\theoremstyle{plain}
\newtheorem{thm}{Theorem}[section]
\newtheorem{cor}[thm]{Corollary}
\newtheorem{lem}[thm]{Lemma}
\newtheorem{prop}[thm]{Proposition}
\theoremstyle{definition}
\newtheorem{defi}[thm]{Definition}
\theoremstyle{remark}
\newtheorem{rem}[thm]{Remark}
\numberwithin{equation}{section}
\newcommand{\average}{{\mathchoice {\kern1ex\vcenter{\hrule height.4pt
width 6pt depth0pt} \kern-9.7pt} {\kern1ex\vcenter{\hrule
height.4pt width 4.3pt depth0pt} \kern-7pt} {} {} }}
\def\R{\mathbb{R}}
\def\RR{\mathbb{R}}
\def\NN{\mathbb{N}}
\newcommand{\ZZ}{\mathbb{Z}}
\DeclareMathOperator*{\einf}{ess\,inf}
\def\supp{\mathrm{supp}}
\def\osc{\mathrm{osc}}
\begin{document}

\title[The Neumann problem for the fractional Laplacian: boundary regularity]{The Neumann problem for the fractional Laplacian: regularity up to the boundary}

\author{Alessandro Audrito}
\address{Universit\"at Z\"urich, Institut f\"ur Mathematik, Winterthurerstrasse 190, 8057 Z\"urich, Switzerland}
\email{alessandro.audrito@math.uzh.ch}

\author{Juan-Carlos Felipe-Navarro}
\address{Universitat Polit\`ecnica de Catalunya and BGSMath, Departament de Matem\`atiques, Avda. Diagonal 647, 08028 Barcelona, Spain}
\email{juan.carlos.felipe@upc.edu}

\author{Xavier Ros-Oton}
\address{Universit\"at Z\"urich,
Institut f\"ur Mathematik,
Winterthurerstrasse 190, 8057 Z\"urich, Switzerland \&
ICREA, Pg.\ Llu\'is Companys 23, 08010 Barcelona, Spain \&
Universitat de Barcelona, Departament de Matem\`atiques i Inform\`atica, Gran Via de les Corts Catalanes 585, 08007 Barcelona, Spain.}
\email{xavier.ros-oton@math.uzh.ch}


\keywords{Nonlocal operators, fractional Laplacian, Neumann problem, regularity.}

\subjclass[2010]{35B65; 35R11; 60G52; 47G30.}

\begin{abstract}
We study the regularity up to the boundary of solutions to the Neumann problem for the fractional Laplacian.
We prove that if $u$ is a weak solution of $(-\Delta)^s u=f$ in $\Omega$, \ $\mathcal N_s u=0$ in $\Omega^c$, then $u$ is $C^\alpha$ up tp the boundary for some $\alpha>0$.
Moreover, in case $s>\frac12$, we then show that $u\in C^{2s-1+\alpha}(\overline\Omega)$.
To prove these results we need, among other things, a delicate Moser iteration on the boundary with some logarithmic corrections.

Our methods allow us to treat as well the Neumann problem for the regional fractional Laplacian, and we establish the same boundary regularity result.

Prior to our results, the interior regularity for these Neumann problems was well understood, but near the boundary even the continuity of solutions was open.
\end{abstract}

\maketitle


\section{Introduction and main results}

We study the regularity of solutions to the Neumann problem
\begin{equation}\label{eq-intro}
\left\{ \begin{array}{rcll}
(-\Delta)^su&=&f&\textrm{in }\Omega \\
\mathcal N_su&=&0&\textrm{in }\Omega^c,
\end{array}\right.
\end{equation}
where $\mathcal N_s$ is a ``nonlocal normal derivative'', given by
\begin{equation}\label{normal-s}
\mathcal N_su(x):=c_{N,s}\int_\Omega \frac{u(x)-u(y)}{|x-y|^{N+2s}}\,dy, \qquad x\in \Omega^c.
\end{equation}
The constant $c_{N,s}$ is the one appearing in the definition the fractional Laplacian
\begin{equation}\label{operator}
(-\Delta)^s u(x)=c_{N,s}\,\textrm{PV}\int_{\R^N}\frac{u(x)-u(y)}{|x-y|^{N+2s}}\,dy.
\end{equation}

The Neumann problem \eqref{eq-intro} was first introduced in \cite{DRV,DGLZ}, and has been subsequently studied in several papers; see for example \cite{Ab,AT,CC,LMPPS,Vo}.
As explained in detail in \cite{DRV}, \eqref{eq-intro} is a natural Neumann problem for the fractional Laplacian, for several reasons:

\vspace{2mm}

\noindent \ $\bullet$ The problem has a variational structure, and weak solutions are obtained by minimizing the energy functional
\begin{equation}\label{energy}
 \mathcal E(u) :=\frac{c_{N,s}}{4}\int\int_{\R^{2N}\setminus(\Omega^c)^2}\frac{|u(x)-u(y)|^2}{|x-y|^{N+2s}}\,dx\,dy-\int_\Omega f\,u.
 \end{equation}
Solutions exist if and only if $\int_\Omega f=0$.

\vspace{2mm}

\noindent \ $\bullet$ The following integration by parts formulas hold for $C^2$ functions $u,v$:
\[\int_\Omega (-\Delta)^su\,dx=-\int_{\Omega^c} \mathcal N_su\,dx\]
and
\begin{equation}\label{int-by-parts}
 \frac{c_{n,s}}{2}\int\int_{\R^{2N}\setminus(\Omega^c)^2}\frac{\bigl(u(x)-u(y)\bigr)\bigl(v(x)-v(y)\bigr)}{|x-y|^{N+2s}}\,dx\,dy
=\int_\Omega v\,(-\Delta)^su+\int_{\Omega^c} v\,\mathcal N_su.
\end{equation}

\vspace{2mm}

\noindent \ $\bullet$ The corresponding heat equation with homogeneous Neumann conditions
possesses natural properties like conservation of mass inside $\Omega$ or convergence to a constant as $t\rightarrow \infty$.

\vspace{2mm}

\noindent \ $\bullet$ The problem has a natural probabilistic interpretation, heuristically described in~\cite{DRV}, and rigorously studied  in \cite{Vo}.

\vspace{2mm}

\noindent \ $\bullet$ As $s\uparrow1$, we recover the classical Neumann problem for the Laplacian in $\Omega$.

\vspace{2mm}

\noindent \ $\bullet$ The energy functional \eqref{energy} is the same that yields solutions to the Dirichlet problem for the fractional Laplacian; see \cite{RS-Dir,R-survey}.

\vspace{3mm}

The aim of this paper is to study the boundary regularity of solutions to \eqref{eq-intro}.

\subsection{Main results}

While the Dirichlet problem is very well understood \cite{AR,AuR,BCI2,BKK08,CKS,FKV,Grubb,Grubb2,Landkov,R-survey,RS-Dir}, much less is known for the Neumann case.
Our main result reads as follows:

\begin{thm}\label{thm-intro}
Let $\Omega\subset \R^N$ be any bounded Lipschitz domain.
Let $s\in(0,1)$, and $u$ be any weak solution of \eqref{eq-intro} with $f\in L^q(\Omega)$, with $q>\frac{N}{2s}$ and $\int_\Omega f =0$.

Then,
\[\|u\|_{C^\alpha(\overline\Omega)} \leq C\left( \|f\|_{L^q(\Omega)} + \|u\|_{L^2(\Omega)}\right),\]
for some $\alpha>0$.
Moreover, if $s>\frac12$, $q>N$, and $\Omega$ is $C^1$, we then have
\[\|u\|_{C^{2s-1+\alpha}(\overline\Omega)} \leq C\left( \|f\|_{L^q(\Omega)} + \|u\|_{L^2(\Omega)}\right).\]
The constants $C$ and $\alpha$ depend only on $N$, $s$, $q$, and $\Omega$.
\end{thm}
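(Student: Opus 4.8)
The plan is to develop a De Giorgi--Nash--Moser theory \emph{up to the boundary}, the interior regularity being already known. First I would fix a boundary point $x_0\in\partial\Omega$ and reduce, by a covering argument, to proving each of the two Hölder estimates on half-balls $B_\rho(x_0)\cap\overline\Omega$. The key preliminary remark is that the Neumann condition $\mathcal N_su=0$ in $\Omega^c$ says exactly that, for a.e.\ $x\in\Omega^c$,
\[
u(x)=\Bigl(\int_\Omega\frac{dy}{|x-y|^{N+2s}}\Bigr)^{-1}\int_\Omega\frac{u(y)}{|x-y|^{N+2s}}\,dy,
\]
i.e.\ $u|_{\Omega^c}$ is a weighted average of $u|_\Omega$ against a probability measure concentrated near $x$; hence pointwise bounds and oscillation of $u$ on $\Omega^c\cap B_\rho(x_0)$ are controlled, up to constants, by those of $u$ on $\Omega\cap B_{C\rho}(x_0)$, and it suffices to estimate $u$ on $\Omega$.

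The first main step is \emph{local boundedness}. I would test the weak formulation \eqref{int-by-parts} with $v=\eta^2\phi(u)$, where $\eta$ is a cutoff supported in $B_\rho(x_0)$ and $\phi$ is increasing and convex. Expanding the Gagliardo difference of $v$ yields, on $\Omega\times\Omega$, a coercive term $\gtrsim\iint|\eta(x)\Phi(u(x))-\eta(y)\Phi(u(y))|^2|x-y|^{-N-2s}$ with $\Phi'=\sqrt{\phi'}$, minus an error controlled by $\iint|\eta(x)-\eta(y)|^2(\cdots)|x-y|^{-N-2s}$; on the mixed regions $\Omega\times\Omega^c$ and $\Omega^c\times\Omega$, and on the long-range part of the kernel, the averaging identity above lets me absorb all contributions of $u|_{\Omega^c}$ into $\int_\Omega\eta^2|f\,\phi(u)|$ plus tail terms bounded by $\|u\|_{L^2(\Omega)}$. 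Feeding this into a fractional Sobolev inequality adapted to the \emph{restricted} Dirichlet form over $\R^{2N}\setminus(\Omega^c)^2$ --- available because a Lipschitz $\Omega$ satisfies a corkscrew/measure-density condition at $x_0$, so $\Omega$ fills a fixed fraction of every $B_r(x_0)$ --- I would close a Moser iteration over $\phi(t)=|t|^p$ with geometrically growing $p$ and shrinking radii, obtaining
\[
\|u\|_{L^\infty(B_{\rho/2}(x_0)\cap\Omega)}\le C\bigl(\rho^{-N/2}\|u\|_{L^2(B_\rho(x_0)\cap\Omega)}+\rho^{2s-N/q}\|f\|_{L^q(\Omega)}+\|u\|_{L^2(\Omega)}\bigr),
\]
which by the preliminary remark also bounds $u$ on $B_{\rho/2}(x_0)\cap\Omega^c$. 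I expect \emph{this} step to be the main obstacle: unlike the Dirichlet case one cannot extend $u$ by zero, the energy is the restricted form, the cutoff cannot vanish near $x_0\in\partial\Omega$, and the borderline exponents of the iteration (where $\phi'$ degenerates) force the logarithmic corrections announced in the abstract.

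With $u$ bounded, I would then run the second half of De Giorgi--Nash--Moser: testing with $v=\eta^2\bigl(\ell-\log(M-u+\varepsilon_\rho)\bigr)$ gives a BMO-type bound for $\log(M-u+\varepsilon_\rho)$ on $B_{\rho/2}(x_0)\cap\Omega$, which, combined with a Poincaré inequality on the Lipschitz set and the usual iteration that simultaneously decreases the nonlocal tail, yields
\[
\osc_{B_{\rho/2}(x_0)\cap\overline\Omega}u\le(1-\theta)\,\osc_{B_\rho(x_0)\cap\overline\Omega}u+C\rho^{2s-N/q}\|f\|_{L^q(\Omega)}
\]
for some $\theta=\theta(N,s,q,\Omega)\in(0,1)$. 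Iterating over dyadic radii and combining with the interior estimate gives $u\in C^\alpha(\overline\Omega)$ with the first claimed bound, for some $\alpha>0$.

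Finally, for $s>\tfrac12$, $q>N$ and $\Omega\in C^1$, I would bootstrap from $C^\alpha$ to $C^{2s-1+\alpha}$ by a blow-up/compactness argument. The model is the half-space Neumann problem $(-\Delta)^sU=0$ in $\R^N_+$, $\mathcal N_sU=0$ in $\R^N_-$, for which I would prove a Liouville-type theorem: every solution with growth $|U(x)|\le C(1+|x|)^{2s-1+\alpha}$ is constant (the exponent $2s-1$ being forced by the $\dist(\cdot,\partial\Omega)^{2s-1}$-type behavior natural to the problem when $s>\tfrac12$; this function is unbounded when $s\le\tfrac12$, which is why only $C^\alpha$ is claimed in that range). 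Then, arguing by contradiction, I would rescale $u$ at the worst boundary point and scale, subtract the optimal constant, and normalize, producing solutions of the Neumann problem in the domains $(\Omega-x_k)/r_k$ --- which flatten to a half-space since $\Omega\in C^1$ --- with right-hand sides tending to $0$ in the rescaled norm (here $q>N$ ensures the interior regularity coming from $f$ is strictly better than $C^{2s-1}$); by the compactness furnished by the previous steps these converge to a nonconstant half-space solution of growth $2s-1+\alpha$, contradicting the Liouville theorem. Together with interior estimates this gives the second part of the statement, and the same scheme should apply verbatim to the Neumann problem for the regional fractional Laplacian.
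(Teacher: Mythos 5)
Your proposal follows essentially the same route as the paper: eliminate $u|_{\Omega^c}$ via the averaging identity coming from $\mathcal N_su=0$ (which converts the problem into a regional-type problem inside $\Omega$ with a modified kernel), run a Moser/De Giorgi--Nash--Moser scheme with logarithmic corrections up to the boundary to get $C^\alpha$, and then bootstrap to $C^{2s-1+\alpha}$ by a blow-up argument against a half-space Liouville theorem for the Neumann problem. One diagnostic point in your sketch is off, however: you attribute the logarithmic corrections to ``the cutoff not vanishing at $x_0$'' and to ``borderline exponents where $\phi'$ degenerates,'' but in the paper they come from a different source. After substituting the averaging identity, the effective kernel $K_\Omega(x,y)=c_{N,s}|x-y|^{-N-2s}+k_\Omega(x,y)$ has an extra nonnegative piece $k_\Omega$ that is genuinely logarithmically singular along $\partial\Omega$ (Proposition \ref{kernel-estimates}): $K_\Omega(x,y)\asymp\bigl(1+\log^-(d_{x,y}/|x-y|)\bigr)|x-y|^{-N-2s}$. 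The $\Omega\times\Omega^c$ contributions are therefore not absorbed into the right-hand side or into tail terms; they become a structural part of the bilinear form, and it is the evaluation of $\int_\Omega[\varphi(x)-\varphi(y)]^2K_\Omega(x,y)\,dy$ for a cutoff $\varphi$ (which grows like $1+|\log d(x)|$, see the proof of Lemma \ref{lemma-noname}) that produces the logarithmic weights entering the Moser iteration and requiring the radius choice of Corollary \ref{Cor:BoundInfNegLp1}. With that correction, your plan matches the paper's; the remaining gaps (the precise formulation of the fractional Poincar\'e/Sobolev inequalities used, the 1D boundary Harnack and its reduction from $N$D which underlie the Liouville theorem) are details one would expect to be filled in when writing up.
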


This is the first boundary regularity result for the Neumann problem \eqref{eq-intro}, and even the continuity of solutions is new.

As in case of the Dirichlet problem \cite{RS-Dir}, it turns out that the boundary regularity is much more delicate than the interior one, and does not follow easily by adapting the classical methods used for $s=1$ \cite{N,MS}.
This is because in this nonlocal context one cannot use any even/odd reflection to study solutions near the boundary, and a completely different strategy is needed.

In \cite{RS-Dir}, a key idea was to use the methods coming from equations with bounded measurable coefficients in non-divergence form.
Here, instead, we will need to use methods coming from equations with bounded measurable coefficients in divergence form.
More precisely, we will need (among other things) a delicate Moser iteration on the boundary involving some logarithmic corrections on $\partial\Omega$.
This will be explained in more detail later on in the paper.

In a sense, Theorem \ref{thm-intro} can be seen as the Neumann version of the boundary regularity theory for the Dirichlet problem developed in \cite{RS-Dir}.

\begin{rem}
It is important to remark that $2s-1$ is a natural critical exponent in this problem.
This can be seen easily when $\Omega=\{x_N>0\}$, in which the function $|x_N|^{2s-1}$ solves \eqref{eq-intro} pointwise, even though it is not a weak solution --- nor it satisfies \eqref{int-by-parts}.
Thus,  $C^{2s-1+\alpha}(\overline\Omega)$  is the minimum regularity needed in order to discard this kind of solutions.
This will become even more clear in case of the regional fractional Laplacian, explained below.
\end{rem}

\subsection{Regional fractional Laplacian}

The methods developed in this paper allow us to treat as well the Neumann problem for the \emph{regional} fractional Laplacian.
This corresponds to a censored stochastic process; see \cite{BBC}.

Solutions to this problem are obtained by minimizing the energy
\begin{equation}\label{energy-reg}
 \mathcal E(u) :=\frac{c_{N,s}}{4}\int_\Omega\int_{\Omega}\frac{|u(x)-u(y)|^2}{|x-y|^{N+2s}}\,dx\,dy-\int_\Omega f\,u,
 \end{equation}
and the operator is given by
\begin{equation}\label{operator-reg}
(-\Delta)_\Omega^s u(x)=c_{N,s}\,\textrm{PV}\int_{\Omega}\frac{u(x)-u(y)}{|x-y|^{N+2s}}\,dy.
\end{equation}
This problem shares many of the properties of \eqref{eq-intro} described above: it has a variational formulation, a nice probabilistic interpretation, convergence as $s\uparrow1$ to the Neumann problem for the Laplacian, and conservation of mass for its parabolic version.
The main difference is that the operator given by \eqref{operator-reg} depends on $\Omega$, and that in this case $\R^N\setminus\overline{\Omega}$ plays no role.

The Dirichlet problem in this setting is obtained by considering \eqref{energy-reg} among all functions $u=0$ on $\partial\Omega$.
Notice that, by trace theorems for $H^s(\Omega)$ spaces \cite{Di}, this makes sense only when $s>\frac12$.
It turns out then that solutions to the Dirichlet problem are $C^{2s-1}(\overline \Omega)$, and if $f>0$ they actually satisfy
 \[u\asymp d^{2s-1}\quad \textrm{in}\quad \Omega;\]
see \cite{BBC,Ch,CK,GM}.

However, as in case of the fractional Laplacian \eqref{eq-intro}, the Neumann case is much less understood, and it is not even clear what is the right pointwise Neumann condition for solutions in this case.

An integration by parts formula found in \cite{Gu} suggests that the right quantity in this context is given by\footnote{Notice that when $u=0$ on $\partial\Omega$ (Dirichlet case), then this quantity is the same as $u/d^{2s-1}|_{\partial\Omega}$.}
\[\qquad \qquad\qquad \partial_\nu^{2s-1} u(z) := \lim_{t\downarrow0}\frac{u(z+t\nu)-u(z)}{t^{2s-1}},\qquad\qquad z\in\partial\Omega,\]
where $\nu$ is the (inward) unit normal to $\partial\Omega$.
More precisely, it is proved in \cite{Gu} that, if $u,v\in d^{2s-1}C^2(\overline\Omega)+C^2(\overline\Omega)$ then\footnote{A function $w$ belongs to $d^{2s-1}C^2(\overline\Omega)+C^2(\overline\Omega)$ if it can be written as $w= d^{2s-1} g +h$, with $g,h\in C^2(\overline\Omega)$.}
\begin{equation}\label{int-by-parts-reg}
 \frac{c_{N,s}}{2}\int_\Omega\int_\Omega \frac{\bigl(u(x)-u(y)\bigr)\bigl(v(x)-v(y)\bigr)}{|x-y|^{N+2s}}\,dx\,dy
=\int_\Omega v\,(-\Delta)^s_\Omega u+\kappa_{N,s}\int_{\partial\Omega} v\,\partial_\nu^{2s-1} u.
\end{equation}
This is the analogue of \eqref{int-by-parts} in this context, and suggests that the pointwise Neumann condition in this setting should be
\begin{equation}\label{Neumann-reg}
\partial_\nu^{2s-1} u = 0 \quad \textrm{on}\quad \partial\Omega.
\end{equation}

Our main result in this context answers positively this question, and reads as follows.

\begin{thm}\label{thm-intro-reg}
Let $\Omega\subset \R^N$ be any bounded Lipschitz domain.
Let $s\in(0,1)$, $f\in L^q(\Omega)$, with $q>\frac{N}{2s}$, be such that $\int_\Omega f=0$, and $u$ be any free minimizer of \eqref{energy-reg}.

Then,
\[\|u\|_{C^\alpha(\overline\Omega)} \leq C\left( \|f\|_{L^q(\Omega)} + \|u\|_{L^2(\Omega)}\right),\]
for some $\alpha>0$.
Moreover, if $s>\frac12$, $q>N$, and $\Omega$ is $C^1$, we then have
\[\|u\|_{C^{2s-1+\alpha}(\overline\Omega)} \leq C\left( \|f\|_{L^q(\Omega)} + \|u\|_{L^2(\Omega)}\right).\]
In particular, for every $s\in(0,1)$ we have \eqref{Neumann-reg}.
The constants $C$ and $\alpha$ depend only on $N$, $s$, $q$, and $\Omega$.
\end{thm}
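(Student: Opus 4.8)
The plan is to reduce Theorem~\ref{thm-intro-reg} to a perturbation of the Neumann problem~\eqref{eq-intro} for the full fractional Laplacian, so that essentially all of the machinery developed for Theorem~\ref{thm-intro} can be reused. Given a free minimizer $u$ of~\eqref{energy-reg}, the Euler--Lagrange equation reads
\[
\frac{c_{N,s}}{2}\int_\Omega\int_\Omega \frac{\bigl(u(x)-u(y)\bigr)\bigl(v(x)-v(y)\bigr)}{|x-y|^{N+2s}}\,dx\,dy=\int_\Omega f\,v
\]
for all $v\in H^s(\Omega)$. The idea is to extend $u$ to $\R^N$ (the natural choice is to set $u\equiv 0$ in $\Omega^c$, or, better for regularity bookkeeping, a $C^\alpha$ extension) and to rewrite the regional bilinear form as the full bilinear form appearing in~\eqref{int-by-parts} minus the ``crossed'' contributions between $\Omega$ and $\Omega^c$. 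Concretely, one writes the full-space form over $\R^{2N}\setminus(\Omega^c)^2$ as the regional form plus $2\int_\Omega\int_{\Omega^c}\frac{(u(x)-u(y))(v(x)-v(y))}{|x-y|^{N+2s}}$, and since $v$ may be taken supported in $\overline\Omega$ one shows that $u$ solves, weakly, a Neumann-type problem $(-\Delta)^s u = f + g$ in $\Omega$, $\mathcal N_s u = h$ in $\Omega^c$, where $g,h$ are error terms coming from the interaction with the exterior and are controlled by $\|u\|_{L^\infty}$ (or $\|u\|_{C^\alpha}$) once some preliminary integrability of $u$ is known.

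The key steps, in order, would be: (i) a De Giorgi/Moser-type argument giving $u\in L^\infty(\overline\Omega)$ and then the first-order bound $u\in C^\alpha(\overline\Omega)$ \emph{directly} for free minimizers of~\eqref{energy-reg} — here one repeats the boundary Moser iteration with logarithmic corrections exactly as in the proof of Theorem~\ref{thm-intro}, noting that the regional kernel differs from the full one only by a lower-order, integrable perturbation on a Lipschitz domain (the missing exterior mass is absorbed into the right-hand side and does not affect the De Giorgi geometry); (ii) with $u\in C^\alpha(\overline\Omega)$ in hand, establish the pointwise identity that for $z\in\partial\Omega$ the limit $\partial_\nu^{2s-1}u(z)$ exists and, by testing~\eqref{int-by-parts-reg} against arbitrary $v\in C^\infty(\overline\Omega)$ and comparing with the Euler--Lagrange equation, conclude $\partial_\nu^{2s-1}u=0$ on $\partial\Omega$; (iii) assuming $s>\tfrac12$, $q>N$ and $\Omega$ of class $C^1$, upgrade to $u\in C^{2s-1+\alpha}(\overline\Omega)$ by the same blow-up / Liouville scheme used for Theorem~\ref{thm-intro}, using that the relevant blow-up limits solve $(-\Delta)^s_{\R^N_+}w=0$ in a half-space with $\partial_\nu^{2s-1}w=0$, and classifying those as affine in the $x_N$-direction (this is where the exponent $2s-1$ is sharp, cf. the remark after Theorem~\ref{thm-intro}); the interaction terms $g,h$ are, under these hypotheses, of class $C^{\alpha}$ up to the boundary and hence harmless.

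I expect the main obstacle to be step~(iii), and more precisely the Liouville-type classification for the \emph{regional} operator in the half-space: one must show that any solution $w$ of $(-\Delta)^s_{\mathbb{R}^N_+}w=0$ in $\{x_N>0\}$ with controlled growth and $\partial_\nu^{2s-1}w=0$ on $\{x_N=0\}$ is an affine function of $x_N$ alone. The subtlety is that, unlike the Dirichlet regional problem where the boundary behaviour $d^{2s-1}$ is well studied \cite{BBC,CK,GM}, here the boundary condition is the \emph{vanishing} of the $d^{2s-1}$-coefficient, and one does not have an explicit Poisson kernel; one has to either transplant the argument of~\cite{RS-Dir} via the extension to the full-space Neumann problem (using step~(i) to make the transplantation rigorous) or derive the classification directly from the integration-by-parts formula~\eqref{int-by-parts-reg} together with a monotonicity/Almgren-type frequency argument. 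A secondary difficulty is making sure that the error terms $g$ and $h$ produced by the $\Omega$--$\Omega^c$ interaction are genuinely of the regularity claimed: on a merely Lipschitz domain they are only bounded, which is why the $C^{2s-1+\alpha}$ statement requires $\Omega\in C^1$, exactly as in Theorem~\ref{thm-intro}. Once these points are settled, the two regularity estimates follow by combining the interior estimates (already known) with the boundary estimates via a standard covering and scaling argument.
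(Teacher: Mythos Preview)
Your opening reduction---extend $u$ to $\R^N$ and view the regional problem as the full Neumann problem $(-\Delta)^s u = f+g$, $\mathcal N_s u=h$---does not work as stated. If you extend $u$ by zero (or by any $C^\alpha$ function), the correction inside $\Omega$ is $g(x)=c_{N,s}\int_{\Omega^c}\frac{u(x)-u(y)}{|x-y|^{N+2s}}\,dy$, and even when $u$ is bounded this behaves like $d(x)^{-2s}$ near $\partial\Omega$. That is far too singular to be in $L^q$ for any $q>\frac{N}{2s}$, so none of the $L^q$-right-hand-side machinery from Theorem~\ref{thm-intro} applies to the perturbed problem. The ``lower-order, integrable perturbation'' claim is false.

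The paper does something much simpler and avoids this entirely. All of Sections~\ref{sec3}--\ref{sec6} (the $L^\infty$ bound, the Moser iteration with log corrections, the $C^\alpha$ estimate, the half-space Liouville theorem, and the blow-up to $C^{2s-1+\alpha}$) are proved simultaneously for kernels satisfying either \eqref{K1} or \eqref{K2}. The regional fractional Laplacian is exactly the \eqref{K2} case, and it is the \emph{easier} one: there is no logarithmic correction, so several arguments simplify. The proof of Theorem~\ref{thm-intro-reg} in the paper is therefore one line: repeat the proof of Theorem~\ref{thm-intro} verbatim, using the interior estimate for the regional operator (Lemma~\ref{lemma:Interior-Regularity}) in place of Lemma~\ref{lemma:Interior-Regularity-frac}. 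Your step~(i) stumbles onto the right idea (run Moser directly for the regional bilinear form), but the justification is backwards: the regional kernel is not a perturbation of the Neumann kernel $K_\Omega$; rather $K_\Omega$ is the regional kernel plus the log-singular piece $k_\Omega$.

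Two smaller points. First, your step~(ii) is misplaced: the Neumann condition $\partial_\nu^{2s-1}u=0$ is an immediate \emph{consequence} of the regularity estimate (if $u\in C^\beta(\overline\Omega)$ with $\beta>2s-1$ then $|u(z+t\nu)-u(z)|\le Ct^\beta=o(t^{2s-1})$), not something one proves via \eqref{int-by-parts-reg}, which requires far more regularity than $C^\alpha$. Second, in the Liouville step the blow-up limits are constant in $x_N$ and affine only in the \emph{tangential} variables ($b_N=0$ in Theorem~\ref{Thm:LiouvilleND2}), not ``affine in $x_N$ alone''.
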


In particular, thanks to Theorem \ref{thm-intro-reg}, we find that the Neumann problem for the regional fractional Laplacian is actually
\begin{equation}\label{eq-intro-reg}
\left\{ \begin{array}{rcll}
(-\Delta)^s_\Omega u&=&f&\textrm{in }\Omega \\
\partial_\nu^{2s-1} u&=&0&\textrm{on }\partial\Omega.
\end{array}\right.
\end{equation}
Notice that our result also implies that solutions to the Neumann problem are more regular than those corresponding to the Dirichlet case, as expected.

\begin{rem}
Other Neumann problems for the fractional Laplacian $(-\Delta)^s$ have been introduced in \cite{BCGJ,BGJ} and \cite{Grubb2}.
These different Neumann problems recover the classical Neumann problem as a limit case, and the one in \cite{BCGJ,BGJ} has a probabilistic interpretation as well.
We refer to \cite{DRV} for a comparison between these different models, and related problems for the other operators.
\end{rem}

\subsection{Acknowledgements}

XR was supported by the European Research Council (ERC) under the Grant Agreement No 801867.
AA and XR were supported by the Swiss National Science Foundation (SNF).
JF and XR were supported by MINECO grant MTM2017-84214-C2-1-P (Spain).
JF acknowledges financial support from the Spanish Ministry of Economy and Competitiveness (MINECO), through the Mar\'ia de Maeztu Programme for Units of Excellence in R\&D (MDM-2014-0445-16-4).
Moreover, he is a member of the Barcelona Graduate School of Mathematics (BGSMath) and part of the Catalan research group 2017 SGR 01392.
Part of this work has been done while JF was visiting Universit\"at Z\"urich.

\subsection{Organization of the paper}

In Section \ref{sec2} we transform the Neumann problem \eqref{eq-intro} into a regional-type operator inside $\Omega$.
In Section \ref{sec3} we prove an $L^\infty$ bound for solutions of \eqref{eq-intro} and \eqref{eq-intro-reg}.
Then, in Section \ref{sec4} we develop a Moser iteration (with logarithmic corrections), and deduce that solutions are $C^\alpha$ for some $\alpha>0$.
In Section \ref{sec5} we establish a Neumann Liouville-type theorem in a half-space, and finally in Section \ref{sec6} we use it to prove higher regularity of solutions.

\section{An equivalent problem in $\Omega$}
\label{sec2}

As first noticed in \cite{Ab}, problem \eqref{eq-intro} can be reformulated as a regional-type problem in~$\Omega$ for a new operator
\begin{equation}\label{new-operator}
L_\Omega u(x):= \textrm{PV}\int_{\Omega}\big(u(x)-u(y)\big)K_\Omega(x,y)\,dy,
\end{equation}
with
\begin{equation}\label{new-kernel}
K_\Omega(x,y) = \frac{c_{N,s}}{|x-y|^{N+2s}} + k_\Omega(x,y),
\end{equation}
\begin{equation}\label{new-kernel2}
\qquad\qquad \qquad k_\Omega(x,y)=c_{N,s} \int_{\Omega^c} \frac{dz}{|x-z|^{N+2s} |y-z|^{N+2s} \int_\Omega \frac{dw}{|z-w|^{N+2s}} },\qquad x,y\in\Omega.
\end{equation}
Moreover, it was proved in \cite{Ab} that, for every fixed $x\in\Omega$, the kernel $k_\Omega(x,y)$ has a logarithmic singularity along $\partial\Omega$.
Here we need more precise estimates, with constants that are independent of $x,y\in\Omega$.

\subsection{Fine estimates on the new kernel}

Here, and throughout the paper, we denote $A\asymp B$ whenever $C^{-1}A\leq B\leq CA$ for some positive constant $C$.

\begin{prop}\label{kernel-estimates}
Let $\Omega\subset \R^N$ be any Lipschitz domain, let $d$ be the distance function to the boundary, and denote
\[\qquad \qquad d_{x,y} := \min\{d(x),\, d(y)\},\qquad x,y\in \Omega.\]
Then, the kernel $k_\Omega$ satisfies
\begin{equation}\label{new-kernel2-estimates}
k_\Omega(x,y) \asymp \left\{
\begin{array}{ll}
\displaystyle \frac{1+\left|\log\left(\frac{d_{x,y}}{|x-y|}\right)\right|}{|x-y|^{N+2s}} & \quad\textrm{if} \quad d_{x,y} \leq |x-y|  \vspace{2mm}\\
d_{x,y}^{-N-2s} & \quad\textrm{if} \quad d_{x,y}\geq |x-y|
\end{array}
\right.
\end{equation}
In particular, the kernel $K_\Omega$ satisfies
\begin{equation}\label{new-kernel-estimates}
\qquad\qquad\qquad K_\Omega(x,y) \asymp  \frac{1+\log^-\left(\frac{d_{x,y}}{|x-y|}\right)}{|x-y|^{N+2s}} \qquad \textrm{for all}\quad x,y\in\Omega,
\end{equation}
where $\log^- t:=\max\{0,\, -\log t\}$.

The constants  in \eqref{new-kernel2-estimates} and \eqref{new-kernel-estimates} depend only on $\Omega$.
Moreover, if $\Omega\cap B_2$ can be written as a Lipschitz graph, then \eqref{new-kernel2-estimates} and \eqref{new-kernel-estimates} hold for $x,y \in \Omega\cap B_1$ with constants depending only on $N$ and the Lipschitz norm of such graph.
\end{prop}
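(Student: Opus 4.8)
The plan is to estimate the innermost integral first, then the remaining double integral, treating separately the two geometric regimes $d_{x,y}\le|x-y|$ and $d_{x,y}\ge|x-y|$. Write $I(z):=\int_\Omega |z-w|^{-N-2s}\,dw$ for $z\in\Omega^c$. Since $\Omega$ is a Lipschitz domain, a point $z\in\Omega^c$ with $\dist(z,\partial\Omega)=\delta$ "sees" a full cone of $\Omega$ near its closest boundary point, so $I(z)\asymp \delta^{-2s}$ whenever $\delta\lesssim\diam\Omega$ (the lower bound comes from integrating over a truncated cone in $\Omega$, the upper bound from $|z-w|\ge\delta$ together with the fact that $\Omega$ is bounded and has finite measure, plus a dyadic decomposition in annuli around $z$). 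After localizing (using a partition of unity subordinate to the boundary charts, or just the stated local version on $B_2$), this reduces the problem to estimating
\[
k_\Omega(x,y)\asymp c_{N,s}\int_{\Omega^c}\frac{\dist(z,\partial\Omega)^{2s}}{|x-z|^{N+2s}\,|y-z|^{N+2s}}\,dz,\qquad x,y\in\Omega.
\]

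Next I would exploit that, again by the Lipschitz (or $C^1$) graph structure, $\dist(z,\partial\Omega)\asymp$ the vertical distance from $z$ to the graph, and flatten the boundary: after a bi-Lipschitz change of variables we may assume $\Omega=\{x_N>0\}$ locally and $\Omega^c=\{z_N<0\}$, with $\dist(z,\partial\Omega)\asymp|z_N|$. One then has to bound, up to harmless constants,
\[
\int_{\{z_N<0\}}\frac{|z_N|^{2s}}{|x-z|^{N+2s}\,|y-z|^{N+2s}}\,dz,
\]
with $x=(x',x_N)$, $y=(y',y_N)$, $x_N,y_N>0$. Here the key scaling observation is that the integrand is homogeneous of degree $-N-2s$ under the dilation $(x,y,z)\mapsto(\lambda x,\lambda y,\lambda z)$, so $k_\Omega$ is homogeneous of degree $-N-2s$ in the variables $(x,y)$; hence it suffices to understand the integral on the scale $|x-y|=1$, and then the dependence on the single remaining parameter $d_{x,y}=\min\{x_N,y_N\}$ (after also using translation invariance in $z'$ to center at, say, $x'$).

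The core computation is then the one-parameter estimate: with $|x-y|\asymp1$ and $t:=d_{x,y}$, show
\[
\int_{\{z_N<0\}}\frac{|z_N|^{2s}}{|x-z|^{N+2s}\,|y-z|^{N+2s}}\,dz\asymp
\begin{cases}
1+\log^-t & \text{if }t\le1,\\[1mm]
t^{-N-2s} & \text{if }t\ge1.
\end{cases}
\]
For $t\le1$ one splits the $z$-integral into the region $|z|\lesssim1$ (where one factor $|x-z|^{-N-2s}$ or $|y-z|^{-N-2s}$ is comparable to the other up to the competing singularities at $x$ and $y$, and the $|z_N|^{2s}$ weight exactly cancels the non-integrable part, leaving a logarithm measured by how deep $x$ and $y$ sit, i.e.\ by $t$) and the far region $|z|\gtrsim1$ (which contributes a bounded amount). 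For $t\ge1$ both $x,y$ are far from the boundary, $|x-z|,|y-z|\gtrsim t$ on the bulk of the relevant mass, and the integral is dominated by $z$ near $\partial\Omega$ at distance $\asymp t$, giving the $t^{-N-2s}$ behavior; this also matches, after undoing the scaling, the claim $k_\Omega\asymp d_{x,y}^{-N-2s}$ when $d_{x,y}\ge|x-y|$. Finally, adding $c_{N,s}|x-y|^{-N-2s}$ to $k_\Omega$ and simplifying: in the regime $d_{x,y}\ge|x-y|$ the $k_\Omega$ term $d_{x,y}^{-N-2s}$ is dominated by $|x-y|^{-N-2s}$, while for $d_{x,y}\le|x-y|$ the sum is $\asymp (1+\log^-(d_{x,y}/|x-y|))|x-y|^{-N-2s}$ since $1+|\log(d_{x,y}/|x-y|)|=1+\log^-(d_{x,y}/|x-y|)$ there; this yields \eqref{new-kernel-estimates}.

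I expect the main obstacle to be making the Lipschitz flattening quantitative enough to preserve the $\asymp$ constants uniformly in $x,y$, and in particular controlling the borderline logarithmic term: one must check that the change of variables distorts $\dist(z,\partial\Omega)$, $|x-z|$ and $|y-z|$ only by multiplicative constants depending solely on the Lipschitz norm, and that the logarithm $\log^-(d_{x,y}/|x-y|)$, being only comparable rather than equal across the change of variables, still comes out with the right leading behavior (a comparable argument of a logarithm changes it only by an additive constant, which is absorbed into the "$1+$"). The far-field contributions and the case $d_{x,y}\ge|x-y|$ are comparatively routine once the localization is set up; the delicate point is the near-diagonal, near-boundary analysis producing exactly one logarithm.
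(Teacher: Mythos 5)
Your strategy is essentially the paper's: estimate the inner integral via the Lipschitz geometry (the paper cites \cite[Lemma 2.1]{Ab}, which gives $\int_\Omega|z-w|^{-N-2s}\,dw\asymp\min\{d^{-2s}(z),\,d^{-N-2s}(z)\}$ — keep the $\min$, since the $d^{-N-2s}$ branch is what makes the far field contribute only a bounded amount), use the scale invariance $k_\Omega(rx,ry)=r^{-N-2s}k_{r^{-1}\Omega}(x,y)$ and symmetry in $x,y$, flatten by a bi-Lipschitz chart, and evaluate the model half-space integral. One step needs tightening: after fixing $|x-y|=1$ the integral does \emph{not} reduce to a one-parameter family in $t=d_{x,y}$ alone — the larger height $\max\{d(x),d(y)\}$ (constrained to lie in $[t,t+1]$) is an independent free parameter. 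The paper avoids this by normalizing $d(y)\le d(x)$ and $\max\{d(x),|x-y|\}=1$, which yields three clean cases: $|x-y|\le d(y)\le d(x)$, $d(y)\le|x-y|\le d(x)$, and $d(y)\le d(x)\le|x-y|$; in the middle case the factor $|x-z|^{-N-2s}$ is automatically bounded, and in the last case the two singularities are well separated, so the double-singularity integral reduces in each case to a single-singularity one, making your near-diagonal logarithm heuristic into a genuine computation. Your observations about absorbing the bi-Lipschitz distortion of the logarithm into the additive ``$1+$'', and the $t\ge1$ scaling giving $t^{-N-2s}$, are correct and match the paper.
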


\begin{proof}
Since \eqref{new-kernel-estimates} follows immediately from \eqref{new-kernel2-estimates}, it suffices to prove \eqref{new-kernel2-estimates}.
Moreover, since any Lipschitz domain can be locally written as a Lipschitz graph, we will assume that $\Omega\cap B_2$ is a Lipschitz graph, and prove the estimate for $x,y\in \Omega\cap B_1$.

By \cite[Lemma 2.1]{Ab} we have that
\[\int_\Omega \frac{dw}{|z-w|^{N+2s}} \asymp \min\big\{d^{-2s}(z),\, d^{-N-2s}(z)\big\}\]
for $z\in \Omega^c$, so we deduce that
\[k_\Omega(x,y) \asymp \int_{\Omega^c} \frac{d^{2s}(z)\, dz}{|x-z|^{N+2s} |y-z|^{N+2s}\min\big\{1,\,d^{-N}(z)\big\} },\qquad x,y\in\Omega\cap B_1.\]

On the other hand, notice that the kernel is scale invariant, in the sense that
\[k_\Omega(rx,ry) = r^{-N-2s} k_{r^{-1}\Omega}(x,y),\]
and it is symmetric in $x,y$.
Moreover, the estimate we want to prove is also scale invariant and symmetric.
Therefore, to prove the desired estimate, we may assume that
\[d(y)\leq d(x)\qquad \textrm{and}\qquad \max\{d(x),\, |x-y|\}=1.\]

Moreover, since for $x,y\in \Omega\cap B_1$ the contributions from $\Omega^c\cap B_2^c$ in \eqref{new-kernel2} are bounded, we have
\begin{equation}\label{boundss}
k_\Omega(x,y) \asymp \int_{\Omega^c \cap B_2} \frac{d^{2s}(z)\, dz}{|x-z|^{N+2s} |y-z|^{N+2s} },\qquad x,y\in\Omega\cap B_1.
\end{equation}

Now, notice that since such integral is obviously bounded when $d(x)\geq d(y)\geq \frac12$, since $z\in \Omega^c$ and therefore the integrand is bounded.
Further, notice that if $|x-y|\geq\frac12$ then the singularities are well separated, and therefore we can split the integral into two pieces.

 Because of this, we split the proof into different cases.
First, assume that $|x-y|\leq d(y)\leq d(x)=\frac12$.
Then, by triangle inequality we have $d(y)+|x-y|\geq d(x)$, and therefore $d(y)\geq \frac12$, which yields that the integrand in \eqref{boundss} is bounded.
Hence, in this case, $k_\Omega\asymp 1$.

For the second case, assume that $d(y)\leq |x-y|\leq d(x)=1$.
By triangle inequality, we have $|x-y|\geq\frac12$ in this case.
The factor $|x-z|^{-n-2s}$ is bounded, and hence we have
\[
k_\Omega(x,y) \asymp \int_{\Omega^c \cap B_2} \frac{d^{2s}(z)\, dz}{|y-z|^{N+2s} }.
\]
Then, by doing a bi-Lipschitz transformation, it suffices to consider the case in which $\Omega\cap B_2$ is flat, i.e., $\Omega\cap B_2 = \{x_N>0\}\cap B_2$.
(Notice that the estimates are invariant under a biLipschitz transformation, since all distances stay comparable.)
Then, we get
\[
k_\Omega(x,y) \asymp \int_{\{z_N<0\} \cap B_2} \frac{|z_N|^{2s}\, dz}{|y-z|^{N+2s} }
 \asymp 1+\big|\log d(y)\big|.
\]
The last estimate can be proved as follows: denote $d(y)=y_N=:\delta>0$, so that by a change of variables $z\mapsto \delta z$ we have
\[
\int_{\{z_N<0\} \cap B_2} \frac{|z_N|^{2s}\, dz}{|y-z|^{N+2s} } \asymp
 \int_{\{z_N<0\} \cap B_{1/\delta}} \frac{|z_N|^{2s}}{1+|z|^{N+2s} }\, dz \asymp 1+\big|\log \delta\big|,\]
as claimed.

Finally, for the third case, assume that $d(y)\leq d(x) \leq |x-y|=1$.
Then, by the same argument we have
\[\begin{split}
k_\Omega(x,y) &\asymp \int_{\Omega^c \cap B_{1/2}(x)} \frac{d^{2s}(z)\, dz}{|x-z|^{N+2s} } + \int_{\Omega^c \cap B_{1/2}(y)} \frac{d^{2s}(z)\, dz}{|y-z|^{N+2s} } + C\\
& \asymp 1+\big|\log d(y)\big|,\end{split}
\]
where we used that $d(y)\leq d(x)$.
Thus, the result is proved.
\end{proof}

Thanks to these estimates, we will treat problem \eqref{eq-intro} as a problem inside $\Omega$ for an operator \eqref{new-operator} with kernel satisfying \eqref{new-kernel-estimates}.
This will allow us to treat at the same time both problems \eqref{eq-intro} and \eqref{eq-intro-reg}.

More precisely, throughout the next two sections we assume that $L_\Omega$ is an operator of the form \eqref{new-operator}, with kernel $K_\Omega$ satisfying either
\begin{equation}\label{K1}
\qquad\qquad\qquad K_\Omega(x,y) \asymp  \frac{1+\log^-\left(\frac{d_{x,y}}{|x-y|}\right)}{|x-y|^{N+2s}} \qquad \textrm{for}\quad x,y\in\Omega,
\end{equation}
or
\begin{equation}\label{K2}
\qquad\qquad\qquad K_\Omega(x,y) \asymp  \frac{1}{|x-y|^{N+2s}} \qquad \textrm{for}\quad x,y\in\Omega.
\end{equation}
The first case covers the Neumann problem for the fractional Laplacian, while the second case covers the regional fractional Laplacian.
The constants in \eqref{K1} and \eqref{K2} are given by Proposition \ref{kernel-estimates}.

The corresponding bilinear form is given by
\begin{equation}\label{B}
 B(u,v) :=\int_\Omega\int_\Omega \big(u(x)-u(y)\big)\big(v(x)-v(y)\big)K_\Omega(x,y)\,dx,
 \end{equation}
and the definition of weak solution to the Neumann problem is the following.

\subsection{Weak solutions}

Here, and throughout the paper, we denote with $H_K(\Omega)$ the space of functions for which
$$ ||w||_{H_K(\Omega)}^2 = ||w||_{L^2(\Omega)}^2 + \int_\Omega \int_{\Omega} |w(x)-w(y)|^2 \, K_\Omega(x,y) dxdy$$
is finite.

Similar, we denote with $H_{K,loc}(\Omega)$ the space of functions for which the quantity
\[
||w||_{L^2(\Omega \cap B)}^2 + \int_{\Omega \cap B} \int_{\Omega \cap B} |w(x)-w(y)|^2 \, K_\Omega(x,y) dx\, dy
\]
is finite for any ball $B \subset \mathbb{R}^N$.

\begin{defi}\label{Def:WeakSolNeumann}
Let $\Omega \subset \RR^N$ be any Lipschitz domain, $B\subset \RR^N$ be a ball, and $D := B\cap\Omega$.
Let $K_\Omega$ be any kernel of the form either \eqref{K1} or \eqref{K2}, and let $L_\Omega$ and $B$ be given by \eqref{new-operator} and \eqref{B}, respectively.
Let $\mu,f \in L^q(D)$ with $q \in \left(\frac{N}{2s},\infty\right]$.

We say that $u \in H_{K,loc}(\Omega)$ is a weak supersolution in $D$, with Neumann conditions on $\partial\Omega \cap B$, and we write
\[
L_\Omega u \geq \mu u + f \quad \text{ in } D,
\]
if
\[
B(u,\eta) \geq \int_{D} \mu u \eta dx + \int_{D} f\eta dx \quad \text{ for all } \eta \in C_0^{\infty}(B), \, \eta \geq 0.
\]
We say that $u \in H_{K,loc}(\Omega)$ is a weak subsolution in $D$, with Neumann conditions on $\partial\Omega \cap B$, and we write
\[
L_\Omega u \leq \mu u + f \quad \text{ in } D,
\]
if
\[
B(u,\eta) \leq \int_{D} \mu u \eta dx + \int_{D} f\eta dx \quad \text{ for all } \eta \in C_0^{\infty}(B), \, \eta \geq 0.
\]
We say that $u \in H_{K,loc}(\Omega)$ is a weak solution to
\[
L_\Omega u = \mu u + f \quad \text{ in } D,
\]
with Neumann conditions on $\partial\Omega \cap B$, if it is both a weak supersolution and subsolution in $D$ with Neumann conditions on $\partial\Omega \cap B$.

Finally, we say that $u$ is a weak (sub/super)-solution in $\Omega$ if the previous definition holds for all balls $B\subset\RR^N$.
\end{defi}

We will also need the following.

\begin{lem}\label{Lemma:Subsolutions}
Let $\Omega$ be a bounded Lipschitz domain  and $K_\Omega$, $B$, $f$, $\mu$, as in Definition \ref{Def:WeakSolNeumann}.

Then, the following statements hold.
\begin{itemize}
\item[(i)] Let $u$ satisfy
\[
L_\Omega u = \mu u + f \quad \text{ in } D,
\]
with Neumann condition on $\partial\Omega \cap B$. Then $u_+$  and $u_-$ satisfy respectively
\[
L_\Omega u_+ \leq \mu u_+ + f_+ \quad \text{ in } D,
\]
and
\[
L_\Omega u_- \geq \mu u_- + f_- \quad \text{ in } D,
\]
with Neumann condition on $\partial\Omega \cap B$.

\item[(ii)] Let $\mu, f \geq 0$ and $u$ a nonnegative function weakly satisfying
\[
L_\Omega u \leq \mu u + f \quad \text{ in } D,
\]
with Neumann condition on $\partial\Omega \cap B$. Then for any $l \geq 0$, the function $\underline{u} = \max\{u,l\}$ also satisfies
\[
L_\Omega \underline{u} \leq \mu \underline{u} + f \quad \text{ in } D,
\]
with Neumann condition on $\partial\Omega \cap B$.

\end{itemize}
\end{lem}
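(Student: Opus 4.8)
The plan is to verify both statements directly from the weak formulation in Definition \ref{Def:WeakSolNeumann}, testing the bilinear form $B$ against suitable nonnegative test functions and exploiting the symmetry of the kernel $K_\Omega(x,y)$. Throughout, the key algebraic identity will be the pointwise decomposition $a = a_+ - a_-$ together with the inequalities relating $\bigl(a(x)-a(y)\bigr)\bigl(\phi(a)(x)-\phi(a)(y)\bigr)$ to $\bigl(\phi(a)(x)-\phi(a)(y)\bigr)^2$ or similar one-sided bounds, where $\phi$ is one of the truncations $t\mapsto t_+$, $t\mapsto t_-$, or $t\mapsto \max\{t,l\}$.

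For part (i), I would start from the identity $u = u_+ - u_-$ and compute, for any fixed $x,y$, the product $\bigl(u(x)-u(y)\bigr)\bigl(u_+(x)-u_+(y)\bigr)$. The elementary pointwise inequality $\bigl(u(x)-u(y)\bigr)\bigl(u_+(x)-u_+(y)\bigr) \geq \bigl(u_+(x)-u_+(y)\bigr)^2$ (which follows by checking the sign cases for $u(x),u(y)$) shows, after multiplying by $K_\Omega\geq 0$ and integrating, that $B(u,\eta)$ controls $B(u_+,\eta)$ from below whenever $\eta\ge0$ is of a form testing $u_+$; but one has to be careful that the admissible test functions are $\eta\in C_0^\infty(B)$, not $u_+\eta$ itself. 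The standard device is to approximate: use $\eta$ replaced by $u_+\eta$ is not allowed, so instead one argues that $L_\Omega u_+ \le \mu u_+ + f_+$ is equivalent to $B(u_+,\eta)\le \int_D(\mu u_+announce+f_+)\eta$ for all $\eta\in C_0^\infty(B)$, $\eta\ge0$, and one derives this by writing $u_+ = u + u_-$, so $B(u_+,\eta) = B(u,\eta) + B(u_-,\eta)$, using the hypothesis on $u$, and then showing $B(u_-,\eta)\le \int_D(\mu u_- + f_-)\eta - (\text{something nonnegative})$ via the analogous inequality for $u_-$ and the fact that $\mu u \eta + f\eta = \mu(u_+-u_-)\eta + (f_+-f_-)\eta$. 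Keeping track of the cross terms and the sign of $\mu$ is the delicate bookkeeping here, but it is routine once the pointwise kernel inequalities are in hand. The statement for $u_-$ is symmetric (replace $u$ by $-u$, $f$ by $-f$, which swaps sub/super).

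For part (ii), with $\mu,f\ge0$ and $u\ge0$, set $\underline u = \max\{u,l\}$ for $l\ge0$. The relevant pointwise fact is $\bigl(u(x)-u(y)\bigr)\bigl(\underline u(x)-\underline u(y)\bigr) \ge \bigl(\underline u(x)-\underline u(y)\bigr)^2 \ge \bigl(\underline u(x)-\underline u(y)\bigr)\bigl(\underline u(x)-\underline u(y)\bigr)$ — more precisely one checks case by case that $\bigl(u(x)-u(y)\bigr)\bigl(\underline u(x)-\underline u(y)\bigr) \ge \bigl(\underline u(x)-\underline u(y)\bigr)^2$, hence after multiplying by $K_\Omega\ge0$ and integrating against any $\eta\ge0$ in a way that reduces to allowable test functions, $B(\underline u,\eta) \le B(u,\eta)$. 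Combining with the subsolution hypothesis $B(u,\eta)\le \int_D \mu u\eta + \int_D f\eta$ and using $0\le u\le \underline u$ together with $\mu,f\ge0$ to bound $\int_D\mu u\eta \le \int_D \mu\underline u\eta$, we conclude $B(\underline u,\eta)\le \int_D\mu\underline u\eta + \int_D f\eta$, which is the claim. Again the only real subtlety is that $C_0^\infty(B)$ test functions do not directly see $\underline u - u$, so one must justify the inequality $B(\underline u,\eta)\le B(u,\eta)$ by a density/truncation argument (approximating $\underline u - u = (l-u)_+$ and using that it is an admissible increment in $H_{K,loc}$).

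The main obstacle I anticipate is not any single inequality but the careful passage from pointwise kernel inequalities to the weak formulation: since the test class is $C_0^\infty(B)$ with $\eta\ge0$, one cannot plug in $u_+$, $u_-$, or $(l-u)_+$ directly, and one needs either (a) to enlarge the test class to nonnegative functions in $H_K(\Omega)$ with compact support in $B$ by density, noting $B$ is continuous on $H_K(\Omega)\times H_K(\Omega)$ and the truncations of an $H_{K,loc}$ function stay in $H_{K,loc}$ with the right support, or (b) to run the argument with a smooth cutoff times the truncation and pass to the limit. Establishing that the truncated functions indeed lie in the correct space, with the correct support properties so that the integration-by-parts-free weak identity applies, is the technical heart; the pointwise convexity-type inequalities for $t_+$, $t_-$, $\max\{t,l\}$ are standard and quick.
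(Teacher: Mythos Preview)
Your approach has a genuine gap. The pointwise inequalities you quote are correct, but they do not yield what you then claim. In (ii), for instance, the inequality $(u(x)-u(y))(\underline u(x)-\underline u(y)) \geq (\underline u(x)-\underline u(y))^2$ says only that $B(\underline u,\underline u) \leq B(u,\underline u)$; it does \emph{not} give $B(\underline u,\eta) \leq B(u,\eta)$ for an arbitrary nonnegative test $\eta$. The latter is false in general: writing $\underline u - u = (l-u)_+$ one has $B(\underline u,\eta) - B(u,\eta) = B((l-u)_+,\eta)$, and there is no reason for this to be $\le 0$ for every $\eta\ge 0$ (take $\eta$ a bump concentrated where $(l-u)_+$ is a bump). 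The same problem arises in (i): the decomposition $B(u_+,\eta)=B(u,\eta)+B(u_-,\eta)$ is just linearity, and bounding $B(u_-,\eta)$ in terms of the $u_-$--data is exactly the statement for $u_-$ you are trying to prove, so the argument is circular.

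The missing device is to test the equation for $u$ with $p'(u)\eta$ rather than with $\eta$, where $p$ is the relevant convex truncation ($p(t)=t_+$ in (i), $p(t)=\max\{t,l\}$ in (ii)). Convexity of $p$ gives, for all $\eta_1,\eta_2\ge 0$,
\[
\bigl(p(a)-p(b)\bigr)(\eta_1-\eta_2)\ \le\ (a-b)\bigl(p'(a)\eta_1 - p'(b)\eta_2\bigr),
\]
and hence $B(p(u),\eta) \le B(u,p'(u)\eta)$. After approximating $p$ by smooth convex $p_k$ so that $p_k'(u)\eta\in H_K(\Omega)$ is admissible, one plugs this test into the weak formulation to get $B(u,p_k'(u)\eta)=\int_D(\mu u+f)\,p_k'(u)\eta$ (or $\le$ in the subsolution case), and passes to the limit: $p_k'(u)\to\chi_{\{u>0\}}$ (resp.\ $\chi_{\{u>l\}}$), and the right-hand side converges to something bounded by $\int_D(\mu u_+ + f_+)\eta$ (resp.\ by $\int_D(\mu\underline u + f)\eta$, using $\mu,f\ge 0$). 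This is precisely the route the paper takes, following \cite{K}.
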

\begin{proof}
We follow the proof of \cite[Lemma 2.4]{K}.
The proof is very general and does not really use the explicit form of the kernel.

\medskip

Let us first prove (i). Setting $p(x) = x_+$, we consider a sequence of smooth and convex functions $p_k:\RR \to \RR$, such that
\begin{equation}\label{eq:Proppk}
p_k,p_k' \geq 0, \quad p_k(x) = p(x), \; x\in \RR\setminus (-\tfrac{1}{k},\tfrac{1}{k}), \quad \|p - p_k\|_{H^1(\RR)} \leq \tfrac{1}{k},
\end{equation}
for all positive integer $k$. Using the convexity of $p_k$, it is not difficult to verify that
\[
B(p_k(u),\eta) \leq B(u,p_k'(u)\eta),
\]
for all $k$ and all $\eta \in H_K(\Omega)$, $\eta \geq 0$. Further, we notice that, thanks to the properties of $p_k$ and the fact that $u \in H_K(\Omega)$, $p_k'(u)\eta$ is an admissible test, whenever $\eta \in H_K(\Omega)$ (by approximation it is always possible to test with functions belonging to $H_K(\Omega)$).

Consequently,
\[
\begin{aligned}
B(p_k(u),\eta) &- \int_\Omega \mu p_k(u) \eta dx - \int_\Omega f_+\eta dx \\
&\leq B(u,p_k'(u)\eta) - \int_\Omega \mu p_k(u) \eta dx - \int_\Omega f_+\eta dx \\
& = \int_\Omega \mu u p_k'(u) \eta dx + \int_\Omega f p_k'(u)\eta dx - \int_\Omega \mu p_k(u) \eta dx - \int_\Omega f_+\eta dx,
\end{aligned}
\]
for all $k$ and all $\eta \in H_K(\Omega)$, $\eta \geq 0$. Finally, passing to the limit as $k \to +\infty$, and noticing that $\int_\Omega \mu u p_k'(u) \eta, \int_\Omega \mu p_k(u) \eta \to \int_\Omega \mu u_+ \eta$, it follows
\[
B(u_+,\eta) - \int_\Omega \mu u_+ \eta dx - \int_\Omega f_+\eta dx \leq \int_{\Omega\cap\{u > 0\}} f\eta dx - \int_\Omega f_+ \eta dx \leq 0,
\]
for all $\eta \in H_K(\Omega)$, $\eta \geq 0$, which proves the first part of our claim. To prove the second part, it is enough to notice that $-u$ is a solution with $-f$ and apply the first part of our statement.
We obtain that $u_- = (-u)_+$ is a subsolution with $f_- = (-f)_+$, which is exactly what we wanted to prove.

\medskip

To prove part (ii), we proceed as before. We fix $l \geq 0$ and we define $p(x) := \max\{x,l\}$. Then, we consider a sequence of smooth and convex functions $p_k$ satisfying \eqref{eq:Proppk}. Thus,
\[
\begin{aligned}
B(p_k(u),\eta) &- \int_\Omega \mu p_k(u) \eta dx - \int_\Omega f\eta dx \\
&\leq B(u,p_k'(u)\eta) - \int_\Omega \mu p_k(u) \eta dx - \int_\Omega f\eta dx \\
& \leq \int_\Omega \mu u p_k'(u) \eta dx + \int_\Omega f p_k'(u)\eta dx - \int_\Omega \mu p_k(u) \eta dx - \int_\Omega f\eta dx,
\end{aligned}
\]
for all $k$ and all $\eta \in H_K(\Omega)$, $\eta \geq 0$. Passing to the limit as $k \to + \infty$, we obtain
\[
B(p(u),\eta) - \int_\Omega \mu p(u) \eta dx - \int_\Omega f\eta dx   \leq  - l \int_{\Omega\cap\{u < l\}} \mu \eta dx -\int_{\Omega\cap\{u < l\}} f\eta dx  \leq 0,
\]
for all $\eta \in H_K(\Omega)$, $\eta \geq 0$, and our statement follows.


\end{proof}

\section{$L^\infty$ bounds}
\label{sec3}

The aim of this section is to prove $L^\infty$ bounds for solutions to the Neumann problems that we study.
For this, we only need the lower bound $K_\Omega(x,y) \gtrsim |x-y|^{-N-2s}$.

We next prove the boundedness of solutions to \eqref{eq-intro} and \eqref{eq-intro-reg}.
We start with the following.

\begin{lem}\label{Lemma:L2LinfEstimate}
Let $\Omega \subset \RR^N$ be a bounded Lipschitz domain and $c \in L^q(\Omega)$ and $q>\frac{N}{2s}$.
Let $K_\Omega$ be of the form either \eqref{K1} or \eqref{K2}.
Assume that $u$ satisfies
\begin{equation}\label{eq:SubsolPot}
\begin{cases}
L_\Omega u \leq c(x)u \quad &\text{ in } \Omega \\
u \geq 0       \quad &\text{ in } \Omega,
\end{cases}
\end{equation}
in the weak sense with Neumann conditions on $\partial\Omega$. Then
\[
\|u\|_{L^{\infty}(\Omega)} \leq C \left( 1+ \| c \|_{L^q(\Omega)}^{\frac{qN}{4qs - 2N}}\right) \|u\|_{L^2(\Omega)},
\]
for some constant $C > 0$ depending only on $N$, $s$, $q$, and $\Omega$.
\end{lem}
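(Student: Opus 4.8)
The plan is to run a Moser/De Giorgi $L^2$–$L^\infty$ iteration, using only the lower kernel bound $K_\Omega(x,y)\gtrsim|x-y|^{-N-2s}$, which provides a Gagliardo–type Sobolev embedding for the space $H_K(\Omega)$. First I would record the embedding: since $\Omega$ is a bounded Lipschitz domain, there is a constant $C$ and an exponent $2^*_s=\frac{2N}{N-2s}$ (with the usual interpretation if $N\le 2s$, taking any large subcritical exponent) such that
\[
\|w\|_{L^{2^*_s}(\Omega)}^2 \le C\Big( \|w\|_{L^2(\Omega)}^2 + \int_\Omega\int_\Omega |w(x)-w(y)|^2 K_\Omega(x,y)\,dx\,dy\Big)
= C\,\|w\|_{H_K(\Omega)}^2
\]
for all $w\in H_K(\Omega)$. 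This follows from the fractional Sobolev inequality on $\Omega$ together with the lower bound on $K_\Omega$, which dominates the Gagliardo seminorm of $H^s(\Omega)$.

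Next I would set up the iteration. For $\beta\ge 1$ and a truncation level, test the equation with $\eta=u^{\beta}$ (more precisely with truncated powers $u\,u_M^{\beta-1}$ where $u_M=\min\{u,M\}$, letting $M\to\infty$ at the end to justify the test function lies in $H_K(\Omega)$; here one uses Lemma \ref{Lemma:Subsolutions}(ii) to handle truncations, and $u\ge 0$). The algebraic inequality
\[
\big(a-b\big)\big(a^\beta - b^\beta\big) \ \ge\ \frac{4\beta}{(\beta+1)^2}\,\big(a^{(\beta+1)/2} - b^{(\beta+1)/2}\big)^2,\qquad a,b\ge 0,
\]
turns $B(u,u^\beta)$ into a lower bound for $\int\int |u^{(\beta+1)/2}(x)-u^{(\beta+1)/2}(y)|^2 K_\Omega$, up to the factor $\frac{4\beta}{(\beta+1)^2}$. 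On the right-hand side we get $\int_\Omega c\,u^{\beta+1}=\int_\Omega c\, (u^{(\beta+1)/2})^2$. Using Hölder with exponents $q$ and $q'$, then interpolating $\|v\|_{L^{2q'}}$ between $\|v\|_{L^2}$ and $\|v\|_{L^{2^*_s}}$ (this requires $q>\frac N{2s}$, so that $2q'<2^*_s$), and absorbing the small $L^{2^*_s}$ part via the Sobolev embedding, I obtain, with $v=u^{(\beta+1)/2}$,
\[
\|v\|_{L^{2^*_s}(\Omega)}^2 \ \le\ C\,(1+\beta)\,\big(1+\|c\|_{L^q(\Omega)}\big)^{\theta}\,\|v\|_{L^2(\Omega)}^2,
\]
for a fixed exponent $\theta$ independent of $\beta$ (the factor $(1+\beta)$ coming from the reciprocal of $\frac{4\beta}{(\beta+1)^2}$). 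This reads, with $\chi=2^*_s/2=\frac{N}{N-2s}>1$ and $p_k=(\beta+1)=2\chi^k$,
\[
\|u\|_{L^{2\chi^{k+1}}(\Omega)} \ \le\ \big(C\,(1+\chi^k)(1+\|c\|_{L^q})^{\theta}\big)^{1/(2\chi^k)}\,\|u\|_{L^{2\chi^k}(\Omega)}.
\]

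Finally I would iterate: starting from $\|u\|_{L^2(\Omega)}$, multiply the estimates over $k=0,1,2,\dots$. The product $\prod_k \big(C(1+\chi^k)\big)^{1/(2\chi^k)}$ converges (since $\sum_k \frac{k}{\chi^k}<\infty$), and $\prod_k (1+\|c\|_{L^q})^{\theta/(2\chi^k)} = (1+\|c\|_{L^q})^{\theta/(2(\chi-1))}$ collects into a single power of $\|c\|_{L^q}$; tracking the arithmetic of the exponent gives precisely $\frac{qN}{4qs-2N}$. Letting $k\to\infty$ yields $\|u\|_{L^\infty(\Omega)}\le C(1+\|c\|_{L^q(\Omega)}^{qN/(4qs-2N)})\|u\|_{L^2(\Omega)}$.

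The main obstacle I anticipate is not the iteration bookkeeping but the justification of the test functions: $u^\beta$ need not belong to $H_K(\Omega)$ a priori, so one must work with the bounded truncations $u_M$, prove the differential inequality at the truncated level (invoking Lemma \ref{Lemma:Subsolutions} and the convexity argument used there), get estimates uniform in $M$, and pass to the limit by monotone/dominated convergence. A secondary subtlety is making the Sobolev embedding and the interpolation uniform in $\Omega$ only through the dependence allowed ($N,s,q,\Omega$), and correctly handling the degenerate case $N\le 2s$ by replacing $2^*_s$ with an arbitrary large exponent, which only improves the constants.
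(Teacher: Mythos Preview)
Your proposal is correct and follows essentially the same Moser iteration as the paper: truncated powers as test functions, the elementary inequality $(a-b)(a^\beta-b^\beta)\ge \frac{4\beta}{(\beta+1)^2}(a^{(\beta+1)/2}-b^{(\beta+1)/2})^2$ (which is the content of \cite[Lemma~2.3]{K} cited in the paper), H\"older plus interpolation between $L^2$ and $L^{2^*_s}$, absorption, and iteration. The only cosmetic difference is that the paper first reduces to $\|c\|_{L^q(\Omega)}\le 1$ by scaling and then runs the iteration without tracking $\|c\|_{L^q}$, whereas you carry the $\|c\|_{L^q}$-dependence through the iteration; both routes yield the exponent $\tfrac{qN}{4qs-2N}$.
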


\begin{proof}
Note that by scaling properties we can assume $\| c \|_{L^q(\Omega)}\leq 1$. That is, we only need to work with the auxiliary function $w(x) = u \left(||c||_{L^q(\Omega)}^\frac{q}{N-2qs} x\right)$ in $\widetilde{\Omega}=||c||_{L^q(\Omega)}^\frac{q}{2N-4qs} \Omega \subset \Omega$ when $\| c \|_{L^q(\Omega)}> 1$. Given $\beta \geq 2$, the idea is to take $u^{\beta-1}$ as test function in the weak formulation and thanks to Sobolev inequality, improve iteratively the integrability of $u$. Since a priori we cannot guaranteed that $u^{\beta-1}\in H_K(\Omega)$ we need to truncate it in some sense in order to be an admissible test function. That is, let us consider the sequence
\[
u_k := \min\{u,k\},
\]
for all $k \in \NN$, $k \geq 1$. We have $u_k \in H_K(\Omega)$, $0 \leq u_k \leq u_{k+1}$ and $u_k \to u$ a.e. in $\Omega$. Testing the inequality with $\eta = u_k^{\beta - 2} u$, we immediately deduce
\begin{equation}\label{eq:BoundIneq0}
B(u,u_k^{\beta-2}u) \leq \int_\Omega c(x) u_k^{\beta-2}u^2 dx.
\end{equation}
Note that the fact $u_k^{\beta - 2}u \in H_K(\Omega)$, for $\beta \geq 2$, can be easily checked.

Now, setting $v := u_k^{\beta/2-1}u$ and applying \cite[Lemma 2.3]{K}, we obtain
\begin{equation}\label{eq:BoundIneq1}
B(v,v) \leq \beta B(u,u_k^{\beta-2}u)
\end{equation}
for all $\beta \geq 2$.  On the other hand, by H\"older inequality, we have
\begin{equation} \label{eq:BoundIneq2}
\int_\Omega c(x) u_k^{\beta-2}u^2 dx \leq \|c\|_{L^q(\Omega)} \|v\|^2_{L^{2q'}(\Omega)} \leq \|v\|^2_{L^{2q'}(\Omega)}.
\end{equation}
Since $q > \frac{N}{2s}$, it follows that $2 < 2q' < 2_s^\ast$ and so, taking $\vartheta \in (0,1)$ satifying
\[
\frac{1}{2q'} = \frac{\vartheta}{2} + \frac{1-\vartheta}{2_s^\ast}, \qquad \text{ i.e. } \vartheta = \frac{2qs - N}{2qs},
\]
and using the interpolation and the Sobolev inequality, we obtain
\begin{equation} \label{eq:BoundIneq3}
\|v\|^2_{L^{2q'}(\Omega)} \leq \|v\|^{2\vartheta}_{L^2(\Omega)} \|v\|^{2(1-\vartheta)}_{L^{2_s^\ast}(\Omega)} \leq C \left(\|v\|^{2}_{L^2(\Omega)} + B(v,v) \right)^{1-\vartheta}  \|v\|^{2\vartheta}_{L^2(\Omega)}.
\end{equation}
Now, thanks to the fact that $\vartheta \in (0,1)$, we infer
\begin{equation}\label{eq:BoundIneq4}
\left(\|v\|^{2}_{L^2(\Omega)} + B(v,v) \right)^{1-\vartheta}  \|v\|^{2\vartheta}_{L^2(\Omega)} \leq \varepsilon B(v,v) + (1 + \varepsilon^{-\frac{1-\vartheta}{\vartheta}} ) \|v\|^2_{L^2(\Omega)},
\end{equation}
for all $\varepsilon > 0$. Putting together \eqref{eq:BoundIneq0}, \eqref{eq:BoundIneq1}, \eqref{eq:BoundIneq2}, \eqref{eq:BoundIneq3}, \eqref{eq:BoundIneq4} and choosing
\[
\varepsilon = \left(C\, \beta \right)^{-1},
\]
it follows by taking into account that $\beta\geq 2$ that
\[
B(v,v) \leq C \beta^{\frac{1}{\vartheta}} \|v\|^2_{L^2(\Omega)},
\]
and, using Sobolev inequality again, we deduce
\begin{equation}\label{eq:BoundFundIneq}
\left( \int_\Omega u^2 u_k^{\beta\gamma-2} dx\right)^{\frac{1}{\beta\gamma}} \leq (C\beta)^{\frac{1}{\beta\vartheta}} \left( \int_\Omega u^2 u_k^{\beta-2} dx\right)^{\frac{1}{\beta}},
\end{equation}
for some new constant $C > 0$ depending only on $N$, $s$, $q$, and the Lipschitz norm of $\partial \Omega$. Here, $\gamma := 2_s^\ast/2 > 1$.

Now, taking $\beta_0 = 2$ and $\beta_i := \gamma\beta_{i-1} = \beta_0 \gamma^i$ for all integers $i \geq 1$, and iterating \eqref{eq:BoundFundIneq}, we obtain
\[
\begin{aligned}
\| u_k \|_{L^{2\gamma^j}(\Omega)}^\vartheta \leq \| u \|_{L^2(\Omega)}^\vartheta \sum_{i=0}^{j-1} (C\,\gamma^i)^\frac{1}{2\gamma^i}  \leq \| u \|_{L^2(\Omega)}^\vartheta \sum_{i=0}^{\infty} (C\,\gamma^i)^\frac{1}{2\gamma^i} = C\,\| u \|_{L^2(\Omega)}^\vartheta.
\end{aligned}
\]
Thus, passing to the limit as $j \to +\infty$, it follows
\begin{equation} 
\| u_k \|_{L^\infty(\Omega)}^\vartheta \leq C \| u \|_{L^2(\Omega)}^\vartheta.
\end{equation}

Finally, since the previous inequality holds for any $k$ with the same constant $C$, we conclude that
\[
\| u \|_{L^\infty(\Omega)} \leq C \| u \|_{L^2(\Omega)}.
\]
\normalcolor
\end{proof}

We now prove the following result, which gives the boundedness of solutions.
We notice that, in case of \eqref{eq-intro}, a similar result has been obtained in \cite{DPV}, with a different proof.

\begin{prop}\label{Prop:GlobalL2Linf}
Let $\Omega \subset \mathbb{R}^N$ be a bounded Lipschitz  domain, $\mu,f \in L^q(\Omega)$, with $q > \frac{N}{2s}$.
Let $K_\Omega$ be of the form either \eqref{K1} or \eqref{K2}.
Let $u$ be a weak solution to
\[
L_\Omega u = \mu u + f \quad \text{ in } \Omega,
\]
with Neumann conditions on $\partial\Omega$. Then,
\[
\|u\|_{L^{\infty}(\Omega)} \leq C \left( \|u\|_{L^2(\Omega)} + \|f\|_{L^q(\Omega)} \right),
\]
for some constant $C > 0$ depending only on $N$, $s$, $q$, $\|\mu\|_{L^q(\Omega)}$ and $\Omega$.
\end{prop}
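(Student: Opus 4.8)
The plan is to reduce Proposition \ref{Prop:GlobalL2Linf} to the homogeneous estimate of Lemma \ref{Lemma:L2LinfEstimate} by absorbing the inhomogeneous term $f$ into a potential, and then splitting $u$ into positive and negative parts. First I would write the equation $L_\Omega u = \mu u + f$ as $L_\Omega u = c(x) u$ with the potential
\[
c(x) := \mu(x) + \frac{f(x)}{u(x)}\chi_{\{u\neq 0\}},
\]
which is the standard trick, but the issue is that $c$ need not lie in $L^q$ since $u$ may vanish or be small. To fix this, I would apply Lemma \ref{Lemma:Subsolutions}(i): since $u$ solves the equation, $u_+$ is a subsolution with right-hand side $\mu u_+ + f_+$ and $u_-$ is a subsolution with $\mu u_- + f_-$ (up to sign bookkeeping, $u_-=(-u)_+$). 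It then suffices to bound $\|u_+\|_{L^\infty}$ and $\|u_-\|_{L^\infty}$ separately, so from now on assume $u\geq 0$ is a subsolution with nonnegative right-hand side $\mu u + g$, where $g = f_\pm \in L^q$, $g\geq 0$.

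The next step is to handle the vanishing of $u$. For $l>0$ set $\underline u := \max\{u,l\}$. By Lemma \ref{Lemma:Subsolutions}(ii), applied with $\mu$ replaced by a nonnegative potential, $\underline u$ is again a subsolution of an inequality of the same type; more precisely one checks that $\underline u$ weakly satisfies $L_\Omega \underline u \leq \tilde c(x)\,\underline u$ with
\[
\tilde c(x) := \mu_+(x) + \frac{g(x)}{\underline u(x)} \leq \mu_+(x) + \frac{g(x)}{l},
\]
so that $\|\tilde c\|_{L^q(\Omega)} \leq \|\mu\|_{L^q(\Omega)} + l^{-1}\|g\|_{L^q(\Omega)} \leq \|\mu\|_{L^q(\Omega)} + l^{-1}\|f\|_{L^q(\Omega)}$. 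Now apply Lemma \ref{Lemma:L2LinfEstimate} to $\underline u$: since $\underline u \geq l$, this is where I would need to be slightly careful with the $+1$ in the estimate and with the fact that $\|\underline u\|_{L^2(\Omega)} \leq \|u\|_{L^2(\Omega)} + l|\Omega|^{1/2}$. One gets
\[
\|u\|_{L^\infty(\Omega)} \leq \|\underline u\|_{L^\infty(\Omega)} \leq C\Bigl(1 + \|\tilde c\|_{L^q(\Omega)}^{\frac{qN}{4qs-2N}}\Bigr)\bigl(\|u\|_{L^2(\Omega)} + l|\Omega|^{1/2}\bigr).
\]

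Finally I would choose $l$ to balance the two sources of growth. Taking $l := \|f\|_{L^q(\Omega)}$ (if $f\equiv 0$ there is nothing to do, or one takes a limit $l\downarrow 0$) makes $\|\tilde c\|_{L^q} \leq \|\mu\|_{L^q} + 1$, so the prefactor $1+\|\tilde c\|_{L^q}^{qN/(4qs-2N)}$ is bounded by a constant depending only on $N,s,q,\|\mu\|_{L^q(\Omega)}$, and $|\Omega|$; and $l|\Omega|^{1/2} \leq C\|f\|_{L^q(\Omega)}$. Collecting the estimates for $u_+$ and $u_-$ yields
\[
\|u\|_{L^\infty(\Omega)} \leq C\bigl(\|u\|_{L^2(\Omega)} + \|f\|_{L^q(\Omega)}\bigr),
\]
which is the claim. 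The main obstacle is the bookkeeping in the first two steps: justifying that the truncations $u_\pm$ and $\max\{u,l\}$ really produce subsolutions of inequalities with an $L^q$ potential of controlled norm — this is exactly what Lemma \ref{Lemma:Subsolutions} is designed to supply, so the argument is essentially a careful packaging of that lemma together with Lemma \ref{Lemma:L2LinfEstimate} and an optimization of the free parameter $l$.
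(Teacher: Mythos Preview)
Your proposal is correct and follows essentially the same approach as the paper: split into $u_\pm$ via Lemma \ref{Lemma:Subsolutions}(i), truncate from below via Lemma \ref{Lemma:Subsolutions}(ii) so that $f$ can be absorbed into an $L^q$ potential, and then apply Lemma \ref{Lemma:L2LinfEstimate}. The only cosmetic difference is that the paper first normalizes so that $\|u_+\|_{L^2}+\|f_+\|_{L^q}\le 1$ and truncates at level $l=1$ (so $c=\mu_++f_+$ directly), whereas you keep $l$ free and then choose $l=\|f\|_{L^q}$; these are equivalent packagings of the same argument.
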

\begin{proof}
Thanks to Lemma \ref{Lemma:Subsolutions} (part (i)), we know that $u_+$ is a nonnegative subsolution with $\mu = \mu_+$ and $f = f_+$. Consequently, the function $v = \max\{u_+,1\}$ is still a subsolution and, furthermore, $v \geq 1$ (Lemma \ref{Lemma:Subsolutions} part (ii)). Consequently, $v$ satisfies
\[
L_\Omega v \leq c(x)v \quad \text{ in } \Omega
\]
in the weak sense (with Neumann conditions on $\partial\Omega$), where $c = \mu_+ + f_+$.

Now, note that if $\|u_+\|_{L^2(\Omega)} \leq 1$ then $\|v\|_{L^2(\Omega)} \leq \sqrt{1 + |\Omega|}$ and so, under the assumptions $\|u_+\|_{L^2(\Omega)} \leq 1$ and $\|f_+\|_{L^q(\Omega)} \leq 1$, it follows by Lemma \ref{Lemma:L2LinfEstimate}
\[
\|u_+\|_{L^\infty(\Omega)} \leq \|v\|_{L^\infty(\Omega)} \leq C,
\]
for some constant depending only on $N$, $s$, $q$, $\|\mu_+\|_{L^q(\Omega)}$ and $\Omega$. Applying the above inequality to the subsolution
\[
w = \frac{u_+}{\|u_+\|_{L^2(\Omega)} + \|f_+\|_{L^q(\Omega)}},
\]
we deduce
\[
\|u_+\|_{L^\infty(\Omega)} \leq C \left( \|u_+\|_{L^2(\Omega)} + \|f_+\|_{L^q(\Omega)} \right),
\]
for some constant depending only on $N$, $s$, $q$, $\|\mu_+\|_{L^q(\Omega)}$ and $\Omega$. Finally, repeating the same procedure for the subsolution $u_-$ (with $\mu = \mu_-$ and $f = f_-$), we complete the proof of the theorem.
\end{proof}

We will also need the following.
Here, we denote $D_R(x_0) = \Omega \cap B_R(x_0)$.

\begin{lem} \label{lemma: SolutionDirichletNeumann}
	Let $\Omega \subset \RR^N$ be a domain, $R > 0$, $x_0 \in \Omega$ and $f \in L^{q}(D_{2R}(x_0))$ with $q>\frac{N}{2s}$.
Let $K_\Omega$ be of the form either \eqref{K1} or \eqref{K2}.
Moreover, assume that $\partial \Omega \cap B_{3R}(x_0)$ is a Lipschitz graph. Then, there is a weak solution to
	\begin{equation}\label{eq:ProbLinfLqestimate}
	\begin{cases}
	L_\Omega v = |f| \quad &\text{ in } D_{2R}(x_0), \\
	v = 0       \quad &\text{ in } \Omega \setminus D_{2R}(x_0), \\
	\end{cases}
	\end{equation}
	with Neumann conditions on $\partial\Omega \cap B_{2R}(x_0)$ in the sense of Definition \ref{Def:WeakSolNeumann}. Furthermore, it satisfies
	$$0\leq v \leq \kappa_0 \, R^{2s-\frac{N}{q}}\,||f||_{L^q(D_{2R}(x_0))} \ \ \text{ in }  \ \ D_{2R}(x_0),$$
	for some nonnegative constant $\kappa_0$ depending only on $N$, $s$, $q$, and the Lipschitz norm of $\partial\Omega \cap B_{3R}(x_0)$.
\end{lem}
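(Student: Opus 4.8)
The plan is to produce $v$ by the direct method in the calculus of variations, and then to obtain the pointwise upper bound by constructing an explicit supersolution barrier and comparing. For existence, I would work in the closed convex set
\[
\mathcal{A} := \{ w \in H_K(\Omega) : w = 0 \text{ a.e.\ in } \Omega \setminus D_{2R}(x_0) \},
\]
and minimize the functional
\[
\mathcal{J}(w) := \tfrac12 B(w,w) - \int_{D_{2R}(x_0)} |f|\, w\, dx
\]
over $\mathcal{A}$. Here the key point is that $\mathcal{A}$ carries a genuine Poincaré-type inequality: since functions in $\mathcal{A}$ vanish on a set of positive measure inside a ball where $K_\Omega \gtrsim |x-y|^{-N-2s}$, one has $\|w\|_{L^2}^2 \lesssim B(w,w)$ (this uses only the lower kernel bound, exactly as in Section \ref{sec3}), so $\mathcal{J}$ is coercive; it is also strictly convex and weakly lower semicontinuous, hence it admits a unique minimizer $v \in \mathcal{A}$. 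The Euler--Lagrange inequality for minimization over $\mathcal{A}$ gives $B(v,\eta) = \int_{D_{2R}(x_0)} |f|\,\eta\,dx$ for all $\eta \in C_0^\infty(B_{2R}(x_0))$, which is precisely the weak formulation in Definition \ref{Def:WeakSolNeumann} with $\mu = 0$. Nonnegativity, $v \geq 0$, follows by testing with $\eta = v_-$ (admissible since $v_- \in \mathcal{A}$) and using $B(v_-,v_-) \leq -B(v,v_-) \leq 0$ together with the Poincaré inequality; alternatively one invokes Lemma \ref{Lemma:Subsolutions}(i) after noting $v_-$ is a subsolution with right-hand side $0$.

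For the upper bound I would exhibit a supersolution. After rescaling and flattening (the hypothesis that $\partial\Omega \cap B_{3R}(x_0)$ is a Lipschitz graph is used precisely here, and the constant will depend on its Lipschitz norm), it suffices to find, in $D_2 := \Omega \cap B_2$, a bounded function $\Phi$ with $\Phi \geq 0$ in $\Omega$, $\Phi \geq \|f\|_{L^q(D_2)}\cdot(\text{const})$ outside $D_2$, and $L_\Omega \Phi \geq |f|$ in $D_2$ in the weak sense, with $\|\Phi\|_{L^\infty} \lesssim \|f\|_{L^q(D_2)}$. A natural candidate is $\Phi = \kappa_0\,\|f\|_{L^q(D_2)}\,\psi$ where $\psi$ solves (or is a barrier for) $L_\Omega \psi \geq 1$ in $D_2$; since $|f|$ has only $L^q$-integrability with $q > N/2s$, one produces the needed subsolution-level estimate via the same Sobolev/Hölder machinery as in Lemma \ref{Lemma:L2LinfEstimate} applied to $v$ itself — that is, $v$ weakly satisfies $L_\Omega v = |f| \leq |f|\,\mathbf{1}_{\{v>0\}}$, and one runs the Moser iteration (now with the zero exterior datum playing the role of the Neumann boundary, which only helps) to get $\|v\|_{L^\infty(D_2)} \lesssim \|v\|_{L^2(D_2)} + \|f\|_{L^q(D_2)}$, and then controls $\|v\|_{L^2}$ by testing the equation with $v$ and using Poincaré: $B(v,v) = \int |f| v \lesssim \|f\|_{L^q}\|v\|_{L^{q'}} \lesssim \|f\|_{L^q}\|v\|_{H_K} \lesssim \|f\|_{L^q}\, B(v,v)^{1/2}$, giving $\|v\|_{L^2} \lesssim \|f\|_{L^q}$. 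Undoing the scaling reinstates the factor $R^{2s - N/q}$.

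The main obstacle — and the part that needs the most care — is making the comparison/barrier argument rigorous in this nonlocal, only-Lipschitz setting. One cannot just invoke a classical maximum principle: the cleanest route is the one sketched above, namely to bypass an explicit barrier entirely and instead combine (i) the $L^2$ bound from testing with $v$ and the Poincaré inequality on $\mathcal{A}$, with (ii) the local $L^2 \to L^\infty$ estimate, whose proof is a verbatim repetition of Lemma \ref{Lemma:L2LinfEstimate} (it uses only the lower kernel bound and the presence of a test-function-friendly region, here $\Omega \setminus D_{2R}$). The only genuinely new bookkeeping is tracking the $R$-dependence of all constants through the scaling $v(x) \mapsto v(x_0 + Rx)$, which turns $L_\Omega$ into $R^{2s} L_{R^{-1}(\Omega - x_0)}$ and $\|f\|_{L^q(D_{2R}(x_0))}$ into $R^{N/q}\|f(x_0 + R\,\cdot)\|_{L^q(D_2)}$; assembling these exponents yields the stated $R^{2s - N/q}$. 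A secondary, minor technical point is confirming that $\mathcal{A}$ is closed under weak $H_K$-convergence and that the Poincaré inequality's constant depends only on $N$, $s$, and the Lipschitz norm — both follow from the kernel lower bound and a standard compactness-plus-rescaling argument.
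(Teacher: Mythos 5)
Your final route — minimize $\mathcal{J}$ over $\{w\in H_K(\Omega):w\equiv 0\text{ in }\Omega\setminus D_{2R}\}$, get $v\geq 0$ by testing with $v_-$, control $\|v\|_{L^2}$ by testing with $v$ plus a fractional Poincar\'e inequality, run the Moser/De Giorgi $L^2\to L^\infty$ machinery of Lemma \ref{Lemma:L2LinfEstimate} adapted to this mixed Dirichlet--Neumann configuration, and then undo the scaling to recover $R^{2s-N/q}$ — is exactly the paper's proof, and the initial barrier sketch is, as you yourself note, a red herring that you correctly abandon. The argument is sound and essentially the same as the paper's.
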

\begin{proof}
Since the general case comes by scaling, we take $R = 1$. First, let us notice that the existence (and uniqueness) of such solution $v$ can be obtained by minimizing the functional
	$$ \mathcal{E}(w) = \frac{1}{4} \int_\Omega \int_\Omega |w(x)-w(y)|^2 K_\Omega(x-y) \, dx\, dy - \int_\Omega |f(x)|w(x)\,dx $$
	among all functions $w\in H_K(\Omega)$ such that $w\equiv 0$ in $\Omega \setminus D_{2}(x_0)$. See \cite[Section~3]{R-survey} for the details in case of the fractional Laplacian.
	
	Next, in order to prove that the solution is nonnegative we can use the same argument of \cite[Theorem~4.1]{R-survey}, consisting on using $v_-$ as a test function in the weak formulation, which yields $v_-\equiv 0$ in $\Omega$. The bound from above is more delicate and we need to repeat the arguments from Lemma~\ref{Lemma:L2LinfEstimate} and Proposition~\ref{Prop:GlobalL2Linf} adapted to this setting of mixed Dirichlet and Neumann conditions. In that way we obtain that
	$$ v\leq C ( ||v||_{L^2(D_{2}(x_0))} + ||f||_{L^q(D_{2}(x_0))}) \ \ \text{ in }  \ \ D_{2}(x_0), $$
	where $C$ is a nonnegative constant depending only on $N$, $s$, $q$, and the Lipschitz norm of $\partial\Omega \cap B_3(x_0)$.
	
	Finally, we need to estimate the $L^2$-norm of $v$ in terms of the $L^q$-norm of $f$. In order to do that it is sufficient to use $v$ as a test function in the weak formulation and applying the fractional Poincar\'e inequality in $D_3(x_0)$. That is,
	\begin{align*}
	||v||_{L^2(D_{2}(x_0))}^2 &= ||v||_{L^2(D_{3}(x_0))}^2 \leq C_P [v]_{H^s(D_{3}(x_0))}^2 \leq C_P [v]_{H^s(\Omega)}^2 \leq C\int_{D_2(x_0)} f v \\ &\leq C ||f||_{L^q(D_{2}(x_0))} \, ||v||_{L^2(D_{2}(x_0))}.
	\end{align*}
	Let us remark that we apply the fractional Poincar\'e inequality in $D_{3}(x_0)$ since we need $v$ to be zero in some subset of the domain of $v$.
\end{proof}

\section{Moser-type iteration and H\"older regularity up to the boundary}
\label{sec4}

The goal of this section is to develop a Moser-type iteration for our nonlocal problem with Neumann boundary conditions.
The overall strategy follows that of Kassmann \cite{K} for interior regularity but, as we will see, the logarithmic singularity of the kernel in \eqref{K1} will introduce several difficulties.

From now on,  for any $r >0$ and $x_0 \in \Omega$ we denote
\[
D_r(x_0) := B_r(x_0) \cap \Omega.
\]
The main result of this section is the following.

\begin{thm}\label{Theorem:HolderRegularity1}
 Let $\Omega \subset \RR^N$ be a domain, $R > 0$, $x_0 \in \Omega$ and $f \in L^{q}(D_{R}(x_0))$ with $q>\frac{N}{2s}$.
 Assume that $\partial \Omega \cap B_{R}(x_0)$ is a Lipschitz graph.
Let $K_\Omega$ be of the form either \eqref{K1} or \eqref{K2}.
Assume that $u$ is a weak bounded solution to
	\[
	L_\Omega u = f \quad \text{ in } D_{R}(x_0),
	\]
	with Neumann conditions on $\partial\Omega \cap B_{R}(x_0)$ in the sense of Definition \ref{Def:WeakSolNeumann}.
	
	Then there exist $\alpha \in (0,1)$ and $C$ depending only on the Lipschitz norm of $\partial \Omega \cap B_{R}(x_0)$, $N$, $s$, and $q$, such that
	\begin{equation}\label{eq:HolderEstimate1}
	|u(x) - u(y)| \leq C \left( \frac{|x - y|}{R} \right)^\alpha \left[ \|u\|_{L^{\infty}(\Omega)} + R^{2s-\frac{N}{q}} \|f\|_{L^{q}(D_{R}(x_0))}\right]
	\end{equation}
	for a.e. $x,y \in D_{R/2}(x_0)$.
\end{thm}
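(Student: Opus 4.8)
The plan is to prove \eqref{eq:HolderEstimate1} via the classical Moser oscillation-decay argument, carried out directly on the domain $D_r(x_0)$ up to the boundary portion $\partial\Omega\cap B_R(x_0)$, using only the structural bounds \eqref{K1}/\eqref{K2} on the kernel. After a rescaling we may take $R=1$ and $x_0=0$, and after subtracting a constant and normalizing we may assume $\|u\|_{L^\infty(\Omega)}+\|f\|_{L^q(D_1)}\le 1$. The goal is then to show that there is $\lambda\in(0,1)$ such that
\[
\osc_{D_{r/2}} u \le \lambda \,\osc_{D_r} u + C r^{2s-\frac{N}{q}}\|f\|_{L^q(D_1)}
\]
for all small dyadic $r$, from which the Hölder estimate follows by a standard iteration lemma. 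The heart of the matter is the following \emph{measure-to-pointwise} (De Giorgi–Moser) estimate: if $v\ge 0$ is a weak supersolution of $L_\Omega v\ge -\|f\|_{L^q}$ in $D_1$ with Neumann conditions on $\partial\Omega\cap B_1$, and $|\{v\ge 1\}\cap D_{1/2}|\ge \tfrac12 |D_{1/2}|$, then $v\ge \delta$ in $D_{1/4}$ for some $\delta>0$ depending only on the data. Applying this to whichever of $\frac{u-\inf_{D_r}u}{\osc_{D_r}u}$ or $\frac{\sup_{D_r}u-u}{\osc_{D_r}u}$ satisfies the measure hypothesis (one of them must, restricted to $D_{r/2}$) yields the oscillation decay.

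To prove the measure-to-pointwise estimate I would run the two standard Moser sub-pieces. \emph{(a) Logarithmic estimate / growth of the good set.} Test the supersolution inequality with $\eta = \varphi^2/(v+\epsilon)$ for a cutoff $\varphi$ supported in $B_1$ (this is admissible by the truncation/approximation in Lemma \ref{Lemma:Subsolutions} and the argument in Lemma \ref{Lemma:L2LinfEstimate}); manipulating $B\big(v,\varphi^2/(v+\epsilon)\big)$ as in Kassmann \cite{K} produces a Caccioppoli-type bound on $\nabla$-type differences of $\log(v+\epsilon)$ against the kernel, hence (via the fractional Poincaré inequality on $D_1$) a bound on $[\log(v+\epsilon)]_{H^s}$ and, crucially, on $\int_{D_{1/2}}|\log(v+\epsilon) - (\log(v+\epsilon))_{D_{1/2}}|\,dx$. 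Since $|\{v\ge1\}|$ is large, the mean is bounded, so $|\{v<\theta\}\cap D_{1/2}|$ is small for $\theta$ small, and by iterating this (or combining with De Giorgi level-set estimates) one upgrades ``large measure of $\{v\ge1\}$'' to ``$v\ge\theta$ on most of $D_{1/2}$'' and then to ``$|\{v<\theta\}\cap D_{3/8}|$ as small as we like.'' \emph{(b) Local maximum principle for negative powers.} Apply the Moser iteration of Lemma \ref{Lemma:L2LinfEstimate}, but to $\bar v = v+\epsilon$ with negative exponents $\beta<0$ and localized cutoffs, to obtain $\sup_{D_{1/4}} (v+\epsilon)^{-1} \le C\big(\int_{D_{3/8}}(v+\epsilon)^{-p}\big)^{1/p}$ for some small $p>0$. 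Combining (a) and (b): step (a) makes the right-hand side small, hence $\inf_{D_{1/4}} v \ge \delta > 0$, as desired. The inhomogeneous term $f$ is absorbed everywhere using $q>\frac{N}{2s}$ exactly as in Lemma \ref{Lemma:L2LinfEstimate}, contributing the $r^{2s-N/q}\|f\|_{L^q}$ correction.

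The main obstacle — and the only place where this differs from the purely interior theory of \cite{K} — is the logarithmic blow-up of $K_\Omega$ in \eqref{K1} along $\partial\Omega$: the weight $\log^-\!\big(d_{x,y}/|x-y|\big)$ is unbounded as $x,y$ approach the boundary, so the kernel is \emph{not} comparable to $|x-y|^{-N-2s}$ near $\partial\Omega$ and the energy $B(\cdot,\cdot)$ is strictly stronger than the $H^s$ seminorm there. Concretely, in the logarithmic estimate of step (a) the extra factor $1+\log^-(d_{x,y}/|x-y|)$ survives inside every integral, and one must show it does not destroy the constants: the key observations are that (i) for a lower bound on $B(v,v)$ one only ever uses $K_\Omega \gtrsim |x-y|^{-N-2s}$, which suffices to run Sobolev; (ii) for upper bounds (the Caccioppoli-type terms) one needs the integrability $\int_{\Omega\cap B_1} \big(1+\log^-(d_{x,y}/|x-y|)\big)|x-y|^{-N-2s}\,dy$ against the relevant cutoff differences, and Proposition \ref{kernel-estimates} gives precisely that $k_\Omega(x,\cdot)$ has only a logarithmic (hence locally integrable, with uniform constants) singularity, so the boundary terms are controlled with constants depending only on the Lipschitz norm. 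Thus the ``logarithmic corrections'' alluded to in the introduction enter as carefully tracked logarithmic factors in the test functions ($\eta$ and the level sets of $\log v$ must be chosen with an eye on the $\log^-$ weight), but ultimately every divergent contribution is either on the favorable side of an inequality or integrable with uniform constants, and the Moser machine closes.
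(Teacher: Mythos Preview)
Your overall architecture is the same as the paper's---log-estimate plus negative-power Moser iteration combined into a weak Harnack, then oscillation decay---but there is one genuine gap that the paper spends real effort on and that your plan glosses over: the \emph{nonlocal tail}. When you apply the measure-to-pointwise estimate to $v=\frac{u-\inf_{D_r}u}{\osc_{D_r}u}$ (or its flip), this function is nonnegative in $D_r$ but can be very negative in $\Omega\setminus D_r$. Testing with $\eta=\varphi^2/(v+\epsilon)$ produces a cross term $\int_{D_r}\eta(x)\int_{\Omega\setminus D_r}(-v(y))K_\Omega(x,y)\,dy\,dx$ with the wrong sign, and this term is not small---it is the whole obstruction to running the local argument verbatim. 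In the interior theory of \cite{K} this is already the delicate point; here it is compounded by the fact that the kernel blows up logarithmically along $\partial\Omega$, so the tail integral itself carries a $\log^- d(y)$ weight.

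The paper handles this not by a single oscillation-decay step but by an iterative construction of sequences $m_n\le u\le M_n$ on $D_{\vartheta^{-n}}$ with $M_n-m_n=K\vartheta^{-n\alpha}$, and at each step it verifies that the rescaled function satisfies a quantitative tail decay $v(x)\ge c[1-(\vartheta|x|)^\gamma]$ outside the current ball (this is exactly the second hypothesis in \eqref{eq:AssBoundBelow1}). A separate technical lemma (Lemma~\ref{Lemma:TechAssumNonlocal1}) shows that under this decay the tail integral $\int_{\Omega\setminus B_r}v(y)K_\Omega(x,y)\,dy$ is nonnegative---and it is precisely here that the $\log^-$ weight must be tracked on both sides and the exponent $\gamma$ chosen small enough to beat it. Without this coupling of the decay assumption to the choice of $\alpha$, your oscillation inequality $\osc_{D_{r/2}}u\le\lambda\,\osc_{D_r}u+\ldots$ simply does not follow from the measure-to-pointwise estimate as you have stated it. Your discussion of the logarithmic correction is otherwise on the right track (the paper absorbs it in the Moser iteration by a H\"older step that costs an $\varepsilon$ in the exponent, cf.\ Corollary~\ref{Cor:BoundInfNegLp1}), but the tail control is the missing structural ingredient.
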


Theorem \ref{Theorem:HolderRegularity1} will be obtained through several auxiliary results. The first step in the proof is the following.

\begin{lem}\label{Lemma:TechAssumNonlocal1}
Let $\Omega \subset \RR^N$ be a domain, $R > 0$ and $x_0 \in \Omega$.
Let $K_\Omega$ be of the form either \eqref{K1} or \eqref{K2}.
Assume that $\partial \Omega \cap B_{2R}(x_0)$ is a Lipschitz graph.

Then for any $c>0$, $\delta_0 \in (0,1/2)$ and $\vartheta > 1$, there exists $\gamma \in (0,2s)$ depending only on the Lipschitz constant of $\partial \Omega \cap B_{2R}(x_0)$, $N$, $s$, $c$, $\delta_0$ and $\vartheta$	such that for any $u \in L^{\infty}(\Omega)$ satisfying
	\begin{equation}\label{eq:AssDecay1}
		\begin{cases}
			u(x) \geq 0 \quad &\text{ for a.e. } x \in D_R(x_0) \\
			u(x) \geq c \left[ 1 - \left( \vartheta \frac{|x-x_0|}{R} \right)^\gamma \right]  \quad &\text{ for a.e. } x \in \Omega \setminus B_R(x_0) \\
			\frac{|\{ u \geq 1 \} \cap D_R(x_0)|}{|D_R(x_0)|} \geq \frac{1}{2},
		\end{cases}
	\end{equation}
	it holds
	\begin{equation}\label{eq:SignIntegralOutside1}
		\int_{\Omega \setminus B_r(x_0)} u(x) K_\Omega(x,y)dx \geq 0 \quad \text{ for a.e. } y \in D_r(x_0),
	\end{equation}
	for all $r < R$ such that
	\begin{equation}\label{eq:AssTechLemOutside1}
		\frac{|\{u \geq 1\} \cap (D_R(x_0) \setminus D_r(x_0))|}{|D_R(x_0)|} \geq \delta_0.
	\end{equation}
\end{lem}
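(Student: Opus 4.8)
The plan is to split the integral in \eqref{eq:SignIntegralOutside1} into two regions according to whether $x\in\Omega\setminus B_R(x_0)$ (the \emph{far} region) or $x\in D_R(x_0)\setminus D_r(x_0)$ (the \emph{intermediate annulus}), and to show that the (possibly negative) contribution of the far region is dominated by the (positive) contribution coming from the set $\{u\geq 1\}$ inside the intermediate annulus, provided $\gamma$ is chosen small enough. More precisely, write for $y\in D_r(x_0)$
\[
\int_{\Omega\setminus B_r(x_0)} u(x)K_\Omega(x,y)\,dx
= \int_{D_R(x_0)\setminus D_r(x_0)} u(x)K_\Omega(x,y)\,dx
+ \int_{\Omega\setminus B_R(x_0)} u(x)K_\Omega(x,y)\,dx.
\]
By the first line of \eqref{eq:AssDecay1}, $u\geq 0$ on $D_R(x_0)$, so the first integral is bounded below by $\int_{\{u\geq1\}\cap(D_R(x_0)\setminus D_r(x_0))} K_\Omega(x,y)\,dx$; using the lower bound $K_\Omega(x,y)\gtrsim |x-y|^{-N-2s}$ valid in both cases \eqref{K1} and \eqref{K2}, together with $|x-y|\leq \diam(\Omega\cap B_{2R})$, and \eqref{eq:AssTechLemOutside1} which gives $|\{u\geq1\}\cap(D_R(x_0)\setminus D_r(x_0))|\geq \delta_0|D_R(x_0)|$, this first integral is bounded below by a strictly positive constant $c_1=c_1(N,s,\delta_0,\Omega)>0$, uniformly in $y\in D_r(x_0)$ and in $r<R$ satisfying \eqref{eq:AssTechLemOutside1}.

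\textbf{Controlling the far region.} For the second integral, use the lower bound in the second line of \eqref{eq:AssDecay1}, namely $u(x)\geq c\bigl[1-(\vartheta|x-x_0|/R)^\gamma\bigr]$ for a.e.\ $x\in\Omega\setminus B_R(x_0)$. When $|x-x_0|\leq R\vartheta^{-1}(\text{something})$ the bracket is positive and contributes a nonnegative amount; the genuinely negative part is supported on $\{|x-x_0|\geq R/\vartheta\}$, where $u(x)\geq -c\,(\vartheta|x-x_0|/R)^\gamma$. Hence, using again $K_\Omega(x,y)\asymp |x-y|^{-N-2s}$ on $\Omega\setminus B_R(x_0)$ (note $|x-y|\asymp |x-x_0|$ there, since $y\in D_r(x_0)\subset B_R(x_0)$ and the logarithmic factor in \eqref{K1} is bounded for $d_{x,y}\leq|x-y|$), the negative part is bounded below by
\[
-C\,c\,\gamma^{0}\!\!\int_{\Omega\setminus B_{R}(x_0)} \Bigl(\frac{\vartheta|x-x_0|}{R}\Bigr)^{\!\gamma} \frac{dx}{|x-x_0|^{N+2s}}
\;\geq\; -C\,c\,\vartheta^\gamma\, R^{-2s}\,\frac{1}{2s-\gamma}\,R^{-\gamma}\cdot R^{\gamma}
\;=\; -\,\frac{C\,c\,\vartheta^\gamma}{2s-\gamma}\,R^{-2s},
\]
after the change of variables $x\mapsto x_0+Rx$ and computing $\int_{|x|\geq1}|x|^{\gamma-N-2s}\,dx = \frac{|\mathbb S^{N-1}|}{2s-\gamma}$. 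Since $\gamma\in(0,2s)$ keeps this finite, and since $R^{-2s}$ is exactly the scaling of $c_1$ after one restores the scaling (both integrals scale like $R^{-2s}$ for general $R$, so one may as well take $R=1$ by scaling invariance of \eqref{eq:SignIntegralOutside1} and all hypotheses), the whole expression is bounded below by $c_1 - \frac{C\,c\,\vartheta^\gamma}{2s-\gamma}$. As $\gamma\downarrow 0$ this tends to $c_1 - \frac{C\,c}{2s}$, which need not be positive; so a single smallness of $\gamma$ is \emph{not} quite enough as written.

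\textbf{Main obstacle and its resolution.} This is the crux: the far region produces a negative term of size $\sim c/(2s)$ that does not disappear as $\gamma\to0$, so one cannot conclude positivity from $\gamma$ alone. The resolution is to exploit the \emph{third} hypothesis in \eqref{eq:AssDecay1} more carefully together with the precise form of the barrier: the positive part of $u$ on $\Omega\setminus B_R(x_0)$, namely on the shell $R\leq|x-x_0|\leq R(\vartheta^{-1}\cdot 1)^{-1/\gamma}$... wait—rather, one should observe that for small $\gamma$ the function $1-(\vartheta t)^\gamma$ is \emph{close to} $1-1=0$ only for $t$ near $\vartheta^{-1}$ but stays close to $1$ on a \emph{huge} range $t\in[1,\vartheta^{-1}e^{c/\gamma}]$ where $(\vartheta t)^\gamma\leq 1+\varepsilon$; so in fact the \emph{positive} contribution $\int c[1-(\vartheta|x-x_0|/R)^\gamma]_+ K_\Omega\,dx$ over the annulus $R\leq|x-x_0|\leq 2R$ (say) is bounded \emph{below} by a constant $\sim c/2s$ independent of $\gamma$, while the negative tail $\int_{|x-x_0|\geq 2R} c(\vartheta|x-x_0|/R)^\gamma K_\Omega\,dx$ can be made $\leq \varepsilon\, c/2s$ by the $\gamma$-smallness. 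So the plan is: (i) by scaling reduce to $R=1$; (ii) bound the $D_R\setminus D_r$ contribution below by $c_1>0$ using \eqref{eq:AssTechLemOutside1} and the kernel lower bound; (iii) split $\Omega\setminus B_1(x_0)$ into $\{1\leq|x-x_0|\leq T\}$ and $\{|x-x_0|>T\}$ with $T=T(c,s)$ large but fixed, estimate the first piece's positive contribution from below and the second piece's negative contribution from above using $(\vartheta|x-x_0|/1)^\gamma\to 1$ uniformly on $[1,T]$ and $\int_{|x|>T}|x|^{\gamma-N-2s}\,dx\to 0$ as $T\to\infty$; (iv) finally pick $T$ and then $\gamma$ so that the total is $\geq c_1/2>0$, which proves \eqref{eq:SignIntegralOutside1}. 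The technical care needed in step (iii)—matching the fixed positive mass of the barrier against the $\gamma$-small negative tail, all uniformly in $y\in D_r(x_0)$ and in admissible $r$—is the main work; everything else is a routine application of the two-sided kernel bounds \eqref{K1}--\eqref{K2}.
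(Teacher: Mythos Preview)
Your overall decomposition---splitting $\Omega\setminus B_r(x_0)$ into the intermediate annulus $D_R\setminus D_r$ and the far region $\Omega\setminus B_R$, and showing that the positive mass on $\{u\geq 1\}\cap(D_R\setminus D_r)$ dominates the negative tail of the barrier---is exactly the right skeleton, and your eventual realization that $|1-(\vartheta|x|)^\gamma|\to 0$ pointwise as $\gamma\to 0$ (so the far contribution vanishes by dominated convergence) is the correct mechanism. For the kernel of type \eqref{K2} your argument, once the confusion is cleaned up, goes through essentially as in the paper.

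There is, however, a genuine gap for the kernel of type \eqref{K1}. You assert that ``the logarithmic factor in \eqref{K1} is bounded for $d_{x,y}\leq|x-y|$''. This is false: in that regime $\log^-(d_{x,y}/|x-y|)=\log(|x-y|/d_{x,y})$, which blows up as $d_{x,y}\to 0$. In particular, for $y\in D_r(x_0)$ with $d(y)$ small and $x\in\Omega\setminus B_R(x_0)$, one has $d_{x,y}\leq d(y)$ and $|x-y|\geq R-r$, so the kernel upper bound gives a factor $1+|\log d(y)|$. Consequently your upper bound for the negative far contribution is not a constant but rather $\delta_\gamma\,(1+|\log d(y)|)$, and this cannot be beaten by a fixed constant $c_1$ as $y$ approaches $\partial\Omega$.

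The paper resolves this by exploiting the \emph{lower} bound in \eqref{K1} on the intermediate annulus as well. Since $|x-y|\leq 2R$ there and $d_{x,y}\leq d(y)$, one gets
\[
\int_{D_R\setminus D_r} u(x)K_\Omega(x,y)\,dx
\;\geq\; \int_{A_r} K_\Omega(x,y)\,dx
\;\geq\; c\,\delta_0\,|D_R|\,(1+|\log d(y)|),
\]
so the positive contribution also carries the factor $1+|\log d(y)|$. The comparison then reads
\[
\int_{\Omega\setminus B_r} u\,K_\Omega(\cdot,y)\,dx
\;\geq\; \bigl(c\,\delta_0|D_R|-\delta_\gamma\bigr)\,(1+|\log d(y)|),
\]
and choosing $\gamma$ small enough so that $\delta_\gamma\leq c\,\delta_0|D_R|$ gives \eqref{eq:SignIntegralOutside1} uniformly in $y$. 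This matching of logarithmic weights on both sides is the point you are missing.

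Two smaller remarks. First, your ``resolution'' paragraph speaks of a positive part of the barrier on $\Omega\setminus B_R$; there is none, since $\vartheta>1$ forces $1-(\vartheta|x-x_0|/R)^\gamma<0$ for all $|x-x_0|\geq R$. The only positive mass comes from the intermediate annulus. Second, to make $\delta_\gamma\to 0$ rigorous in the \eqref{K1} case one must check that the dominating function (roughly $|x|^{-N-2s}(1+|\log d(x)|+|\log d(y)|+\log|x|)$ on $\Omega\setminus B_R$) is integrable in $x$; this uses that $|\log d(x)|$ is locally integrable near $\partial\Omega$ and that the $|\log d(y)|$ term factors out.
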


\begin{proof} Taking $u_R(x) = u(x_0 + Rx)$ instead of $u$, we may assume $R = 1$ and $x_0 = 0$.
	We prove the result for $K$ of the form \eqref{K1}; the case \eqref{K2} is simpler.
	
By the third assumption in \eqref{eq:AssDecay1}, we deduce the existence of $r_0 \in (0,1)$ depending only $\delta_0 >0$, $N$ and the Lipschitz constant of $\partial \Omega$ such that \eqref{eq:AssTechLemOutside1} holds if $r\leq r_0$.

	Let us take $r \leq r_0$ satisfying \eqref{eq:AssTechLemOutside1} and set $A_r := \{u \geq 1\} \cap (D_1 \setminus D_r)$.
	By assumption we have $|A_r| \geq \delta_0 |D_1|$, $u \geq 0$ in $D_1$ and so for a.e. $y \in D_r$, it follows
	\[
	\int_{D_1\setminus D_r} u(x) K_\Omega(x,y)dx \geq \int_{A_r} K_\Omega(x,y)dx \geq c\int_{A_r} \frac{1+\log^-\left(\frac{d_{x,y}}{|x-y|}\right)}{|x-y|^{N+2s}} dx
	\]
with $c>0$, where $d_{x,y} = \min\{d(x),d(y)\}$.
We have to find a suitable lower bound for the above integral.
To do so, we first notice that for any fixed $d > 0$, the function
	\[
	\varrho \to \frac{1 + \log^-\left(d/\varrho\right)}{\varrho^{N+2s}}, \quad \varrho > 0
	\]
	is decreasing and thus, since $|x-y| \leq 2$, we find
	\[
	\int_{A_r} \frac{1+ \log^-\left(\frac{d_{x,y}}{|x-y|}\right)}{|x-y|^{N+2s}} dx \geq 2^{-N-2s} \int_{A_r} 1 + \log^-\left(\frac{d_{x,y}}{2}\right) dx \geq c |A_r| \left( 1  + \log^-\left(\frac{d(y)}{2}\right) \right).
	\]
	Consequently, whenever $d(y) \geq 1$, we have
	\begin{equation}\label{eq:PosLemNewKerEst1}
		\int_{D_1\setminus D_r} u(x) K_\Omega(x,y)dx \geq c |A_r| \geq c\delta_0,
	\end{equation}
	for some $c > 0$ depending only on $N$, $s$ and $\Omega$.
	Conversely, when $0 < d(y) < 1$, we obtain by the inequality above
	\begin{equation}\label{eq:PosLemNewKerEst3}
		\int_{D_1\setminus D_r} u(x) K_\Omega(x,y)dx \geq C\delta_0 |D_1| (1 + |\log d(y)|).
	\end{equation}
	On the other hand, for a.e. $y \in D_r$, it holds
	\[
	\int_{\Omega \setminus B_1} u(x) K_\Omega(x,y)dx \geq - c  \int_{\Omega \setminus B_1} \left| 1 - \left( \vartheta |x| \right)^\gamma \right| K_\Omega(x,y) dx,
	\]
	thanks to the second inequality in \eqref{eq:AssDecay1}. Moreover,
	\begin{equation}\label{eq:PosLemNewKerEst2}
		\begin{aligned}
			\int_{\Omega \setminus B_1} \left| 1 - \left( \vartheta |x| \right)^\gamma \right| K_\Omega(x,y) dx &\leq C \int_{\Omega \setminus B_1} \left| 1 - \left( \vartheta |x| \right)^\gamma \right| \frac{1+\log^-\left(\frac{d_{x,y}}{|x-y|}\right)}{|x-y|^{N+2s}} dx \\
			& = C \int_{\Omega_1} \frac{\left| 1 - \left( \vartheta |x| \right)^\gamma \right|}{|x-y|^{N+2s}} \left| \log\left(\frac{d_{x,y}}{|x-y|}\right)\right| dx  \\
			&\quad + C \int_{\Omega \setminus B_1} \frac{\left| 1 - \left( \vartheta |x| \right)^\gamma \right|}{|x-y|^{N+2s}} dx := I_1(\gamma) + I_2(\gamma),
		\end{aligned}	
	\end{equation}
	where $\Omega_1 := (\Omega \setminus B_1)\cap \{d_{x,y} \leq |x-y|\}$. Notice that
	\[
	\begin{aligned}
	I_1(\gamma) &= \int_{\Omega_1\cap\{d_{x,y} = d(x)\}} \frac{\left| 1 - \left( \vartheta |x| \right)^\gamma \right|}{|x-y|^{N+2s}} \left| \log\left(\frac{d(x)}{|x-y|}\right)\right| dx \\
	&\quad + \int_{\Omega_1\cap\{d_{x,y} = d(y)\}} \frac{\left| 1 - \left( \vartheta |x| \right)^\gamma \right|}{|x-y|^{N+2s}} \left| \log\left(\frac{d(y)}{|x-y|}\right)\right| dx \\
	&\leq \int_{\Omega_1\cap\{d_{x,y} = d(x)\}} \frac{\left| 1 - \left( \vartheta |x| \right)^\gamma \right|}{|x-y|^{N+2s}} \left| \log\left(\frac{d(x)}{|x-y|}\right)\right| dx \\
	&\quad + \int_{\Omega_1\cap\{d_{x,y} = d(y)\}} \frac{\left| 1 - \left( \vartheta |x| \right)^\gamma \right|}{|x-y|^{N+2s}} \log |x-y| dx \\
	&\quad + |\log d(y)| \int_{\Omega_1\cap\{d_{x,y} = d(y)\}} \frac{\left| 1 - \left( \vartheta |x| \right)^\gamma \right|}{|x-y|^{N+2s}} dx.
	\end{aligned}
	\]
	Further, $\left| 1 - \left( \vartheta |x| \right)^\gamma \right| \to 0$ for a.e. $x \in \Omega\setminus B_1$ as $\gamma \to 0^+$. So, since $|\log d(x)|$ is integrable near $\partial\Omega$ and recalling that $|x-y| \geq 1 - r > 0$, we deduce the existence of $\delta_\gamma \to 0^+$ as $\gamma \to 0^+$ such that $I_1(\gamma) \leq \delta_\gamma (1 + |\log d(y)|)$ for all small $\gamma > 0$, by dominated convergence. Similar for $I_2(\gamma)$. Therefore, by \eqref{eq:PosLemNewKerEst1}, \eqref{eq:PosLemNewKerEst3} and \eqref{eq:PosLemNewKerEst2},
	\[
	\begin{aligned}
	\int_{\Omega \setminus B_r} u(x) K_\Omega(x,y)dx &= \int_{D_1 \setminus D_r} u(x) K_\Omega(x,y) dx + \int_{\Omega \setminus B_1} u(x) K_\Omega(x,y) dx \\
	& \geq C \delta_0 |D_1| \left( 1 + |\log d(y)|  \right) - \delta_\gamma (1 + |\log d(y)|) \geq 0,
	\end{aligned}
	\]
	if $\gamma > 0$ is small enough and our statement follows.
\end{proof}

Using the previous lemma, we can now prove the following.

\begin{lem} \label{Lemma:LpLminuspBound1}
Let $\Omega \subset \RR^N$ be a domain, $R > 0$ and $x_0 \in \Omega$.
Let $K_\Omega$ be of the form either \eqref{K1} or \eqref{K2}.
Assume that $\partial \Omega \cap B_{2R}(x_0)$ is a Lipschitz graph, and that $u$ satisfies
	\[
	\begin{cases}
	L_\Omega u \geq 0 \quad &\text{ in } D_{R}(x_0) \\
	u > 0       \quad &\text{ in } D_{R}(x_0),
	\end{cases}
	\]
	with Neumann conditions on $\partial\Omega \cap B_R(x_0)$.
	Assume also that $u$ satisfies \eqref{eq:SignIntegralOutside1} with $r=R$.
	
	Then,
	\[
	\left( \fint_{D_R(x_0)} u(x)^{\beta_0} dx \right)^{1/\beta_0} \leq C \left(\fint_{D_R(x_0)} u(x)^{-\beta_0} dx \right)^{-1/\beta_0},
	\]
for some $\beta_0 \in (0,1)$ and $C > 0$ depending only on the Lipschitz constant of $\partial \Omega \cap B_{2R}(x_0)$, $N$, and $s$.
\end{lem}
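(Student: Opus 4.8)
This is the classical "weak Harnack" mechanism of Moser, adapted to the nonlocal boundary setting: one controls positive and negative powers of $\log u$ on the ball $D_R(x_0)$ and combines a logarithmic estimate with the John--Nirenberg inequality to conclude. The sign condition \eqref{eq:SignIntegralOutside1} with $r=R$ is precisely what is needed to discard the nonlocal tail contributions and reduce everything to a local-looking computation on $D_R(x_0)$. I would normalize $R=1$, $x_0=0$ (the estimate and the hypotheses being scale invariant), and work with the truncated function $\underline u = \max\{u,\varepsilon\}$, using Lemma \ref{Lemma:Subsolutions}(ii) to keep the supersolution property, then let $\varepsilon\downarrow 0$ at the end; since $u>0$ in $D_1$ this only affects the tail region, where the sign condition already gives what we need.

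\textbf{Step 1: the logarithmic energy estimate.} Test the weak supersolution inequality $B(u,\eta)\ge 0$ with $\eta = \varphi^2 \underline u^{-1}$, where $\varphi\in C_0^\infty(B_1)$ is a standard cutoff. Expanding $B(u,\varphi^2\underline u^{-1})$ and using the algebraic inequality
\[
\bigl(u(x)-u(y)\bigr)\Bigl(\tfrac{\varphi^2(x)}{\underline u(x)}-\tfrac{\varphi^2(y)}{\underline u(y)}\Bigr)
\le -\tfrac12\bigl(\varphi(x)\wedge\varphi(y)\bigr)^2\Bigl|\log\tfrac{\underline u(x)}{\underline u(y)}\Bigr|^2 + C\,|\varphi(x)-\varphi(y)|^2,
\]
valid pointwise (this is the usual Moser trick, see e.g. Kassmann \cite{K}), one gets
\[
\int_{D_1}\int_{D_1}\bigl(\varphi(x)\wedge\varphi(y)\bigr)^2\Bigl|\log\tfrac{\underline u(x)}{\underline u(y)}\Bigr|^2 K_\Omega(x,y)\,dx\,dy
\le C\int_{D_1}\int_{D_1}|\varphi(x)-\varphi(y)|^2 K_\Omega(x,y)\,dx\,dy + (\text{tail}).
\]
The tail term is $\le 0$ by \eqref{eq:SignIntegralOutside1}, since $\varphi^2\underline u^{-1}\ge 0$ and $\int_{\Omega\setminus B_1}u(x)K_\Omega(x,y)\,dx\ge 0$ for a.e. $y\in D_1$ (here the truncation is harmless: outside $B_1$ we replaced $u$ by $\underline u\ge u$ on $\{u<\varepsilon\}$, which only increases a nonnegative integral, so one should state the sign condition for $\underline u$ — it follows from the one for $u$ together with nonnegativity). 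The right-hand side is bounded by $C\,[\varphi]^2$, which for \eqref{K1} produces at most a $\log$-loss but is still finite and controlled by a constant depending on $N,s$ and the Lipschitz norm; this is where the logarithmic singularity of $K_\Omega$ must be handled carefully, but only the \emph{finiteness} of $\int\int|\varphi(x)-\varphi(y)|^2K_\Omega\,dx\,dy$ for the specific cutoff is needed, and that is immediate from \eqref{K1}.

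\textbf{Step 2: from the energy estimate to BMO, then to the conclusion.} Set $w = \log \underline u$ and let $\bar w$ be its average over $D_{1/2}(0)$. From Step 1 with $\varphi\equiv 1$ on $B_{1/2}$ and a (fractional) Poincaré inequality on $D_{1/2}(0)$ — using that $K_\Omega(x,y)\gtrsim|x-y|^{-N-2s}$ — one obtains
\[
\fint_{D_{1/2}(0)} |w - \bar w|\,dx \le C.
\]
The same argument applied on arbitrary subballs $D_\rho(z)\subset D_{1/2}(0)$ gives the BMO bound $\|w\|_{\mathrm{BMO}(D_{1/2})}\le C$ (the Poincaré/cutoff constants here are uniform because $\partial\Omega\cap B_1$ is Lipschitz; one covers $D_{1/2}$ by balls either well inside $\Omega$ or centered near $\partial\Omega$, and in both cases the fractional Poincaré inequality on $\Omega$-intersected balls holds with uniform constant). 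Then the John--Nirenberg inequality yields a $\beta_0\in(0,1)$ and $C>0$ with
\[
\fint_{D_{1/2}(0)} e^{\beta_0 w}\,dx \;\cdot\; \fint_{D_{1/2}(0)} e^{-\beta_0 w}\,dx \le C,
\]
i.e. $\bigl(\fint \underline u^{\beta_0}\bigr)^{1/\beta_0}\le C\bigl(\fint \underline u^{-\beta_0}\bigr)^{-1/\beta_0}$. Letting $\varepsilon\downarrow 0$ (monotone convergence for the positive power, and for the negative power $\underline u^{-\beta_0}\le u^{-\beta_0}$ so Fatou gives the right direction) and enlarging the ball from $D_{1/2}$ back to $D_1$ by a standard covering/iteration of the estimate on dyadic subballs — which is where one should be slightly careful that the constants stay uniform, again using the Lipschitz structure — proves the claim on $D_R(x_0)$.

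\textbf{Main obstacle.} The delicate point is \textbf{Step 1}: ensuring that the logarithmic singularity in \eqref{K1} does not spoil the estimate. Concretely, when $d_{x,y}$ is comparable to or smaller than $|x-y|$, the kernel carries an extra factor $1+\log^-(d_{x,y}/|x-y|)$; one must check that $\int\int |\varphi(x)-\varphi(y)|^2 K_\Omega(x,y)\,dx\,dy < \infty$ for Lipschitz $\varphi$ (true, since $\log^-$ integrates against the weight near the diagonal for $s<1$), and, more subtly, that the Poincaré-type step in Step 2 only ever needs the \emph{lower} bound $K_\Omega\gtrsim|x-y|^{-N-2s}$, so the log does not obstruct it. In the companion paper's framework this logarithmic correction is exactly the new ingredient that distinguishes the Neumann boundary analysis from Kassmann's interior argument, and the present lemma isolates it in the simplest possible form — a one-sided $L^{\beta_0}$--$L^{-\beta_0}$ comparison — before the full oscillation decay is run in the proof of Theorem \ref{Theorem:HolderRegularity1}.
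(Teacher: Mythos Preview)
Your overall strategy --- a log-energy estimate via testing with $\varphi^2 u^{-1}$, then BMO for $\log u$, then John--Nirenberg --- is exactly the paper's. But three points in your execution are off, and the third is the important one.

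\emph{(i)} Lemma~\ref{Lemma:Subsolutions}(ii) is about \emph{sub}solutions; you cannot invoke it here. The standard fix is to work with $u+\varepsilon$, which is still a supersolution, and for which \eqref{eq:SignIntegralOutside1} only improves. \emph{(ii)} The tail is not $\le 0$: splitting the cross term as
\[
2\int_{D_1}\varphi^2(y)\!\int_{\Omega\setminus B_1}\!K_\Omega\,dx\,dy
\;-\;2\int_{D_1}\frac{\varphi^2(y)}{u(y)}\!\int_{\Omega\setminus B_1}\!u(x)K_\Omega(x,y)\,dx\,dy,
\]
only the second piece is $\le 0$ by \eqref{eq:SignIntegralOutside1}; the first is positive but bounded by a constant depending only on $\varphi$, $N$, $s$ and the Lipschitz norm. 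This is harmless for the conclusion but your claim is inaccurate.

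\emph{(iii)} The substantive divergence is in how you reach the conclusion on $D_R$ rather than $D_{R/2}$. The paper never uses a cutoff reaching $\partial\Omega$ and never needs to ``enlarge'' at the end. Instead it runs the Kassmann log-energy estimate only on \emph{interior} balls $B_\varrho(z_0)$ with $B_{2\varrho}(z_0)\subset D_1$; on the support of such a cutoff $d_{x,y}\gtrsim\varrho$, so the logarithmic factor in \eqref{K1} is bounded and \cite[Lemma~3.3]{K} applies verbatim, giving $\fint_{B_\varrho}|\log u-[\log u]_{B_\varrho}|\le C$ uniformly over all such balls. By Buckley's characterization \cite[Theorem~0.3]{1999Buckley:art} this already says $\log u\in\mathrm{BMO}(D_1)$ on the Lipschitz domain $D_1$, and the John--Nirenberg inequality on $D_1$ \cite[Theorem~0.4]{1999Buckley:art} yields the $L^{\beta_0}$--$L^{-\beta_0}$ comparison directly on all of $D_1$. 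This simultaneously dissolves the ``main obstacle'' you flag (the log kernel never appears) and removes the need for your covering/enlarging step from $D_{1/2}$ to $D_1$, which as written is the one genuinely unjustified move in your argument.
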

\begin{proof} The proof is basically the same for both classes of kernels,  \eqref{K1} and \eqref{K2}.

By scaling and translation we may assume $R = 1$ and $x_0 = 0$.
Given any arbitrary $z_0 \in D_1$ and $\varrho > 0$ such that $B_{2\varrho}(z_0) \subset D_1$, we take $B_\varrho = B_\varrho(z_0)$. Then, exactly as in \cite[Lemma 3.3]{K} with $r=\varrho$ (here we use the assumption \eqref{eq:SignIntegralOutside1}), we find
	\[
	\int_{B_\varrho \times B_\varrho} \frac{[\log u(x) - \log u(y)]^2}{|x-y|^{N+2s}} dxdy \leq C \varrho^{N-2s},
	\]
	for some constant $C > 0$ depending only on $N$, $s$ and the constants in \eqref{K1}-\eqref{K2} (which depend only on the Lipschitz norm of the domain).
This yields $\log u \in H^s(B_\varrho)$ and thus, by the Poincar\'e inequality,
\[
	\int_{B_\varrho} \big| \log u(x) - [\log u]_{B_\varrho} \big|^2 dx \leq C \varrho^N,
	\]
	for some constant $C$ depending only on $N$, $s$ and the constants in \eqref{K1}-\eqref{K2}, where $[\log u]_{B_\varrho} := \fint_{B_\varrho} \log u$.   By H\"older inequality, it follows that
	\[
	\int_{B_\varrho} \big| \log u(x) - [\log u]_{B_\varrho} \big| dx \leq C \varrho^N,
	\]
	and therefore, thanks to the arbitrariness of $z_0$ and $\varrho > 0$, we deduce that $\log u \in \text{BMO}(D_1)$ (see \cite[Theorem 0.3]{1999Buckley:art}).
	Now, by the John-Nirenberg inequality (see \cite[Theorem 0.3 and Theorem 0.4]{1999Buckley:art}), we deduce the existence of $\beta_0 \in (0,1)$ and $C$, depending only on the Lipschitz constant of $\partial \Omega$, $N$, and $s$, such that
	\[
	\int_{D_1} e^{\beta_0 | \log u(x) - [\log u]_{D_1}|} dx \leq C.
	\]
	Finally, since
	\[
	\begin{aligned}
	& \left( \fint_{D_1} u(x)^{\beta_0} dx \right)^{1/\beta_0} \cdot \left(\fint_{D_1} u(x)^{-\beta_0} dx \right)^{1/\beta_0} \\
	= & \left(\fint_{D_1} e^{\beta_0 \{ \log u(x) - [\log u]_{D_1}\}} dx \right)^{1/\beta_0} \cdot \left( \fint_{D_1} e^{-\beta_0 \{ \log u(x) - [\log u]_{D_1}\}} dx \right)^{1/\beta_0} \leq C,
	\end{aligned}
	\]
	the result follows.
\end{proof}

On the other hand, we next prove a key lemma for the Moser-type iteration.

\begin{lem} \label{lemma-noname}
Let $\Omega \subset \RR^N$ be a domain, $R > 0$, $x_0 \in \Omega$ and $\beta>1$.
Let $K_\Omega$ be of the form either \eqref{K1} or \eqref{K2}.
Assume that $u$ satisfies
	\[
	\begin{cases}
	L_\Omega u \geq 0 \quad &\text{ in } D_R(x_0) \\
	u > 0       \quad &\text{ in } D_R(x_0),
	\end{cases}
	\]
	with Neumann conditions on $\partial\Omega \cap B_R(x_0)$, in the sense of Definition \ref{Def:WeakSolNeumann}.

	Then, there exists a constant $C$ depending only on $N$, $s$, and the Lipschitz constant of $\partial \Omega \cap B_{2R}(x_0)$, such that
	\begin{equation}\label{eq:LocalRevPoin1}
		\begin{aligned}
			\int_{D_r(x_0) \times D_r(x_0)} &\frac{\big[ u(x)^{\frac{1-\beta}{2}} - u(y)^{\frac{1-\beta}{2}} \big]^2}{|x - y|^{N+2s}} dxdy \\
			&\leq \frac{C \beta^2}{(R-r)^{2s}}  \int_{D_R(x_0)} u(x)^{1-\beta} \left( 1 +  \left|\log \frac{d(x)}{R-r}\right| \right) dx,
		\end{aligned}
	\end{equation}
	for all $0 < r < R$.
	In case \eqref{K2}, the same estimate holds without the logarithmic term.
\end{lem}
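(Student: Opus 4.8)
\emph{Overview.} Inequality \eqref{eq:LocalRevPoin1} is a reverse Poincar\'e (Caccioppoli) estimate for the negative power $u^{(1-\beta)/2}$ of a supersolution, and I would prove it by a Moser-type argument: test the weak inequality $B(u,\cdot)\ge 0$ with $\varphi=\tau^2u^{-\beta}$, perform the nonlocal algebra, and then estimate the kernel integral that appears. The new feature compared with \cite{K} is that the logarithmic factor in \eqref{K1} forces the weight $1+\big|\log\frac{d(x)}{R-r}\big|$ into the estimate for that kernel integral.

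\emph{Reductions and test function.} By scaling and translation we may take $R=1$, $x_0=0$: both sides of \eqref{eq:LocalRevPoin1} scale like $\rho^{N-2s}$, since $K_\Omega$ scales like $\rho^{-N-2s}$ and the ratio $d_{x,y}/|x-y|$ in \eqref{K1} is scale invariant. Because $B$ depends only on differences of its arguments, $u+\delta$ is again a supersolution for every $\delta>0$; we prove the estimate for $u+\delta$ and let $\delta\downarrow0$ afterwards, using Fatou on the left and monotone convergence on the right (recall $1-\beta<0$, so $(u+\delta)^{1-\beta}\uparrow u^{1-\beta}$). Thus we may assume $u\ge\delta>0$ on $\overline{D_1}$, so $t\mapsto t^{-\beta}$ is Lipschitz on the range of $u$ there. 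Fix $\tau\in C_c^\infty(B_{(1+r)/2})$ with $0\le\tau\le1$, $\tau\equiv 1$ on $B_r$, $|\nabla\tau|\le C/(1-r)$; then $\varphi:=\tau^2u^{-\beta}\ge0$ is supported in $B_1\cap\Omega$ and lies in $H_K(\Omega)$ (using $u\ge\delta$, the Lipschitz bound on $u^{-\beta}$, $u\in H_{K,loc}(\Omega)$, and that $|x-y|^2K_\Omega$ is integrable near the diagonal because $s<1$), so it is admissible in the sense of Definition \ref{Def:WeakSolNeumann}.

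\emph{Algebraic step.} Testing $B(u,\varphi)\ge0$ and writing $S:=\mathrm{supp}\,\tau$ (the integrand vanishes on $S^c\times S^c$), one gets
\[
-\int_S\!\!\int_S\big(u(x)-u(y)\big)\big(\tau(x)^2u(x)^{-\beta}-\tau(y)^2u(y)^{-\beta}\big)K_\Omega\ \le\ 2\int_S\!\!\int_{S^c}\big(u(x)-u(y)\big)\tau(x)^2u(x)^{-\beta}K_\Omega,
\]
and on the right $u(x)-u(y)\le u(x)$ (since $u\ge0$), so the right side is at most $2\int_S\tau(x)^2u(x)^{1-\beta}\big(\int_{S^c}K_\Omega(x,y)\,dy\big)dx$. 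For the left side I would use the elementary pointwise inequality: for $a,b>0$ and $\theta,\eta\in[0,1]$,
\[
-(a-b)\big(\theta^2a^{-\beta}-\eta^2b^{-\beta}\big)\ \ge\ c(\beta)\big(\theta a^{\frac{1-\beta}{2}}-\eta b^{\frac{1-\beta}{2}}\big)^2-C(\beta)\big(a^{1-\beta}+b^{1-\beta}\big)(\theta-\eta)^2,
\]
which follows from $\big|a^{\frac{1-\beta}{2}}-b^{\frac{1-\beta}{2}}\big|\ge\frac{\beta-1}{2}\,|a-b|\,\max(a,b)^{-\frac{1+\beta}{2}}$ together with a dichotomy according to whether $\max(a,b)^\beta\le2\min(a,b)^\beta$, and whose constants satisfy $C(\beta)/c(\beta)\le C_0\beta^2$. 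Applying it pointwise with $a=u(x)$, $b=u(y)$, $\theta=\tau(x)$, $\eta=\tau(y)$, integrating against $K_\Omega$, using $\tau\equiv1$ on $B_r$ and $K_\Omega(x,y)\gtrsim|x-y|^{-N-2s}$ (true for both \eqref{K1} and \eqref{K2}), and symmetrizing, one is left with
\[
\int_{D_r}\!\!\int_{D_r}\frac{\big[u(x)^{\frac{1-\beta}{2}}-u(y)^{\frac{1-\beta}{2}}\big]^2}{|x-y|^{N+2s}}\,dx\,dy\ \le\ C\beta^2\int_{D_1}u(x)^{1-\beta}\Big(\int_\Omega(\tau(x)-\tau(y))^2K_\Omega(x,y)\,dy\Big)dx,
\]
the second tail $\int_{S^c}K_\Omega(x,y)\,dy$ being absorbed into this after the same kind of estimate as below.

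\emph{Kernel estimate and conclusion.} The remaining point — and the delicate one — is to show that for every $x\in D_1$,
\[
\int_\Omega(\tau(x)-\tau(y))^2K_\Omega(x,y)\,dy\ \le\ \frac{C}{(1-r)^{2s}}\Big(1+\big|\log\tfrac{d(x)}{1-r}\big|\Big).
\]
Splitting at $|x-y|=1-r$ and using $|\tau(x)-\tau(y)|\le\min\{1,\,C|x-y|/(1-r)\}$ (here $|\nabla\tau|\le C/(1-r)$ enters), one reduces, via \eqref{K1}, to bounding
\[
\frac{1}{(1-r)^2}\int_{\{|x-y|<1-r\}}\frac{1+\log^-\!\big(\tfrac{d_{x,y}}{|x-y|}\big)}{|x-y|^{N+2s-2}}\,dy\qquad\text{and}\qquad\int_{\{|x-y|\ge1-r\}}\frac{1+\log^-\!\big(\tfrac{d_{x,y}}{|x-y|}\big)}{|x-y|^{N+2s}}\,dy,
\]
with $d_{x,y}=\min\{d(x),d(y)\}$. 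Rescaling by $1-r$, these are handled by the same type of computation as in the proof of Proposition \ref{kernel-estimates} — integrating the radially decreasing profiles and accounting for the contribution of points $y$ near $\partial\Omega$ — and yield $C(1-r)^{-2s}\big(1+\big|\log\tfrac{d(x)}{1-r}\big|\big)$; in case \eqref{K2} the logarithm is absent and one simply gets $C(1-r)^{-2s}$. Inserting this into the last display of the previous paragraph, and doing the analogous (and simpler, without $\beta^2$) estimate for the $\int_{S^c}K_\Omega\,dy$ term, gives \eqref{eq:LocalRevPoin1} for $u+\delta$; letting $\delta\downarrow0$ completes the proof. The main obstacle is precisely this logarithmic kernel computation, together with the careful bookkeeping of the nonlocal tails against the cutoff; once these are settled the rest is the standard Moser machinery.
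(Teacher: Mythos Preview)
Your approach is essentially the paper's: test the weak supersolution inequality with a negative power of $u$ times a cutoff, apply a pointwise algebraic lemma to extract the Caccioppoli term, and then bound $\int_\Omega(\tau(x)-\tau(y))^2K_\Omega(x,y)\,dy$ by $C(R-r)^{-2s}\big(1+|\log\frac{d(x)}{R-r}|\big)$. The paper normalizes to $R-r=1$ and tests with $\varphi^{1+\beta}u^{-\beta}$, citing \cite[Lemma~2.5]{K} for the algebraic step; you normalize to $R=1$ and test with $\tau^2u^{-\beta}$, sketching your own algebraic inequality. These are standard variants and both lead to the same kernel computation.

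There is, however, a genuine gap in your handling of the cross term. When you split $B(u,\varphi)$ into $S\times S$ and $S\times S^c$ and bound the latter via ``$u(x)-u(y)\le u(x)$ (since $u\ge0$)'', you are using $u(y)\ge 0$ for $y\in S^c\cap\Omega$. But $S^c\supset\Omega\setminus B_R$, and the hypothesis is only $u>0$ in $D_R(x_0)$; outside $D_R$ the function $u$ may well be negative --- and in the eventual application to the weak Harnack inequality (Theorem~\ref{Theorem:WeakHarnack01}, via the auxiliary function $w=1-v$) it \emph{is} negative. So this bound on the tail is not available as stated.

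The paper sidesteps this precisely through the choice of test function $\varphi^{1+\beta}u^{-\beta}$ and by applying Kassmann's algebraic lemma over all of $\Omega\times\Omega$ without splitting off the cross region. After symmetrization the right-hand side becomes $2\int_\Omega\int_\Omega\varphi(x)^{\beta-1}[\varphi(x)-\varphi(y)]^2u(x)^{1-\beta}K_\Omega\,dx\,dy$, and the factor $\varphi(x)^{\beta-1}$ (with $\beta>1$) vanishes wherever $\varphi=0$, so only values of $u$ on $\mathrm{supp}\,\varphi\subset D_R$ --- where positivity is assumed --- enter the estimate. Your $\tau^2$ test function does not produce this automatic localization, which is why you were forced to split and invoke global nonnegativity. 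The fix is either to switch to the $\varphi^{1+\beta}$ test function and the corresponding algebraic lemma, or to carry a genuine nonlocal tail term (as in \cite{2016DiCastroEtAl:art}) through the argument; simply asserting $u\ge0$ outside $D_R$ is not justified.
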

\begin{proof} Since the kernels and \eqref{eq:LocalRevPoin1} are scale-invariant, after a rescaling we may assume that $R-r = 1$.
We take a smooth cut-off function $0 \leq \varphi \leq 1$ satisfying
	\[
	\varphi = 1 \quad \text{in } \overline{B}_r, \quad \supp (\varphi) \subset B_R, \quad \sup |\nabla \varphi| \leq c.
	\]
	Testing $L_\Omega u \geq 0$ in $D_{R}$ with $\eta := \varphi^{1 + \beta}u^{-\beta}$ (notice that $\eta$ is an admissible test since $u > 0$ in $D_R$ and $\varphi = 0$ in $\Omega \setminus B_R$), it follows that
	\[
	\int_{\Omega}\int_\Omega  [u(x) - u(y)][\varphi^{1 + \beta}(x)u^{-\beta}(x) - \varphi^{1 + \beta}(y)u^{-\beta}(y)] K_\Omega(x,y)dxdy \geq 0.
	\]
In particular, for any $\varepsilon > 0$,
	\[
	\begin{aligned}
	\int\int_{\substack{ \Omega \times \Omega  \\ |x-y| > \varepsilon}} & [u(y) - u(x)][\varphi^{1 + \beta}(x)u^{-\beta}(x) - \varphi^{1 + \beta}(y)u^{-\beta}(y)] K_\Omega(x,y)dxdy \\
	\leq & - \int_{\substack{ \Omega \times \Omega  \\ |x-y| \leq \varepsilon}}  [u(y) - u(x)][\varphi^{1 + \beta}(x)u^{-\beta}(x) - \varphi^{1 + \beta}(y)u^{-\beta}(y)] K_\Omega(x,y)dxdy.
	\end{aligned}
	\]
	
	Now, we apply \cite[Lemma 2.5]{K} with $a = u(x)$, $b = u(y)$, $\tau_1 = \varphi(x)$, $\tau_2 = \varphi(y)$ and $p = \beta$, integrate on $(\Omega \times \Omega) \cap \{|x-y| > \varepsilon\}$ and use the above inequality to obtain
	\[
	\begin{aligned}
	\int\int_{\substack{ \Omega \times \Omega \\ |x-y| > \varepsilon}} & \varphi(x) \varphi(y) \left[ \left(\frac{u(x)}{\varphi(x)}\right)^{\frac{1-\beta}{2}} - \left(\frac{u(y)}{\varphi(y)}\right)^{\frac{1-\beta}{2}}\right]^2 K_\Omega(x,y) dxdy \\
	\leq & \, c_{\beta} \int_{\substack{ \Omega \times \Omega  \\ |x-y| > \varepsilon}}  [\varphi(x) - \varphi(y)]^2 \left[ \left(\frac{u(x)}{\varphi(x)}\right)^{1-\beta} + \left(\frac{u(y)}{\varphi(y)}\right)^{1-\beta}\right] K_\Omega(x,y) dxdy \\
	& - (\beta - 1)\int_{\substack{ \Omega \times \Omega  \\ |x-y| \leq \varepsilon}}  [u(y) - u(x)][\varphi^{1 + \beta}(x)u^{-\beta}(x) - \varphi^{1 + \beta}(y)u^{-\beta}(y)] K_\Omega(x,y)dxdy,
	\end{aligned}
	\]
	where $c_\beta := \max\{\frac{\beta-1}{2},\frac{6(\beta-1)^2}{16}\} \leq \beta^2$, since $\beta > 1$. Since $\eta = \varphi^{1+\beta} u^{-\beta} \in H_K(D_R)$, the last term converges to zero when we pass to limit as $\varepsilon \to 0$.
	Thus, we deduce
	\[
	\begin{aligned}
	\int_\Omega\int_{ \Omega } & \varphi(x) \varphi(y) \left[ \left(\frac{u(x)}{\varphi(x)}\right)^{\frac{1-\beta}{2}} - \left(\frac{u(y)}{\varphi(y)}\right)^{\frac{1-\beta}{2}}\right]^2 K_\Omega(x,y) dxdy \\
	\leq & \, \beta^2 \int_{ \Omega}\int_\Omega  [\varphi(x) - \varphi(y)]^2 \left[ \left(\frac{u(x)}{\varphi(x)}\right)^{1-\beta} + \left(\frac{u(y)}{\varphi(y)}\right)^{1-\beta}\right] K_\Omega(x,y) dxdy.
	\end{aligned}
	\]
	Now, using that $\varphi\equiv1$ in $D_r$, we bound from below the left hand side as
	\[
	\begin{aligned}
	\int_{ \Omega}\int_\Omega  &\varphi(x) \varphi(y) \left[ \left(\frac{u(x)}{\varphi(x)}\right)^{\frac{1-\beta}{2}} - \left(\frac{u(y)}{\varphi(y)}\right)^{\frac{1-\beta}{2}}\right]^2 K_\Omega(x,y) dxdy \\
	\geq & \int_{D_r}\int_{D_r}  \varphi(x) \varphi(y) \left[ \left(\frac{u(x)}{\varphi(x)}\right)^{\frac{1-\beta}{2}} - \left(\frac{u(y)}{\varphi(y)}\right)^{\frac{1-\beta}{2}}\right]^2 K_\Omega(x,y) dxdy \\
	\geq & \, c \int_{D_r}\int_{D_r} \frac{\big[ u(x)^{\frac{1-\beta}{2}} - u(y)^{\frac{1-\beta}{2}} \big]^2}{|x - y|^{N+2s}} dxdy,
	\end{aligned}
	\]
	where $c > 0$ depends only on $N$, $s$ and the Lipschitz constant of $\partial \Omega \cap B_{2R}(x_0)$. Here we have used \eqref{new-kernel} and that $k_\Omega \geq 0$.

	On the other hand, by symmetry, we have
	\[
	\begin{aligned}
	\int_{ \Omega}\int_\Omega  &[\varphi(x) - \varphi(y)]^2 \left[ \left(\frac{u(x)}{\varphi(x)}\right)^{1-\beta} + \left(\frac{u(y)}{\varphi(y)}\right)^{1-\beta}\right] K_\Omega(x,y) dxdy \\
	= & \, 2 \int_{ \Omega}\int_\Omega  \varphi(x)^{\beta-1} [\varphi(x) - \varphi(y)]^2  u(x)^{1-\beta} K_\Omega(x,y)  dxdy \\
	\leq & \, 2 \int_{D_R} u(x)^{1-\beta} \int_{\Omega} [\varphi(x) - \varphi(y)]^2 K_\Omega(x,y) dydx.
	\end{aligned}
	\]

Therefore, we have proved that
\[
\begin{aligned}
\int_{D_r}\int_{D_r} \frac{\big[ u(x)^{\frac{1-\beta}{2}} - u(y)^{\frac{1-\beta}{2}} \big]^2}{|x - y|^{N+2s}} dxdy \leq C \beta^2  \int_{D_R} u(x)^{1-\beta} \int_\Omega [\varphi(x) - \varphi(y)]^2 K_\Omega(x,y)dy dx.
\end{aligned}
\]
To finish the proof, we have to estimate the integral
	\[
	\begin{aligned}
	\int_{\Omega} [\varphi(x) - \varphi(y)]^2 K_\Omega(x,y) dy &= 	\int_{D_1(x)} [\varphi(x) - \varphi(y)]^2 K_\Omega(x,y) dy \\
& \quad + \int_{\Omega \setminus B_1(x)} [\varphi(x) - \varphi(y)]^2 K_\Omega(x,y) dy
:= J_1 + J_2,
	\end{aligned}
	\]
	where $x \in D_R$ is fixed and $d := d(x) < 1$. In view of \eqref{new-kernel} and \eqref{new-kernel2-estimates}, have
	\begin{equation}\label{eq:Est1J1Neumann2}
		\begin{aligned}
			J_1 &\leq C \int_{D_{1}(x)} \frac{|\log|x-y|| + |\log d(x)| + |\log d(y)| }{|x-y|^{N+2s-2}} dy \\
&\leq C (1+|\log d(x)|) + C\int_{D_{1}(x)\cap \{d(x)/2\leq d(y)\leq 2\}} \frac{|\log d(y)| }{|x-y|^{N+2s-2}} dy\\
& \ \ \ \ \ \ + C\int_{D_{1}(x)\cap \{d(y)\leq d(x)/2\}} \frac{|\log d(y)| }{|x-y|^{N+2s-2}} dy \\
&= C (1+|\log d(x)|) + I_1+ I_2.
		\end{aligned}
	\end{equation}
	Now, taking into account that $|\log d(y)|\leq C (1+|\log d(x)|)$ when $d(x)/2\leq d(y)\leq 2$ we obtain that $I_1 \leq C (1+|\log d(x)|)$. Next, in order to estimate $I_2$ it is enough to consider the case in which $D_1$ is flat since any other Lipschitz domain can be transform through a bi-Lipschitz transformation. In that case,
	\begin{equation}\label{eq:Est2J1Neumann2}
	\begin{aligned}
	I_2 &= C\int_{D_{1}(x)\cap \{0\leq y_N\leq x_N/2\}} \frac{|\log y_N| }{|x-y|^{N+2s-2}} dy \\
	&\leq - C\int_{x_N/2}^{x_N} \log(x_N-y_N) \left(\int_{B_1\subset \RR^{N-1}} (y_N^2+|z|^2)^{\frac{-N-2s+2}{2}} dz \right) dy_N \\
		&\leq - C\int_{x_N/2}^{x_N} \log(x_N-y_N) (1+y_N^{1-2s}) dy_N \\
		&\leq C (1+|\log x_N|) = C (1+|\log d(x)|).
	\end{aligned}
	\end{equation}
	Here, we have used the following estimate
	\begin{equation*}
	\begin{aligned}
	\int_{B_1\subset \RR^{N-1}} &(y_N^2+|z|^2)^{\frac{-N-2s+2}{2}} dz \leq C\, y_N^{1-2s} \int_0^{1/y_N} \frac{r^{N-2}}{(1+r^2)^\frac{N+2s-2}{2}} dr \\
	&\leq C\,y_N^{1-2s} \left(\int_0^{1/2} r^{N-2} dr + \int_{1/2}^{1/y_N} r^{-2s} dr \right) \\
	&\leq C\,(1+y_N^{1-2s}).
	\end{aligned}
	\end{equation*}
	
	Putting together \eqref{eq:Est1J1Neumann2} and \eqref{eq:Est2J1Neumann2}, we find
	\[
	J_1 \leq C (1 + |\log d(x)|),
	\]
	for some constant $C > 0$ depending on $N$, $s$ and the Lipschitz constant of $\Omega$.

	To estimate $J_2$, we notice that
	\[
	J_2 \leq 2\int_{\Omega \setminus B_1(x)} K_\Omega(x,y) dy \leq C\int_{\Omega \setminus B_1(x)} \frac{1+\log^-\left(\frac{d_{x,y}}{|x-y|}\right)}{|x-y|^{N+2s}} dy,
	\]
	for some universal $C > 0$ and that the kernel is singular only near $\partial\Omega$, due to the fact that $|x-y| \geq 1$. Moreover, $y \to |\log d(y)|d(y)^{-N-2s}$ is integrable for $|y|$ large and thus repeating the arguments which have led to \eqref{eq:Est1J1Neumann2} and \eqref{eq:Est2J1Neumann2}, we find
	\[
	J_2 \leq C (1 + |\log d(x)|),
	\]
	for some $C >0$ depending on $N$, $s$ and the Lipschitz constant of $\Omega$, as wanted.
\end{proof}

Using the previous lemma, and a Moser-type iteration, we deduce the following.

\begin{cor}\label{Cor:BoundInfNegLp1}
Let $\Omega \subset \RR^N$ be a domain, $R > 0$, $x_0 \in \Omega$ and $\beta>1$. Moreover, assume that $\partial \Omega \cap B_R(x_0)$ is a Lipschitz graph.
Let $K_\Omega$ be of the form either \eqref{K1} or \eqref{K2}.
Let $u$ satisfy
	\[
	\begin{cases}
	L_\Omega u \geq 0 \quad &\text{ in } D_{R}(x_0) \\
	u > 0       \quad &\text{ in } D_{R}(x_0),
	\end{cases}
	\]
	with Neumann conditions on $\partial\Omega \cap B_{R}(x_0)$ in the sense of Definition \ref{Def:WeakSolNeumann}.
	
	Then, there exists a constant $C > 0$ depending only on the Lipschitz constant of $\partial \Omega$, $N$, $s$, and $\beta > 0$, such that
	\begin{equation}\label{eq:BoundBelowLocal1}
		\einf_{x \in D_{R/2}(x_0)} u(x) \geq C \left( \fint_{D_R(x_0)} u(x)^{-\beta} dx \right)^{-1/\beta}.
	\end{equation}
\end{cor}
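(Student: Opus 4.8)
The plan is to run a Moser-type iteration built on the reverse Poincar\'e inequality of Lemma~\ref{lemma-noname}, in the spirit of the interior argument in \cite{K}, the new ingredient being the careful treatment of the logarithmic weight on the right-hand side of \eqref{eq:LocalRevPoin1}. First I would reduce by scaling and translation to $R=1$, $x_0=0$, writing $D_r:=D_r(0)$, and dispose of the trivial case $\int_{D_1}u^{-\beta}\,dx=+\infty$ (in which the right-hand side of \eqref{eq:BoundBelowLocal1} is zero). I would set $\gamma:=2_s^\ast/2=N/(N-2s)>1$, fix an exponent $\sigma\in(1,\gamma)$ (say $\sigma=\sqrt{\gamma}$), put $\lambda:=\gamma/\sigma>1$, and choose nested radii $\tfrac12<\dots<R_{i+1}<R_i<\dots<R_0=1$ with $R_i\downarrow\tfrac12$ and $R_i-R_{i+1}\asymp 2^{-i}$, exponents $p_i:=\beta\lambda^i\uparrow+\infty$, and $\Phi_i:=\bigl(\int_{D_{R_i}}u^{-p_i}\,dx\bigr)^{1/p_i}$.

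The core of the argument is the step from $\Phi_i$ to $\Phi_{i+1}$. I would apply Lemma~\ref{lemma-noname} with parameter $\beta_{\mathrm{lem}}:=1+p_i/\sigma>1$ on the pair $R_{i+1}<R_i$: with $g:=u^{-p_i/(2\sigma)}=u^{(1-\beta_{\mathrm{lem}})/2}$, estimate \eqref{eq:LocalRevPoin1} bounds the Gagliardo seminorm $[g]_{H^s(D_{R_{i+1}})}^2$ (and, since $R_i-R_{i+1}<1$, also $\|g\|_{L^2(D_{R_{i+1}})}^2$) by $\tfrac{C\beta_{\mathrm{lem}}^2}{(R_i-R_{i+1})^{2s}}\int_{D_{R_i}}u^{-p_i/\sigma}\bigl(1+\bigl|\log\tfrac{d(x)}{R_i-R_{i+1}}\bigr|\bigr)\,dx$. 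The fractional Sobolev inequality on the Lipschitz domain $D_{R_{i+1}}$ (constant depending only on $N$, $s$ and the Lipschitz norm of $\partial\Omega\cap B_1$) then puts $\|g\|_{L^{2_s^\ast}(D_{R_{i+1}})}^2=\bigl(\int_{D_{R_{i+1}}}u^{-p_{i+1}}\,dx\bigr)^{1/\gamma}$ on the left, since $p_i\gamma/\sigma=p_{i+1}$. On the right I would absorb the logarithmic weight by H\"older's inequality with exponents $\sigma,\sigma'$: using $\bigl|\log\tfrac{d}{R_i-R_{i+1}}\bigr|\le|\log d|+C(1+i)$ and $|\log d|^{\sigma'}\in L^1(D_1)$ (valid because $\partial\Omega\cap B_1$ is Lipschitz), the integral is at most $C(1+i)\bigl(\int_{D_{R_i}}u^{-p_i}\,dx\bigr)^{1/\sigma}$. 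Combining these facts and raising to the power $\gamma/p_{i+1}=\sigma/p_i$ should yield
\[
\Phi_{i+1}\le C_i^{\sigma/p_i}\,\Phi_i,\qquad C_i\le C\,(1+i)\,\mu^i,
\]
with $\mu=\mu(N,s)>1$ (the factor $\mu^i$ accounting for $(R_i-R_{i+1})^{-2s}\asymp 2^{2si}$ and $\beta_{\mathrm{lem}}^2\le C(\beta)\lambda^{2i}$). I would also note that the integrability $\int_{D_{R_i}}u^{-p_i}<+\infty$ propagates automatically along the iteration — at each step it follows from $\Phi_i<+\infty$ together with the H\"older bound just used — so no lower truncation of $u$ is required.

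Since $p_i=\beta\lambda^i$ and $\log C_i\le C(1+i)$, the series $\sum_{i\ge0}\tfrac{\sigma}{p_i}\log C_i\le C\sum_{i\ge0}(1+i)\lambda^{-i}$ converges, so $\Phi_i\le C'\Phi_0$ for all $i$, with $C'$ depending only on $N$, $s$, $\beta$ and the Lipschitz norm of $\partial\Omega\cap B_1$. Letting $i\to+\infty$ and using $D_{1/2}\subset D_{R_i}$,
\[
\bigl(\einf_{D_{1/2}}u\bigr)^{-1}=\|u^{-1}\|_{L^\infty(D_{1/2})}=\lim_{i\to\infty}\Bigl(\int_{D_{1/2}}u^{-p_i}\,dx\Bigr)^{1/p_i}\le C'\,\Phi_0=C'\Bigl(\int_{D_1}u^{-\beta}\,dx\Bigr)^{1/\beta},
\]
which is \eqref{eq:BoundBelowLocal1} after absorbing $|D_1|$ into the constant and rescaling back.

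I expect the main obstacle to be precisely the logarithmic weight $1+|\log(d/(R-r))|$ in \eqref{eq:LocalRevPoin1}, which is absent in the interior theory of \cite{K}. The scheme above absorbs it at the price of the small H\"older loss $\sigma\in(1,\gamma)$; this is affordable because the weight lies in $L^{\sigma'}(D_1)$ for every finite $\sigma'$, so that the exponent-growth factor $\lambda=\gamma/\sigma$ can still be chosen strictly bigger than $1$, while the explicit $R-r$ dependence inside the logarithm only costs the harmless polynomial factor $(1+i)$ per step, which does not spoil the convergence of $\prod_i C_i^{\sigma/p_i}$. In the regional case \eqref{K2} the logarithm is not present (Lemma~\ref{lemma-noname} then holds without it), and one may simply take $\sigma=1$, recovering the classical Moser scheme verbatim.
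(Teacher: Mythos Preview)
Your proposal is correct and follows essentially the same Moser-type iteration as the paper: apply Lemma~\ref{lemma-noname} on nested radii, use the fractional Sobolev inequality on the left, and absorb the logarithmic weight on the right by a H\"older inequality with a small exponent loss so that the effective amplification factor $\lambda=\gamma/\sigma$ stays strictly above $1$. The only cosmetic difference is that the paper bounds the log constant via $|\log t|\le C_\alpha t^{-\alpha}$ (writing $C_k\le C(r_k-r_{k+1})^{-\alpha}$) and then chooses $r_k-r_{k+1}$ accordingly, whereas you bound it directly by $C(1+i)$ using $R_i-R_{i+1}\asymp 2^{-i}$; both lead to the same convergent product $\prod_i C_i^{\sigma/p_i}$.
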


\begin{proof}  By scaling, we can assume $x_0 = 0$ and $R =1$.
	
	Let $\{r_k\}_{k \in \NN}$ be a decreasing sequence satisfying $r_0 = 1$ and $r_k \to 1/2$ as $k \to +\infty$. For a given $\beta > 1$, we apply the Sobolev inequality to \eqref{eq:LocalRevPoin1} to obtain
	\[
	\left( \int_{D_{r_{k+1}}} u(x)^{(1-\beta)\gamma} dx \right)^{1/\gamma} \leq \frac{C \beta^2}{(r_k-r_{k+1})^{2s}}  \int_{D_{r_k}} u(x)^{1-\beta} \left( 1 +  \left|\log \frac{d(x)}{r_k-r_{k+1}}\right| \right) dx,
	\]
	where $\gamma := 2_s^{\ast}/2 > 1$ and where $C$ depends only on the Lipschitz constant of $\partial \Omega$, $N$, and $s$.
	
	Let $\varepsilon \in (0,\gamma-1)$ and apply H\"older inequality to the right hand side:
	\[
	\begin{aligned}
	\int_{D_{r_k}} u(x)^{1-\beta} \left( 1 +  \left|\log \frac{d(x)}{r_k-r_{k+1}}\right| \right) dx &\leq \left( \int_{D_{r_k}} u(x)^{(1-\beta)(1+\varepsilon)} dx \right)^{\frac{1}{1+\varepsilon}} \\
	&\quad \times \left( \int_{D_{r_k}} \left( 1 +  \left|\log \frac{d(x)}{r_k-r_{k+1}}\right| \right)^{\frac{1+\varepsilon}{\varepsilon}} dx \right)^{\frac{\varepsilon}{1+\varepsilon}} \\
	&= C_k \left( \int_{D_{r_k}} u(x)^{(1-\beta)(1+\varepsilon)} dx \right)^{\frac{1}{1+\varepsilon}},
	\end{aligned}
	\]
	where
	\[
	C_k := \left( \int_{D_{r_k}} \left( 1 +  \left|\log \frac{d(x)}{r_k-r_{k+1}}\right| \right)^{\frac{1+\varepsilon}{\varepsilon}} dx \right)^{\frac{\varepsilon}{1+\varepsilon}}.
	\]
	Notice that, since $r_k-r_{k+1} \to 0$ and $r_k \to 1/2$, we have
	\[
	\begin{aligned}
	C_k &\leq C \left( \int_{D_{1/2}} \left( 1 +  \left|\log \frac{d(x)}{r_k-r_{k+1}}\right| \right)^{\frac{1+\varepsilon}{\varepsilon}} dx \right)^{\frac{\varepsilon}{1+\varepsilon}} \\
	&\leq C \left[ \left( \int_{D_{1/2}} \left( 1 +  \left|\log d(x)\right| \right)^{\frac{1+\varepsilon}{\varepsilon}} dx \right)^{\frac{\varepsilon}{1+\varepsilon}} + |\log (r_k-r_{k+1})|  \right] \leq C |\log (r_k-r_{k+1})|,
	\end{aligned}
	\]
	for some $C$. Further, for any fixed $\alpha \in (0,1)$,
	\[
	|\log (r_k-r_{k+1})| \leq C_\alpha (r_k-r_{k+1})^{-\alpha},
	\]
	for some $C_\alpha$, and so
	\begin{equation}\label{eq:EstCKMoserLocal1}
		C_k \leq C_\alpha (r_k-r_{k+1})^{-\alpha},
	\end{equation}
	for some $C_\alpha$. Now, changing $1-\beta \to - \beta$, we easily deduce
	\[
	\left( \int_{D_{r_{k+1}}} u(x)^{-\beta \gamma} dx \right)^{-\frac{1}{\beta\gamma}} \geq \left[ \frac{(r_k-r_{k+1})^{2s}}{C C_k (1+\beta)^2} \right]^{\frac{1}{\beta}} \left( \int_{D_{r_k}} u(x)^{-\beta(1+\varepsilon)} dx \right)^{-\frac{1}{\beta(1+\varepsilon)}}.
	\]
	Further, setting $v := u^{1+\varepsilon}$, $\sigma := \frac{\gamma}{1+\varepsilon} > 1$, and using \eqref{eq:EstCKMoserLocal1}, it follows
	\begin{equation}\label{eq:ReversedMoserLocal1}
		\left( \int_{D_{r_{k+1}}} v(x)^{-\beta \sigma} dx \right)^{-\frac{1}{\beta\sigma}} \geq \left[ \frac{(r_k-r_{k+1})^{2s+\alpha}}{C (1+\beta)^2} \right]^{\frac{1+\varepsilon}{\beta}} \left( \int_{D_{r_k}} v(x)^{-\beta} dx \right)^{-\frac{1}{\beta}},
	\end{equation}
	for some $C$. Thus, given $\beta_0 > 0$, we define $\beta_k := \beta_0\sigma^k$, $k \geq 1$.
	Iterating \eqref{eq:ReversedMoserLocal1} with $\beta = \beta_0$, we obtain
	\begin{equation}\label{eq:MoserIterScheme1}
		\begin{aligned}
			\|v\|_{L^{-\beta_k}(D_{r_k})} &\geq \prod_{j=0}^{k-1} \left[ \frac{(r_j-r_{j+1})^{2s+\alpha}}{C (1+\beta_j)^2} \right]^{\frac{1}{\beta_j}} \|v\|_{L^{-\beta_0}(D_{r_0})} \\
			& = \prod_{j=0}^{k-1} \left[\frac{(r_j-r_{j+1})^{2s+\alpha}}{ C (1+\beta_0\sigma^j)^2} \right]^{\frac{1}{\beta_0\sigma^j}} \|v\|_{L^{-\beta_0}(D_{r_0})},
		\end{aligned}
	\end{equation}
	up to changing the constant $C > 0$, independently of $k \in \mathbb{N}$. Now, we notice that
	\[
	\prod_{j=0}^{k-1} \left[ \frac{(r_j-r_{j+1})^{2s+\alpha}}{ C (1+\beta_0\sigma^j)^2} \right]^{\frac{1}{\beta_0\sigma^j}} = \exp \left\{ \frac{1}{\beta_0} \sum_{j = 0}^{k-1} \frac{1}{\sigma^j} \log \left[ \frac{(r_j-r_{j+1})^{2s+\alpha}}{ C (1+\beta_0\sigma^j)^2} \right] \right\},
	\]
	for all $k \geq 1$, and so, choosing $r_j$ such that $(r_j - r_{j+1})^{2s+\alpha} = C\beta_0^2 \sigma^{-2j}$ for $j \in \NN$ large enough, we obtain
	\[
	\sum_{j = 0}^\infty \frac{1}{\sigma^j} \log \left[ \frac{(r_j-r_{j+1})^{2s+\alpha}}{C (1+\beta_0\sigma^j)^2} \right] \geq  - C\sum_{j = 0}^\infty \frac{j}{\sigma^j} > -\infty.
	\]
	Consequently, we can pass to the limit in \eqref{eq:MoserIterScheme1} and deduce \eqref{eq:BoundBelowLocal1}, thanks to the fact that $\|v\|_{L^{-\beta_k}(D_{r_k})}  \to \einf_{x \in D_{R/2}(x_0)} v(x)$ as $k \to +\infty$ and $v = u^{1+\varepsilon}$.
\end{proof}

Combining Lemma \ref{Lemma:LpLminuspBound1} and Corollary \ref{Cor:BoundInfNegLp1}, we finally deduce the following.

\begin{thm}\label{Theorem:WeakHarnack01}
Let $\Omega \subset \RR^N$ be a domain, $R > 0$, $x_0 \in \Omega$ and $\beta>1$. Assume that $\partial \Omega \cap B_{3R}(x_0)$ is a Lipschitz graph.
Let $K_\Omega$ be of the form either \eqref{K1} or \eqref{K2}.
Assume that $u$ satisfies
	\[
	\begin{cases}
	L_\Omega u \geq 0 \quad &\text{ in } D_{2R}(x_0) \\
	u > 0       \quad &\text{ in } D_{2R}(x_0),
	\end{cases}
	\]
	with Neumann conditions on $\partial\Omega \cap B_{2R}(x_0)$ in the sense of Definition \ref{Def:WeakSolNeumann}.
	
	Then for any $c>0$ and $\vartheta > 1$, there exist $\kappa > 0$ and $\gamma \in (0,2s)$  depending only on the Lipschitz constant of $\partial \Omega$, $N$, $s$, $c$ and $\vartheta$, such that if
	\begin{equation}\label{eq:AssBoundBelow1}
		\begin{cases}
			u(x) \geq c \left[ 1 - \left( \vartheta \frac{|x-x_0|}{R} \right)^\gamma \right]  \quad &\text{ for a.e. } x \in \Omega \setminus B_{2R}(x_0) \\
			\frac{|\{ u \geq 1 \} \cap D_{2R}(x_0)|}{|D_{2R}(x_0)|} \geq \frac{1}{2},
		\end{cases}
	\end{equation}
	then
	\begin{equation}\label{eq:BoundBelow1}
		\einf_{x \in D_{R/4}(x_0)} u(x) \geq \kappa.
	\end{equation}
\end{thm}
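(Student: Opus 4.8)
The plan is to chain four earlier results. Lemma~\ref{Lemma:TechAssumNonlocal1} produces the ``nonlocal tail'' positivity \eqref{eq:SignIntegralOutside1} at a suitable radius; Lemma~\ref{Lemma:LpLminuspBound1} then gives $\big(\fint u^{\beta_0}\big)^{1/\beta_0}\le C\big(\fint u^{-\beta_0}\big)^{-1/\beta_0}$; Corollary~\ref{Cor:BoundInfNegLp1} is the reversed Moser iteration $\einf_{D_{r/2}}u\ge C\big(\fint_{D_r}u^{-\beta}\big)^{-1/\beta}$; and the measure hypothesis in \eqref{eq:AssBoundBelow1} bounds $\fint u^{\beta_0}$ from below. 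After a translation and dilation we take $x_0=0$, $R=1$, so the goal is $\einf_{D_{1/4}}u\ge\kappa$.

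First I would invoke Lemma~\ref{Lemma:TechAssumNonlocal1} at scale $2$: since the lower bound in \eqref{eq:AssBoundBelow1} can be written $u(x)\ge c\big[1-(2\vartheta\,|x|/2)^\gamma\big]$ on $\Omega\setminus B_2$, and $u>0$ in $D_2$ (so $u\ge 0$ in $D_2$), the lemma applies with $2\vartheta$ in place of $\vartheta$, the same $c$, and a fixed $\delta_0=\delta_0(N,\mathrm{Lip})\in(0,\tfrac12)$; it yields $\gamma\in(0,2s)$, depending only on $N$, $s$, the Lipschitz constant, $c$ and $\vartheta$, such that \eqref{eq:SignIntegralOutside1} holds for a.e.\ $y\in D_r$ at every $r<2$ for which the measure condition \eqref{eq:AssTechLemOutside1} is satisfied. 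Let $\bar r$ be the supremum of these radii (it satisfies $\bar r<2$, and one caps it slightly if necessary so that the Lipschitz-graph hypotheses of Lemma~\ref{Lemma:LpLminuspBound1} and Corollary~\ref{Cor:BoundInfNegLp1} hold on $B_{2\bar r}$, up to routine bookkeeping of radii). By the proof of Lemma~\ref{Lemma:TechAssumNonlocal1} one has $\bar r\ge r_0$ for some $r_0=r_0(\delta_0,N,\mathrm{Lip})>0$, and by the maximality of $\bar r$,
\[
|\{u\ge1\}\cap D_{\bar r}| = |\{u\ge1\}\cap D_2| - |\{u\ge1\}\cap(D_2\setminus D_{\bar r})| \ge \big(\tfrac12-\delta_0\big)|D_2| \ge \big(\tfrac12-\delta_0\big)|D_{\bar r}|,
\]
so a definite fraction of the superlevel set lies inside $D_{\bar r}$.

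On $D_{\bar r}$ we have $u>0$, $L_\Omega u\ge0$ with Neumann conditions on $\partial\Omega\cap B_{\bar r}$, and the tail positivity \eqref{eq:SignIntegralOutside1} with $r=\bar r$; thus Lemma~\ref{Lemma:LpLminuspBound1} gives $\big(\fint_{D_{\bar r}}u^{\beta_0}\big)^{1/\beta_0}\le C\big(\fint_{D_{\bar r}}u^{-\beta_0}\big)^{-1/\beta_0}$, and Corollary~\ref{Cor:BoundInfNegLp1} (with $\beta=\beta_0$) gives $\einf_{D_{\bar r/2}}u\ge C\big(\fint_{D_{\bar r}}u^{-\beta_0}\big)^{-1/\beta_0}$. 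Combining these with the mass estimate,
\[
\einf_{D_{\bar r/2}}u \ge C\Big(\fint_{D_{\bar r}}u^{\beta_0}\Big)^{1/\beta_0} \ge C\Big(\frac{|\{u\ge1\}\cap D_{\bar r}|}{|D_{\bar r}|}\Big)^{1/\beta_0} \ge C\big(\tfrac12-\delta_0\big)^{1/\beta_0} =: \kappa_0 > 0 .
\]
If $\bar r\ge\tfrac12$ --- which, after fixing $\delta_0$ small, is the case unless the Lipschitz constant is very large --- then $D_{1/4}\subset D_{\bar r/2}$ and we conclude $\einf_{D_{1/4}}u\ge\kappa_0$, hence \eqref{eq:BoundBelow1} after undoing the scaling. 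In general one only has $\bar r\ge r_0\ge c(N,\mathrm{Lip})>0$, and I would then propagate the positivity $u\ge\kappa_0$ on $D_{\bar r/2}(0)$ to $D_{1/4}(0)$ by a standard finite iteration: running the same scheme on a controlled number of overlapping balls (now with the fraction $\tfrac12$ replaced by a fixed $\delta_\ast(N,\mathrm{Lip})$, which only affects the constant), one reaches $D_{1/4}(0)$ in $\lesssim\log(1/\bar r)\le C(N,\mathrm{Lip})$ steps, losing only a constant factor. This yields \eqref{eq:BoundBelow1} with $\kappa=\kappa(N,s,\mathrm{Lip},c,\vartheta)>0$. For kernels of the form \eqref{K2} the argument is the same and slightly simpler, the logarithmic terms being absent.

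The main obstacle is the incompatibility, at any single fixed radius, of the two ingredients: the tail positivity \eqref{eq:SignIntegralOutside1} --- which for the logarithmically singular kernel \eqref{K1} is the delicate point, and is exactly what Lemma~\ref{Lemma:TechAssumNonlocal1} delivers --- is available only once enough of $\{u\ge1\}$ sits \emph{outside} the working ball, whereas the reversed Moser iteration needs $\fint u^{\beta_0}$ bounded below, i.e.\ enough of $\{u\ge1\}$ \emph{inside} it. Taking $\bar r$ to be the threshold radius of the measure condition \eqref{eq:AssTechLemOutside1} reconciles the two, since there the tail positivity still holds while, by maximality, a definite portion of $\{u\ge1\}$ remains in $D_{\bar r}$. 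The remaining work --- matching $\bar r$ with the target ball $D_{R/4}$ --- is routine geometric bookkeeping.
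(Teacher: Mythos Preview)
Your proposal is essentially correct and follows the same architecture as the paper's proof: combine Lemma~\ref{Lemma:TechAssumNonlocal1} (tail positivity), Lemma~\ref{Lemma:LpLminuspBound1} ($L^{\beta_0}$--$L^{-\beta_0}$ bound), Corollary~\ref{Cor:BoundInfNegLp1} (reversed Moser), and the measure hypothesis, at a well-chosen radius where both the ``outside mass'' and ``inside mass'' conditions hold simultaneously. The difference lies only in how that radius is selected.

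The paper's argument is cleaner than yours on precisely the point you flag as the ``main obstacle''. Rather than taking a supremum $\bar r$ and then worrying whether $\bar r\ge 1/2$, the paper first observes the elementary geometric fact that for a Lipschitz graph domain with $0\in\Omega$, one always has $|D_{1/2}|/|D_2|\le\omega$ for some universal $\omega<1/2$ (they check $\omega=2/5$ works via $\sqrt{4-x^2}>3\sqrt{1/4-x^2}$). Fixing $\delta_0=(1-2\omega)/4$, an intermediate value argument on the continuous, nondecreasing function $h(\rho)=|\{u\ge1\}\cap D_\rho|/|D_2|$ then produces $r_0\in(1/2,2)$ with $h(r_0)=(1+2\omega)/4$, so that simultaneously $|\{u\ge1\}\cap D_{r_0}|/|D_2|\ge(1+2\omega)/4$ and $|\{u\ge1\}\cap(D_2\setminus D_{r_0})|/|D_2|\ge(1-2\omega)/4$. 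Since $r_0\ge1/2$ is guaranteed, the conclusion $\einf_{D_{1/4}}u\ge\kappa$ follows in one shot.

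Your threshold-radius construction would also give $\bar r\ge1/2$ directly if you made the same observation and chose $\delta_0<1/2-\omega$; then your ``iteration fallback'' is unnecessary. As written, that fallback is the weakest part of your proposal: re-running the scheme on overlapping balls requires re-verifying both the decay hypothesis on $\Omega\setminus B_{2R}$ and a measure condition at each step, and you have not argued why either persists. It is simpler to avoid this entirely by fixing $\delta_0$ against the universal volume ratio, as the paper does.
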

\begin{proof}
By scaling, it is enough to consider the case $R = 1$ and $x_0 = 0$.

 First, since $\partial \Omega \cap B_3$ is a Lipschitz graph, and $0\in\Omega$, we can show that there exists $\omega\in (0,1/2)$ such that
	$$\frac{|D_{1/2}|}{|D_2|} \leq \omega. $$
Indeed, this follows from the pointwise inequality $\sqrt{4-x^2} > 3 \sqrt{1/4-x^2}$, which shows that we can take $\omega = 2/5 <1/2$.

	Now we claim that the second condition in \eqref{eq:AssBoundBelow1} guarantees the existence of $r_0 \in (1/2,2)$ such that
	$$ \frac{|\{ u \geq 1 \} \cap D_{r_0}|}{|D_2|} \geq \frac{1+2\omega}{4}, \qquad \frac{|\{ u \geq 1 \} \cap (D_2 \setminus  D_{r_0})|}{|D_2|} \geq \frac{1-2\omega}{4}.$$
	Let us define the functions
	$$ h(\rho) :=\frac{|\{ u \geq 1 \} \cap D_\rho|}{|D_2|}, \qquad \tilde{h}(\rho):=\frac{|\{ u \geq 1 \} \cap (D_2 \setminus D_\rho)|}{|D_2|}.$$
	It is clear that they are both continuous. Moreover, the first one is nondecreasing and satisfies $h(1/2)\leq \omega$ and $h(2)\geq 1/2$ by hypothesis. This means that there exists $r_0\in (1/2,2)$ such that
	$h(r_0) = (1/2+\omega)/2 = (1+2\omega)/4$. If we now use that $h(\rho)+\tilde{h}(\rho) \geq 1/2$, the claim easily follows.
	
	Applying Corollary \ref{Cor:BoundInfNegLp1} (with $R=r_0$), we obtain that for any $\beta > 0$
	\begin{equation}\label{eq:MoserIneBeta}
		\einf_{x \in D_{r_0/2}} u(x) \geq C \left( \fint_{D_{r_0}} u(x)^{-\beta} dx \right)^{-1/\beta},
	\end{equation}
	for some constant $C > 0$ depending only on the Lipschitz constant of $\partial \Omega$, $N$, $s$, and $\beta$. Now, by Lemma \ref{Lemma:TechAssumNonlocal1} with $R=2$, $\delta_0 = (1-2\omega)/4$ and $r=r_0$, there is $\gamma \in (0,2s)$ depending only on the Lipschitz constant of $\partial \Omega$, $N$, $s$, $c$ and $\vartheta$ such that
	\[
	\int_{\Omega \setminus B_{r_0}} u(x) K_\Omega(x,y)dx \geq 0 \quad \text{ for a.e. } y \in D_{r_0}.
	\]
	On the other hand, by Lemma \ref{Lemma:LpLminuspBound1} (with $R=r_0$), there exists $\beta_0 \in (0,1)$ depending only on the Lipschitz constant of $\partial \Omega$, $N$, and $s$ such that
	\[
	\left( \fint_{D_{r_0}} u(x)^{\beta_0} dx \right)^{1/\beta_0} \leq C \left(\fint_{D_{r_0}} u(x)^{-\beta_0} dx \right)^{-1/\beta_0},
	\]
	and thus, choosing $\beta = \beta_0$ in \eqref{eq:MoserIneBeta}, it follows
	\[
	\begin{aligned}
	\einf_{x \in D_{r_0/2}} u(x) &\geq C \left( \fint_{D_{r_0}} u(x)^{\beta_0} dx \right)^{1/\beta_0} \geq C \left( \frac{1}{|D_{r_0}|} \int_{D_{r_0}\cap\{u \geq 1\}} u(x)^{\beta_0} dx \right)^{1/\beta_0} \\
	& \geq C \left( \frac{|\{u \geq 1\}\cap D_{r_0}|}{|D_{r_0}|} \right)^{1/\beta_0} \geq C \left( \frac{|\{u \geq 1\}\cap D_{r_0}|}{|D_2|} \right)^{1/\beta_0} \\
&\geq C \left( \frac{1+2\omega}{4} \right)^{1/\beta_0} := \kappa.
	\end{aligned}
	\]
	Since $r_0 \geq 1/2$, the thesis follows.
\end{proof}

As a first consequence, we can prove a version of the above theorem that allows a right hand side $f$.

\begin{thm}\label{Theorem:WeakHarnackf1} (Weak Harnack inequality)  Let $\Omega \subset \RR^N$ be a domain, $R > 0$, $x_0 \in \Omega$ and $f \in L^{q}(D_{2R}(x_0))$ with $q>\frac{N}{2s}$.  Assume that $\partial \Omega \cap B_{3R}(x_0)$ is a Lipschitz graph.
Let $K_\Omega$ be of the form either \eqref{K1} or \eqref{K2}.
Assume that $u$ satisfies
	\[
	\begin{cases}
	L_\Omega u \geq f \quad &\text{ in } D_{2R}(x_0) \\
	u > 0       \quad &\text{ in } D_{2R}(x_0),
	\end{cases}
	\]
	with Neumann conditions on $\partial\Omega \cap B_{2R}(x_0)$, in the sense of Definition \ref{Def:WeakSolNeumann}.

	Then for any $c>0$ and $\vartheta > 1$, there exist $\kappa_0 > 0$, $\kappa > 0$ and $\gamma \in (0,2s)$ depending only on the Lipschitz constant of $\partial \Omega$, $N$, $s$, $c$ and $\vartheta$, such that if \eqref{eq:AssBoundBelow1} holds, then
	\begin{equation}\label{eq:BoundBelowWithf1}
	\einf_{x \in D_{R/4}(x_0)} u(x) + \kappa_0 \,R^{2s-\frac{N}{q}}\, \|f\|_{L^{q}(D_{2R}(x_0))} \geq \kappa.
	\end{equation}
\end{thm}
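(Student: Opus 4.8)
The plan is to reduce \eqref{eq:BoundBelowWithf1} to the homogeneous weak Harnack inequality of Theorem \ref{Theorem:WeakHarnack01} by absorbing the right-hand side $f$ into a nonnegative correction term that vanishes outside $D_{2R}(x_0)$ and is controlled in $L^\infty$ by the $L^q$ norm of $f$.

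First I would apply Lemma \ref{lemma: SolutionDirichletNeumann} on $D_{2R}(x_0)$ --- which is legitimate since $\partial\Omega\cap B_{3R}(x_0)$ is a Lipschitz graph by assumption --- to obtain a function $v\ge 0$ that weakly solves $L_\Omega v=|f|$ in $D_{2R}(x_0)$, satisfies $v\equiv 0$ in $\Omega\setminus D_{2R}(x_0)$ together with the Neumann condition on $\partial\Omega\cap B_{2R}(x_0)$, and obeys $0\le v\le \kappa_0\,R^{2s-\frac{N}{q}}\,\|f\|_{L^q(D_{2R}(x_0))}$ in $D_{2R}(x_0)$, with $\kappa_0$ depending only on $N$, $s$, $q$ and the Lipschitz norm of $\partial\Omega\cap B_{3R}(x_0)$. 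I would then set $w:=u+v$. Since $u\in H_{K,loc}(\Omega)$ and $v\in H_K(\Omega)$ we have $w\in H_{K,loc}(\Omega)$, and $w>0$ in $D_{2R}(x_0)$ because $u>0$ there and $v\ge 0$. Testing the weak formulations of $L_\Omega u\ge f$ and $L_\Omega v=|f|$ against an arbitrary nonnegative $\eta\in C_0^\infty(B_{2R}(x_0))$ and adding, bilinearity of $B$ gives $B(w,\eta)=B(u,\eta)+B(v,\eta)\ge \int_{D_{2R}(x_0)}(f+|f|)\eta\,dx\ge 0$, so that $L_\Omega w\ge 0$ in $D_{2R}(x_0)$ with Neumann conditions on $\partial\Omega\cap B_{2R}(x_0)$. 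Finally, $w$ still satisfies both conditions in \eqref{eq:AssBoundBelow1}: outside $B_{2R}(x_0)$ the correction $v$ vanishes, so $w=u\ge c\bigl[1-(\vartheta|x-x_0|/R)^\gamma\bigr]$ there; and since $v\ge 0$ we have $\{u\ge 1\}\subseteq\{w\ge 1\}$, hence $|\{w\ge 1\}\cap D_{2R}(x_0)|\ge\tfrac12|D_{2R}(x_0)|$.

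I would then apply Theorem \ref{Theorem:WeakHarnack01} to $w$: for the given $c>0$ and $\vartheta>1$ this yields $\kappa>0$ and $\gamma\in(0,2s)$, depending only on the Lipschitz constant, $N$, $s$, $c$, $\vartheta$ (and $q$, through $\kappa_0$), such that $\einf_{D_{R/4}(x_0)}w\ge\kappa$. Using the pointwise bound $v\le \kappa_0 R^{2s-\frac{N}{q}}\|f\|_{L^q(D_{2R}(x_0))}$ on $D_{R/4}(x_0)\subset D_{2R}(x_0)$, we get for a.e. $x\in D_{R/4}(x_0)$ that $u(x)=w(x)-v(x)\ge w(x)-\kappa_0 R^{2s-\frac{N}{q}}\|f\|_{L^q(D_{2R}(x_0))}$; taking the essential infimum over $D_{R/4}(x_0)$ gives exactly \eqref{eq:BoundBelowWithf1}. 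There is no genuine difficulty here beyond bookkeeping; the only point deserving care is the consistency of the exponent $\gamma$ --- the decay assumption \eqref{eq:AssBoundBelow1} must be imposed with the $\gamma$ that Theorem \ref{Theorem:WeakHarnack01} returns when applied to $w$, which is harmless since that $\gamma$ is extracted there from the same data $(c,\vartheta,N,s,\mathrm{Lip})$, and passing from $u$ to $w$ does not worsen the decay condition because $v$ is supported in $\overline{D_{2R}(x_0)}$.
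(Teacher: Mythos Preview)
Your proof is correct and follows essentially the same approach as the paper: construct the auxiliary Dirichlet--Neumann solution $v$ from Lemma~\ref{lemma: SolutionDirichletNeumann}, set $w=u+v$, verify that $w$ is a nonnegative supersolution satisfying \eqref{eq:AssBoundBelow1}, apply Theorem~\ref{Theorem:WeakHarnack01} to $w$, and then subtract the $L^\infty$ bound on $v$. Your version is in fact slightly more explicit than the paper's (you spell out the bilinearity argument for $L_\Omega w\ge 0$ and the verification of both conditions in \eqref{eq:AssBoundBelow1}), and your remark about the consistency of $\gamma$ is well taken.
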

\begin{proof}
We assume $R = 1$, $x_0 = 0$. Let us consider the function $w := u + v$, where $v$ satisfies \eqref{eq:ProbLinfLqestimate} (with $R=1$ and $x_0 = 0$). Then,  $w$ satisfies
	\[
	\begin{cases}
	L_\Omega w \geq 0 \quad &\text{ in } D_{2} \\
	w > 0       \quad &\text{ in } D_{2},
	\end{cases}
	\]
	with Neumann conditions on $\partial\Omega \cap B_{2}$ in the sense of Definition \ref{Def:WeakSolNeumann}. Notice that $w \geq u$ in $\Omega$ and thus it satisfies the assumptions in \eqref{eq:AssBoundBelow1}. Consequently, we can apply Theorem~\ref{Theorem:WeakHarnack01} to the function $w$ and, since $v \leq \kappa_0 \, ||f||_{L^q(D_{1})}$ in $D_{1/2}$ (by Lemma \ref{lemma: SolutionDirichletNeumann}), we deduce
	\[
	\einf_{x \in D_{1/4}} u(x) + \kappa_0 \, \|f\|_{L^{q}(D_1)} \geq \einf_{x \in D_{1/4}} w(x) \geq \kappa,
	\]
	which proves \eqref{eq:BoundBelowWithf1}.
\end{proof}

We finally use the previous weak Harnack inequality to deduce the H\"older regularity of solutions.

\begin{proof}[Proof of Theorem \ref{Theorem:HolderRegularity1}] The result follows by iterating the previous weak Harnack inequality, with an argument similar to those in \cite{K,2006Silvestre:art}.
By scaling and a covering argument as in \cite[Remark~2.13]{FR-book}, it is sufficient to assume that $u$ is a weak bounded solution to
	\[
	L_\Omega u = f \quad \text{ in } D_3(x_0),
	\]
	with Neumann conditions on $\partial\Omega \cap B_3(x_0)$ (in the sense of Definition \ref{Def:WeakSolNeumann}) and prove
	\begin{equation*}
	|u(x) - u(y)| \leq C |x - y|^\alpha \left[ \|u\|_{L^{\infty}(\Omega)} + \|f\|_{L^{q}(D_{3}(x_0))}\right]
	\end{equation*}
	for a.e. $x,y \in D_{1/2}(x_0)$.

	\emph{Step 1.} Let us take $\vartheta = 4$, $c = 2$, $\kappa \in (0,1)$, $\gamma \in (0,2s)$ and $\kappa_0 > 0$ as in Theorem \ref{Theorem:WeakHarnackf1} (depending only on the Lipschitz constant of $\partial \Omega$, $N$, $s$, and $q$). We set $\overline{\kappa} := \kappa/2$.

	Given any $z_0 \in D_1(x_0)$, we construct a non-decreasing sequence $(m_n)_{n \in \ZZ}$ and a non-increasing sequence $(M_n)_{n \in \ZZ}$ such that
	\begin{equation}\label{eq:AuxSeqHolder}
	\begin{aligned}
	&m_n \leq u(y) \leq M_n \quad \text{ for a.e. } y \in D_{\vartheta^{-n}}(z_0) \\
	&M_n - m_n = K \vartheta^{-n\alpha},
	\end{aligned}
	\end{equation}
	for all $n \in \ZZ$, some $\alpha \in (0,1)$ and $K > 0$ to be determined (independently of $z_0$ and $x_0$). We choose
	\begin{equation}\label{eq:ChoiceEps0}
	0 < \varepsilon_0 \leq \min \left\{ \frac{1}{2},\frac{\kappa}{4\kappa_0} \right\}
	\end{equation}
	and
	\[
	M_0 := \|u\|_{L^{\infty}(\Omega)} + \frac{1}{\varepsilon_0} \|f\|_{L^{q}(D_3(x_0))}, \qquad m_0 := -\|u\|_{L^{\infty}(\Omega)},
	\]
	so that
	\[
	K := M_0 - m_0 =  2\|u\|_{L^{\infty}(\Omega)} + \frac{1}{\varepsilon_0}\|f\|_{L^{q}(D_3(x_0))}.
	\]
	Now, we assume that \eqref{eq:AuxSeqHolder} holds and show how \eqref{eq:HolderEstimate1} follows. Since $u$ is bounded, whenever $x,y \in D_1(x_0)$ satisfy $|x - y| \geq 1$, \eqref{eq:HolderEstimate1} follows with $C = 2$ and any $\alpha \in (0,1)$.

	Thus it is enough to check the validity of \eqref{eq:HolderEstimate1} when $x \not= y$ and $|x - y| < 1$. In such case, we take $x = z_0$ and consider $n \in \NN$ (depending on $y$) such that
	\[
	\vartheta^{-(n+1)} \leq |x - y| < \vartheta^{-n}.
	\]
	Consequently,
	\[
	\begin{aligned}
	|u(x) - u(y)| &\leq \osc_{B_{\vartheta^{-n}}(x)} u \leq M_n - m_n = K \vartheta^{-n\alpha} \leq K \vartheta^\alpha |x-y|^\alpha \\
	& \leq \frac{\vartheta^\alpha}{\varepsilon_0} |x-y|^\alpha \left[ \|u\|_{L^{\infty}(\Omega)} + \|f\|_{L^{q}(D_3(x_0))}\right],
	\end{aligned}
	\]
	which is exactly \eqref{eq:HolderEstimate1} with $C = \vartheta^\alpha/\varepsilon_0$. Using the arbitrariness of $x,y \in D_1(x_0)$ with $|x - y| < 1$ and $x\not=y$, the estimate \eqref{eq:HolderEstimate1} follows.

	\emph{Step 2.} Notice that, since $u$ is bounded in $\Omega$, the choice of $K$ guarantees that \eqref{eq:AuxSeqHolder} hold true for $n = 0$ and, moreover, setting $M_n = M_0$ and $m_n = m_0$ for all negative integers $n$, \eqref{eq:AuxSeqHolder} hold true for any $n \in \ZZ$, $n < 0$.

	\emph{Step 3.} We construct the sequences $(m_n)_{n\in\NN}$ and $(M_n)_{n\in\NN}$ by induction on $n \in \NN$. So, we assume that there exists $k \geq 1$ such that \eqref{eq:AuxSeqHolder} hold for all $n \leq k-1$, and we show how to choose $m_k$ and $M_k$ such that \eqref{eq:AuxSeqHolder} hold for $n = k$.

	We define
	\begin{equation}\label{eq:DefAlpha}
	\alpha := \min \left\{\gamma, \ln \left(\frac{2}{2-\overline{\kappa}}\right) / \ln \vartheta \right\},
	\end{equation}
	and we consider the function
	\[
	v(x) := \left( u(\vartheta^{-(k-1)} x + z_0) - \frac{M_{k-1} + m_{k-1}}{2} \right) \frac{2\vartheta^{(k-1)\alpha}}{K}.
	\]
	Notice that, in view of \eqref{eq:AuxSeqHolder}, we have
	\[
	|v| \leq 1 \quad \text{ in } \widetilde{D}_1,
	\]
	where $B_1 = B_1(0)$, $\widetilde{\Omega} := \{x \in \RR^N: \vartheta^{-(k-1)} x + z_0 \in \Omega\}$ and $\widetilde{D}_1 := B_1\cap\widetilde{\Omega}$. Note that since $\widetilde{\Omega}$ is a dilation, its Lipschitz constant does not increase. Now, we divide the proof in two cases. First, we assume
	\begin{equation}\label{eq:Ass1Holder}
	\frac{|\{v \leq 0\} \cap \widetilde{D}_1|}{|\widetilde{D}_1|} \geq \frac{1}{2}.
	\end{equation}
	In order to apply Theorem \ref{Theorem:WeakHarnackf1}, we study the decaying of $v$ in $\widetilde{\Omega} \setminus B_1$. So, for any $y \in \widetilde{\Omega} \setminus B_1$ we have $|y| \geq 1$ and thus there is $j \in \NN$, $j \geq 1$ (depending on $y$) such that
	\[
	\vartheta^{j-1} \leq |y| < \vartheta^j.
	\]
	Using that $(m_n)_{n \in \NN}$ is non-decreasing, the fact that $y \in B_{\vartheta^j}$ and \eqref{eq:AuxSeqHolder}, we obtain
	\[
	\begin{aligned}
	v(y) &= \frac{2\vartheta^{(k-1)\alpha}}{K} \left( u(\vartheta^{-(k-1)} y + z_0) - \frac{M_{k-1} + m_{k-1}}{2} \right) \\
	&\leq \frac{2\vartheta^{(k-1)\alpha}}{K} \left( M_{k-j-1} - m_{k-j-1} + m_{k-j-1} - \frac{M_{k-1} + m_{k-1}}{2} \right) \\
	&\leq \frac{2\vartheta^{(k-1)\alpha}}{K} \left( M_{k-j-1} - m_{k-j-1} - \frac{M_{k-1} - m_{k-1}}{2} \right) \\
	&= \frac{2\vartheta^{(k-1)\alpha}}{K} \left( K\vartheta^{-(k-j-1)\alpha} - \frac{K}{2}\vartheta^{-(k-1)\alpha} \right) = 2 \vartheta^{j\alpha} - 1 \leq 2 \vartheta^\alpha |y|^\alpha - 1,
	\end{aligned}
	\]
	which, setting $w := 1 - v$, is equivalent to
	\[
	w(y) \geq 2 \left[ 1 - \left(\vartheta |y| \right)^\alpha \right] \quad \text{ for a.e. } y \in \widetilde{\Omega} \setminus B_1.
	\]
	Furthermore, $w$ is a weak solution to
	\[
	 L_\Omega w = -\frac{2}{K} \vartheta^{(\alpha-2s) (k-1)} f \quad \text{ in }  \widetilde{D}_2,
	\]
	and so, thanks to assumption \eqref{eq:Ass1Holder} and the fact that $\alpha \leq \gamma$ (see \eqref{eq:DefAlpha}), we can apply Theorem \ref{Theorem:WeakHarnackf1} (with $R = 1$) to deduce
	\[
	\einf_{x \in \widetilde{D}_{\vartheta^{-1}}} w(x) + \frac{2 \kappa_0}{K} \vartheta^{(\alpha-2s)(k-1)} \|f\|_{L^{q}(\widetilde{D}_2)} \geq \kappa,
	\]
which implies
\[
	v(x) \leq 1 - \kappa + \frac{2 \kappa_0}{K} \vartheta^{(\alpha-2s)(k-1)} \|f\|_{L^{q}(\widetilde{D}_2)} \quad \text{ for a.e. } x \in \widetilde{D}_{\vartheta^{-1}}.
	\]
Notice that, using the definition of $K$ and that $\alpha \leq 2s$ (cfr. with \eqref{eq:DefAlpha}) and $\vartheta > 1$, we have
\[
	\frac{2 \kappa_0}{K} \vartheta^{(\alpha-2s)(k-1)} \|f\|_{L^{q}(\widetilde{D}_2)} \leq 2 \kappa_0 \frac{\varepsilon_0 \|f\|_{L^{q}(\widetilde{D}_2)}}{\varepsilon_0\|u\|_{L^{\infty}(\widetilde{D}_2)} + \|f\|_{L^{q}(\widetilde{D}_2)}}  \leq 2\kappa_0 \varepsilon_0 \leq \frac{\kappa}{2},
	\]
thanks to the choice of $\varepsilon_0 > 0$ in \eqref{eq:ChoiceEps0}. Consequently,
\[
	v(x) \leq 1 - \frac{\kappa}{2} := 1 - \overline{\kappa}  \quad \text{ for a.e. } x \in \widetilde{D}_{\vartheta^{-1}}.
	\]
So, using the definition of $v$ and the above inequality, we obtain
	\[
	\begin{aligned}
	u(x) & \leq \frac{1-\overline{\kappa}}{2} K \vartheta^{-(k-1)\alpha} + \frac{M_{k-1} + m_{k-1}}{2} =  \frac{1-\overline{\kappa}}{2} (M_{k-1} - m_{k-1}) + \frac{M_{k-1} + m_{k-1}}{2} \\
	& = m_{k-1} + \left( 1 - \frac{\overline{\kappa}}{2} \right)(M_{k-1} - m_{k-1})  \\
	\end{aligned}
	\]
for a.e. $x \in D_{\vartheta^{-k}}(z_0)$. Finally, using \eqref{eq:DefAlpha}, we have that $1 - \frac{\overline{\kappa}}{2} \leq \vartheta^{-\alpha}$, and so from the definition of $K$, we deduce
\[
u(x) \leq m_{k-1} + K \vartheta^{-k\alpha} \quad \text{ for a.e. } x \in D_{\vartheta^{-k}}(z_0).
\]
Choosing $m_k := m_{k-1}$ and $M_k := m_{k-1} + K \vartheta^{-k\alpha}$, it follows that \eqref{eq:AuxSeqHolder} is satisfied for $n = k$ and we complete the proof of the first case.

Finally, if \eqref{eq:Ass1Holder} is not satisfied, it is sufficient to notice that it holds for $\tilde{v} := - v$ and repeat the above procedure working with $\tilde{v}$.
\end{proof}

\section{A Neumann Liouville theorem in the half-space}
\label{sec5}

The goal of this section is to prove the following Liouville-type theorem in a half-space with nonlocal Neumann boundary conditions.

\begin{thm} \label{Thm:LiouvilleND2}
	Let $\Omega=\RR^N_+ = \{x_N>0\}$, and $s\in (\frac12,1)$. Let $L_\Omega$ and $K_\Omega$ be given by either \eqref{new-operator}-\eqref{new-kernel}-\eqref{new-kernel2}, or \eqref{operator-reg}. Assume $v$ is a weak solution to
	$$L_\Omega v = 0 \quad \text{ in } \mathbb{R}^N_+$$
	with Neumann condition on $\partial \mathbb{R}^N_+=\{x_N=0\}$ (in the sense of Definition~\ref{Def:WeakSolNeumann}).
	Let $\alpha>0$ be given by Theorem \ref{Thm:BoundaryRegBH}, and assume that
		$$ ||v||_{L^\infty(B_R^+)} \leq C_0 (1 + R^{2s-1+\varepsilon}) \quad \textrm{for all}\quad R > 0,$$
	for some $C_0$ and $\varepsilon \in (0,\alpha)$. Then,
	$$ v(x) = a + b\cdot x $$
	for some $a\in \RR$ and $b\in \RR^N$ with $b_N = 0$.
	Moreover, if $2s-1+\epsilon<1$ then $b=0$.
\end{thm}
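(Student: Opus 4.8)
The plan is a blow--down argument based on the scale invariance of $\RR^N_+$ and of the kernels $K_\Omega$, combined with an iteration of the boundary Hölder estimate (Theorem~\ref{Thm:BoundaryRegBH}) applied to \emph{tangential} incremental quotients of $v$. The key structural fact is that translating $v$ in a direction $e$ with $e_N=0$ preserves $\RR^N_+$, its boundary, the operator $L_\Omega$, and (by the scaling relations of Section~\ref{sec2}) the kernel, so such increments are again weak solutions of the homogeneous Neumann problem. Iterating the estimate on them forces all sufficiently high tangential finite differences of $v$ to vanish, so that $v$ is a polynomial of low degree in $x'$; the subquadratic growth (note $2s-1+\varepsilon<2$ since $s<1$ and $\varepsilon<\alpha<1$) then cuts the tangential degree down to one, and the normal variable is handled separately.

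Concretely, first, for $\rho\ge1$ the rescaling $v_\rho:=v(\rho\,\cdot\,)$ solves the homogeneous Neumann problem in $B_2^+$ — here the long--range tails of $K_\Omega$ are harmless, since $2s-1+\varepsilon<2s$ they contribute only a bounded right--hand side — so Theorem~\ref{Thm:BoundaryRegBH} rescaled back gives
\[
[v]_{C^\alpha(B_\rho^+)}\le C\rho^{-\alpha}\|v\|_{L^\infty(B_{2\rho}^+)}\le C\rho^{\,2s-1+\varepsilon-\alpha},\qquad \rho\ge 1.
\]
Next, for horizontal $h$ with $|h|\le1$ the difference $\delta_h v:=v(\,\cdot\,+h)-v$ is again a weak solution of the homogeneous Neumann problem (translate the weak formulation and use the horizontal invariance of $L_\Omega$), and $\|\delta_h v\|_{L^\infty(B_\rho^+)}\le|h|^\alpha[v]_{C^\alpha(B_{2\rho}^+)}$; feeding this back into the estimate and iterating $k$ times yields $\|\delta_h^{\,k}v\|_{L^\infty(B_\rho^+)}\le C_k|h|^{k\alpha}\rho^{\,2s-1+\varepsilon-k\alpha}$. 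Choosing $k$ with $k\alpha>2s-1+\varepsilon$, the right side tends to $0$ as $\rho\to\infty$ while the left side is nondecreasing in $\rho$, hence $\delta_h^{\,k}v\equiv0$ for every horizontal $h$; thus for each fixed $x_N$ the map $x'\mapsto v(x',x_N)$ is a polynomial of degree $\le k-1$, and its $O(|x'|^{2s-1+\varepsilon})$ growth forces it to be affine, $v(x',x_N)=a(x_N)+b(x_N)\cdot x'$.

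To see that $b$ is constant I would evaluate the Hölder bound above at the four points $(x',x_N),(x',\tilde x_N),(0,x_N),(0,\tilde x_N)\in B_\rho^+$ with $|x'|=\rho/2$, obtaining $|b(x_N)-b(\tilde x_N)|\le C\rho^{\,2s-2+\varepsilon-\alpha}|x_N-\tilde x_N|^\alpha$ with $2s-2+\varepsilon-\alpha<0$, so $b\equiv b_0\in\RR^{N-1}$; if moreover $2s-1+\varepsilon<1$, the sublinear tangential growth already forces $b_0=0$. Since $x\mapsto b_0\cdot x'$ is itself a weak solution of the homogeneous Neumann problem (its $L_\Omega$ vanishes by the symmetry of $K_{\RR^N_+}(x,\cdot\,)$ under reflection of the second argument about $x$ in the tangential variables, and its $\partial_\nu^{2s-1}$, resp.\ $\mathcal N_s$, vanishes trivially), the function $a(x_N):=v-b_0\cdot x'$ is a weak solution of the homogeneous Neumann problem depending only on $x_N$, with $|a(x_N)|\le C(1+x_N^{\sigma})$ and $\sigma:=\max\{2s-1+\varepsilon,1\}<2$. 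Identifying $b_0$ with $b:=(b_0,0)\in\RR^N$, it remains only to prove $a\equiv$ const, which gives $v=a+b\cdot x$ with $b_N=0$ (and $b=0$ in the sublinear case).

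The main obstacle is precisely this last one--dimensional step. Integrating out the tangential variables turns $L_\Omega a=0$ into the homogeneous Neumann problem on the half--line $(0,\infty)$ for an operator of order $2s$ of the same type, with a subquadratically growing solution; one must classify the admissible boundary behaviours and rule out all modes but the constant. The mode $x_N^{2s-1}$ must be excluded because it violates the Neumann condition — this is exactly the content of the Remark following Theorem~\ref{thm-intro}, and is why $C^{2s-1+\alpha}$ is the natural threshold — while the mode $x_N$ must be excluded because for $s>\tfrac12$ the function $x_N$ is not a weak solution of $L_\Omega u=0$; the remaining degrees of freedom grow at least like $x_N^{2s}$, faster than $x_N^{\sigma}$, and are ruled out by the growth bound, leaving only constants. (In the sublinear regime $b_0=0$ already, and the one--dimensional argument simplifies to the statement that a sublinearly growing half--line solution is constant.) Making all of this rigorous — including the tail control for the Neumann--reformulation kernel and the justification that increments, and the reduced one--dimensional object, are genuine weak solutions — is where the bulk of the work lies.
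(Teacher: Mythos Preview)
Your overall strategy---tangential incremental quotients to reduce $v$ to an affine function in $x'$ with coefficients depending on $x_N$, followed by a one-dimensional Liouville step---is exactly the paper's route (Proposition~\ref{Prop:LiouvilleND} and then Corollary~\ref{Cor:Liouville1D}). Your four-point argument for showing $b(x_N)$ is constant is in fact a pleasant shortcut compared with what the paper does: the paper instead shows, via Lemma~\ref{Lemma:NDto1D}, that each $w_i$ satisfies the one-dimensional Neumann problem and then invokes the 1D Liouville for all of them; your direct H\"older-difference argument sidesteps that for $i\ge 1$.

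There are, however, two issues. The first is a mislabeling that reveals a conceptual slip: the estimate you invoke for $[v]_{C^\alpha(B_\rho^+)}$ is the $N$-dimensional H\"older estimate of Theorem~\ref{Theorem:HolderRegularity1} (that is the content of Lemma~\ref{Lemma:LiouvilleND}), \emph{not} Theorem~\ref{Thm:BoundaryRegBH}, which is a purely one-dimensional boundary-Harnack result for the quotient $u/x^{2s-1}$. The exponent $\alpha$ in the \emph{statement} of the Liouville theorem is the one from Theorem~\ref{Thm:BoundaryRegBH}; its role is not in the tangential reduction (where any positive exponent would do, you simply iterate more) but precisely in the final 1D step, and you never use it there.

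The second issue is a genuine gap in that last step. After reducing to a function $a(x_N)$ satisfying the 1D homogeneous Neumann problem with growth $\le C(1+x_N^{2s-1+\varepsilon})$ (incidentally the correct growth is $2s-1+\varepsilon$, not your $\sigma=\max\{2s-1+\varepsilon,1\}$: just evaluate $v$ at $x'=0$), you appeal to a ``mode classification'' asserting that beyond constants, $x_N^{2s-1}$ and $x_N$, the next solutions grow like $x_N^{2s}$. There is no Frobenius-type theory available for these nonlocal operators on the half-line that would furnish such a list, and the statement is not obviously true. The paper's rigorous argument (Corollary~\ref{Cor:Liouville1D}) does something different and more robust: it first uses the 1D boundary Harnack (Theorem~\ref{Thm:BoundaryRegBH}) to prove that $a(x_N)/x_N^{2s-1}$ extends to a $C^\alpha$ function on $[0,R]$ with seminorm $\le CR^{\varepsilon-\alpha}$, then lets $R\to\infty$---and \emph{here} is where the hypothesis $\varepsilon<\alpha$ is used---to conclude $a(x_N)=Ax_N^{2s-1}$; finally, a direct sign computation with the weak formulation forces $A=0$. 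Your sketch hits the correct target (exclude $x_N^{2s-1}$ via the Neumann condition) but the step \emph{before} it---showing there is nothing else to exclude under the given growth---is the hard part, and your proposal does not supply it.
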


The proof of this result is not standard and does not follow from classical tools such as even reflection for harmonic functions.
Moreover, the extension problem for the fractional Laplacian is of no use here, and therefore the proof must be different from the Dirichlet case, too.

We stress that, even in 1D, we do not know how to prove a better Liouville theorem (allowing more growth on $v$).
This seems a challenging open problem, which is strongly related to the higher boundary regularity of solutions to \eqref{eq-intro}.

\subsection{1D barriers}

We need sub- and supersolutions for both problems \eqref{eq-intro} and \eqref{eq-intro-reg}.
We start with the following.

\begin{lem} (Supersolution for \eqref{eq-intro} and \eqref{eq-intro-reg})\label{Lemma:1DSupersolution}
	Let $N=1$, $\Omega=(0,\infty)$, and $s\in (\frac12,1)$.
	Let $L_\Omega$ and $K_\Omega$ be given by either \eqref{new-operator}, \eqref{new-kernel}-\eqref{new-kernel2} or \eqref{operator-reg}.
	Given any $r_0 > 0$, let us consider $\eta \in C_0^\infty([0,2r_0))$ satisfying $0 \leq \eta \leq 1$ and $\eta = 1$ in $[0,r_0]$.
	
	Then, there exists $\overline{c} > 0$ (depending only on $r_0$) such that the function
	\[
	\overline{\varphi}(x) := \eta(x)x^{2s-1}
	\]
	satisfies
	$$L_\Omega \overline{\varphi} \geq \overline{c} \ \text{ in } \ (0,r_0).$$
	Moreover, if $L_\Omega$ and $K_\Omega$ are given by \eqref{new-operator}, \eqref{new-kernel}-\eqref{new-kernel2}, a logarithmic improvement can be done. That is,
	$$L_\Omega \overline{\varphi} \geq \overline{c} \left(1+ \log^-\left(\frac{x}{r_0}\right)\right)\ \text{ in } \ (0,r_0).$$
\end{lem}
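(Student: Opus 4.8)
The plan is to isolate the singular power by writing $\overline\varphi(x)=x^{2s-1}-\psi(x)$, where $\psi(x):=\bigl(1-\eta(x)\bigr)x^{2s-1}$ is nonnegative, vanishes on $[0,r_0]$, and equals $x^{2s-1}$ on $[2r_0,\infty)$. For $x\in(0,r_0)$ the function $\eta$ is identically $1$ near $x$, so $\psi\equiv 0$ in a neighbourhood of $x$ and no principal value is needed for the $\psi$-part; by linearity,
\[
L_\Omega\overline\varphi(x)=L_\Omega(x^{2s-1})(x)-L_\Omega\psi(x)=L_\Omega(x^{2s-1})(x)+\int_0^\infty\psi(y)\,K_\Omega(x,y)\,dy .
\]
Thus the proof reduces to two claims: (i) $L_\Omega(x^{2s-1})\equiv 0$ on $(0,\infty)$, for both choices of $L_\Omega$; and (ii) the remaining nonnegative integral is bounded below by $\overline c$ on $(0,r_0)$, and, in the case \eqref{new-operator}--\eqref{new-kernel2}, by $\overline c\,(1+\log^-(x/r_0))$.

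For (i), both kernels for $\Omega=(0,\infty)$ are scale covariant (e.g.\ $k_\Omega(rx,ry)=r^{-1-2s}k_\Omega(x,y)$), hence $L_\Omega(x^{2s-1})$ is homogeneous of degree $-1$, i.e.\ equal to $c\,x^{-1}$ for a constant $c=c(s)$, and it remains to check $c=0$. For the regional operator this is the classical identity $\mathrm{PV}\int_0^\infty\frac{1-t^{2s-1}}{|1-t|^{1+2s}}\,dt=0$, obtained by splitting the integral at $t=1$ and applying the substitution $t\mapsto 1/t$ on $(1,\infty)$, which turns the numerator of the second piece into $t^{2s-1}-1$ and cancels the first. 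For $L_\Omega$ associated to \eqref{new-operator}--\eqref{new-kernel2} one writes $L_\Omega=(-\Delta)^s_{(0,\infty)}+\int_0^\infty(\,\cdot\,)\,k_\Omega(x,y)\,dy$ with the explicit expression $k_\Omega(x,y)=2s\,c_{1,s}\int_0^\infty t^{2s}(x+t)^{-1-2s}(y+t)^{-1-2s}\,dt$ valid for the half-line; applying Fubini and evaluating the elementary Beta-type integrals $\int_0^\infty(1+u)^{-1-2s}\,du=\int_0^\infty u^{2s-1}(1+u)^{-1-2s}\,du=\tfrac1{2s}$ reduces the $k_\Omega$-contribution to $c_{1,s}\,x^{-1}\int_0^\infty\frac{1-u^{2s-1}}{(1+u)^{1+2s}}\,du=0$. (Alternatively one may invoke the equivalence of Section~\ref{sec2}: $|x_N|^{2s-1}$ is $(-\Delta)^s$-harmonic in $\{x_N>0\}$ and satisfies $\mathcal N_s(|x_N|^{2s-1})=0$ in $\{x_N<0\}$, both being consequences of the same identity.)

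For (ii), on $(0,r_0)$ we then have $L_\Omega\overline\varphi(x)=\int_0^\infty\psi(y)\,K_\Omega(x,y)\,dy\ge\int_{2r_0}^\infty y^{2s-1}K_\Omega(x,y)\,dy$. Since $K_\Omega(x,y)\ge c_{1,s}|x-y|^{-1-2s}$ in both cases (because $k_\Omega\ge0$) and $y-x\le y$,
\[
L_\Omega\overline\varphi(x)\ge c_{1,s}\int_{2r_0}^\infty\frac{y^{2s-1}}{(y-x)^{1+2s}}\,dy\ge c_{1,s}\int_{2r_0}^\infty y^{-2}\,dy=\frac{c_{1,s}}{2r_0}=:\overline c ,
\]
which gives the first assertion. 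For the logarithmic refinement, when $x\in(0,r_0)$ and $y\ge 2r_0$ we have $d_{x,y}=\min\{x,y\}=x$ and $|x-y|=y-x\ge r_0>x$, so by Proposition~\ref{kernel-estimates},
\[
K_\Omega(x,y)\ge k_\Omega(x,y)\ \gtrsim\ \frac{1+\log\!\bigl(\tfrac{y-x}{x}\bigr)}{(y-x)^{1+2s}}\ \ge\ \frac{1+\log^-(x/r_0)}{y^{1+2s}},
\]
using $y-x\ge r_0$ (so $\log((y-x)/x)\ge\log^-(x/r_0)\ge0$) and $y-x\le y$; integrating as above yields $L_\Omega\overline\varphi(x)\ge\overline c\,(1+\log^-(x/r_0))$ on $(0,r_0)$, possibly after shrinking $\overline c$.

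The only real difficulty is step (i): one must first verify that $x^{2s-1}$ is an admissible pointwise input for $L_\Omega$ on $(0,\infty)$ — which follows from its $O(y^{-2})$ decay against the kernel at infinity (up to a harmless logarithmic factor for \eqref{new-kernel2}) and the integrability of $|\log d|$ near the boundary — and then establish the cancellation $\int_0^\infty(1-u^{2s-1})(1+u)^{-1-2s}\,du=0$ in a way that covers both families of kernels at once. Once the tail $\psi$ has been peeled off, the estimates in step (ii) are elementary.
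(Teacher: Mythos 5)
Your proof is correct and follows essentially the same decomposition as the paper: writing $L_\Omega\overline{\varphi}=L_\Omega(x^{2s-1})+\int_0^\infty y^{2s-1}(1-\eta(y))K_\Omega(x,y)\,dy$, using the cancellation $L_\Omega(x^{2s-1})\equiv 0$, and bounding the remaining nonnegative tail integral from below via the kernel estimate. The only difference is that the paper simply asserts $L_\Omega(x^{2s-1})=0$ as ``easy to check by symmetry and scaling,'' while you flesh it out with the explicit half-line formula for $k_\Omega$, Fubini, and the Beta-function identity $\int_0^\infty(1-u^{2s-1})(1+u)^{-1-2s}\,du=0$ — a useful elaboration, but not a different route.
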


\begin{proof} We prove the result for $K_\Omega$ of the form \eqref{new-operator}, \eqref{new-kernel}-\eqref{new-kernel2}; the case \eqref{operator-reg} is simpler.
	
	By scaling, we may assume $r_0=1$. Given $x \in (0,1)$ and using the definition of $\overline{\varphi}$, we compute
	\[
	\begin{aligned}
	L_\Omega\overline{\varphi}(x) &= \int_0^\infty \left\{ x^{2s-1} - \eta(y)y^{2s-1} \right\}\,K_\Omega(x,y)\,dy \\
	&= \int_0^\infty \left\{ x^{2s-1} - y^{2s-1} \right\}\,K_\Omega(x,y)\,dy + \int_0^\infty y^{2s-1}(1- \eta(y))\,K_\Omega(x,y)\,dy := I_1 + I_2.
	\end{aligned}
	\]
	Now, by the symmetry and the scaling of the kernel $K_\Omega$ (see Section \ref{sec2}), it is easy to check that $L_\Omega(x^{2s-1}) = 0$ in $\RR_+$ and so $I_1(x) = 0$. On the other hand, we know that $\eta = 1$ in $[0,1]$ while $\eta = 0$ in $[2,\infty)$. Moreover, if we use that $1\leq y-x \leq y$ for all $x < 1 < 2 \leq y$,  it follows
	\[
	\begin{aligned}
	I_2(x) &= \int_{1}^{2} y^{2s-1}(1- \eta(y))\,K_\Omega(x,y)\,dy + \int_{2}^\infty y^{2s-1}(1- \eta(y))\,K_\Omega(x,y)\,dy \\
	&\geq \int_{2}^\infty y^{2s-1}(1- \eta(y))\,K_\Omega(x,y)\,dy = \int_{2}^\infty y^{2s-1}\,K_\Omega(x,y)\,dy \\
	&\geq c \int_{2}^\infty y^{2s-1} \frac{1+\log^-\left(\frac{x}{y-x}\right)}{(y-x)^{1+2s}}\,dy\\
	&= c \int_{2}^{\infty} \frac{y^{2s-1}}{(y-x)^{1+2s}}\,dy + c\int_{2}^{\infty} y^{2s-1}\,\frac{\log\left(\frac{y-x}{x}\right)}{(y-x)^{1+2s}} dy \\
	&\geq c \int_{2}^{\infty} y^{-2} dy + c\int_{2}^{\infty} y^{-2}\, \log\left(\frac{y-x}{x}\right) dy \geq c \int_{2}^{\infty} y^{-2} dy + c\int_{2}^{\infty} y^{-2}\, \log\left(\frac{1}{x}\right) dy \\
&\geq \overline{c} \left(1+ \log^-x\right).
	\end{aligned}
	\]
\end{proof}

We next show the following construction of subsolutions.

\begin{lem}(Subsolution for \eqref{eq-intro} and \eqref{eq-intro-reg})\label{Lemma:1DSubsolution}
	Let $N=1$, $\Omega=(0,\infty)$, and $s\in (\frac12,1)$.
	Let $L_\Omega$ and $K_\Omega$ be given by either \eqref{new-operator}, \eqref{new-kernel}-\eqref{new-kernel2} or \eqref{operator-reg}.
	 Given any $r_0 > 0$, let us consider $\eta \in C_0^\infty([0,2r_0))$ satisfying $0 \leq \eta \leq 1$, $\eta = 1$ in $[0,r_0]$ and $\zeta \in C_0^\infty((r_0,2r_0))$ satisfying $0 \leq \zeta \leq 1$ and $\zeta \not\equiv 0$.
	
	 Then, for any $\underline{c} \geq 0$, there exist $M > 0$ (depending on $\underline{c}$, $s$ and $r_0$) such that the function
	\[
	\underline{\varphi}(x) := \eta(x)x^{2s-1} + M\zeta(x)
	\]
	satisfies
	$$L_\Omega\underline{\varphi} \leq - \underline{c}\ \text{ in } \ (0,r_0).$$
	Moreover, if $L_\Omega$ and $K_\Omega$ are given by \eqref{new-operator}, \eqref{new-kernel}-\eqref{new-kernel2}, a logarithmic improvement can be done. That is,
	$$L_\Omega\underline{\varphi} \leq - \underline{c}\, \left(1+ \log^-\left(\frac{x}{r_0}\right)\right) \ \text{ in } \ (0,r_0).$$
\end{lem}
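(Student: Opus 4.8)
## Proof proposal for Lemma \ref{Lemma:1DSubsolution}

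The plan is to mirror the structure of the proof of Lemma \ref{Lemma:1DSupersolution}, now building a subsolution by combining the model function $\eta(x)x^{2s-1}$ with a large negative bump supported away from the boundary. As before, by scaling we reduce to $r_0=1$ and treat the kernel of the form \eqref{new-operator}, \eqref{new-kernel}--\eqref{new-kernel2} (the regional case being simpler, with no logarithmic factor). We write $\underline{\varphi} = \overline{\varphi} + M\zeta$ where $\overline{\varphi}(x) = \eta(x)x^{2s-1}$ is exactly the function from Lemma \ref{Lemma:1DSupersolution}, and compute $L_\Omega \underline{\varphi}(x) = L_\Omega\overline{\varphi}(x) + M\, L_\Omega\zeta(x)$ for $x\in(0,1)$ using linearity.

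First I would control $L_\Omega\overline{\varphi}$ from above. Following the decomposition $L_\Omega\overline{\varphi} = I_1 + I_2$ used in the previous lemma, we again have $I_1(x) = 0$ by scaling and symmetry of the kernel (since $L_\Omega(x^{2s-1})=0$ in $\R_+$). For $I_2(x) = \int_0^\infty y^{2s-1}(1-\eta(y))K_\Omega(x,y)\,dy$, we only need an \emph{upper} bound. Since $1-\eta$ is supported in $[1,\infty)$ and for $x\in(0,1)$ we have $|x-y|\geq y-1$ there, the singularity of the kernel is harmless; using the upper bound in \eqref{new-kernel-estimates}, namely $K_\Omega(x,y)\lesssim (1+\log^-(d_{x,y}/|x-y|))|x-y|^{-N-2s}$ with $d_{x,y}\asymp x$ near the boundary, one estimates $I_2(x) \leq C(1+\log^- x)$ by the same kind of integration carried out in Lemma \ref{Lemma:1DSupersolution} (split $\int_1^2$ and $\int_2^\infty$, bound $\log^-(x/(y-x))$ by $\log^-x$ plus an integrable remainder). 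Thus $L_\Omega\overline{\varphi}(x) \leq C_1(1+\log^- x)$ in $(0,1)$ for a constant $C_1$ depending only on $s$.

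Next I would produce the negative contribution from $M\,L_\Omega\zeta(x)$. For $x\in(0,1)$ and $y\in\mathrm{supp}\,\zeta\subset(1,2)$ we have $\zeta(x)=0$, so $L_\Omega\zeta(x) = -\int_1^2 \zeta(y)K_\Omega(x,y)\,dy =: -g(x) < 0$. The key point is a \emph{lower} bound $g(x) \geq c_0(1+\log^- x)$ on $(0,1)$: on $\mathrm{supp}\,\zeta$ the kernel satisfies $K_\Omega(x,y)\gtrsim (1+\log^-(d_{x,y}/|x-y|))|x-y|^{-N-2s}$, and since $d_{x,y}=\min\{d(x),d(y)\}\asymp x$ when $x$ is small while $|x-y|\asymp 1$, the logarithmic factor $1+\log^-(x/|x-y|)$ is comparable to $1+\log^- x$; integrating $\zeta(y)\gtrsim$ (a fixed positive function) against this over $(1,2)$ gives $g(x)\geq c_0(1+\log^- x)$. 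Choosing $M := (\,\underline{c} + C_1\,)/c_0$ then yields
\[
L_\Omega\underline{\varphi}(x) \leq C_1(1+\log^- x) - M c_0(1+\log^- x) \leq -\underline{c}\,(1+\log^- x) \quad\text{in }(0,1),
\]
which is the claimed logarithmic improvement; dropping the logarithm gives the plain bound $L_\Omega\underline{\varphi}\leq -\underline{c}$, and the regional case follows identically without the $\log^-$ factor. I expect the main technical point to be the two-sided matching of the logarithmic weight: one needs the upper estimate on $I_2$ and the lower estimate on $g$ to carry \emph{the same} power of $1+\log^- x$, which relies on the sharp constants in Proposition \ref{kernel-estimates} being used consistently — the upper bound where $\overline{\varphi}$ is tested and the lower bound where $\zeta$ is tested — rather than on any delicate cancellation.
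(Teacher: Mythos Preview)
Your approach is the same as the paper's: scale to $r_0=1$, write $L_\Omega\underline{\varphi}=L_\Omega\overline{\varphi}+M\,L_\Omega\zeta$, bound $L_\Omega\overline{\varphi}$ above by $C(1+\log^- x)$ and $-L_\Omega\zeta$ below by $c_0(1+\log^- x)$, then choose $M$ large. The lower bound on $g(x)=\int_1^2\zeta(y)K_\Omega(x,y)\,dy$ is handled exactly as you describe.

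There is one genuine gap in your upper bound for $I_2$, specifically the piece $\int_1^2 y^{2s-1}(1-\eta(y))K_\Omega(x,y)\,dy$. You write that ``since $|x-y|\geq y-1$ the singularity of the kernel is harmless,'' but this alone is not enough: it only gives $K_\Omega(x,y)\lesssim (y-1)^{-1-2s}(1+\log\text{-terms})$, and $(y-1)^{-1-2s}$ is \emph{not} integrable near $y=1$. What saves you is that $\eta\equiv 1$ on $[0,1]$ and $\eta\in C^\infty$, so $\eta'(1)=0$ and hence $1-\eta(y)\leq C(y-1)^2$ near $y=1$; this yields an integrand of order $(y-1)^{1-2s}$, which \emph{is} integrable since $s<1$. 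The paper makes exactly this observation to get a bound uniform in $x\in(0,1)$ (the dangerous regime is $x\uparrow 1$, where $\log^- x\to 0$ and you really need a finite constant). Once you insert this quadratic vanishing of $1-\eta$, your argument goes through and matches the paper's proof.
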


\begin{proof} We proceed as in the previous lemma, proving the result only in the case $L_\Omega$ and $K_\Omega$ are given by \eqref{new-operator}, \eqref{new-kernel}-\eqref{new-kernel2} and $r_0=1$. Given $x\in (0,1)$ and using the properties of $\zeta$ and the identity $L_\Omega(x^{2s-1}) = 0$ in $\RR_+$, we obtain
	\[
	\begin{aligned}
	L_\Omega\underline{\varphi}(x) &= L_\Omega\overline{\varphi}(x) + M L_\Omega\zeta(x) \\
	&= \int_{1}^{2} y^{2s-1}(1 - \eta(y))\,K_\Omega(x,y)\,dy + \int_{2}^\infty y^{2s-1}\,K_\Omega(x,y)\,dy - M \int_{1}^{2} \zeta(y)\,K_\Omega(x,y)\,dy \\
	&:= I_1(x) + I_2(x) - M\,I_3(x).
	\end{aligned}
	\]
	Now, we consider separately each of the three terms. That is,
	\begin{align*}
	I_1(x) &= \int_{1}^{2} y^{2s-1}(1 - \eta(y))\,K_\Omega(x,y)\,dy \\
	&\leq C \int_{1}^{2} y^{2s-1}(1 - \eta(y))\,\frac{1+\log^-\left(\frac{x}{y-x}\right)}{(y-x)^{1+2s}}\,dy \\
	&= C \int_{1}^{2} \frac{y^{2s-1}(1 - \eta(y))}{(y-x)^{1+2s}}\,dy + C\int_{1}^{2} y^{2s-1}(1 - \eta(y))\,\frac{\log\left(\frac{y-x}{x}\right)}{(y-x)^{1+2s}}\,\chi_{\{y>2x\}}\,dy \\
	&:= I_{11}(x)+I_{12}(x).
	\end{align*}
	On the one hand, we know the existence of two positive constants $\delta$ and $C$, such that
	\[
	1 - \eta(y) \leq C (y - 1)^2 \quad \text{ for all } y \in [1,1 + \delta).
	\]
	This follows from the fact that $\eta'(1) = 0$ and that $\eta''(1)$ is bounded (notice that $\delta$ and $C$ depend only on $\eta''$). Consequently, since $x \in (0,1)$, we have $y-x \geq y - 1$ and, moreover, when $y \in (1+\delta,2)$ we have $y-x \geq \delta$. Thus,
	\[
	\begin{aligned}
	I_{11}(x) &= C\int_{1}^{1 + \delta} \frac{y^{2s-1}(1 - \eta(y))}{(y-x)^{1 + 2s}}dy + C\int_{1 + \delta}^{2} \frac{y^{2s-1}(1 - \eta(y))}{(y-x)^{1 + 2s}}dy \\
	&\leq C\int_{1}^{1 + \delta} y^{2s-1} (y-1)^{1 - 2s} dy + C\,\delta^{-1-2s}\int_{1 + \delta}^{2} y^{2s-1} dy < C < +\infty.
	\end{aligned}
	\]
	On the other hand, taking into account that $y-x \geq y - 1/2 \geq 1/2$ when $x \leq 1/2$, whilst $y-x > x > 1/2$ when $x > 1/2$ and $y > 2x$ we arrive at	
	\begin{align*}
	I_{11}(x) &\leq C\int_{1}^{2} \frac{\log\left(\frac{y-x}{x}\right)}{(y-x)^{1+2s}}\,\chi_{\{y>2x\}}\,dy \leq C \int_{1}^{2} \log\left(\frac{2}{x}\right)\,dy \\
	&\leq C \left( 1+ \log\left(\frac{1}{x}\right) \right) \leq C \left( 1 + \log^- x\right).
	\end{align*}
	Thus, we obtain
	$$ I_1(x) \leq C_1 \left( 1 + \log^- x\right). $$
	
	Next, we proceed with the estimate of the term $I_2$. That is, since $y - x \geq y/2$ when $x\leq 1 <2 \leq y$ we get
	\[
	\begin{aligned}
	I_2(x) &= \int_{2}^{\infty} y^{2s-1}\,K_\Omega(x,y)\,dy \leq C \int_{2}^{\infty} y^{2s-1}\,\frac{1+\log^-\left(\frac{x}{y-x}\right)}{(y-x)^{1+2s}}\,dy \\
	&\leq C \int_{2}^{\infty} y^{-2} \left( 1+\log\left(\frac{y-x}{x}\right)\right) dy \\
	&\leq C \int_{2}^{\infty} y^{-2}\left( 1+\log\left(\frac{y}{x}\right)\right)\,dy \leq C \left( 1 - \log x \right) \int_{2}^{\infty} y^{-2}\left( 1+\log y\right)\,dy \\ &\leq C_2 \left( 1 + \log^- x\right).
	\end{aligned}
	\]

Finally, we consider $I_3$. By using again again that $1/2 \leq y-x \leq 2$ when $2x<y<2$, we arrive at
	\[
	\begin{aligned}
	I_3(x) &= \int_{1}^{2} \zeta(y)\,K_\Omega(x,y)\,dy \geq c \int_{1}^{2} \zeta(y)\,\frac{1+\log^-\left(\frac{x}{y-x}\right)}{(y-x)^{1+2s}}\,dy \\
	&\geq c\int_{1}^{2} \zeta(y)\,\frac{1+\log\left(\frac{y-x}{x}\right)\chi_{\{y>2x\}}}{(y-x)^{1+2s}}\,dy \\
	&\geq c\int_{1}^{2} \zeta(y)\,\left( 1+\log\left(\frac{y-x}{x}\right)\chi_{\{1>2x\}} \right) \,dy \\
	&\geq c\int_{1}^{2} \zeta(y)\,\left( 1+\log\left(\frac{1}{2x}\right)\chi_{\{1>2x\}} \right) \,dy \geq c\left( 1+\log\left(\frac{1}{2x}\right)\chi_{\{1>2x\}} \right) \\
	&\geq C_3 \left(1 + \log^-x\right).
	\end{aligned}
	\]
Therefore, as a consequence of the previous computations, for all $x \in (0,1)$ and all $\underline{c} \geq 0$, we obtain
	\[
	L_\Omega \underline{\varphi}(x) = I_1 + I_2 - M I_3 \leq  (C_1+C_2- M C_3) \left(1+ \log^-x\right) := - \underline{c} \left(1+ \log^-x\right),
	\]
	if we take $M > 0$ large enough, depending only on $s$ and $\underline{c}$.
\end{proof}

\subsection{A 1D boundary Harnack}

We now prove a boundary Harnack estimate in dimension 1, by using the previous sub/supersolutions and following the general steps from \cite{RS-Dir}.

For any $R > 0$, we define
\[
I_R := (0,R) \quad \text{ and } \quad I^+_R := (R/4,R/2).
\]
The first step is the following.

\begin{lem} \label{Lemma:InfInfBH}
Let $N=1$, $\Omega=(0,\infty)$, $s\in (\frac12,1)$, and $K_0\geq 0$. Assume that either $L_\Omega$ and $K_\Omega$ are given by \eqref{operator-reg} and $u$ satisfies
\[
\begin{cases}
L_\Omega u \geq -K_0 \quad &\text{ in } I_R,  \\
u \geq 0    \quad &\text{ in } \RR_+,
\end{cases}
\]
or $L_\Omega$ and $K_\Omega$ are given by  \eqref{new-operator}-\eqref{new-kernel}-\eqref{new-kernel2} and $u$ satisfies
\[
\begin{cases}
L_\Omega u \geq -K_0 \left[1 + \log^-\left(\frac{x}{R}\right)\right] \quad &\text{ in } I_R,  \\
u \geq 0    \quad &\text{ in } \RR_+.
\end{cases}
\]
Then, there exists $C > 0$ depending only on $s$, such that
\begin{equation}\label{eq:IneqBH1}
\inf_{x \in I_R^+} \frac{u(x)}{x^{2s-1}} \leq C \bigg[ \inf_{x \in I_{R/4}} \frac{u(x)}{x^{2s-1}} + K_0R  \bigg].
\end{equation}
\end{lem}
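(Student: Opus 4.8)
The plan is to read \eqref{eq:IneqBH1} backwards: since $u$ is a supersolution (with a controlled right-hand side), I expect to \emph{propagate a lower bound for $u(x)/x^{2s-1}$ from the interior annulus $I_R^+$, where $x^{2s-1}\asymp R^{2s-1}$, down to the boundary strip $I_{R/4}$}. The tool is the subsolution barrier $\underline\varphi$ of Lemma~\ref{Lemma:1DSubsolution}, which behaves like $x^{2s-1}$ near $0$, used as a \emph{lower} barrier: comparing $u$ from below with a multiple $\lambda\underline\varphi$, the difference $u-\lambda\underline\varphi$ becomes a supersolution as soon as $\lambda$ dominates $K_0$, and the minimum principle then pushes the inequality $u\ge\lambda\underline\varphi$ from $\RR_+\setminus I_{R/4}$ into $I_{R/4}$.

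First I would reduce to $R=1$ by scaling: under $x\mapsto Rx$ one has $L_\Omega u_R(y)=R^{2s}L_\Omega u(Ry)$ and the logarithmic weight is scale invariant, so $K_0$ becomes $R^{2s}K_0$, while $u_R(y)/y^{2s-1}=R^{2s-1}u(Ry)/(Ry)^{2s-1}$ produces exactly the residual factor $R$ in \eqref{eq:IneqBH1}. With $R=1$, set $A:=\inf_{I_1^+}u(x)/x^{2s-1}$, so that $u\ge A\,x^{2s-1}$ a.e.\ in $(1/4,1/2)$ and $u\ge0$ in $\RR_+$. Fix $\underline c=1$ and take $\underline\varphi=\eta\,x^{2s-1}+M\zeta$ from Lemma~\ref{Lemma:1DSubsolution} with $r_0=1/4$, where $M=M(s)$, $\eta\equiv1$ on $[0,1/4]$ with $\supp\eta\subset[0,1/2)$, and $\zeta\in C^\infty_0((1/4,1/2))$; thus $\underline\varphi=x^{2s-1}$ on $[0,1/4]$, $\underline\varphi\equiv0$ on $[1/2,\infty)$, $0\le\underline\varphi\le x^{2s-1}+M$ on $(1/4,1/2)$, and $L_\Omega\underline\varphi\le-(1+\log^-(4x))<0$ in $(0,1/4)$ (in case \eqref{operator-reg}, simply $L_\Omega\underline\varphi\le-1$).

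Then I would split into two cases. Let $\kappa_1:=\tfrac12(1/4)^{2s-1}\big((1/4)^{2s-1}+M\big)^{-1}\in(0,1)$, depending only on $s$. If $A\le 2\kappa_1^{-1}K_0$, then \eqref{eq:IneqBH1} holds with $C=2\kappa_1^{-1}$ just because $u\ge0$. Otherwise $A>2\kappa_1^{-1}K_0$; set $\lambda:=\kappa_1 A$. On the one hand, $\lambda\underline\varphi\le u$ a.e.\ in $\RR_+\setminus(0,1/4)$: trivially on $[1/2,\infty)$ since $\underline\varphi\equiv0$, and on $(1/4,1/2)$ because $\lambda\underline\varphi(x)\le\kappa_1 A(x^{2s-1}+M)\le\tfrac12A\,x^{2s-1}\le u(x)$ a.e.\ (using $x^{2s-1}\ge(1/4)^{2s-1}$ there and the definition of $\kappa_1$), so in fact $w:=u-\lambda\underline\varphi\ge\tfrac12 A\,x^{2s-1}>0$ a.e.\ in $(1/4,1/2)$. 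On the other hand, $\lambda\cdot\underline c=\kappa_1 A>2K_0$, which, up to an absolute constant comparing $1+\log^-x$ with $1+\log^-(4x)$ on $(0,1/4)$, makes $L_\Omega w=L_\Omega u-\lambda L_\Omega\underline\varphi\ge0$ in $(0,1/4)$ with Neumann condition on $\{0\}$, in the sense of Definition~\ref{Def:WeakSolNeumann}. Combining $w\ge0$ in $\RR_+\setminus(0,1/4)$ with the comparison principle for weak supersolutions under Neumann conditions — obtained by testing the inequality for $w$ against $w_-$ — gives $w\ge0$ in $\RR_+$, hence $u(x)\ge\kappa_1 A\,x^{2s-1}$ a.e.\ in $(0,1/4)$, so $\inf_{I_{1/4}}u/x^{2s-1}\ge\kappa_1 A$, which rearranges to \eqref{eq:IneqBH1}. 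The regional case \eqref{operator-reg} is identical once all logarithmic factors are dropped.

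I expect the main obstacle to be making this comparison step fully rigorous in the weak Neumann framework. One must pass from the \emph{pointwise} inequality $L_\Omega\underline\varphi\le-\underline c(1+\log^-(4x))$ furnished by Lemma~\ref{Lemma:1DSubsolution} to the \emph{weak} supersolution inequality for $w$, i.e.\ verify the integration by parts $B(\underline\varphi,\varphi)=\int\varphi\,L_\Omega\underline\varphi$ for nonnegative $\varphi\in C^\infty_0(B_{1/4})$; this requires $L_\Omega\underline\varphi\in L^1_{\mathrm{loc}}$ near $0$ (it is $\lesssim\log^-x$ there) and, crucially, $\underline\varphi\in H_K(\RR_+)$, which is precisely where the hypothesis $s>\tfrac12$ enters, since $x^{2s-1}\in H^s_{\mathrm{loc}}$ near $0$ exactly when $s>\tfrac12$. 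One must also check that $w_-$ is an admissible test function in the Neumann comparison principle, i.e.\ that it lies in $H_K(\RR_+)$ and vanishes on $\RR_+\setminus(0,1/4)$; the latter is guaranteed by the strict inequality $w>0$ a.e.\ in $(1/4,1/2)$ obtained above.
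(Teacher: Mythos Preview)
Your proposal is correct and follows essentially the same barrier-plus-comparison strategy as the paper: reduce to $R=1$, use the subsolution $\underline\varphi$ of Lemma~\ref{Lemma:1DSubsolution} with $r_0=1/4$ as a lower barrier supported in $[0,1/2)$, check $\lambda\underline\varphi\le u$ on $[1/4,\infty)$ from the definition of the interior infimum, and then apply the comparison principle in $I_{1/4}$.

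The only organizational difference is in how the right-hand side $K_0$ is absorbed. The paper first proves the case $K_0=0$ (using $\underline\varphi$ with $\underline c=0$), and then reduces $K_0>0$ to $K_0=0$ by replacing $u$ with $u+\kappa_0\overline\varphi$, where $\overline\varphi$ is the \emph{supersolution} from Lemma~\ref{Lemma:1DSupersolution} and $\kappa_0=K_0/\overline c$; this makes the new function a nonnegative supersolution of $L_\Omega\,\cdot\,\ge 0$. You instead take $\underline c=1$ in Lemma~\ref{Lemma:1DSubsolution} and let the strict subsolution property of $\lambda\underline\varphi$ absorb the $-K_0(1+\log^-x)$ term directly, after a dichotomy on the size of $A$ versus $K_0$. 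Both routes are equivalent in spirit; yours avoids invoking Lemma~\ref{Lemma:1DSupersolution} at the cost of the (harmless) constant comparison $1+\log^-x\le(1+\log 4)\bigl(1+\log^-(4x)\bigr)$ on $(0,1/4)$---so your threshold $\lambda>2K_0$ should really be $\lambda\ge(1+\log 4)K_0$, which only changes $\kappa_1$ by an absolute factor. Your closing remarks about the weak formulation (integrability of $L_\Omega\underline\varphi$, $\underline\varphi\in H_K$ thanks to $s>\tfrac12$, and admissibility of $w_-$ as test) are well taken and are exactly the points the paper leaves implicit when it says ``applying the comparison principle''.
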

\begin{proof} We prove the result for $K_\Omega$ of the form \eqref{new-operator}, \eqref{new-kernel}-\eqref{new-kernel2} since the case \eqref{operator-reg} is completely analogous.
	
By scaling properties we may assume $R = 1$. The general case is recovered by applying \eqref{eq:IneqBH1} (with $R = 1$) to the function $u_R(x) := R^{-2s}u(Rx)$, $R > 0$.

\emph{Step 1.} Assume $K_0 = 0$. Let us define
\[
m := \inf_{x \in I_1^+} \frac{u(x)}{x^{2s-1}} \geq 0.
\]
If $m = 0$, the thesis follows immediately. So, assume $m > 0$. In this case, it holds
\[
u(x) \geq m x^{2s-1} \geq m r_0^{2s-1} \quad \text{ in } I_1^+.
\]
Now, for any $\varepsilon > 0$, we define
\[
\varphi(x) := \varepsilon \underline{\varphi}(x),
\]
where $\underline{\varphi}$ is the subsolution constructed in Lemma \ref{Lemma:1DSubsolution} for $r_0=1/4$ and $\underline{c} = 0$, satisfying $L_\Omega \underline{\varphi} \leq 0$ in $I_{1/4}$, and $\supp(\underline{\varphi}) \subset I_{1/2}$. Consequently, $\varphi$ is a subsolution in $I_{1/4}$ for any $\varepsilon > 0$ and, furthermore,
\[
\varphi(x) = \varepsilon [\eta(x)x^{2s-1} + M \zeta(x)] \leq \varepsilon (2^{1-2s} + M) \leq m 4^{1-2s} \leq u(x),
\]
for all $x \in [1/4,1/2)$, whenever $0 < \varepsilon \leq \varepsilon_0 := m 4^{1-2s} / (2^{1-2s} + M)$. Thus, choosing $\varepsilon = \varepsilon_0$ and recalling that $u$ is nonnegative, it follows that $\varphi \leq u$ in $[1/4,+\infty)$ and so applying the comparison principle in $I_{1/4}$ we obtain
\[
\varepsilon_0 x^{2s-1} = \varphi (x) \leq u(x) \quad \text{ in } I_{1/4}.
\]
Taking $C = (2^{1-2s} + M)/4^{1-2s}$ and using the definition of $\varepsilon_0$, it easily follows
\[
m \leq C \inf_{x \in I_{1/4}} \frac{u(x)}{x^{2s-1}},
\]
and the proof in the case $K_0 = 0$ is completed.

\emph{Step 2.} Assume $K_0 > 0$. For any $\kappa_0 > 0$, we define
\[
v(x) := \kappa_0 \overline{\varphi}(x) + u(x) = \kappa_0 x^{2s-1} + u(x) \quad \text{ in } I_1,
\]
where $\overline{\varphi}$ is the supersolution constructed in Lemma \ref{Lemma:1DSupersolution} (with $r_0 =1$ and $r_1 = 2$), satisfying $L_\Omega \overline{\varphi} \geq \overline{c} \left( 1 + \log^-x \right)$ in $I_1$, for some universal constant $\overline{c} > 0$, and $\supp(\underline{\varphi}) \subset I_2$. Thus, choosing $\kappa_0 = K_0/\overline{c}$ and recalling that $\overline{\varphi}$ is nonnegative, it follows
\[
\begin{cases}
L_\Omega v \geq 0 \quad &\text{ in } I_1 \\
v \geq 0    \quad &\text{ in } \RR_+.
\end{cases}
\]
Hence, we can apply \emph{Step 1} to the function $v$ to conclude the existence of a constant $C > 0$ (depending on $s$) such that
\[
\inf_{x \in I_1^+} \frac{v(x)}{x^{2s-1}} \leq C \inf_{x \in I_{1/4}} \frac{v(x)}{x^{2s-1}}.
\]
Finally, \eqref{eq:IneqBH1} follows easily since $v(x) = \kappa_0 x^{2s-1} + u(x)$ in $I_1$. Notice that the constant $C > 0$ changes passing from $v$ to~$u$.
\end{proof}

We will also need the following, which follows from the interior Harnack inequality (see for instance \cite{DKP}).

\begin{lem}  \label{Lemma:InteriorHarnack1D}
	Let $N=1$, $\Omega=(0,\infty)$, and $s\in (\frac12,1)$.
	Let $L_\Omega$ and $K_\Omega$ be given by either \eqref{new-operator}-\eqref{new-kernel}-\eqref{new-kernel2}, or \eqref{operator-reg}.
	Assume that
\[
\begin{cases}
|L_\Omega u| \leq K_0\left( 1 + \log^- \left(\frac{x}{R}\right) \right) \quad &\text{ in } I_R \\
u \geq 0    \quad &\text{ in } \RR_+,
\end{cases}
\]
for some $K_0 \geq 0$. Then there exists $C > 0$ depending only on $s$, such that
\begin{equation}\label{eq:IneqBH2}
\sup_{x \in I_R^+} \frac{u(x)}{x^{2s-1}} \leq C \bigg[ \inf_{x \in I_R^+} \frac{u(x)}{x^{2s-1}} + K_0R  \bigg].
\end{equation}
\end{lem}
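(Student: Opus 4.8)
The strategy is to compare $u$ with a suitable auxiliary function that removes the right-hand side, reduce to the case $L_\Omega u = 0$, and then feed this into the interior Harnack inequality on a chain of interior balls that cover $I_R^+$. By scaling (replacing $u$ by $u_R(x) = R^{-2s}u(Rx)$, which transforms $L_\Omega u$ by $R^{-2s}$ and the logarithmic weight $1+\log^-(x/R)$ into $1+\log^- x$), it suffices to treat $R=1$. Moreover the estimate \eqref{eq:IneqBH2} is of the form ``$\sup \lesssim \inf + K_0$'', so — exactly as in \emph{Step 2} of Lemma \ref{Lemma:InfInfBH} — we can add $\kappa_0 \overline{\varphi}$ (with $\overline{\varphi}$ the supersolution from Lemma \ref{Lemma:1DSupersolution} and $\kappa_0 = K_0/\overline c$) to kill the negative part of the right-hand side, and subtract the same kind of barrier (using the subsolution $\underline\varphi$ from Lemma \ref{Lemma:1DSubsolution}, or simply $\kappa_0\underline\varphi$) to kill the positive part; since both barriers are comparable to $x^{2s-1}$ on $I_1^+$ and vanish in $\Omega\setminus I_2$, after these two steps we may assume $L_\Omega u = 0$ in $I_1$, at the cost of changing the constant $C$ and picking up the additive $K_0$ term (one checks $\overline\varphi(x),\underline\varphi(x)\asymp x^{2s-1}$ for $x\in I_1^+$, so both $\sup_{I_1^+}(\cdot)/x^{2s-1}$ and $\inf_{I_1^+}(\cdot)/x^{2s-1}$ change only by $O(K_0)$).

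So it remains to prove: if $L_\Omega u = 0$ in $I_1$ and $u\geq 0$ in $\RR_+$, then $\sup_{I_1^+}u/x^{2s-1}\leq C\inf_{I_1^+}u/x^{2s-1}$ with $C = C(s)$. On the interval $I_1^+ = (1/4,1/2)$ the weight $x^{2s-1}$ is bounded above and below by constants depending only on $s$, so this is equivalent to $\sup_{I_1^+}u\leq C\inf_{I_1^+}u$, i.e. a genuine Harnack inequality on $I_1^+$. Now $u$ solves $L_\Omega u = 0$ in $I_1$, and on any ball $B_\rho(x_*)$ with $B_{2\rho}(x_*)\subset I_1$ the kernel $K_\Omega(x,y)$ is comparable to $|x-y|^{-1-2s}$ by \eqref{new-kernel-estimates}/\eqref{K2} (the logarithmic factor $\log^-(d_{x,y}/|x-y|)$ is bounded on such an interior ball since $d_{x,y}$ is bounded below there), so $L_\Omega$ satisfies the hypotheses of the interior Harnack inequality for nonlocal operators with comparable kernels — this is \cite{DKP} (or \cite{DKP,DKP} style results); one must be slightly careful that the nonlocal tail $\int_{\RR_+\setminus B_{2\rho}(x_*)} u\, K_\Omega(x,y)$ is controlled, which is immediate because $u\geq 0$ everywhere in $\RR_+$ so the tail term only helps (the relevant form of the interior Harnack for nonnegative supersolutions requires exactly $u\geq 0$ in all of $\RR_+$, which is our hypothesis). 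Covering $\overline{I_1^+}$ by finitely many such interior balls $B_{\rho}(x_i)$ (the number depending only on $s$, since $\mathrm{dist}(I_1^+,\{0\})=1/4$ and $I_1^+\subset I_1$) and chaining the Harnack inequalities along overlapping balls gives $\sup_{I_1^+}u\leq C(s)\inf_{I_1^+}u$, as desired.

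**Main obstacle.** The only delicate point is making sure the interior Harnack inequality from \cite{DKP} applies cleanly: those results are typically stated for operators whose kernels are globally comparable to $|x-y|^{-N-2s}$, whereas here $K_\Omega$ has a logarithmic enhancement near $\partial\Omega$ and (in the regional case) is only defined on $\Omega\times\Omega$. However, restricted to a compactly-contained-in-$I_1$ ball, $K_\Omega$ \emph{is} comparable to $|x-y|^{-1-2s}$ with constants depending only on $s$ and the distance $1/4$ to the boundary, and the extra mass of the kernel (both the log enhancement and, in the non-regional case, the contribution of $k_\Omega$) is nonnegative, so it only improves the lower bounds needed in the Harnack proof while the upper tail bound uses $u\geq 0$. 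Thus the classical interior Harnack argument goes through verbatim; one just has to cite it in the right form for nonnegative functions. Since Lemma \ref{Lemma:InfInfBH} already builds the two barriers and does the ``add a supersolution'' trick, the present lemma is essentially a bookkeeping combination of that idea with a black-box interior Harnack, and no new analytic difficulty arises.
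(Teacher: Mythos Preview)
Your overall strategy---reduce to $R=1$ by scaling, observe that $x^{2s-1}$ is bounded above and below on $I_1^+=(1/4,1/2)$, and invoke an interior Harnack inequality---is exactly the paper's. However, the barrier step you insert is both unnecessary and not correct as written. Adding $\kappa_0\overline\varphi$ turns $u$ into a \emph{supersolution} ($L_\Omega(u+\kappa_0\overline\varphi)\geq 0$), and subtracting $\kappa_0\overline\varphi$ (or $\kappa_0\underline\varphi$) turns it into a \emph{subsolution}; neither operation, nor any combination of them, yields a function that simultaneously satisfies $L_\Omega(\cdot)=0$ \emph{and} stays nonnegative in $\RR_+$. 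So the phrase ``after these two steps we may assume $L_\Omega u=0$'' does not follow.

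The paper sidesteps this entirely with a much simpler observation: the interior Harnack in \cite{DKP} is stated for equations with a right-hand side, yielding directly
\[
\sup_{I_1^+} u \leq C\Big[\inf_{I_1^+} u + \|f\|_{L^\infty}\Big],
\]
and since $I_1^+$ is compactly contained in $I_1$ (indeed $\log^- x$ is bounded on $[1/8,1]$), the right-hand side $K_0(1+\log^- x)$ is bounded by $CK_0$ on the relevant region. There is no need to reduce to $f=0$. Your remaining remarks---that $K_\Omega$ is comparable to $|x-y|^{-1-2s}$ on interior balls, that the nonnegative tail only helps, and that one can chain finitely many balls across $I_1^+$---are all correct and implicit in the paper's one-line appeal to \cite{DKP}. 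Drop the barrier detour and your argument matches the paper's.
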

\begin{proof} Again, it is enough to prove the case $R=1$.
Inequality \eqref{eq:IneqBH2} easily follows from the interior Harnack inequality (see (2.2)-(2.3) in \cite{DKP})
\[
\sup_{x \in I_1^+} u(x) \leq C \bigg[ \inf_{x \in I_1^+} u(x) + K_0 \bigg],
\]
and using that $x \in (1/4,1/2)$, and that $\log^- x$ is bounded in $[1/8,1]$.
\end{proof}

We can now prove the oscillation decay for the quotient $u/x^{2s-1}$.

\begin{lem} \label{Lemma:OscDecayBH}
	Let $N=1$, $\Omega=(0,\infty)$, $s\in (\frac12,1)$, and $K_0\geq 0$. Assume that either $L_\Omega$ and $K_\Omega$ are given by \eqref{operator-reg} and u satisfies
\[
\begin{cases}
|L_\Omega u| \leq K_0 \quad \text{ in } I_2 \\
u(0) = 0,
\end{cases}
\]
or $L_\Omega$ and $K_\Omega$ are given by  \eqref{new-operator}-\eqref{new-kernel}-\eqref{new-kernel2} and  $u$ satisfies
\[
\begin{cases}
|L_\Omega u| \leq K_0\left( 1 + \log^- x \right) \quad \text{ in } I_2 \\
u(0) = 0.
\end{cases}
\]
Moreover, assume that $u$ satisfies the growth condition
\begin{equation}\label{eq:GrowthCondBH}
|u(y)| \leq c_0 (1 + y^{2s - \varepsilon_0}), \quad \text{ for all } y > 0,
\end{equation}
for some $c_0 > 0$, $\varepsilon_0 > 1$. Then there exist $\alpha \in (0,1)$ and $C > 0$ (depending on $s$, $c_0$ and $\varepsilon_0$), such that
\begin{equation}\label{eq:IneqBH}
\sup_{x \in I_R} \frac{u(x)}{x^{2s-1}} - \inf_{x \in I_R} \frac{u(x)}{x^{2s-1}}\leq C R^\alpha \left[ \|u\|_{L^\infty(I_2)} + K_0 \right] ,
\end{equation}
for all $R \in (0,1]$.
\end{lem}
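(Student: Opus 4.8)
\emph{Sketch of the proof.} The plan is to run a dyadic oscillation--decay iteration for the quotient $w(x):=u(x)/x^{2s-1}$, built out of the one--sided boundary Harnack inequality of Lemma~\ref{Lemma:InfInfBH} together with the interior Harnack inequality of Lemma~\ref{Lemma:InteriorHarnack1D}. This is the boundary analogue, for the weighted quantity $u/x^{2s-1}$, of the oscillation decay behind Theorem~\ref{Theorem:HolderRegularity1}, and it follows the general strategy of \cite{RS-Dir}. Dividing $u$ by $\|u\|_{L^\infty(I_2)}+K_0$ (if this vanishes, $u\equiv0$), we may assume $\|u\|_{L^\infty(I_2)}+K_0=1$. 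A preliminary comparison of $u$ with suitable multiples of the barriers $\overline\varphi$, $\underline\varphi$ of Lemmas~\ref{Lemma:1DSupersolution}--\ref{Lemma:1DSubsolution} (recall $L_\Omega(x^{2s-1})=0$ in $\mathbb R_+$) yields $|u(x)|\le C\,x^{2s-1}$ in $I_1$, so that $w$ is bounded on $I_1$; together with the growth hypothesis \eqref{eq:GrowthCondBH}, which gives $|w(y)|\le c_0\,y^{-\delta_0}$ for $y\ge1$ with $\delta_0:=\min\{2s-1,\,\varepsilon_0-1\}>0$ (here $\varepsilon_0>1$ and $2s-1>0$ are used), $w$ is bounded on all of $\mathbb R_+$ and decays at infinity.

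The core is to construct, by induction on $k\ge0$, a non--decreasing sequence $(m_k)$ and a non--increasing sequence $(M_k)$ with $m_k\le w\le M_k$ a.e.\ in $I_{\rho_k}$ (here $\rho_k:=\lambda^{-k}$ for a fixed large $\lambda$) and $M_k-m_k\le C_1\rho_k^{\alpha}$, for appropriate $C_1>0$ and $\alpha\in(0,1)$; the base step $k=0$ is the boundedness of $w$ on $I_1$ with $C_1$ large. In the inductive step one works with $v_1:=u-m_k\,x^{2s-1}$ and $v_2:=M_k\,x^{2s-1}-u$, which are nonnegative on $I_{\rho_k}$ and still solve $L_\Omega v_i=\pm f$ (again by $L_\Omega(x^{2s-1})=0$, so subtracting a multiple of $x^{2s-1}$ is harmless; equivalently, $w$ solves a degenerate nonlocal equation with kernel $x^{2s-1}y^{2s-1}K_\Omega(x,y)$, for which constants are harmonic). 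Since $v_1,v_2$ are not globally nonnegative, before invoking the Harnack lemmas one must correct $m_k\,x^{2s-1}$, resp.\ $M_k\,x^{2s-1}$, outside $I_{\rho_k}$ so that they stay below $u$, resp.\ above $u$, on all of $\mathbb R_+$; the bounds at coarser scales and the decay of $w$ coming from \eqref{eq:GrowthCondBH} make this possible, and the price is an extra right--hand side of the form $K_0'\bigl(1+\log^-(x/\rho_k)\bigr)$ in $I_{\rho_k}$ --- precisely the structure handled by Lemmas~\ref{Lemma:InfInfBH} and \ref{Lemma:InteriorHarnack1D}, which also covers the regional kernel \eqref{operator-reg} (with no logarithm).

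Applying Lemma~\ref{Lemma:InteriorHarnack1D} to each $v_i$ and then Lemma~\ref{Lemma:InfInfBH} to transfer the resulting infima from $I_{\rho_k}^+$ down to $I_{\rho_{k+1}}$, and using that $v_1/x^{2s-1}+v_2/x^{2s-1}\equiv M_k-m_k$ on $I_{\rho_k}^+$ (so the two suprema over $I_{\rho_k}^+$ add up to at least $M_k-m_k$), one gets a reduction $M_{k+1}-m_{k+1}\le(1-\theta)(M_k-m_k)+(\text{error})$ with a fixed $\theta\in(0,1)$, where the error comes from $K_0$ and from the nonlocal tails of the corrected functions. Choosing $\alpha$ small --- equivalently, spreading the oscillation reduction over many dyadic scales, so that the tail interference from the scales just outside $I_{\rho_k}$ stays summable and does not absorb the gain $\theta(M_k-m_k)$ --- and then $C_1$ large, the induction closes and yields $M_k-m_k\le C_1\rho_k^\alpha$ for all $k$. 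Finally, for $R\in(\rho_{k+1},\rho_k]$ one has $I_R\subset I_{\rho_k}$, hence $\osc_{I_R}w\le M_k-m_k\le C_1\rho_k^\alpha\le\lambda^\alpha C_1 R^\alpha$; undoing the normalization gives \eqref{eq:IneqBH}.

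\emph{Main difficulty.} The delicate point --- and the reason for the logarithmically improved barriers of Lemmas~\ref{Lemma:1DSupersolution}--\ref{Lemma:1DSubsolution} --- is the nonlocal tail: the natural comparison functions $u\mp(\mathrm{const})\,x^{2s-1}$ cannot be made globally signed (precisely because \eqref{eq:GrowthCondBH} forces $w$ to decay at infinity while $x^{2s-1}$ grows), so one must modify them outside the working interval in a way that simultaneously (i) preserves the super/sub\-solution property required by Lemmas~\ref{Lemma:InfInfBH} and \ref{Lemma:InteriorHarnack1D}, with a right--hand side of the admissible $1+\log^-(x/R)$ type, and (ii) produces a perturbation small enough, after iteration, not to destroy the geometric oscillation decay. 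Controlling this interference, together with carrying the $\log^-$ weight uniformly so that the kernels \eqref{new-kernel} and \eqref{operator-reg} are treated in parallel, is the technical heart of the argument; a measure--type dichotomy as in the proof of Theorem~\ref{Theorem:HolderRegularity1} may also be needed in the inductive step.
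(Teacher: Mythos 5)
Your proposal is correct and follows essentially the same route as the paper: a dyadic oscillation--decay iteration for $w=u/x^{2s-1}$ on nested intervals $I_{\vartheta^{-k}}$, built from the boundary Harnack of Lemma~\ref{Lemma:InfInfBH} and the interior Harnack of Lemma~\ref{Lemma:InteriorHarnack1D}, with the base case established by comparison against the barriers of Lemmas~\ref{Lemma:1DSupersolution}--\ref{Lemma:1DSubsolution}, and the inductive step running on $u-m_k x^{2s-1}$ and $M_k x^{2s-1}-u$ with the two Harnack estimates added so that $v_1/x^{2s-1}+v_2/x^{2s-1}\equiv M_k-m_k$ closes the loop. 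Two small remarks: what you describe as ``correcting $m_k x^{2s-1}$ outside $I_{\rho_k}$'' is implemented in the paper more simply by passing to the positive part $u_k^+$ of $u_k:=u-m_k x^{2s-1}$ and estimating $|L_\Omega u_k^-|$ pointwise from the tail bounds at coarser scales (these are the same modification, just packaged differently), and the coefficient of this error is $\varepsilon_0(\alpha)\to0$ as $\alpha\to0$, which is how the geometric decay is rescued rather than by ``spreading over many scales''; and the measure-type dichotomy you float at the end is not needed here --- the pair-summation of Harnack inequalities already yields the oscillation reduction without any measure estimate.
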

\begin{proof} As in the previous results we are only proving it in the case $L_\Omega$ and $K_\Omega$ are given by  \eqref{new-operator}-\eqref{new-kernel}-\eqref{new-kernel2}.
	
Let us fix $\vartheta = 4$ and $R=1$. Similar to the approach followed in the proof of Theorem \ref{Theorem:HolderRegularity1}, we construct a non-decreasing sequence $(m_n)_{n \in \NN}$ and a non-increasing sequence $(M_n)_{n \in \NN}$ such that
\begin{equation}\label{eq:AuxSeqBH}
\begin{aligned}
&m_n \leq \frac{u(y)}{y^{2s-1}} \leq M_n \quad \text{ for a.e. } y \in I_{\vartheta^{-n}} \\
&M_n - m_n = K \vartheta^{-n\alpha},
\end{aligned}
\end{equation}
for all $n \in \NN$, some $\alpha \in (0,1)$ and $K > 0$ to be suitably determined. We proceed by induction on $n \in \NN$.

\emph{Step 1.} We prove the case $n = 0$. Let $\eta \in C_0^\infty([0,2))$ satisfying $0 \leq \eta \leq 1$ and $\eta = 1$ in $[0,1]$ and define
\[
v(x) = \eta(x)u(x), \quad x \geq 0.
\]
Notice that for any $x \in I_1 = (0,1)$, we have $v(x) = u(x)$ and, furthermore,
\[
\begin{aligned}
|L_\Omega v(x)| &\leq |L_\Omega u(x)| + \int_1^2 \frac{|u(y)|[1 - \eta(y)]}{(y-x)^{1+2s}} dy + \int_2^{\infty} \frac{|u(y)|}{(y-x)^{1+2s}} dy \\
&\leq K_0(1 + \log^- x) + K_1(1 + \log^- x) + K_2(1 + \log^- x) := \overline{K}_0(1 + \log^- x),
\end{aligned}
\]
where $\overline{K_0}$ depends only on $c_0 > 0$, $\varepsilon_0 > 1$ and $s$. The above bounds follow by using that $x \in (0,1)$, $y > 1$ (and so $y - x > y - 1$), the regularity properties of $\eta$ and \eqref{eq:GrowthCondBH}.

Now, let $\overline{\varphi}$ be the supersolution constructed in Lemma \ref{Lemma:1DSupersolution} (with $r_0 = 1$, $r_1 = 2$) satisfying $L_\Omega \overline{\varphi} \geq \overline{c} (1 + \log^- x)$ in $I_1$, and let $\psi(x) := A \overline{\varphi}(x)$, $A > 0$. Since, $v$ is bounded and has support contained in $I_2$, we can choose $A$ large enough (for instance, $A \geq \max\{\|u\|_{L^\infty(I_2)}, \overline{K}_0/\overline{c} \}$) so that
\begin{equation}\label{eq:Comp1BoundHar1D}
\begin{aligned}
&\psi \geq v \quad \text{ in } [1,\infty), \\
&L_\Omega\psi \geq A\overline{c} (1 + \log^- x) \geq \overline{K}_0 (1 + \log^- x) \geq L_\Omega v \quad \text{ in } I_1,
\end{aligned}
\end{equation}
and so, recalling that $\psi(0) = v(0) = 0$, it follows $\psi \geq v$ in $I_1$ by the maximum principle. In particular, $u(x) \leq A x^{2s-1}$ for all $x \in I_1$. Notice that the function $\varphi = - \psi$ works as a subsolution in $I_1$ with $\varphi \leq -v$ in $[1,\infty)$ and so $|u(x)| \leq A x^{2s-1}$ for all $x \in I_1$.

Thus we can choose $M_0 = A$, $m_0 = -A$ and $K = M_0 - m_0 = 2A$. We anticipate that in the second part of the proof we will ask $K > 3C K_0$ (see \eqref{eq:ChoiceAlphaKBH}), where $C > 0$ is the constant appearing in Lemma \ref{Lemma:InfInfBH} and Lemma \ref{Lemma:InteriorHarnack1D}. To guarantee this, it is enough to choose
\begin{equation}\label{eq:ChoiceKBH}
K = 2A, \quad A = C_0 \left( \|u\|_{L^\infty(I_2)} + K_0 \right), \quad C_0 > \max\{1,3C/2, \overline{K}_0/(\overline{c} K_0)\}.
\end{equation}
Notice that this choice guarantees $A \geq \max\{\|u\|_{L^\infty(I_2)}, \overline{K}_0/\overline{c} \}$ and thus \eqref{eq:Comp1BoundHar1D} is justified.

\emph{Step 2.} We assume that \eqref{eq:AuxSeqBH} hold for all $n \leq k$ and we prove the existence of $m_{k+1}$ and $M_{k+1}$ satifying \eqref{eq:AuxSeqBH}, too. Define
\[
u_k(x) := u(x) - m_k x^{2s-1},
\]
and write $u_k = u_k^+ - u_k^-$. Notice that in view of \eqref{eq:AuxSeqBH} we have
\[
u_k^+ = u_k \quad \text{ in } I_{\vartheta^{-k}}.
\]
Using the monotonicity of $(m_k)_{k\in\NN}$ and $(M_k)_{k\in\NN}$, we easily deduce that given $x \in I_{\vartheta^{-j}}$, it satisfies
\[
\begin{aligned}
u_k(x) &= u(x) - m_k x^{2s-1} \geq (m_j - m_k)x^{2s-1} \geq (m_j -M_j + M_k - m_k)x^{2s-1} \\
& = K(-\vartheta^{-j\alpha} + \vartheta^{-k\alpha}) x^{2s-1} \geq - K \vartheta^{-j(2s-1)}(\vartheta^{-j\alpha} - \vartheta^{-k\alpha}),
\end{aligned}
\]
for all $j \leq k$. Now, for any $x > \vartheta^{-k}$, there is $j \leq k-1$ such that $\vartheta^{-j-1} < x \leq \vartheta^{-j}$, and thus, if $x \in I_{\vartheta^{-j}} \setminus I_{\vartheta^{-k}}$, we have
\begin{equation}\label{eq:BehUkOutisdeIkBH}
\begin{aligned}
u_k(x) &\geq - K \vartheta^{-j(2s-1)}(\vartheta^{-j\alpha} - \vartheta^{-k\alpha}) = - K \frac{\vartheta^{-j(2s-1)}}{\vartheta^{-k(2s-1)}} \vartheta^{-k(2s-1+\alpha)}\left( \frac{\vartheta^{-j\alpha}}{\vartheta^{-k\alpha}} - 1 \right) \\
&\geq - K \vartheta^{-k(2s-1+\alpha)}  \left(\frac{\vartheta x}{\vartheta^{-k}}\right)^{2s-1}
\left[ \left( \frac{\vartheta x}{\vartheta^{-k}} \right)^\alpha - 1 \right], \quad x \in I_{\vartheta^{-j}} \setminus I_{\vartheta^{-k}}.
\end{aligned}
\end{equation}
Since the r.h.s. of the above inequality does not depend on $j$, we conclude that \eqref{eq:BehUkOutisdeIkBH} holds for all $x \in \RR_+ \setminus I_{\vartheta^{-k}}$. Now, let us take $x \in I_{\vartheta^{-k}/2}$. Using that $u_k^- = 0$ in $I_{\vartheta^{-k}}$ and \eqref{eq:BehUkOutisdeIkBH}, we obtain
\[
\begin{aligned}
0 &\leq -L_\Omega u_k^-(x) = \int_0^\infty u_k^-(y) K(x,y) dy = \int_{\vartheta^{-k}}^\infty u_k^-(y) K(x,y) dy \\
&\leq  C_s \int_{\vartheta^{-k}-x}^\infty \frac{u_k^-(x+y)}{y^{1+2s}} \left( 1 + \left|\log \left(\frac{x}{y}\right)\right| \right)dy \\
&\leq C_s K \vartheta^{-k(2s-1+\alpha)}  \int_{\vartheta^{-k}-x}^\infty \left(\frac{\vartheta (x+y)}{\vartheta^{-k}}\right)^{2s-1}
\left[ \left( \frac{\vartheta (x+y)}{\vartheta^{-k}} \right)^\alpha - 1 \right] \frac{1 + \left|\log \left(\frac{x}{y}\right)\right|}{y^{1+2s}} dy \\
&\leq C_s K \vartheta^{-k(2s-1+\alpha)}  \int_{\vartheta^{-k}/2}^\infty \left(\frac{2\vartheta y}{\vartheta^{-k}}\right)^{2s-1}
\left[ \left( \frac{2\vartheta y}{\vartheta^{-k}} \right)^\alpha - 1 \right] \frac{1 + \left|\log \left(\frac{x}{y}\right)\right|}{y^{1+2s}} dy \\
&\leq C_s K \vartheta^{-k(\alpha-1)}  \int_{1/2}^\infty (2\vartheta y)^{2s-1}
\left[ (2\vartheta y)^\alpha - 1 \right] \frac{1 + |\log y| + |\log x| }{y^{1+2s}} dy \\
&\leq \varepsilon_0(\alpha) C_s K \vartheta^{-k(\alpha-1)} (1 + \log^- x),
\end{aligned}
\]
where
\[
\varepsilon_0(\alpha) := \int_{1/2}^\infty (2\vartheta y)^{2s-1}
\left[ (2\vartheta y)^\alpha - 1 \right] \frac{1 + |\log y|}{y^{1+2s}} dy.
\]
Notice that $\varepsilon_0(\alpha) \to 0$ as $\alpha \to 0$, since $(2\vartheta y)^\alpha \to 1$ as $\alpha \to 0$ for all $y > 1/2$ and Lebesgue dominated convergence theorem. Consequently, recalling that $K$ has been fixed in \eqref{eq:ChoiceKBH}, we choose $\alpha \in (0,1)$ in the following way: if $C > 0$ denotes the constant appearing in the statements of Lemma \ref{Lemma:InfInfBH} and Lemma \ref{Lemma:InteriorHarnack1D}, we take $\alpha$ small such that
\begin{equation}\label{eq:ChoiceAlphaKBH}
\varepsilon_0(\alpha) < \frac{1}{3C}, \quad \vartheta^{-\alpha} > 1 - \frac{1}{3C}.
\end{equation}
Notice that the second inequality above is guaranteed by \eqref{eq:ChoiceKBH}. Now, writing $u_k^+ = u_k + u_k^-$ and using that $L_\Omega(x^{2s-1}) = 0$ in $(0,\infty)$, $\vartheta \geq 1$ and $\alpha \in (0,1)$, we estimate
\[
\begin{aligned}
|L_\Omega u_k^+(x)| \leq |L_\Omega u(x)| + |L_\Omega u_k^-(x)| &\leq K_0 (1 + \log^- x) + \varepsilon_0(\alpha) C_s K \vartheta^{-k(\alpha-1)} (1 + \log^- x)\\
& \leq \left[K_0 + \varepsilon_0(\alpha) C_s K \right] \vartheta^{-k(\alpha-1)} (1 + \log^- x),
\end{aligned}
\]
for all $x \in I_{\vartheta^{-k}/2}$. Consequently, we can apply Lemma \ref{Lemma:InfInfBH} and Lemma \ref{Lemma:InteriorHarnack1D} to $u_k^+$ and, recalling that  $u_k^+ = u_k$ in $I_{\vartheta^{-k}}$, we deduce
\[
\begin{aligned}
\sup_{x \in I_{\vartheta^{-k}/2}^+} \left[ \frac{u(x)}{x^{2s-1}} - m_k  \right] &\leq C \bigg\{ \inf_{x \in I_{\vartheta^{-k}/2}^+} \left[ \frac{u(x)}{x^{2s-1}} - m_k  \right] + (K_0 + \varepsilon_0(\alpha) K) \vartheta^{-k\alpha} \bigg\} \\
&\leq C \bigg\{ \inf_{x \in I_{\vartheta^{-k}/4}} \left[ \frac{u(x)}{x^{2s-1}} - m_k  \right] + (K_0 + \varepsilon_0(\alpha) K) \vartheta^{-k\alpha} \bigg\}
\end{aligned}
\]
Now, defining
\[
u^k(x) := M_k x^{2s-1} - u(x),
\]
and repeating the above argument, we deduce
\[
\sup_{x \in I_{\vartheta^{-k}/2}^+} \left[ M_k - \frac{u(x)}{x^{2s-1}}   \right] \leq C \bigg\{ \inf_{x \in I_{\vartheta^{-k}/4}} \left[ M_k - \frac{u(x)}{x^{2s-1}} \right] + (K_0 + \varepsilon_0(\alpha) K) \vartheta^{-k\alpha} \bigg\}.
\]
Summing, it follows
\[
\begin{aligned}
M_k - m_k &\leq C \bigg\{ \inf_{x \in I_{\vartheta^{-k}/4}^+} \left[ \frac{u(x)}{x^{2s-1}} - m_k  \right] +  \inf_{x \in I_{\vartheta^{-k}/4}} \left[ M_k - \frac{u(x)}{x^{2s-1}} \right] \\
& \quad + (K_0 + \varepsilon_0(\alpha) K) \vartheta^{-k\alpha} \bigg\} \\
&= C \bigg\{ \inf_{x \in I_{\vartheta^{-k}/4}} \frac{u(x)}{x^{2s-1}} - \sup_{x \in I_{\vartheta^{-k}/4}} \frac{u(x)}{x^{2s-1}} + M_k - m_k  \\
& \quad + (K_0 + \varepsilon_0(\alpha) K) \vartheta^{-k\alpha} \bigg\}.
\end{aligned}
\]
In particular, we deduce
\[
\begin{aligned}
\sup_{x \in I_{\vartheta^{-(k+1)}}} \frac{u(x)}{x^{2s-1}} - \inf_{x \in I_{\vartheta^{-(k+1)}}} \frac{u(x)}{x^{2s-1}} &\leq  \frac{C-1}{C} (M_k - m_k) + (K_0 + \varepsilon_0(\alpha) K) \vartheta^{-k\alpha} \\
& = \left( \frac{C-1}{C} + \frac{K_0}{K} + \varepsilon_0(\alpha) \right) K \vartheta^{-k\alpha},
\end{aligned}
\]
and so, thanks to \eqref{eq:ChoiceKBH} and \eqref{eq:ChoiceAlphaKBH}, we find
\[
\frac{C-1}{C} + \frac{K_0}{K} + \varepsilon_0(\alpha) \leq \vartheta^{-\alpha}.
\]
Consequently, choosing
\[
M_{k+1} := \sup_{x \in I_{\vartheta^{-(k+1)}}} \frac{u(x)}{x^{2s-1}}, \qquad m_{k+1} := \inf_{x \in I_{\vartheta^{-(k+1)}}} \frac{u(x)}{x^{2s-1}},
\]
the thesis follows.
\end{proof}

We can finally prove the following.

\begin{thm} \label{Thm:BoundaryRegBH}
	Let $N=1$, $\Omega=(0,\infty)$, and $s\in (\frac12,1)$.
	Let $L_\Omega$ and $K_\Omega$ be given by either \eqref{new-operator}-\eqref{new-kernel}-\eqref{new-kernel2}, or \eqref{operator-reg}.
	Let $R > 0$ and $f \in L^{\infty}(I_{2R})$.
	Assume that
\[
\begin{cases}
L_\Omega u = f \quad \text{ in } I_{2R} \\
u(0) = 0,
\end{cases}
\]
and $u$ satisfies \eqref{eq:GrowthCondBH} for some $c_0 > 0$, $\varepsilon_0 > 1$. Then the function
\[
x \to \frac{u(x)}{x^{2s-1}}
\]
can be continuously extended up to $x = 0$ and, furthermore, there exist $\alpha \in (0,1)$ and $C > 0$ (depending on $s$, $c_0$ and $\varepsilon_0$), such that
\begin{equation}\label{eq:CalphaBH}
\left| \frac{u(x)}{x^{2s-1}} - \frac{u(y)}{y^{2s-1}} \right| \leq C R^{1-2s}\left( \frac{|x-y|}{R} \right)^\alpha \left[ \|u\|_{L^\infty(I_{2R})} + R^{2s}\|f\|_{L^\infty(I_{2R})}\right],
\end{equation}
for all $x,y \in \overline{I_R}$.
\end{thm}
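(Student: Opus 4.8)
The plan is to combine the oscillation‑decay estimate of Lemma~\ref{Lemma:OscDecayBH} (which is the substantial ingredient, and is already available) with interior Hölder regularity for $L_\Omega$, glued together by a standard covering argument. By the scaling $u_R(x):=R^{-2s}u(Rx)$, which turns $L_\Omega u=f$ in $I_{2R}$ into $L_\Omega u_R=f_R$ in $I_2$ with $f_R(x)=f(Rx)$, preserves $u(0)=0$ and the growth exponent $\varepsilon_0$ in \eqref{eq:GrowthCondBH}, and transforms \eqref{eq:CalphaBH} into its scale‑invariant $R=1$ version, it suffices to prove \eqref{eq:CalphaBH} for $R=1$; and, as in the earlier lemmas, the argument is the same for both kernel classes (simpler for \eqref{operator-reg}). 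Since $f\in L^\infty(I_2)$ and $1+\log^- x\ge1$, we have $|L_\Omega u|\le\|f\|_{L^\infty(I_2)}(1+\log^- x)$ in $I_2$, so Lemma~\ref{Lemma:OscDecayBH} applies with $K_0:=\|f\|_{L^\infty(I_2)}$ and yields
\[
\osc_{I_\rho}\,g\ \le\ C\rho^\alpha\,\mathcal A,\qquad g(x):=\frac{u(x)}{x^{2s-1}},\quad \mathcal A:=\|u\|_{L^\infty(I_2)}+\|f\|_{L^\infty(I_2)},
\]
for all $\rho\in(0,1]$.

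From this I first get the continuous extension: since $\rho\mapsto\osc_{I_\rho}g$ is nondecreasing and tends to $0$ as $\rho\downarrow0$, the Cauchy criterion shows $\lim_{x\downarrow0}g(x)$ exists; defining $g(0)$ to be this limit makes $g\in C^0([0,1])$ (it is continuous on $(0,1]$ because $u$ is and $x^{2s-1}$ is smooth and positive there) and gives
\[
|g(x)-g(0)|\ \le\ \osc_{I_{2x}}g\ \le\ C\,x^\alpha\,\mathcal A\qquad\text{for all }x\in(0,1].
\]
In particular $|g|\le C\mathcal A$ on $(0,1]$ (estimate $|g(0)|$ by $|g(x_*)|+C\mathcal A$ with $x_*\in I_1^+$ and note $|g(x_*)|\le C\|u\|_{L^\infty(I_1)}\le C\mathcal A$), hence $|u(x)|\le C\mathcal A\,x^{2s-1}$ on $(0,1]$.

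The key remaining point is an interior estimate performed after subtracting the singular profile. Since $L_\Omega(x^{2s-1})=0$ in $\RR_+$, the function $h(x):=u(x)-g(0)\,x^{2s-1}$ solves $L_\Omega h=f$ in $I_2$, satisfies $h(0)=0$, and by the previous step $|h(x)|\le C\mathcal A\,x^{2s-1+\alpha}$ on $(0,1]$. On each dyadic interval $[\rho/2,\rho]$, $\rho\le1$ — which lies at distance $\sim\rho$ from $0$, where $K_\Omega$ is comparable to $|x-y|^{-1-2s}$ with no logarithmic singularity — the interior $C^\alpha$ estimate for $L_\Omega$ (from the De Giorgi–Nash–Moser theory for such kernels, e.g., \cite{DKP}, in its rescaled form, possibly after shrinking $\alpha$ so that this exponent and the one from Lemma~\ref{Lemma:OscDecayBH} coincide) gives
\[
[h]_{C^\alpha([\rho/2,\rho])}\ \le\ C\rho^{-\alpha}\Big(\|h\|_{L^\infty(I_{2\rho})}+\rho^{2s}\!\!\int_{\RR_+\setminus I_{2\rho}}\!\!\frac{|h(z)|}{|z|^{1+2s}}\,dz+\rho^{2s}\|f\|_{L^\infty(I_{2\rho})}\Big).
\]
The tail integral is estimated by splitting into $z\lesssim1$ (using $|h(z)|\le C\mathcal A z^{2s-1+\alpha}$) and $z\gtrsim1$ (using \eqref{eq:GrowthCondBH} and $\varepsilon_0>1$), which makes every term on the right $\lesssim\mathcal A\rho^{2s-1}$ (up to constants depending on $c_0,\varepsilon_0,s$), so $[h]_{C^\alpha([\rho/2,\rho])}\le C\mathcal A\rho^{2s-1}$. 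Multiplying $h$ by the weight $x^{1-2s}$, whose sup on $[\rho/2,\rho]$ is $\lesssim\rho^{1-2s}$ and whose $C^\alpha$ seminorm there is $\lesssim\rho^{1-2s-\alpha}$, together with $\|h\|_{L^\infty([\rho/2,\rho])}\le C\mathcal A\rho^{2s-1+\alpha}$, yields
\[
[\,g-g(0)\,]_{C^\alpha([\rho/2,\rho])}=[\,h\,x^{1-2s}\,]_{C^\alpha([\rho/2,\rho])}\ \le\ C\mathcal A,
\]
uniformly in $\rho\le1$.

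It remains to glue. For $x,y\in\overline{I_1}$ with $0<x\le y$: if $y\le2(y-x)$ then $|g(x)-g(y)|\le|g(x)-g(0)|+|g(0)-g(y)|\le C\mathcal A(x^\alpha+y^\alpha)\le C\mathcal A\,y^\alpha\le C\mathcal A\,|x-y|^\alpha$; if $y>2(y-x)$ then $x>y/2$, so $x,y\in[y/2,y]$ and $|g(x)-g(y)|=|(g-g(0))(x)-(g-g(0))(y)|\le[\,g-g(0)\,]_{C^\alpha([y/2,y])}\,|x-y|^\alpha\le C\mathcal A\,|x-y|^\alpha$. This is \eqref{eq:CalphaBH} with $R=1$, and undoing the scaling gives the general case. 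The main obstacle is not any isolated computation but the correct bookkeeping of scales: one must subtract $g(0)x^{2s-1}$ before running the interior estimate (otherwise the weight $x^{1-2s}$ contributes a factor $\rho^{-\alpha}$ that is too weak for the gluing in the regime $x,y\to0$), and one must control the nonlocal tail of $h$ using simultaneously the near‑origin decay $|h(x)|\lesssim x^{2s-1+\alpha}$ coming from Lemma~\ref{Lemma:OscDecayBH} and the far‑field growth bound \eqref{eq:GrowthCondBH}.
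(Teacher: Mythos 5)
Your proof is correct, and it takes a genuinely different route from the paper's in the step where the interior estimate is glued to the oscillation decay. Both proofs share the same first half: Lemma~\ref{Lemma:OscDecayBH} gives the oscillation decay of $v=u/x^{2s-1}$ and hence the continuous extension and the bound $|v(x)-v(0)|\le C\mathcal A\,x^\alpha$; and both apply an interior Hölder estimate on dyadic pieces $(r/2,3r/2)$. The divergence is in how the interior estimate is exploited. The paper applies it directly to $u$ on $J_r^+:=(x/2,3x/2)$, obtaining $[u]_{C^\beta(J_r^+)}\lesssim r^{-\beta}\mathcal A$, and then via a Leibniz bound for $v=u\,\delta^{1-2s}$ gets $[v]_{C^\beta(J_r^+)}\lesssim r^{1-2s-\beta}\mathcal A$, which is \emph{singular} as $r\to0$. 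To absorb this singularity the paper uses a two-scale split with a large exponent $p>(\beta+2s-1)/\beta$: if $|x-y|\gtrsim x^p$ the oscillation decay gives exponent $\gamma/p$, and if $|x-y|\lesssim x^p$ the interior estimate gives exponent $\beta-(\beta+2s-1)/p>0$; the final $\alpha$ is the minimum. You instead subtract the limit profile $g(0)x^{2s-1}$ \emph{before} running the interior estimate (using $L_\Omega(x^{2s-1})=0$), which, combined with the near-origin decay $|h|\lesssim \mathcal A\,x^{2s-1+\alpha}$ coming from the oscillation decay, yields the \emph{uniform} bound $[v-g(0)]_{C^\alpha([\rho/2,\rho])}\lesssim\mathcal A$ on each dyadic piece; the gluing is then the elementary split at $|x-y|\sim\max\{x,y\}$. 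The trade-off is that your route requires a careful bookkeeping of the nonlocal tail in the rescaled interior estimate for $h$ — using both the near-origin decay of $h$ and the far-field growth \eqref{eq:GrowthCondBH} — whereas the paper's route only needs the (simpler) tail bound for $u$ itself, at the cost of losing the factor $1/p$ in the exponent. One small point worth tightening in your write-up: the exterior tail you write over $\mathbb{R}_+\setminus I_{2\rho}$ should, after rescaling, carry the weight $(\rho+|z-x_0|)^{-1-2s}$ rather than $|z|^{-1-2s}$; this is harmless since $|z-x_0|\asymp|z|$ for $z\ge2\rho$ and $x_0\in[\rho/2,\rho]$, but the region $(0,\rho/2)\setminus B_{\rho/4}(x_0)$ (where the weight is $\asymp\rho^{-1-2s}$) should be acknowledged as being absorbed into the $\|h\|_{L^\infty(I_{2\rho})}$ term.
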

\begin{proof} We define $\delta(x) := x$, $v := u/\delta^{2s-1}$, $K_0 := \|f\|_{L^\infty(I_2)}$ and we set $R = 1$. First, from \emph{Step 1} of the proof of Lemma \ref{Lemma:OscDecayBH}, we have
\begin{equation}\label{eq:LinftyvBH}
\|v\|_{L^\infty(I_1)} \leq  C_0 \left( \|u\|_{L^\infty(I_2)} + K_0 \right),
\end{equation}
for some suitable $C_0 > 0$ depending only on $s$, $c_0$ and $\varepsilon_0$. Further, by Lemma \ref{Lemma:OscDecayBH}, we have also (see \eqref{eq:IneqBH})
\begin{equation}\label{eq:OscDecayvBHHolder}
\sup_{I_\varrho} v - \inf_{I_\varrho} v \leq C \varrho^\gamma \left[ \|u\|_{L^\infty(I_2)} + K_0 \right],
\end{equation}
for some $\gamma \in (0,1)$, $C > 0$ (depending only on $s$, $c_0$ and $\varepsilon_0$) and all $\varrho \in (0,1]$. In particular, notice that from \eqref{eq:OscDecayvBHHolder} one can easily deduce that $v$ can be continuously extended up to $x = 0$.

Now, for any $x \in I_1$, we set $J_r^+ := (x/2,3x/2)$. Thus,
\[
[u]_{C^{0,\beta}(\overline{J_r^+})} \leq C r^{-\beta} \left[ \|u\|_{L^\infty(I_2)} + K_0 \right],
\]
for all $\beta \in (0,\beta_\ast)$ and some suitable $\beta_\ast \in (0,1)$ (cf. Theorem \ref{Theorem:HolderRegularity1}). On the other hand, it is not difficult to check that
\[
\|\delta^{1-2s}\|_{L^\infty(\overline{J_r^+})} \leq C_s r^{1-2s}, \qquad [\delta^{1-2s}]_{C^{0,1}(\overline{J_r^+})} \leq C_s r^{-2s},
\]
for some $C_s > 0$ depending only on $s$. As a consequence, by interpolation
\[
[\delta^{1-2s}]_{C^{0,\beta}(\overline{J_r^+})} \leq C_s r^{1-2s-\beta},
\]
for all $\beta \in (0,1)$. Thus, for any $\beta \in (0,\beta_\ast)$ and all $z,y \in J_r^+$ ($z \not= y$), using the definition of $v$, it follows
\[
\begin{aligned}
\frac{|v(z) - v(y)|}{|z-y|^\beta} & \leq \|\delta^{1-2s}\|_{L^\infty(\overline{J_r^+})} \frac{|u(z) - u(y)|}{|z-y|^\beta} + \|u\|_{L^\infty(\overline{I_2})}\frac{|\delta^{1-2s}(z) - \delta^{1-2s}(y)|}{|z-y|^\beta} \\
& \leq C r^{1-2s-\beta} \left[ \|u\|_{L^\infty(I_2)} + K_0 \right],
\end{aligned}
\]
for some new constant $C > 0$, which implies
\begin{equation}\label{eq:BetaHolderBoundBH}
[v]_{C^{0,\beta}(\overline{J_r^+})} \leq C r^{1-2s-\beta} \left[ \|u\|_{L^\infty(I_2)} + K_0 \right],
\end{equation}
for all $\beta \in (0,\beta_\ast)$. Now, we see how \eqref{eq:LinftyvBH}, \eqref{eq:OscDecayvBHHolder}, and \eqref{eq:BetaHolderBoundBH} lead to
\[
[v]_{C^{0,\alpha}(\overline{I_1})} \leq C \left[ \|u\|_{L^\infty(I_2)} + K_0 \right],
\]
for some $\alpha \in (0,1)$ depending only on $s$, $c_0$ and $\varepsilon_0$.

Given $x,y \in \overline{I_1}$, we suppose $x \geq y$, and set $\widetilde{\varrho} = x$, $\varrho = |x-y|$. Notice that thanks to \eqref{eq:LinftyvBH}, we can assume $\varrho \in (0,1)$. Finally, we fix
\[
p > \frac{\beta + 2s -1}{\beta},
\]
where $\beta \in (0,\beta_\ast)$ as above. There are two possible cases:

\emph{Case 1.} $\varrho \geq \widetilde{\varrho}^p/2$. Then, thanks to \eqref{eq:OscDecayvBHHolder},
\[
\begin{aligned}
|v(x) - v(y)| &\leq |v(x) - v(0)| + |v(0) - v(y)| \leq C \left[ \|u\|_{L^\infty(I_2)} + K_0 \right] \widetilde{\varrho}^{\gamma} \\
&\leq C \varrho^{\gamma / p} \left[ \|u\|_{L^\infty(I_2)} + K_0 \right],
\end{aligned}
\]
and so it is enough to choose $\alpha = \gamma / p$.

\emph{Case 2.} Assume $\varrho \leq \widetilde{\varrho}^p/2$. Since $p > 1$, we see that $y \in J_{\widetilde{\varrho}}^+ = (x/2,3x/2)$ and so, using \eqref{eq:BetaHolderBoundBH}, it follows
\[
|v(x) - v(y)| \leq C \widetilde{\varrho}^{\,1-2s-\beta} \varrho^\beta \left[ \|u\|_{L^\infty(I_2)} + K_0 \right] \leq C \varrho^{\beta - \frac{\beta + 2s -1}{p}} \left[ \|u\|_{L^\infty(I_2)} + K_0 \right],
\]
and so we complete the proof by choosing $\alpha := \min\left\{\frac{\gamma}{p}, \beta - \frac{\beta + 2s -1}{p} \right\}> 0$.
\end{proof}

\subsection{Proof of the Liouville theorem}

First, as a consequence of the 1D boundary Harnack, we can deduce the following Neumann Liouville theorem in the half-line.

\begin{cor} \label{Cor:Liouville1D}
	Let $N=1$, $\Omega=(0,\infty)$, and $s\in (\frac12,1)$.
	Let $L_\Omega$ and $K_\Omega$ be given by either \eqref{new-operator}-\eqref{new-kernel}-\eqref{new-kernel2}, or \eqref{operator-reg}.
	Assume that
\begin{equation}\label{eq:HalfLineProb}
\begin{cases}
L_\Omega u = 0 \quad \text{ in } \mathbb{R}_+ \\
u(0) = 0,
\end{cases}
\end{equation}
and $u$ satisfies
\begin{equation}\label{eq:GrowthCondLiouville1D}
|u(y)| \leq c_0 (1 + y^{2s-1+\varepsilon}), \quad y > 0,
\end{equation}
for some $c_0 > 0$ and $\varepsilon \in (0,\alpha)$, where $\alpha \in (0,1)$ is as in Theorem \ref{Thm:BoundaryRegBH}.
Then,
\[
u(x) = A x^{2s-1},
\]
for some $A \in \mathbb{R}$.

Furthermore, if in addition $u$ satisfies \eqref{eq:HalfLineProb} in the weak sense with Neumann condition (in the sense of Definition~\ref{Def:WeakSolNeumann}) at $x = 0$, then $u = 0$ in $\overline{\mathbb{R}_+}$.
\end{cor}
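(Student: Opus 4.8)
The plan is to obtain the representation $u=Ax^{2s-1}$ from the one–dimensional boundary regularity of Theorem~\ref{Thm:BoundaryRegBH} by a blow‑down argument, and then to rule out $A\neq0$ by testing the weak formulation against a function that does not vanish at the boundary and using an integration by parts identity that exhibits a nonzero boundary contribution of $x^{2s-1}$.

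\emph{First step: $u(x)=Ax^{2s-1}$.} Since $u$ solves $L_\Omega u=0$ in all of $\RR_+$, with $u(0)=0$ and the growth bound \eqref{eq:GrowthCondLiouville1D}, Theorem~\ref{Thm:BoundaryRegBH} applies on every interval $I_{2R}$ with $f\equiv0$. In particular $x\mapsto u(x)/x^{2s-1}$ extends continuously up to $x=0$ and, using \eqref{eq:CalphaBH} together with $\|u\|_{L^\infty(I_{2R})}\le c_0\bigl(1+(2R)^{2s-1+\varepsilon}\bigr)\le C R^{2s-1+\varepsilon}$ for $R\ge1$,
\[
\left|\frac{u(x)}{x^{2s-1}}-\frac{u(y)}{y^{2s-1}}\right|\le C\,R^{1-2s}\Bigl(\frac{|x-y|}{R}\Bigr)^{\alpha}\|u\|_{L^\infty(I_{2R})}\le C\,R^{\varepsilon-\alpha}\,|x-y|^{\alpha},\qquad x,y\in\overline{I_R}.
\]
Fixing $x,y>0$ and letting $R\to\infty$, the hypothesis $\varepsilon<\alpha$ forces $u(x)/x^{2s-1}=u(y)/y^{2s-1}$; hence $u(x)=Ax^{2s-1}$ for some $A\in\RR$, which (since $u(0)=0$) also holds at $x=0$.

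\emph{Second step: $A=0$ under the weak Neumann condition.} Assume now $u$ is moreover a weak solution with the Neumann condition at $x=0$; by Definition~\ref{Def:WeakSolNeumann} with $\mu=f=0$ this means $B(u,\eta)=0$ for all $\eta\in C_0^\infty(B)$ and all balls $B$. Since $u=Ax^{2s-1}$ and $B$ is bilinear, it suffices to find an admissible $\eta$ with $B(x^{2s-1},\eta)\neq0$. Recall from the proof of Lemma~\ref{Lemma:1DSupersolution} that $L_\Omega(x^{2s-1})=0$ pointwise in $\RR_+$, and note $\partial_\nu^{2s-1}(x^{2s-1})\big|_{x=0}=\lim_{t\downarrow0}t^{2s-1}/t^{2s-1}=1$. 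Because $x^{2s-1}\in d^{2s-1}C^2(\overline\Omega)$ and $\eta\in C^2(\overline\Omega)$ has compact support, the integration by parts formula \eqref{int-by-parts-reg} (in the regional case), respectively an integration by parts formula for the reformulated operator $L_\Omega$ on the class $d^{2s-1}C^2(\overline\Omega)+C^2(\overline\Omega)$ analogous to \eqref{int-by-parts-reg} and obtained via \eqref{int-by-parts} and the reformulation of \cite{Ab} (in the Neumann case for $(-\Delta)^s$), gives
\[
B(x^{2s-1},\eta)=c_\ast\int_{\partial\Omega}\eta\,\partial_\nu^{2s-1}(x^{2s-1})=c_\ast\,\eta(0),
\]
since $\int_\Omega\eta\,L_\Omega(x^{2s-1})\,dx=0$ and $c_\ast\neq0$ (in the regional case $c_\ast=2\kappa_{N,s}$, the nonzero constant of \eqref{int-by-parts-reg}). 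Choosing $\eta$ with $\eta\equiv1$ near $0$ yields $B(x^{2s-1},\eta)=c_\ast\neq0$, so $0=B(u,\eta)=A\,c_\ast$ forces $A=0$, i.e.\ $u\equiv0$ in $\overline{\RR_+}$.

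\emph{Main obstacle.} The first step is routine once Theorem~\ref{Thm:BoundaryRegBH} is available. The delicate point is the second step in the Neumann case for $(-\Delta)^s$: one must justify the integration by parts for $L_\Omega$ on $d^{2s-1}C^2(\overline\Omega)+C^2(\overline\Omega)$ and verify that the extra, logarithmically singular kernel $k_\Omega$ in \eqref{new-kernel} produces no term cancelling $c_\ast\eta(0)$, so that indeed $c_\ast\neq0$. Equivalently, one argues on the Neumann extension $\tilde u$ of $x^{2s-1}$ across $\{x=0\}$: it satisfies $\mathcal N_s\tilde u=0$ in $\RR_-$ and $(-\Delta)^s\tilde u=0$ in $\RR_+$ \emph{pointwise}, yet fails \eqref{int-by-parts} precisely by the term $c_\ast\eta(0)$ because $\tilde u\notin C^2$ near $0$ — the one–dimensional incarnation of the fact, noted in the Introduction, that $|x_N|^{2s-1}$ solves \eqref{eq-intro} pointwise without being a weak solution.
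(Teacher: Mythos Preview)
Your first step is correct and essentially identical to the paper's argument.

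Your second step, however, has a genuine gap that you yourself flag in the ``Main obstacle'' paragraph without resolving. You rely on an integration by parts identity $B(x^{2s-1},\eta)=c_\ast\eta(0)$ for $L_\Omega$ on the half-line. In the regional case, \eqref{int-by-parts-reg} is stated (in \cite{Gu}) for bounded domains, so applying it on $\Omega=(0,\infty)$ already requires justification. More seriously, in the Neumann case for $(-\Delta)^s$ the operator $L_\Omega$ carries the extra kernel $k_\Omega$ with its logarithmic singularity, and no analogue of \eqref{int-by-parts-reg} for this $L_\Omega$ is available in the paper (nor does the reformulation of \cite{Ab} together with \eqref{int-by-parts} give one directly, since $x^{2s-1}$ is not $C^2$ up to the boundary and its Neumann extension is not $C^2$ across $0$). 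Asserting that the $k_\Omega$ contribution does not cancel $c_\ast\eta(0)$, and that $c_\ast\neq0$, is precisely the point that needs proof.

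The paper avoids all of this with a direct sign argument. Choose $\eta\in C_0^\infty((-\infty,1])$ with $\eta'\le0$ and $\eta\not\equiv0$. Since $x\mapsto x^{2s-1}$ is strictly increasing on $\RR_+$ and $\eta$ is nonincreasing, the integrand $[x^{2s-1}-y^{2s-1}][\eta(x)-\eta(y)]K_\Omega(x,y)$ is $\le0$ everywhere on $\RR_+\times\RR_+$, and strictly negative on a set of positive measure. Hence $B(x^{2s-1},\eta)<0$, and $0=B(u,\eta)=A\,B(x^{2s-1},\eta)$ forces $A=0$. This uses only the positivity of $K_\Omega$ and elementary monotonicity, and works verbatim for both kernels \eqref{K1} and \eqref{K2}; no boundary term computation or integration by parts is needed.
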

\begin{proof} From \eqref{eq:GrowthCondLiouville1D}, we immediately see that
\[
\|u\|_{L^\infty(I_{2R})} \leq C_0(1 + R^{2s-1+\varepsilon}),
\]
for some $C_0 > 0$ depending only on $s$, $c_0$ and $\varepsilon$, and all $R > 0$. On the other hand, we notice that all the assumptions of Theorem \ref{Thm:BoundaryRegBH} are satisfied (in particular, \eqref{eq:GrowthCondLiouville1D} implies \eqref{eq:GrowthCondBH}). Thus, setting $v(x):= u(x)/x^{2s-1}$, and combining \eqref{eq:CalphaBH} with the above inequality, it follows
\[
[v]_{C^{0,\alpha}(I_R)} \leq C R^{1-2s-\alpha}  \|u\|_{L^\infty(I_{2R})} \leq C R^{\varepsilon-\alpha},
\]
for some new constant $C > 0$ and all $R > 0$. Since $\varepsilon \in (0,\alpha)$, we can pass to the limit as $R \to +\infty$ to deduce $[v]_{C^{0,\alpha}(\mathbb{R}_+)} = 0$, which trivially implies that $v = A$ for some $A \in \mathbb{R}$, i.e. the first part of our thesis.

To show the second part, we recall that $u$ satisfies
\[
\int_{\mathbb{R}_+} \int_{\mathbb{R}_+} [u(x) - u(y)][\eta(x) - \eta(y)] K_\Omega(x,y) dxdy = 0,
\]
for all $\eta \in C_0^\infty(\overline{\mathbb{R}_+})$ and, since $u \in C^\infty(\mathbb{R}_+)$ (see \cite{MY}), it satisfies $L_\Omega u = 0$ in $\mathbb{R}_+$. Consequently, from the first part of the statement we deduce that $u(x) = A x^{2s-1}$, for some $A \in \mathbb{R}$.

However, assume $A > 0$ and take $\eta \in C_0^\infty((-\infty,1])$, with $\eta' \leq 0$ and $\eta \not\equiv 0$. Using that $x \to x^{2s-1}$ is strictly increasing in $\mathbb{R}_+$, it follows
\[
\begin{aligned}
0 &= A\int_{\mathbb{R}_+} \int_{\mathbb{R}_+} [x^{2s-1} - y^{2s-1}][\eta(x) - \eta(y)] K_\Omega(x,y) dxdy \\
&= A\int_{\{x < y\}}  \underbrace{[x^{2s-1} - y^{2s-1}]}_{< 0}\underbrace{[\eta(x) - \eta(y)]}_{\geq 0} K_\Omega(x,y) dxdy \\
& \quad + A\int_{\{x \geq y\}} \underbrace{[x^{2s-1} - y^{2s-1}]}_{\geq 0}\underbrace{[\eta(x) - \eta(y)]}_{\leq 0} K_\Omega(x,y) dxdy < 0,
\end{aligned}
\]
since $\eta \not\equiv 0$ (similar if we assume $A < 0$). This leads to a contradiction, unless $A = 0$, and thus $u = 0$.
\end{proof}

In order to extend the previous Neumann Liouville theorem to higher dimensions we need some preliminary lemmata. The first one concerns H\"older regularity of solutions in the half-space.

\begin{lem}
 \label{Lemma:LiouvilleND}
	 Let $\Omega=\RR^N_+ = \{x_N>0\}$, and $s\in (\frac12,1)$. Let $L_\Omega$ and $K_\Omega$ be given by either \eqref{new-operator}-\eqref{new-kernel}-\eqref{new-kernel2}, or \eqref{operator-reg}.
	Assume that $v$ is a weak solution to
	$$ L_\Omega v = 0 \quad \text{ in } \mathbb{R}^N_+$$
	with Neumann condition on $\partial \mathbb{R}^N_+=\{x_N=0\}$ (in the sense of Definition~\ref{Def:WeakSolNeumann}). If
	$$||v||_{L^\infty(B_R^+)} \leq R^{\sigma}, \quad R \geq  1,$$
	for some $0<\sigma<2s$. Then
	$$[v]_{C^\alpha (B_R^+)} \leq C R^{\sigma-\alpha}, \quad R \geq 1,$$
	for some constant $C>0$ depending only on $N, s$, and $\sigma$, and $\alpha$ as in Theorem~\ref{Theorem:HolderRegularity1}.
\end{lem}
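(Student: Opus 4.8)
The plan is to derive the interior-in-scale Hölder bound from Theorem~\ref{Theorem:HolderRegularity1} by a rescaling argument combined with a covering of the strip near $\{x_N=0\}$ and the interior regularity for nonlocal equations. First I would fix $R\geq 1$ and normalize: set $v_R(x):=v(Rx)$, so that $v_R$ solves $L_{\Omega}v_R=0$ in $\mathbb{R}^N_+$ (the half-space is scale invariant, and the kernels $K_\Omega$ in both cases are homogeneous of degree $-N-2s$, so the rescaled operator has the same form — here one uses the scaling $k_\Omega(rx,ry)=r^{-N-2s}k_{r^{-1}\Omega}(x,y)$ noted in Section~\ref{sec2}, which for $\Omega=\mathbb{R}^N_+$ gives exactly the same kernel) with Neumann condition on $\{x_N=0\}$, and $\|v_R\|_{L^\infty(B_2^+)}\leq (2R)^\sigma$. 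Applying Theorem~\ref{Theorem:HolderRegularity1} with $f=0$ on, say, $B_2^+=D_2(0)$ in the rescaled picture — more precisely, covering $\overline{B_1^+}$ by finitely many balls $B_{1/2}(x_0)$ with $x_0\in\overline{B_1^+}$, in each of which either $\partial\mathbb{R}^N_+\cap B(x_0)$ is a (flat, hence Lipschitz) graph so the boundary estimate applies, or $B(x_0)\subset\mathbb{R}^N_+$ so interior Hölder estimates for $L_\Omega$ apply — yields $[v_R]_{C^\alpha(B_1^+)}\leq C\|v_R\|_{L^\infty(\mathbb{R}^N_+\cap B_2)}$. But $\|v_R\|_{L^\infty(\mathbb{R}^N_+)}$ is not controlled, so one cannot simply insert the global $L^\infty$ norm; instead one must re-run the proof tracking the tail, or use the following standard device.

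The cleaner route is: for a point $x_0$ with $|x_0|\leq R$ and any radius $\rho\in(0,R)$ comparable to $R$, localize. Write $v=\bar v+\tilde v$ where... — actually, the simplest is to observe that Theorem~\ref{Theorem:HolderRegularity1} as stated already only involves $\|u\|_{L^\infty(\Omega)}$ on the right-hand side, which forces us to work on a bounded piece. So the honest plan is: apply Theorem~\ref{Theorem:HolderRegularity1} (and its interior counterpart) on the ball $B_{2R}(x_0)$ directly to $v$ itself — but $v$ is only bounded on bounded sets, with $\|v\|_{L^\infty(B_{2R}(x_0))}\leq \|v\|_{L^\infty(B_{3R}^+)}\leq (3R)^\sigma$ when $|x_0|\leq R$. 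The issue is that Theorem~\ref{Theorem:HolderRegularity1}'s constant and estimate are written in scale-invariant form: it gives $|v(x)-v(y)|\leq C(|x-y|/R)^\alpha[\|v\|_{L^\infty(\Omega)}+R^{2s-N/q}\|f\|_{L^q}]$ for $x,y\in D_{R/2}(x_0)$, but with $\|v\|_{L^\infty(\Omega)}$ replaced by... here is the subtlety: the theorem genuinely needs a global (or at least large-scale) $L^\infty$ bound because the nonlocal operator sees all of $\Omega$. One resolves this by noting that the \emph{proof} of Theorem~\ref{Theorem:HolderRegularity1} — via the weak Harnack iteration — only uses the growth of $v$ through the quantity appearing in assumption~\eqref{eq:AssBoundBelow1}, i.e. a controlled polynomial growth $v(x)\geq c[1-(\vartheta|x|/R)^\gamma]$ after rescaling. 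Since $\gamma\in(0,2s)$ can be taken as large as needed below $2s$ and $\sigma<2s$, after rescaling by $R$ the normalized function $v_R/\|v_R\|_{L^\infty(B_1^+)}$ (suitably shifted/scaled in oscillation) satisfies exactly this growth hypothesis with the relevant $\gamma>\sigma$.

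Concretely the key steps, in order, are: (1) rescale $v_R(x)=v(Rx)$; (2) in $B_1^+$, cover by balls, distinguishing boundary balls (where the flat boundary is a Lipschitz graph with Lipschitz constant $0$, so all constants in Theorems~\ref{Theorem:WeakHarnack01}–\ref{Theorem:HolderRegularity1} are absolute) and interior balls (where one invokes standard interior Hölder estimates for $L_\Omega$, e.g.\ from \cite{K} or \cite{DKP}, which apply since $K_\Omega\asymp|x-y|^{-N-2s}$ away from $\partial\Omega$); (3) re-examine the oscillation-decay/weak-Harnack argument proving Theorem~\ref{Theorem:HolderRegularity1} and check that the only place the $L^\infty$ norm of $v$ on all of $\Omega$ enters is in verifying the decay hypothesis~\eqref{eq:AssBoundBelow1} of Theorem~\ref{Theorem:WeakHarnackf1}, and that the growth $\|v_R\|_{L^\infty(B_\rho^+)}\leq \rho^\sigma R^\sigma$ for $\rho\geq 1$, with $\sigma<2s$, is exactly enough to verify it after choosing $\gamma\in(\sigma,2s)$ in Lemma~\ref{Lemma:TechAssumNonlocal1}; (4) conclude $[v_R]_{C^\alpha(B_1^+)}\leq C\|v_R\|_{L^\infty(B_2^+)}\leq C(2R)^\sigma$, and unscale: $[v]_{C^\alpha(B_R^+)}=R^{\alpha}\cdot R^{-\alpha}[v_R]_{C^\alpha(B_1^+)}\cdot R^{-\sigma}\cdot R^{\sigma}$, giving $[v]_{C^\alpha(B_R^+)}\leq CR^{\sigma-\alpha}$.

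The main obstacle is step~(3): Theorem~\ref{Theorem:HolderRegularity1} is stated with $\|u\|_{L^\infty(\Omega)}$ on the right-hand side, which is finite only on bounded domains, so a naive black-box application fails for $\Omega=\mathbb{R}^N_+$. One must either (a) trace through the dependence and observe that the weak Harnack inequality only needs the one-sided polynomial growth bound encoded in~\eqref{eq:AssBoundBelow1}, which the hypothesis $\|v\|_{L^\infty(B_R^+)}\leq R^\sigma$ with $\sigma<2s$ supplies after normalizing; or (b), slightly less sharp but simpler, split $v=v_1+v_2$ with $v_1$ solving the same Neumann problem in $B_{2R}^+$ with $v_1=v$ outside and $v_2$ a correction whose $L^\infty$ norm is estimated by the tail $\int_{\Omega\setminus B_{2R}}|v(y)|K_\Omega(x,y)\,dy\lesssim R^{\sigma-2s}\cdot R^{2s}\ldots$ — one checks this tail integral converges precisely because $\sigma<2s$ and is $O(R^{\sigma})$ on $B_R^+$. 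Option (a) is the one consistent with the paper's style and is what I would write out, carefully stating that the constants in Theorems~\ref{Theorem:WeakHarnack01}, \ref{Theorem:WeakHarnackf1}, and \ref{Theorem:HolderRegularity1} are absolute for the flat half-space and that the required $\gamma$ can be chosen in $(\sigma,2s)$.
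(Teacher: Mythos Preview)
Your diagnosis of the difficulty is exactly right: Theorem~\ref{Theorem:HolderRegularity1} is stated with $\|u\|_{L^\infty(\Omega)}$ on the right-hand side, and for $\Omega=\mathbb{R}^N_+$ with $v$ growing like $R^\sigma$ this is infinite, so a black-box application fails. Your preferred fix, option~(a), is to reopen the oscillation-decay proof of Theorem~\ref{Theorem:HolderRegularity1} and verify that only polynomial growth of exponent $<2s$ is actually used via the hypothesis~\eqref{eq:AssBoundBelow1}. This can be made to work, but it is not a one-line observation: in the proof of Theorem~\ref{Theorem:HolderRegularity1} the sequences $(M_n),(m_n)$ for $n\leq 0$ are set equal to $\pm\|u\|_{L^\infty(\Omega)}$, and to accommodate growth one must instead take $M_n-m_n=K\vartheta^{-n\alpha}$ for all $n\in\mathbb{Z}$, forcing $\alpha\leq\sigma$ and requiring one to re-derive the tail estimate in Step~3 of that proof with this modified initialization. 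It is doable but laborious.

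The paper instead takes a route closer in spirit to your option~(b), but cleaner: after rescaling to $R=1$, set $w:=v\,\chi_{B_4}$. Then $w$ is globally bounded by $4^\sigma$, and a direct computation with the bilinear form shows that $L_\Omega w=f$ weakly in $B_2^+$ with Neumann condition on $\partial\mathbb{R}^N_+\cap B_2$, where $f(x)=2\int_{(B_4^c)^+}v(y)K_\Omega(x,y)\,dy$. Using the growth bound $|v(y)|\leq |y|^\sigma$ together with the kernel estimate~\eqref{new-kernel-estimates} and $\sigma<2s$, one checks $|f(x)|\leq C(1+\log^- x_N)$ pointwise, hence $f\in L^q(B_2^+)$ for every $q<\infty$. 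Now Theorem~\ref{Theorem:HolderRegularity1} applies to $w$ as a genuine black box (with, say, $q=N/s$), giving $[v]_{C^\alpha(B_1^+)}=[w]_{C^\alpha(B_1^+)}\leq C(\|w\|_{L^\infty}+\|f\|_{L^q})\leq C$, and rescaling back yields the claim. The advantage over your option~(a) is that no internal step of the Moser/De Giorgi iteration needs to be revisited; the advantage over your option~(b) is that no auxiliary boundary-value problem has to be solved --- the cutoff itself produces the bounded comparison function, and the ``correction'' appears only as an explicit, estimable right-hand side.
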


\begin{proof} As usual along this paper, we are proving the result in the case $L_\Omega$ and $K_\Omega$ are given by \eqref{new-operator}-\eqref{new-kernel}-\eqref{new-kernel2}. The other case is analogous, but without the logarithmic corrections.
	
	The main idea is to apply Theorem~\ref{Theorem:HolderRegularity1} but, since $v$ is not bounded, we first need to cut it in a suitable way in order to making use of the H\"older estimate. By scaling, it is enough to prove the result for the case $R=1$.
	
	Let us define the auxiliary function $w(x) = v(x) \chi_{B_{4}}(x)$. It is clear, due to the growth condition on $v$, that this new function $w$ is bounded in $\mathbb{R}^N_+$. Indeed,
	$$ ||w||_{L^\infty(\RR^N_+)} \leq 4^\sigma. $$
	
	First, we prove that $w$ satisfies
	$$ L_\Omega w = f \quad \text{ in } B_{2}^+,$$
	in the weak sense with Neumann condition on $ \partial \RR^N_+ \cap B_{2}$, where $ f\in L^q(B_2^+)$ is a function which will be determined next. So, given any test function $\eta \in C^\infty_0(B_{2})$ and using the equation satisfied by $v$ we have
	\begin{align*}
	B(w,\eta) &= \int_{\RR^N_+}\int_{\RR^N_+}(w(x)-w(y))(\eta(x)-\eta(y))\,K_\Omega(x,y) dx dy \\ &= \int_{\RR^N_+}\int_{\RR^N_+}(v(x)\chi_{B_{4}}(x)-v(y)\chi_{B_{4}}(y))(\eta(x)-\eta(y))\,K_\Omega(x,y) dx dy \\
	&=\int_{B_{4}^+}\int_{B_{4}^+} (v(x)-v(y))(\eta(x)-\eta(y)) \,K_\Omega(x,y) dx dy \\
	&\ \ \ \ \ \ + 2 \int_{B_{4}^+} dx \int_{(B_{4}^c)^+} dy\, v(x)\eta(x) \,K_\Omega(x,y)  \\
	&= \int_{B_{2}^+} \left(2\int_{(B_{4}^c)^+} v(y)\,K_\Omega(x,y) dy \right) \eta(x) dx =: \int_{B_{2}^+} f(x) \eta(x) dx
	\end{align*}
	
	Then, given any $x\in B_{2}^+$ we claim that $f$ satisfies the following pointwise estimate
	$$ |f(x)| \leq  C \left(1+\log^-(x_N) \right), $$
	for some positive constant $C$ depending only on $N, s$ and $\sigma$. In particular, it follows that $f\in~L^q(B_2^+)$ for any $1\leq q < \infty$.
	
	Now, if we apply Theorem~\ref{Theorem:HolderRegularity1} to $w$ with $q=N/s$, and we take into account that $v\equiv w$ in $B_{2}^+$ we obtain
	$$ [v]_{C^\alpha (B_1^+)} = [w]_{C^\alpha (B_1^+)} \leq C \left( ||w||_{L^\infty(\RR^N_+)} + ||f||_{L^q(B_{2}^+)} \right) \leq C, $$
	as we wanted.

	Finally, let us prove the pointwise estimate for $f$. Letting $d = d_{x,y}$, using \eqref{new-kernel-estimates} and taking into account that $|y|/2 \leq |x-y| \leq 2|y|$ and $d\leq |x-y|$ when $ x\in B_2$ and $ y\in B_{4}^c$, we have
	\begin{align*}
	|f(x)| &= 2\left|\int_{(B_{4}^c)^+} v(y)\,K_\Omega(x,y) dy\right| \leq C\int_{(B_{4}^c)^+} |y|^\sigma \,\frac{1+\log^-\left(\frac{d}{|x-y|}\right)}{|x-y|^{N+2s}} dy \\
	&= C \int_{(B_{4}^c)^+} \frac{|y|^\sigma dy}{|x-y|^{N+2s}} +  C \int_{(B_{4}^c)^+} |y|^\sigma \,\frac{\log\left(\frac{|x-y|}{d}\right)}{|x-y|^{N+2s}} dy \\
&\leq C \int_{(B_{4}^c)^+} \frac{dy}{|y|^{N + 2s - \sigma}}  + C \int_{(B_{4}^c)^+} \frac{\log (2|y|) + |\log d|}{|y|^{N + 2s}} dy\\
&\leq C + C\int_{(B_{4}^c)^+ \cap \{x_N \leq y_N \}} \frac{\log |y| + |\log x_N|}{|y|^{N + 2s}} dy \\
& \quad + C\int_{(B_{4}^c)^+ \cap \{y_N<x_N \}} \frac{\log |y| + |\log y_N|}{|y|^{N + 2s}} dy \leq  C \left(1+\log^-(x_N)\right),
	\end{align*}
	for some positive constant $C$ depending only on $N, s$ and $\sigma$. Here, it is crucial the fact that $\sigma<2s$ and $d(x) = x_N$ together with the integrability of $\log d$ close to $\partial \mathbb{R}^N_+$.
\end{proof}

Next step is proving that weak solutions to $L_\Omega v = 0$ in $\Omega=\RR^N_+$ are linear functions.
\begin{prop} \label{Prop:LiouvilleND}
	Let $\Omega=\RR^N_+ = \{x_N>0\}$, and $s\in (\frac12,1)$. Let $L_\Omega$ and $K_\Omega$ be given by either \eqref{new-operator}-\eqref{new-kernel}-\eqref{new-kernel2}, or \eqref{operator-reg}. Assume $v$ is a weak solution to
	$$ L_\Omega v = 0 \quad \text{ in } \mathbb{R}^N_+ = \{x_N>0\} $$
	with Neumann condition on $\partial \mathbb{R}^N_+=\{x_N=0\}$ (in the sense of Definition~\ref{Def:WeakSolNeumann}). If
	\begin{equation} \label{eq: GrowthConditionPropLiouvilleND}
	||v||_{L^\infty(B_R^+)} \leq c_0 (1 + R^{\sigma}), \quad R > 0,
	\end{equation}
	for some $c_0 > 0$ and $0<\sigma<2s$. Then, there exist functions $w_0,...,w_{N-1}$ such that
	$$ v(x) = w_0(x_N) + \sum_{i=1}^{N-1} w_i(x_N) x_i.$$
	Furthermore, $v(x) = w_0(x_N)$ if $\sigma<1$.
\end{prop}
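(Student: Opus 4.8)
The plan is to differentiate the equation along the tangential directions, in which the problem is translation invariant, and then iterate the a priori estimate of Lemma~\ref{Lemma:LiouvilleND}. The first point is that, since $\Omega=\RR^N_+$, the kernel $K_\Omega$ is invariant under translations $x\mapsto x+h$ with $h=(h',0)$, $h'\in\RR^{N-1}$: such translations map $\RR^N_+$ onto itself and preserve all the distances and the integrals over $\Omega$ and $\Omega^c$ entering \eqref{new-kernel}-\eqref{new-kernel2}, as well as (trivially) the kernel \eqref{operator-reg}. Performing the change of variables $(x,y)\mapsto(x-h,y-h)$ in the bilinear form \eqref{B}, one checks that $v(\cdot+h)$ is again a weak solution of $L_\Omega(\cdot)=0$ in $\RR^N_+$ with Neumann condition on $\{x_N=0\}$ in the sense of Definition~\ref{Def:WeakSolNeumann}, and hence so is the tangential increment $\Delta_h v:=v(\cdot+h)-v$, which also lies in $H_{K,loc}(\RR^N_+)$ by the same invariance.

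Next I would estimate the growth of $\Delta_h v$. By Lemma~\ref{Lemma:LiouvilleND}, $[v]_{C^\alpha(B_R^+)}\le CR^{\sigma-\alpha}$ for $R\ge1$, so for fixed tangential $h$ and $R\ge\max\{1,|h|\}$ one has $\|\Delta_h v\|_{L^\infty(B_R^+)}\le|h|^\alpha[v]_{C^\alpha(B_{2R}^+)}\le C|h|^\alpha R^{\sigma-\alpha}$; combined with the crude bound $\|\Delta_h v\|_{L^\infty(B_R^+)}\le2\|v\|_{L^\infty(B_{R+|h|}^+)}$ for small $R$, this gives $\|\Delta_h v\|_{L^\infty(B_R^+)}\le C_h(1+R^{\sigma_1})$ with $\sigma_1:=\max\{\sigma-\alpha,0\}<2s$. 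Iterating, after $k$ tangential increments the resulting function is a weak solution with growth exponent at most $\max\{\sigma-k\alpha,0\}$; choosing $k$ so that $\sigma-k\alpha<\alpha$ and applying Lemma~\ref{Lemma:LiouvilleND} once more (to a normalization of the increment, with a suitable exponent $\beta\in(0,\alpha)$, which is admissible since the function is then bounded by $CR^\beta$ on $B_R^+$ for $R\ge1$), one gets $[\Delta_{h_1}\cdots\Delta_{h_k}v]_{C^\alpha(B_R^+)}\le CR^{\beta-\alpha}\to0$ as $R\to\infty$. Hence $\Delta_{h_1}\cdots\Delta_{h_k}v$ is constant on $\RR^N_+$ for every choice of tangential increments $h_1,\dots,h_k$.

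Then I would invoke the classical Fréchet characterization of polynomials: since $v$ is continuous (indeed locally $C^\alpha$, and smooth in $\RR^N_+$ by interior regularity) and all its tangential difference quotients of order $k+1$ vanish, the restriction of $v$ to each slice $\{x_N=t\}$, $t>0$, is a polynomial $P_t$ in $x'=(x_1,\dots,x_{N-1})$ of degree at most $k$. For fixed $t$, the bound $\|P_t\|_{L^\infty(\{|x'|<\varrho\})}\le c_0(1+(\varrho+t)^\sigma)$ together with $\sigma<2s<2$ forces $\deg P_t\le1$, because a polynomial whose homogeneous part of top degree $\ge2$ is nonzero grows at least like $\varrho^2$ on $\{|x'|<\varrho\}$. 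Writing $P_t(x')=w_0(t)+\sum_{i=1}^{N-1}w_i(t)x_i$, with $w_0,\dots,w_{N-1}$ continuous in $t$ thanks to the local $C^\alpha$ regularity of $v$, yields $v(x)=w_0(x_N)+\sum_{i=1}^{N-1}w_i(x_N)x_i$. If moreover $\sigma<1$, then fixing $t>0$ and inserting $x=(\varrho e_i,t)$ into \eqref{eq: GrowthConditionPropLiouvilleND} gives $|w_i(t)|\,\varrho\le c_0(1+(\varrho^2+t^2)^{\sigma/2})$, so letting $\varrho\to\infty$ forces $w_i(t)=0$; since $t>0$ is arbitrary, $v(x)=w_0(x_N)$.

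The hard part is not conceptual but a matter of bookkeeping: one must carefully verify that tangential translations genuinely preserve the weak Neumann formulation (and the space $H_{K,loc}$) for the two specific families of kernels, and one must track the growth exponent along the iteration so that it remains in the admissible range $(0,2s)$ of Lemma~\ref{Lemma:LiouvilleND} at every step — which it does, since it only decreases from the initial value $\sigma<2s$. I also expect the degree-reduction step (polynomial growth versus $\sigma<2$) and the continuity in $t$ of the coefficients $w_i$ to require a short but routine justification.
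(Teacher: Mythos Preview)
Your proposal is correct and follows essentially the same approach as the paper: exploit the tangential translation invariance of $K_\Omega$ in $\RR^N_+$ to see that tangential increments of $v$ are again weak solutions, use Lemma~\ref{Lemma:LiouvilleND} to drop the growth exponent by $\alpha$ at each step, iterate until the growth is subcritical, and then deduce from the vanishing of sufficiently many finite differences that $v$ is a polynomial of degree at most $1$ in $x'$ on each slice $\{x_N=t\}$ (with degree $0$ if $\sigma<1$). The only cosmetic differences are that the paper normalizes the increments by $C|h|^\alpha$ so the growth bound is uniform in $h$, and it iterates one step further to make the iterated difference identically zero rather than merely constant; your ``constant then Fr\'echet'' endpoint works just as well.
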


\begin{proof}
	Note that we can assume that $||v||_{L^\infty(B_R^+)} \leq R^{\sigma}$ for every  $R > 1$, after dividing by a suitable constant.
	
	First, we prove that $v$ is a polynomial in the first $N-1$ variables with coefficients depending on $x_N$, i.e.,
	$$ v(x) = \sum_{|j|\leq N} a_j(x_N) \tilde{x}^j, $$
	where $j=(j_1,...,j_{N-1})$ is a multiindex and $\tilde{x}^j = x_1^{j_1}\cdots x_{N-1}^{j_{N-1}}$.
	
	By Lemma~\ref{Lemma:LiouvilleND} we know that $[v]_{C^\alpha (B_R)} \leq C R^{\sigma-\alpha}$. Now, given any direction $e=(\tilde{e},0)\in S^{N-1}$ and any $h>0$ we define the function
	$$ v_{h,1}^e(x) = \frac{v(x+he)-v(x)}{C |h|^\alpha}, $$
	where $C$ is the positive constant appearing in the statement of Lemma~\ref{Lemma:LiouvilleND}. Then, since $e_N = 0$, it is clear that $v_{h,1}^e$  satisfies
	\[
	\begin{cases}
	L_\Omega v_{h,1}^e = 0 \quad \text{ in } \mathbb{R}^N_+ \\
	||v_{h,1}^e||_{L^\infty(B_R^+)} \leq R^{\sigma-\alpha}, \quad R > 1.
	\end{cases}
	\]
Now, since $v_{h,1}^e$ satisfies the same equation of $v$ and an ``improved'' growth condition, we can iterate this procedure and, defining recursively
	$$  v_{h,k}^e(x) = \frac{v_{h,k-1}^e(x+he)-v_{h,k-1}^e(x)}{C |h|^\alpha}, $$
	we obtain that $||v_{h,k}^e||_{L^\infty(B_R^+)} \leq R^{\sigma-k\alpha}$.  Therefore, if we choose $k\geq d+1 := \lceil\sigma/\alpha\rceil $ and take $R\to\infty$ we get that $v_{h,d+1}^e\equiv 0$ in $\RR_+^N$. By definition, this means that the discrete differences of order $d$ of $v$ in every direction $e$ are zero and thus $v$ is a polynomial of degree $d$ in the first $N-1$ variables. Furthermore, in view of \eqref{eq: GrowthConditionPropLiouvilleND} and that $\sigma<2s<2$, it follows  $d=1$ and therefore $v$ has the form stated above. Indeed, since for any given $x_N>0$, $v(\cdot, x_N)$ is a polynomial of degree $d$, then $||v(\cdot,x_N)||_{L^\infty(B_R^+)}\geq c R^d$, for some constant $c$ depending on $x_N$ and any $R>1$. On the other hand, by \eqref{eq: GrowthConditionPropLiouvilleND} we obtain that $||v(\cdot,x_N)||_{L^\infty(B_R^+)}\leq C R^\sigma$ with $\sigma<2$. It thus follows that $d=1$. Notice that when $\sigma<1$ we get that $d=0$ and so we conclude $v(x) = w_0(x_N)$.
\end{proof}
\begin{lem} \label{Lemma:NDto1D}
	Let $\Omega=\RR^N_+ = \{x_N>0\}$, and $s\in (\frac12,1)$. Let $B_\Omega$ be given by \eqref{B} with $K_\Omega$ either of the form \eqref{new-kernel}-\eqref{new-kernel2}, or \eqref{operator-reg}. Assume $v,\tilde{v}\in H_K(\RR^N_+)$ and $\eta\in C^\infty_0(\RR^N)$ are functions of the form $v(x) = x_i w(x_N)$ for some $i\in\{1,...,N-1\}$, $\tilde{v}(x) = \tilde{w}(x_N)$ and $\eta(x) = \tilde{\eta}(\bar{x})\eta_N(x_N)$ with $x=(\bar{x},x_N)\in \RR^{N-1}\times \RR_+$. Then,
	$$ B_{\RR^N_+}(\tilde{v},\eta) = \left( \int_{\RR^{N-1}} \tilde{\eta}(z) \ dz \right) B_ {\RR_+}(\tilde{w},\eta_N), $$
	and
	$$ B_{\RR^N_+}(v,\eta) = \left( \int_{\RR^{N-1}} z_i \tilde{\eta}(z) \ dz \right) B_{\RR_+}(w,\eta_N). $$
\end{lem}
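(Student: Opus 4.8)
The idea is to reduce the $N$-dimensional bilinear form to a one-dimensional one by explicitly carrying out the integration in the first $N-1$ variables, exploiting the product structure of $v$, $\tilde v$ and $\eta$ together with translation invariance of $K_{\RR^N_+}$ in those variables. Write $x = (\bar x, x_N)$, $y = (\bar y, y_N)$ with $\bar x, \bar y \in \RR^{N-1}$. The key structural fact is that for $\Omega = \RR^N_+$ both kernels in \eqref{new-kernel}--\eqref{new-kernel2} and \eqref{operator-reg} depend on $\bar x, \bar y$ only through $\bar x - \bar y$; that is, $K_{\RR^N_+}(x,y) = \mathcal K(\bar x - \bar y, x_N, y_N)$ for a suitable $\mathcal K$. (For the fractional-Laplacian kernel this is clear since $|x-y|$ and $d(x) = x_N$, $d(y) = y_N$ are all translation invariant in the first $N-1$ variables; the same holds for $k_{\RR^N_+}$ by its defining integral \eqref{new-kernel2}, and for $|x-y|^{-N-2s}$ in the regional case.)

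\textbf{First case: $\tilde v = \tilde w(x_N)$.} Plugging $\tilde v(x) - \tilde v(y) = \tilde w(x_N) - \tilde w(y_N)$ and $\eta(x) - \eta(y) = \tilde\eta(\bar x)\eta_N(x_N) - \tilde\eta(\bar y)\eta_N(y_N)$ into $B_{\RR^N_+}(\tilde v, \eta)$, one gets a double integral over $(\bar x, x_N)$ and $(\bar y, y_N)$ of
\[
\bigl(\tilde w(x_N) - \tilde w(y_N)\bigr)\bigl(\tilde\eta(\bar x)\eta_N(x_N) - \tilde\eta(\bar y)\eta_N(y_N)\bigr)\,\mathcal K(\bar x - \bar y, x_N, y_N).
\]
Here the crucial point is that the \emph{first} factor does not involve $\bar x, \bar y$. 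Integrating in $\bar x, \bar y$ first, with the change of variables $(\bar x, \bar y) \mapsto (\bar x - \bar y, \bar y)$, we compute $\int\int \bigl(\tilde\eta(\bar x)\eta_N(x_N) - \tilde\eta(\bar y)\eta_N(y_N)\bigr)\mathcal K(\bar x - \bar y, x_N, y_N)\,d\bar x\,d\bar y$. By the symmetry $\mathcal K(\zeta, x_N, y_N) = \mathcal K(-\zeta, x_N, y_N)$ (which holds since $|x-y|$ and the kernels are even in $\bar x - \bar y$) and a relabeling of $x_N \leftrightarrow y_N$, the two terms involving $\eta_N(x_N)$ and $\eta_N(y_N)$ contribute symmetrically, and the integral $\int \tilde\eta$ factors out, leaving $\bigl(\int_{\RR^{N-1}} \tilde\eta(z)\,dz\bigr)$ times the one-dimensional kernel $\int_{\RR^{N-1}} \mathcal K(\zeta, x_N, y_N)\,d\zeta$, which is precisely the kernel defining $B_{\RR_+}$. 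This gives the first identity. One should double-check that Fubini applies, using $v, \tilde v \in H_K(\RR^N_+)$ and $\eta \in C_0^\infty$ to guarantee absolute integrability (splitting into $|x-y|\le 1$ and $|x-y|>1$ if needed), but this is routine.

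\textbf{Second case: $v(x) = x_i w(x_N)$.} The computation is identical in structure, except that the first factor is now $v(x) - v(y) = x_i w(x_N) - y_i w(y_N)$, which \emph{does} depend on the first $N-1$ variables through the single coordinate $x_i$ (resp.\ $y_i$). After the same change of variables $\bar x = \zeta + \bar y$, the $x_i$-dependence becomes $(\zeta_i + \bar y_i) w(x_N)$, and one must track the resulting integrals. Write $x_i = \zeta_i + \bar y_i$; then the product $\bigl((\zeta_i + \bar y_i) w(x_N) - \bar y_i w(y_N)\bigr)\bigl(\tilde\eta(\zeta + \bar y)\eta_N(x_N) - \tilde\eta(\bar y)\eta_N(y_N)\bigr)$ must be integrated in $\bar y \in \RR^{N-1}$ and $\zeta \in \RR^{N-1}$ against $\mathcal K(\zeta, x_N, y_N)$. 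Expanding, the $\bar y_i\bigl(w(x_N) - w(y_N)\bigr)$ part pairs with $\bigl(\tilde\eta(\zeta+\bar y)\eta_N(x_N) - \tilde\eta(\bar y)\eta_N(y_N)\bigr)$; after integrating in $\bar y$ and using that $\int_{\RR^{N-1}}\bar y_i \bigl(\tilde\eta(\zeta+\bar y) - \tilde\eta(\bar y)\bigr)\,d\bar y = -\zeta_i \int \tilde\eta$ (by translation), one checks the leftover terms reorganize, using the evenness of $\mathcal K$ in $\zeta$ so that $\int \zeta_i (\text{odd in }\zeta) = $ contributes and $\int \zeta_i (\text{even}) = 0$, into exactly $\bigl(\int_{\RR^{N-1}} z_i \tilde\eta(z)\,dz\bigr)$ times $B_{\RR_+}(w, \eta_N)$.

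\textbf{Main obstacle.} The only delicate point is the bookkeeping in the second identity: one must carefully separate the $\zeta_i$ and $\bar y_i$ contributions and use the parity of the kernel in $\zeta$ (together with the fact that $\int_{\RR^{N-1}}\zeta_i\,g(|\zeta|,\cdot)\,d\zeta = 0$ for radial-in-$\bar x - \bar y$ weights) to kill the spurious terms and assemble the claimed coefficient $\int z_i\tilde\eta$. Everything else is a direct application of Fubini and an affine change of variables; no new estimates beyond those already established are needed, and the argument is insensitive to which of the two kernel forms \eqref{new-kernel}--\eqref{new-kernel2} or \eqref{operator-reg} we use, since only the translation invariance and evenness in the tangential variables are exploited.
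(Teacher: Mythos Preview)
Your proposal is correct and follows essentially the same approach as the paper. Both arguments rest on the translation invariance and evenness of $K_{\RR^N_+}$ in the tangential variables, together with the marginalization identity $\int_{\RR^{N-1}} K_{\RR^N_+}(\bar x,x_N,\bar y,y_N)\,d\bar x = K_{\RR_+}(x_N,y_N)$; the only cosmetic difference is that the paper writes out an explicit add-and-subtract decomposition of $(v(x)-v(y))(\eta(x)-\eta(y))$ into four pieces $I_1,\dots,I_4$ and kills three of them by symmetry, whereas you perform the change of variables $\zeta=\bar x-\bar y$ first and then use parity in $\zeta$---the same cancellations packaged differently.
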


\begin{proof}
	The proof comes from direct computation. On the one hand, if we use the form of $\tilde{v}$ and $\eta$, add and subtract the term $\eta_N(x_N)\tilde{\eta}(\bar{y}) (\tilde{w}(x_N)- \tilde{w}(y_N))$ and rearrange them, we arrive at
	\begin{align*}
	B_{\RR^N_+}(\tilde{v},\eta) &= \int_{\RR^N_+} \int_{\RR^N_+} (\tilde{v}(x)-\tilde{v}(y))(\eta(x)-\eta(y))\,K_{\RR^N_+}(x,y) \ dx dy \\
	& =  \int_{\RR^N_+} \int_{\RR^N_+} (\tilde{w}(x_N)- \tilde{w}(y_N))(\tilde{\eta}(\bar{x})\eta_N(x_N)-\tilde{\eta}(\bar{y})\eta_N(y_N))\,K_{\RR^N_+}(x,y) \ dx dy \\
	& = \int_{\RR^N_+} \int_{\RR^N_+} \eta_N(x_N)(\tilde{w}(x_N)-\tilde{w}(y_N)) (\tilde{\eta}(\bar{x})-\tilde{\eta}(\bar{y}))\,K_{\RR^N_+}(x,y) \ dx dy \\
	& \ \ + \int_{\RR^N_+} \int_{\RR^N_+} \tilde{\eta}(\bar{y}) (\tilde{w}(x_N)-\tilde{w}(y_N))(\eta_N(x_N)-\eta_N(y_N))\,K_{\RR^N_+}(x,y) \ dx dy \\
	&=: J_1+J_2.
	\end{align*}
	Now, one can conclude that $J_1=0$ due to the antisymmetry of the integrand with respect to the variables $\bar{x}$ and $\bar{y}$. Next, we can use the identity
	\begin{equation} \label{eq:OperatorOf1DFunction}
	\int_{\RR^{N-1}} K_{\RR^N_+}(\bar{x},x_N,\bar{y},y_N) \,  d\bar{x} = K_{\RR^+}(x_N,y_N),
	\end{equation}	
	which can be easily checked in both frameworks: $K_\Omega$ either of the form \eqref{new-kernel}-\eqref{new-kernel2}, or \eqref{operator-reg}, in order to deduce that
	$$ J_2 = \left( \int_{\RR^{N-1}} \tilde{\eta}(\bar{y}) \ d\bar{y} \right) B_ {\RR_+}(\tilde{w},\eta_N). $$
	
	On the other hand, if we use the form of $v$ and $\eta$, add and subtract again different terms and rearrange them, we arrive at
	\begin{align*}
	B_{\RR^N_+}(v,\eta) &= \int_{\RR^N_+} \int_{\RR^N_+} \frac{(v(x)-v(y))(\eta(x)-\eta(y))}{|x-y|^{N+2s}} \ dx dy \\
	& =  \int_{\RR^N_+} \int_{\RR^N_+} (x_i w(x_N)-y_i w(y_N))(\tilde{\eta}(\bar{x})\eta_N(x_N)-\tilde{\eta}(\bar{y})\eta_N(y_N))\,K_{\RR^N_+}(x,y) \ dx dy \\
	& = \int_{\RR^N_+} \int_{\RR^N_+} \eta_N(x_N)(w(x_N)-w(y_N)) y_i (\tilde{\eta}(\bar{x})-\tilde{\eta}(\bar{y}))\,K_{\RR^N_+}(x,y) \ dx dy \\
	& \ \ + \int_{\RR^N_+} \int_{\RR^N_+} w(x_N)(\eta_N(x_N)-\eta_N(y_N))(x_i-y_i) \tilde{\eta}(\bar{y})\,K_{\RR^N_+}(x,y) \ dx dy \\
	& \ \ + \int_{\RR^N_+} \int_{\RR^N_+} w(x_N)\eta_N(x_N) (x_i-y_i)(\tilde{\eta}(\bar{x})-\tilde{\eta}(\bar{y}))\,K_{\RR^N_+}(x,y) \ dx dy \\
	& \ \ + \int_{\RR^N_+} \int_{\RR^N_+} y_i \tilde{\eta}(\bar{y}) (w(x_N)-w(y_N))(\eta_N(x_N)-\eta_N(y_N))\,K_{\RR^N_+}(x,y) \ dx dy \\
	& = I_1 + I_2 + I_3 + I_4.
	\end{align*}
	Now, we show that the first three integrals are zero while the last one give us the desired result. That is, by symmetrization with respect to the variables $\bar{x}$ and $\bar{y}$ and the translation invariance and odd symmetry of the kernel $K_{\RR^N_+}(x,y)$ in the first $N-1$ variables, we get
	\begin{align*}
	I_1 & = \int_{\RR^N_+} \int_{\RR^N_+} \eta_N(x_N)(w(x_N)-w(y_N)) y_i (\tilde{\eta}(\bar{x})-\tilde{\eta}(\bar{y}))\,K_{\RR^N_+}(x,y) \ dx dy \\
	& = -\frac{1}{2} \int_{\RR^N_+} \int_{\RR^N_+} \eta_N(x_N)(w(x_N)-w(y_N)) (x_i-y_i) (\tilde{\eta}(\bar{x})-\tilde{\eta}(\bar{y}))\,K_{\RR^N_+}(x,y) \ dx dy \\
	& = \int_{\RR^N_+} \int_{\RR^N_+} \eta_N(x_N)(w(x_N)-w(y_N)) (x_i-y_i) \tilde{\eta}(\bar{y})\,K_{\RR^N_+}(x,y) \ dx dy \\
	& = \int_{\RR^N_+} \int_0^\infty \int_{\RR^{N-1}} \eta_N(x_N)(w(x_N)-w(y_N)) z_i \tilde{\eta}(\bar{y})\,K_{\RR^N_+}(z+\bar{y},x_N,\bar{y},y_N) \ dz dx_N dy \\
	& = 0
	\end{align*}
	The computations of $I_2$ and $I_3$ are completely analogous, although we do not have to do the first symmetrization. Next, we proceed with $I_4$. By using again the identity \eqref{eq:OperatorOf1DFunction} we arrive at
	\begin{align*}
	I_4 & = \int_{\RR^N_+} \int_{\RR^N_+} y_i \tilde{\eta}(\bar{y}) (w(x_N)-w(y_N))(\eta_N(x_N)-\eta_N(y_N))\,K_{\RR^N_+}(x,y) \ dx dy \\
	& = \left( \int_{\RR^{N-1}} y_i \tilde{\eta}(\bar{y}) \ d\bar{y} \right) B_ {\RR_+}(w,\eta_N).
	\end{align*}
\end{proof}

Finally we present the proof of Theorem~\ref{Thm:LiouvilleND2}.

\begin{proof}[Proof of Theorem~\ref{Thm:LiouvilleND2}]
	First, by applying Proposition~\ref{Prop:LiouvilleND} with $\sigma = 2s-1+\varepsilon$ we know that
	$$v(x) = w_0(x_N) + \sum_{i=1}^{N-1} w_i(x_N) x_i.$$
	Now, we are going to take advantage of Lemma~\ref{Lemma:NDto1D} to prove that every $w_i$ satisfies
	\begin{equation} \label{eq: Equationw_i}
	L_\Omega w_i = 0 \quad \text{ in } \mathbb{R}_+
	\end{equation}
	in the weak sense with Neumann boundary condition at $0$. To do this, let us take any test function with separated variables, i.e., $\eta(z) = \tilde{\eta}(\bar{z}) \eta_N(z_N)$. Then, by applying Lemma~\ref{Lemma:NDto1D} and the fact that $v$ is a weak solution of the problem ($B_{\RR^N_+}(v,\eta)=0$), we obtain
	$$ B_ {\RR_+}(w_0,\eta_N) \int_{\RR^{N-1}} \tilde{\eta}(z) \ dz + \sum_{i=1}^{N-1} \left( B_ {\RR_+}(w_i,\eta_N) \int_{\RR^{N-1}} z_i \tilde{\eta}(z) \ dz \right) = 0 , $$
	for any given $\tilde{\eta} \in C^\infty_0(\RR^{N-1})$ and $\eta_N \in C^\infty_0(\RR_+)$.
	
	We claim that this equality is equivalent to $B_{\RR_+}(w_i,\eta_N)=0$ for any $\eta_N \in C^\infty_0(\RR_+)$ and therefore that $w_i$ satisfies \eqref{eq: Equationw_i}, as we wanted. In order to show that we only need to choose $\tilde{\eta}$ properly. On the one hand, by taking a radial $\tilde{\eta}$, we get that $B_{\RR_+}(w_0,\eta_N)=0$. On the other, if we choose the test function $\tilde{\eta}$ to be odd with respect to the $i^{th}$-variable and even with respect to the others we conclude $B_{\RR_+}(w_i,\eta_N)=0$ for $i>0$.
 	
	Moreover, it is clear that each $w_i$ satisfies the same growth condition as $v$, i.e., $||w_i||_{L^\infty(B_R^+)} \leq c_0 (1 + R^{2s-1+\varepsilon})$ for any $R > 0$ and so, applying Corollary~\ref{Cor:Liouville1D} to each $w_i$, we obtain the desired result:
	$$ v(x) = a + \sum_{i=1}^{N-1} b_i x_i, $$
as wanted.
\end{proof}

\section{Higher regularity by blow-up}
\label{sec6}

The aim of this final section is to establish a $C^{2s-1+\alpha}$ estimate (in case $s>\frac12$), by combining the $C^\alpha$ estimate from Section \ref{sec4}, a blow-up argument in the spirit of \cite{2016RosOtonSerra:art}, and the Liouville theorem with nonlocal Neumann conditions established in Section \ref{sec5}.

We will also need the following.

\begin{lem}\label{Lemma:LocalHsBound}
Let $\Omega \subset \mathbb{R}^N$ be any Lipschitz domain, $f \in L^2_{loc}(\Omega)$ and $x_0 \in \Omega$.
Let $L_\Omega$ and $K_\Omega$ be given by either \eqref{new-operator}-\eqref{new-kernel}-\eqref{new-kernel2}, or \eqref{operator-reg}.
Assume that $u$ satisfies
	\[
	L_\Omega u = f \quad \text{ in } \Omega
	\]
	with Neumann conditions on $\partial\Omega$. 	
Assume that
\[|u(x)|\leq M_0(1+|x|^{s-\varepsilon})\quad \textrm{in}\quad \RR^N.\]
Then, for any $0 < r < R$ and any $x_0 \in \Omega$, we have
	\[
	[u]_{H_K(D_r(x_0))}^2 \leq C \left\{ \|f\|_{L^2(D_R(x_0))}^2 + M_0^2 \right\},
	\]
with $C$ depending only on $N$, $s$, $x_0$, $\varepsilon$, $r$ and $R$.
\end{lem}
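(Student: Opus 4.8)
The plan is to establish this energy (Caccioppoli-type) estimate by testing the weak formulation of $L_\Omega u = f$ with $\eta = \varphi^2 u$, for a suitable cut-off $\varphi$. First I would reduce to the case $x_0 = 0$: after translating, the growth hypothesis $|u(x)|\le M_0(1+|x|^{s-\varepsilon})$ becomes one of the same form with $M_0$ replaced by $CM_0(1+|x_0|^{s-\varepsilon})$, which is harmless since the final constant is allowed to depend on $x_0$. Then I fix $\rho\in(r,R)$ and a cut-off $\varphi\in C_0^\infty(B_\rho)$ with $0\le\varphi\le1$, $\varphi\equiv1$ on $B_r$, and $|\nabla\varphi|\le C/(R-r)$. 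The function $\eta=\varphi^2 u$ has compact support in $B_\rho$ and lies in $H_K(\Omega)$ — checking $\int_\Omega\int_\Omega|\varphi^2(x)u(x)-\varphi^2(y)u(y)|^2K_\Omega<\infty$ uses $u\in H_{K,\mathrm{loc}}(\Omega)$ near the diagonal, the Lipschitz bound on $\varphi^2$, and the growth of $u$ together with the decay of $K_\Omega$ at infinity — hence $\eta$ is an admissible test function by approximation (as noted in the proof of Lemma \ref{Lemma:Subsolutions}), giving $B(u,\varphi^2u)=\int_{D_\rho}f\varphi^2u\,dx$.

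Next I would use the pointwise identity
\[
(a-b)(\tau_1^2a-\tau_2^2b)=|\tau_1a-\tau_2b|^2-ab(\tau_1-\tau_2)^2,
\]
with $a=u(x)$, $b=u(y)$, $\tau_1=\varphi(x)$, $\tau_2=\varphi(y)$, to rewrite
\[
B(u,\varphi^2u)=[\varphi u]_{H_K(\Omega)}^2-\int_\Omega\int_\Omega u(x)u(y)(\varphi(x)-\varphi(y))^2K_\Omega(x,y)\,dx\,dy .
\]
Since $\varphi\equiv1$ on $B_r\supset D_r(0)$, the first term is $\ge[u]_{H_K(D_r(0))}^2$. Combining with $B(u,\varphi^2u)=\int f\varphi^2u$, the bound $2|u(x)u(y)|\le u(x)^2+u(y)^2$, and symmetry in $x,y$, I obtain the master inequality
\[
[u]_{H_K(D_r(0))}^2\le\int_{D_\rho}|f|\,\varphi^2|u|\,dx+\int_\Omega u(x)^2g(x)\,dx,\qquad g(x):=\int_\Omega(\varphi(x)-\varphi(y))^2K_\Omega(x,y)\,dy .
\]

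It then remains to bound the two right-hand terms. For the first, Cauchy–Schwarz and $|u|\le M_0(1+|x|^{s-\varepsilon})\le C M_0$ on $D_\rho$ give $\int_{D_\rho}|f|\varphi^2|u|\le C M_0\|f\|_{L^2(D_R)}\le C(\|f\|_{L^2(D_R)}^2+M_0^2)$. For the second I split the $x$-integral into $D_{2R}(0)$ and $\Omega\setminus B_{2R}(0)$. On $D_{2R}(0)$ one has $g(x)\le C(1+|\log d(x)|)$ — in case \eqref{K1} this is precisely the estimate $J_1+J_2\le C(1+|\log d(x)|)$ carried out in the proof of Lemma \ref{lemma-noname}, and in case \eqref{K2} there is no logarithm — so, using $|u|\le C M_0$ there and the local integrability of $|\log d|$ near $\partial\Omega$ for Lipschitz domains, this piece is $\le CM_0^2$. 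On $\Omega\setminus B_{2R}(0)$ we have $\varphi(x)=0$, hence $g(x)=\int_{B_\rho}\varphi(y)^2K_\Omega(x,y)\,dy$, which is bounded by $C|x|^{-N-2s}$ up to logarithmic factors near $\partial\Omega$ handled exactly as in the estimate of $f$ in the proof of Lemma \ref{Lemma:LiouvilleND}; integrating this against $u(x)^2\le M_0^2(1+|x|^{s-\varepsilon})^2$ produces an integrand decaying like $M_0^2|x|^{-N-2\varepsilon}$, whose integral over $\Omega\setminus B_{2R}(0)$ is $\le CM_0^2$. Putting the pieces together yields the claim.

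The main obstacle is the careful bookkeeping in the term $\int_\Omega u^2g$ (and in checking admissibility of $\varphi^2u$): one must simultaneously control the logarithmic singularity of $K_\Omega$ along $\partial\Omega$, as in Lemma \ref{lemma-noname}, and its polynomial decay at infinity against the growth of $u$, as in Lemma \ref{Lemma:LiouvilleND} — and it is exactly here that the strict inequality $2(s-\varepsilon)<2s$, i.e. $\varepsilon>0$, is used. The remaining steps are routine.
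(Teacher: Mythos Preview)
Your argument is correct, and it differs from the paper's in a meaningful way. The paper tests with $\eta=u\varphi$ (not $\varphi^2u$) and uses the identity
\[
2(u(x)-u(y))(u(x)\varphi(x)-u(y)\varphi(y))=(u(x)-u(y))^2(\varphi(x)+\varphi(y))+(u^2(x)-u^2(y))(\varphi(x)-\varphi(y)),
\]
so that the error term becomes $\int_\Omega u^2\,|L_\Omega\varphi|$. Bounding $L_\Omega\varphi(x)=\mathrm{PV}\int_\Omega(\varphi(x)-\varphi(y))K_\Omega(x,y)\,dy$ is then the main work: when $s\geq\tfrac12$ the integrand is not absolutely integrable near the diagonal, and the paper must split into a ball $B_{d(x)/2}(x)$ where cancellation/symmetry of the kernel is used, plus an outer annulus, obtaining $|L_\Omega\varphi(x)|\le C(1+|\log d(x)|)(1+d(x)^{1-2s})$ near the boundary and the appropriate decay at infinity. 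Your choice of $\eta=\varphi^2u$ replaces this signed quantity by $g(x)=\int_\Omega(\varphi(x)-\varphi(y))^2K_\Omega(x,y)\,dy$, whose integrand is nonnegative and already carries the factor $|x-y|^2$ from the Lipschitz cutoff; no principal value or symmetrization is needed, and the bound $g(x)\le C(1+|\log d(x)|)$ follows directly from the $J_1+J_2$ computation in Lemma~\ref{lemma-noname}, as you note. In short, both routes reach the same Caccioppoli inequality; yours is somewhat more elementary (avoiding the $d^{1-2s}$ singularity and the PV step), while the paper's has the conceptual advantage that the error is literally ``$u^2$ times the operator applied to a smooth cutoff.'' The far-field estimate (integrating $u^2g$ or $u^2|L_\Omega\varphi|$ over $\Omega\setminus B_{2R}$, using the growth $|u|\le M_0(1+|x|^{s-\varepsilon})$ against the $|x|^{-N-2s}$ decay with logarithmic corrections near $\partial\Omega$) is handled identically in both approaches, and it is indeed here that $\varepsilon>0$ is essential.
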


\begin{proof} Fix $x_0 \in \Omega$ and $0 < r < R$. Let $\varphi \in C_0^\infty(B_R(x_0))$, such that $0 \leq \varphi \leq 1$ and $\varphi = 1$ in $B_r(x_0)$. Testing the weak formulation with $\eta = u\varphi$, we obtain
	\[
	B(u,\eta) := \int_\Omega \int_\Omega [u(x) - u(y)][u(x)\varphi(x) - u(y)\varphi(y)] K_\Omega(x,y) dxdy = \int_\Omega fu\varphi dx.
	\]
	Writing
	\[
	u(x)\varphi(x) - u(y)\varphi(y) = [u(x) - u(y)]\varphi(x) + u(y)[\varphi(x) - \varphi(y)],
	\]
	we deduce by symmetry
	\[
	2[u(x) - u(y)][u(x)\varphi(x) - u(y)\varphi(y)] = [u(x) - u(y)]^2[\varphi(x) + \varphi(y)] + [u^2(x) - u^2(y)][\varphi(x) - \varphi(y)].
	\]
	Consequently, using the symmetry of $K_\Omega$ and the definition of $\varphi$, it follows
	\[
	\begin{aligned}
	2B(u,\eta) &= \int_\Omega \int_\Omega [u(x) - u(y)]^2[\varphi(x) + \varphi(y)] K_\Omega(x,y) dxdy \\
	& \quad + \int_\Omega \int_\Omega [u^2(x) - u^2(y)][\varphi(x) - \varphi(y)] K_\Omega(x,y) dxdy \\
	&\geq 2[u]_{H_K(D_r(x_0))}^2 - 2 \int_\Omega u^2(x) |L_\Omega \varphi(x)| dx.
	\end{aligned}
	\]
	Now, since $\varphi \in C_0^\infty(B_R(x_0))$, we claim hat
	\begin{equation} \label{claim-smooth}
	\int_\Omega u^2(x) |L_\Omega\varphi(x)| dx \leq CM_0^2\int_\Omega (1+|x|^{2s-2\varepsilon}) |L_\Omega\varphi(x)| dx\leq CM_0^2,
	\end{equation}
	for some constant $C$ depending on $\Omega$, $N$, $s$, $R$, $\varepsilon$, and $x_0$.
	If \eqref{claim-smooth} holds, then
	\[
	[u]_{H^s(D_r(x_0))}^2 \leq \int_{D_R(x_0)} f u dx + CM_0^2,
	\]
	and combining Young's inequality with the growth condition on $u$ we complete the proof.
	Hence, it only remains to prove \eqref{claim-smooth}.
	
	Let us estimate $|L_\Omega \varphi|$.
	For this, notice first that since $\varphi$ is Lipschitz, then
	\[|L_\Omega \varphi(x)| \leq C\int_{\Omega} |x-y|\,K_\Omega(x,y)dy,\]
	which gives a universal bound whenever $s<\frac12$.
	However, in case $s\geq \frac12$ the bound is nontrivial, since we cannot immediately symmetrize the integral.
	In that case, we separate the proof into two cases.
	
\vspace{2mm}

\noindent$\bullet$  \emph{Assume first that $L_\Omega$ is given by \eqref{operator-reg}.} Let  $x \in B_{2R}(x_0)$ and $d = d(x)$.
	Then,
\[
\begin{aligned}
L_\Omega \varphi (x) &= PV \int_{B_d(x)} (\varphi(x) - \varphi(y)) K_\Omega(x,y) dy + \int_{\Omega\setminus B_d(x)} (\varphi(x) - \varphi(y)) K_\Omega(x,y) dy \\
&:= I + J.
\end{aligned}
\]
By the regularity of $\varphi$ and symmetry of $K_\Omega$ inside $B_d(x)$, it follows that
\[
|I| \leq \int_{B_d} \frac{|2\varphi(x) - \varphi(x-y) - \varphi(x+y)|}{|y|^{N+2s}} dy \leq C \int_{B_d} \frac{dy}{|y|^{N+2s-2}} dy \leq C,
\]
for some constant depending on $N$, $s$, $\Omega$ and $\varphi$.
Further, since $\varphi$ is Lipschitz, we obtain
\[
|J| \leq \int_{\Omega\setminus B_d(x)} |\varphi(x) - \varphi(y)| K_\Omega(x,y) dy  \leq C \int_{\mathbb{R}^N\setminus B_d(x)} \frac{dy}{|x-y|^{N+2s-1}}\leq C d(x)^{1-2s},
\]
with $C$ depending only on $N$, $s$ and $\varphi$.

Consequently, we have proved
\begin{equation}\label{eq:L2HsEstLPh}
|L_\Omega \varphi(x)| \leq C(1 + d^{1-2s}(x)), \quad x \in B_{2R}(x_0).
\end{equation}

Now, since $\varphi$ has compact support in $B_R(x_0)$, for all $x \in B_{2R}(x_0)^c$ we find
\begin{equation}\label{eq:BounbOpertestFunctions}
|L_\Omega\varphi(x)| \leq \int_\Omega |\varphi(y)| K_\Omega(x,y) dy \leq C \int_{\mathrm{supp} \varphi} \frac{dy}{|x-y|^{N+2s}} \leq \frac{C}{(1 + |x|)^{N+2s}}.
\end{equation}
Thus, combining \eqref{eq:L2HsEstLPh} and \eqref{eq:BounbOpertestFunctions}, \eqref{claim-smooth} follows.

\vspace{2mm}

\noindent $\bullet$  \emph{Assume now that $L_\Omega$ is given by \eqref{new-operator}-\eqref{new-kernel}-\eqref{new-kernel2}.}
For $x \in B_{2R}(x_0)$ we have
\[
\begin{aligned}
L_\Omega \varphi (x) &= PV \int_{B_{d/2}(x)} (\varphi(x) - \varphi(y)) K_\Omega(x,y) dy + \int_{\Omega\setminus B_{d/2}(x)} (\varphi(x) - \varphi(y)) K_\Omega(x,y) dy \\
&:= I + J,
\end{aligned}
\]
and
\[
I = c_{N,s} PV \int_{B_{d/2}(x)} \frac{\varphi(x) - \varphi(y)}{|x-y|^{N+2s}} dy + PV \int_{B_{d/2}(x)} (\varphi(x) - \varphi(y)) k_\Omega(x,y) dy.
\]
Exactly as above, the first integral is bounded, by symmetry.
Moreover, thanks to Proposition \ref{kernel-estimates}, in $B_{d/2}(x)$ we have $|k_\Omega(x,y)|\leq Cd^{-N-2s}$, and thus since $\varphi$ is Lipschitz we deduce that
\[|I|\leq C(1 + d^{1-2s}(x)).\]

On the other hand, using \eqref{new-kernel-estimates} and the fact that $\varphi$ is Lipschitz, it is not difficult to see that
\[
|J| \leq C \int_{\Omega\setminus B_{d/2}(x)} |x-y| \, \frac{ 1 + \log^-\left( \frac{d_{x,y}}{|x-y|}\right) }{|x-y|^{N+2s}} dy  \leq C (1 + |\log d(x)|) (1+d(x)^{1-2s}).
\]
Therefore,
\begin{equation}\label{eq:L2HsEstLPh2}
|L_\Omega \varphi(x)| \leq C(1 + |\log d(x)|)(1 + d^{1-2s}(x)), \quad x \in B_{2R}(x_0).
\end{equation}

Finally, a similar computation shows that for $x\in B_{2R}^c(x_0)$ we have
\begin{equation}\label{eq:BounbOpertestFunctions2}
|L_\Omega\varphi(x)| \leq \int_\Omega |\varphi(y)| K_\Omega(x,y) dy \leq C \int_{\mathrm{supp} \varphi} K_\Omega(x,y)\,dy \leq \frac{C|\log d(x)|}{(1 + |x|)^{N+2s}},
\end{equation}
and thus \eqref{claim-smooth} follows.
\end{proof}

We can now proceed with the blow-up argument.

\begin{prop} \label{prop-blowup}
Let $\Omega \subset \mathbb{R}^N$ be a bounded $C^1$ domain, $s>\frac12$, and  $f \in L^q(\Omega)$ with  $q > N$.
Let $L_\Omega$ and $K_\Omega$ be given by either \eqref{new-operator}-\eqref{new-kernel}-\eqref{new-kernel2}, or \eqref{operator-reg}.
Assume that $u \in H_K(\Omega)$ is a weak solution to
\[
L_\Omega u = f \quad \text{ in } \Omega,
\]
with Neumann conditions on $\partial\Omega$ in the sense of Definition \ref{Def:WeakSolNeumann}.

Then, there exist $C$ and $\gamma>0$, depending only on $N$, $s$, $q$ and $\Omega$, such that for any $z \in \partial\Omega$ and $x \in \Omega$, we have
\begin{equation}\label{eq:MainEstBound1Stat}
|u(x) - u(z)| \leq C|x-z|^{2s-1+\gamma} \left[\|u\|_{L^\infty(\Omega)} + \|f\|_{L^q(\Omega)}\right].
\end{equation}
In particular, for any $z \in \partial\Omega$,
\begin{equation}\label{eq:FracNeumannDerBound1}
\lim_{\lambda \to 0^+} \frac{u(z) - u(z - \lambda \nu(z))}{\lambda^{2s-1}} = 0,
\end{equation}
where $\nu(z)$ denotes the exterior unit normal to $\partial\Omega$ at $z$.
\end{prop}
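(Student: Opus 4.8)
The plan is to prove \eqref{eq:MainEstBound1Stat} by a contradiction and compactness (blow-up) argument in the spirit of \cite{2016RosOtonSerra:art}: the $C^\alpha$ estimate of Theorem \ref{Theorem:HolderRegularity1} provides the compactness, while the Neumann Liouville theorem (Theorem \ref{Thm:LiouvilleND2}) supplies the contradiction. Once \eqref{eq:MainEstBound1Stat} is available, \eqref{eq:FracNeumannDerBound1} follows at once by taking $x=z-\lambda\nu(z)$ (which lies in $\Omega$ for small $\lambda>0$), so that $|x-z|=\lambda$ and $\lambda^{1-2s}\,|u(z)-u(z-\lambda\nu(z))|\le C\lambda^{\gamma}\to0$. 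Solutions are continuous up to $\partial\Omega$ by Theorem \ref{Theorem:HolderRegularity1} and bounded by Proposition \ref{Prop:GlobalL2Linf}, so all the quantities make sense; after normalization I may assume $\|u\|_{L^\infty(\Omega)}+\|f\|_{L^q(\Omega)}\le1$, and after subtracting the constant $u(z)$ (which $L_\Omega$ annihilates) that $u(z)=0$. I would fix once and for all an exponent $\gamma\in(0,1)$ with $\gamma<\min\{\alpha,\,1-\tfrac{N}{q},\,2-2s\}$, where $\alpha$ is the exponent of Theorem \ref{Thm:BoundaryRegBH} entering the Liouville theorem; note $2s-1+\gamma\in(0,1)$.

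Suppose \eqref{eq:MainEstBound1Stat} fails for this $\gamma$, with every constant. Then there are solutions $u_k$ in $\Omega$, boundary points $z_k\in\partial\Omega$, and points $x_k\in\Omega$ with $u_k(z_k)=0$, $\|u_k\|_{L^\infty(\Omega)}+\|f_k\|_{L^q(\Omega)}\le1$, and $|x_k-z_k|^{-(2s-1+\gamma)}\,|u_k(x_k)|\ge k$. I would introduce the nonincreasing quantity
\[\theta(r):=\sup_{k}\ \sup_{\rho\ge r}\ \rho^{-(2s-1+\gamma)}\,\|u_k\|_{L^\infty(B_\rho(z_k)\cap\Omega)},\qquad r>0,\]
which is finite for each $r>0$ (it is dominated by $r^{-(2s-1+\gamma)}$), whence the blow-up necessarily occurs at small scales, $|x_k-z_k|\to0$, and therefore $\theta(r)\to+\infty$ as $r\to0$. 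Choosing $k_m$ and $\rho_m\ge\tfrac1m$ with $\rho_m^{-(2s-1+\gamma)}\|u_{k_m}\|_{L^\infty(B_{\rho_m}(z_{k_m})\cap\Omega)}\ge\tfrac12\theta(\tfrac1m)\ge\tfrac12\theta(\rho_m)$ (so $\rho_m\to0$), I set $\tilde\Omega_m:=\rho_m^{-1}(\Omega-z_{k_m})$, $\theta_m:=\theta(\tfrac1m)$, and
\[v_m(x):=\frac{u_{k_m}(z_{k_m}+\rho_m x)}{\rho_m^{2s-1+\gamma}\,\theta_m},\qquad x\in\tilde\Omega_m.\]
Then $v_m(0)=0$, $\|v_m\|_{L^\infty(B_1\cap\tilde\Omega_m)}\ge\tfrac12$, and by monotonicity of $\theta$ one has $\|v_m\|_{L^\infty(B_R\cap\tilde\Omega_m)}\le R^{2s-1+\gamma}$ for all $1\le R\le\diam(\Omega)/\rho_m$; moreover $v_m$ is a weak solution of $L_{\tilde\Omega_m}v_m=f_m$ in $\tilde\Omega_m$ with Neumann conditions, where $f_m(x)=\rho_m^{1-\gamma}\theta_m^{-1}f_{k_m}(z_{k_m}+\rho_m x)$ satisfies $\|f_m\|_{L^q(B_R\cap\tilde\Omega_m)}\le\rho_m^{\,1-\gamma-N/q}\theta_m^{-1}\to0$ for every fixed $R$ --- this is exactly where the hypothesis $q>N$ (hence $\gamma<1-\tfrac Nq$) is used.

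Next I would pass to the limit: after a subsequence with $z_{k_m}\to z_\infty\in\partial\Omega$ and a rotation making $\nu(z_\infty)=e_N$, the domains $\tilde\Omega_m$ converge locally to the half-space $\RR^N_+$ (here the $C^1$ regularity of $\Omega$ is essential, a Lipschitz point blowing up only to a cone). The sequence $\{v_m\}$ is uniformly bounded on compacts by the growth bound, and, applying Theorem \ref{Theorem:HolderRegularity1} after the cut-off device of Lemma \ref{Lemma:LiouvilleND} to handle the unboundedness of $v_m$ (with constants uniform in $m$, since the $\tilde\Omega_m$ are uniformly $C^1$ and $\|f_m\|_{L^q_{loc}}$ is uniformly bounded), it is also uniformly $C^{\alpha'}$ on compacts. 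By Arzel\`a--Ascoli, $v_m\to v$ locally uniformly on $\overline{\RR^N_+}$ along a subsequence, and $v$ satisfies $v(0)=0$, $\|v\|_{L^\infty(\overline{B_1^+})}\ge\tfrac12$, $\|v\|_{L^\infty(B_R^+)}\le R^{2s-1+\gamma}$ for $R\ge1$, and, passing to the limit in the weak formulation (using the uniform bounds, the decay $2s-1+\gamma<2s$ to control the nonlocal tails, and the stability $K_{\tilde\Omega_m}\to K_{\RR^N_+}$, immediate for the regional operator and, for the fractional Laplacian, a consequence of the explicit formula \eqref{new-kernel2} together with the uniform estimates of Proposition \ref{kernel-estimates}), $v$ is a weak solution of $L_{\RR^N_+}v=0$ in $\RR^N_+$ with Neumann conditions. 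Since $\gamma<\alpha$, Theorem \ref{Thm:LiouvilleND2} with $\varepsilon=\gamma$ gives $v(x)=a+b\cdot x$ with $b_N=0$; because $2s-1+\gamma<1$, it forces $b=0$, and then $v(0)=0$ gives $v\equiv0$, contradicting $\|v\|_{L^\infty(\overline{B_1^+})}\ge\tfrac12$. This proves \eqref{eq:MainEstBound1Stat}, and with it \eqref{eq:FracNeumannDerBound1}.

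I expect the main obstacle to be the compactness step: one must upgrade the boundary $C^\alpha$ estimate to a uniform H\"older bound for the \emph{unbounded} blow-up sequence $v_m$ on the \emph{varying} domains $\tilde\Omega_m$ (adapting the cut-off of Lemma \ref{Lemma:LiouvilleND} with constants independent of $m$), and then justify carefully the passage to the limit in the nonlocal weak formulation --- i.e.\ the convergence of the bilinear forms $B_{\tilde\Omega_m}$, the stability of the kernels, and the control of the tails through the growth exponent $2s-1+\gamma<2s$.
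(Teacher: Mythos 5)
Your proposal follows the same blow-up--by--contradiction scheme the paper uses, with the same three ingredients: normalization and a non-increasing modulus $\theta(r)$ to calibrate the blow-up, uniform $C^\alpha$ compactness via the cut-off device of Lemma~\ref{Lemma:LiouvilleND} (plus the $C^1$ hypothesis so that $\tilde\Omega_m\to\RR^N_+$), and the Neumann Liouville theorem (Theorem~\ref{Thm:LiouvilleND2}) to kill the limit.

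The one step you gloss over is the one that makes the limit $v$ eligible for the Liouville theorem. Theorem~\ref{Thm:LiouvilleND2} is stated for \emph{weak} solutions in the sense of Definition~\ref{Def:WeakSolNeumann}, which requires $v\in H_{K,\mathrm{loc}}(\overline H)$. Locally uniform convergence of $v_m$ in $C^{\alpha'}$ together with the growth bound does \emph{not} by itself give this: near $\partial H$ the boundary H\"older exponent from Theorem~\ref{Theorem:HolderRegularity1} may be well below $2s-1$, so the bilinear form $B_H(v,\eta)$ does not converge absolutely near the diagonal for Lipschitz $\eta$, and ``passing to the limit in the weak formulation'' cannot be read literally. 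The paper fixes this by first proving a uniform local energy estimate for the blow-up sequence via the Caccioppoli-type Lemma~\ref{Lemma:LocalHsBound} (which is what guarantees $v\in H_{K,\mathrm{loc}}(\overline H)$), then passing to the limit in the \emph{distributional} identity $\int v_j\,L_j\eta\,dx=\tfrac12\int\widetilde f_j\eta$, and only afterwards upgrading this to the weak formulation using the $H_{K,\mathrm{loc}}$ membership. This also imposes the extra constraint $\sigma=2s-1+\gamma<s$, i.e.\ $\gamma<1-s$, which is strictly stronger than your condition $\gamma<2-2s$; it is harmless since $\gamma$ may be taken arbitrarily small, but should be listed among the constraints. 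With these two amendments your argument coincides with the paper's.
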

\begin{proof}
Recall that, thanks to Proposition \ref{Prop:GlobalL2Linf}, we have $u \in L^\infty(\Omega)$.
So, dividing $u$ by a constant if necessary, we may assume that $\|u\|_{L^\infty(\Omega)} + \|f\|_{L^q(\Omega)}\leq 1$,
and \eqref{eq:MainEstBound1Stat} can be written as
\begin{equation}\label{eq:MainEstBound11}
|u(x) - u(z)| \leq C|x-z|^{2s-1+\gamma},
\end{equation}
for all $x \in \Omega$ and $z \in \partial\Omega$.
Now, we prove \eqref{eq:MainEstBound11} with a blow-up and contradiction argument, for some $\gamma>0$ small enough, to be chosen later.

Assume by contradiction that there are sequences:

\smallskip

\noindent  $\bullet$  $(u_k)_{k\in\mathbb{N}}$ and $(f_k)_{k\in\mathbb{N}}$ of weak solutions to $L_\Omega u_k = f_k$ in $\Omega$ with Neumann conditions on $\partial\Omega$,  satisfying $\|u_k\|_{L^\infty(\Omega)} + \|f_k\|_{L^q(\Omega)} \leq 1$ for all $k \in \mathbb{N}$,

\noindent  $\bullet$  $(x_k)_{k\in\mathbb{N}} \in \Omega$ and $(z_k)_{k\in\mathbb{N}} \in \partial\Omega$,

\noindent  $\bullet$  and $C_k \to +\infty$ as $k \to +\infty$, such that
\begin{equation}\label{eq:Seqs1Blowup1}
\frac{|u_k(x_k) - u_k(z_k)|}{|x_k - z_k|^\sigma} \geq C_k,
\end{equation}
where $\sigma := 2s-1+\gamma$.

\smallskip

It follows $|x_k-z_k| \to 0$ as $k\to+\infty$ and so, up to passing to a subsequence, $x_k,z_k \to z_0$ as $k\to+\infty$, for some suitable $z_0 \in \partial\Omega$.

Now, the function
\[
\vartheta(r) := \sup_{k\in\mathbb{N}} \, \vartheta_k(r) := \sup_{k\in\mathbb{N}} \; \max_{\varrho \geq r} \varrho^{-\sigma} \|u_k - u_k(z_k)\|_{L^\infty(B_\varrho(z_k))}
\]
is clearly monotone non-increasing and, thanks to \eqref{eq:Seqs1Blowup1}, it satisfies $\vartheta(r) \to +\infty$ as $r \to 0^+$, that is
\begin{equation}\label{eq:HpAssurda1}
\sup_{k\in\mathbb{N}} \; \sup_{r > 0} r^{-\sigma} \|u_k - u_k(z_k)\|_{L^\infty(B_r(z_k))} = + \infty.
\end{equation}
Indeed, choosing $r_k = |x_k - z_k|$, we have
\[
\vartheta_k(r_k) \geq r_k^{-\sigma} \|u_k - u_k(z_k)\|_{L^\infty(B_{r_k}(z_k))} \geq \frac{|u_k(x_k) - u_k(z_k)|}{|x_k - z_k|^\sigma},
\]
and thus, in view of \eqref{eq:Seqs1Blowup1}, we can pass to the limit as $k \to + \infty$ and \eqref{eq:HpAssurda1} follows.

Furthermore, by the definition of $\vartheta$ we deduce the existence of two sequences $r_j \to 0^+$ and $(k_j)_{j\in\mathbb{N}}$ such that
\begin{equation}\label{eq:ChoiceRj1}
r_j^{-\sigma} \|u_{k_j} - u_{k_j}(z_{k_j})\|_{L^\infty(B_{r_j}(z_{k_j}))} \geq \frac{\vartheta(r_j)}{2}, \quad j \in \mathbb{N}.
\end{equation}

\emph{Step 1: Blow-up sequence.} Now, we introduce the blow-up sequence
\[
v_j(x) := \frac{u_{k_j}(r_jx + z_{k_j}) - u_{k_j}(z_{k_j})}{r_j^\sigma \vartheta(r_j)}, \quad j \in \mathbb{N},
\]
which satisfies $v_j(0) = 0$ for all $j \in \mathbb{N}$ and
\begin{equation}\label{eq:NormLinfty1}
\|v_j\|_{L^\infty(B_1)} \geq \frac{1}{2}, \quad \text{ for all } j \in \mathbb{N},
\end{equation}
thanks to \eqref{eq:ChoiceRj1}. Further, for any $R \geq 1$, we have
\[
\|v_j\|_{L^\infty(B_R)} = \frac{1}{r_j^\sigma \vartheta(r_j)} \|u_{k_j} - u_{k_j}(z_{k_j})\|_{L^\infty(B_{r_jR}(z_{k_j}))} \leq \frac{1}{r_j^\sigma \vartheta(r_j)} (r_jR)^\sigma \vartheta(r_jR) \leq R^\sigma,
\]
where we have used the definition of $\vartheta$ and its monotonicity: $\vartheta(r_jR) \leq \vartheta(r_j)$ for $j \in \mathbb{N}$ and all $R \geq 1$. Thus:
\begin{equation}\label{eq:BoundGrowthvj1}
\|v_j\|_{L^\infty(B_R)} \leq R^\sigma, \quad j \in \mathbb{N}, \; R \geq 1.
\end{equation}
On the other hand, each $v_j$ satisfies
\begin{equation}\label{eq:EqOfvj1}
L_jv_j(x) = \frac{r_j^{2s-\sigma}}{\vartheta(r_j)} f(r_jx + z_{k_j}) := \widetilde{f}_j(x), \quad x \in \Omega_j := r_j^{-1}(z_{k_j}-\Omega),
\end{equation}
in the weak sense with Neumann conditions on $\partial\Omega_j$, where $L_j := L_{\Omega_j}$, and
\begin{equation}\label{eq:BoundrhsEqvj1}
\|\widetilde{f}_j\|_{L^q(\Omega_j)} \leq \|f\|_{L^q(\Omega)} \frac{r_j^{2s - \frac{N}{q} -\sigma}}{\vartheta(r_j)}, \quad \text{ for all } j \in \mathbb{N}.
\end{equation}
Now, fix $R \geq 1$ and define $w_j := v_j \chi_{B_{4R}}$, $j \in \mathbb{N}$. Following the proof of Lemma \ref{Lemma:LiouvilleND} and setting $D_R^j := B_R \cap \Omega_j$, it is not difficult to verify that
\[
L_{\Omega_j}w_j = \overline{f}_j \quad \text{ in } D_{2R}^j,
\]
where
\[
\overline{f}_j := \widetilde{f}_j + 2\int_{\Omega_j\setminus B_{4R}} v_j(y)\,K_{\Omega_j}(x,y) dy.
\]
Using \eqref{eq:BoundrhsEqvj1} and that $q > N$, we can choose $\gamma>0$ small enough so that $2s - N/q - \sigma > 0$, and thus $\|\widetilde{f}_j\|_{L^q(\Omega_j)}$ is uniformly bounded. Further, using \eqref{eq:BoundGrowthvj1} and repeating the proof of Lemma \ref{Lemma:LiouvilleND}, we find that also the second term in the definition of $\overline{f}_j$ is bounded in $L^q(D_{2R}^j)$, uniformly w.r.t. $j$ (recall that we can reduce consider the case $\Omega_j = \mathbb{R}_+^N$ by using a local bi-Lipschitz transformation of $\Omega_j$). In particular, $\overline{f}_j$ is bounded in $L^q(D_{2R}^j)$, uniformly w.r.t. $j$ and thus Theorem \ref{Theorem:HolderRegularity1} implies
\[
[w_j]_{C^\alpha(D_R^j)} \leq C R^{-\alpha} \left[ \|w_j\|_{L^{\infty}(\Omega_j)} + R^{2s - \frac{N}{q}} \|\overline{f}_j\|_{L^q(D_{2R}^j)}\right].
\]
By the argument above and since $\|w_j\|_{L^{\infty}(\Omega_j)} = \|v_j\|_{L^{\infty}(\Omega_j\cap B_{4R})} \leq C R^\sigma$ (see \eqref{eq:BoundGrowthvj1}), it follows that $[w_j]_{C^\alpha(D_R^j)} \leq C_R$ for all $j \in \mathbb{N}$ and some constant $C_R > 0$ (independent of $j$). In particular, since $w_j = v_j$ in $D_R^j$, we obtain
\begin{equation}\label{eq:HolBoundvj1}
[v_j]_{C^\alpha(D_R^j)} \leq C_R.
\end{equation}
Moreover, choosing $\gamma>0$ small enough so that $\sigma < s$, we combine Lemma \ref{Lemma:LocalHsBound}, \eqref{eq:BoundGrowthvj1} and \eqref{eq:BoundrhsEqvj1}, to deduce
\begin{equation}\label{eq:LocHsBoundBlowup1}
\begin{aligned}
\left[v_j\right]_{H^s(D_R^j)}^2 \leq C_R,
\end{aligned}
\end{equation}
for any fixed $R \geq 1$ and some new constant $C_R > 0$ independent of $j \in \mathbb{N}$.

\emph{Step 2: Compactness.} Using simultaneously \eqref{eq:BoundGrowthvj1}, \eqref{eq:HolBoundvj1}, the fact that $\Omega$ is of class $C^1$ together with $z_{k_j} \to z_0 \in \partial\Omega$, and the Ascoli-Arzel\`a theorem, it follows that for any $R \geq 1$ and any $\nu \in (0,\alpha)$,
\[
v_j \to v,
\]
uniformly in $\overline{B_R \cap H}$ (and in $C^\nu$), where $H := \{e \cdot x > 0\}$, for some unit vector $e$ depending on $z_0 \in \partial\Omega$.
Moreover,  $v \in C^\nu(\overline{B_R \cap H})$ and $v(0) = 0$. Further, in view of \eqref{eq:LocHsBoundBlowup1}, the sequence $\{v_j\}_{j\in\mathbb{N}}$ is uniformly bounded in $H_{K,loc}(\overline{\Omega}_j)$ and so $v \in H_{K,loc}(\overline{H})$.

Notice also that by uniform convergence, we obtain that $v$ satisfies
\begin{equation}\label{eq:BoundGrowthv1}
\|v\|_{L^\infty(B_1)} \geq \frac{1}{2}, \qquad \|v\|_{L^\infty(B_R)} \leq R^\sigma, \quad \text{ for all } R \geq 1,
\end{equation}
once we pass to the limit in \eqref{eq:NormLinfty1} and \eqref{eq:BoundGrowthvj1}.

\emph{Step 3: Passage to the limit into the equation.} Since the $v_j$'s satisfy \eqref{eq:EqOfvj1} in the weak sense with Neumann conditions on $\partial\Omega_j$, they satisfy the same equation in the distributional sense, that is
\begin{equation}\label{eq:DisSol1}
\int_{\Omega_j} v_j L_j\eta dx = \frac{1}{2} \int_{\Omega_j} \widetilde{f}_j \eta,
\end{equation}
for all $\eta \in C_0^\infty(\mathbb{R}^N)$, and all $j \in \mathbb{N}$. To justify this, we fix $\eta \in C_0^\infty(\mathbb{R}^N)$, $j \in \mathbb{N}$, $\varepsilon \in (0,1)$ and we notice that, by the symmetry of the kernel, we have
\begin{equation}\label{eq:DisSolEps1}
\begin{aligned}
\int_{\Omega_j} v_j(x) &\int_{\Omega_j\setminus B_\varepsilon(x)} [\eta(x) - \eta(y)] K_{\Omega_j}(x,y)dy dx = \iint_{D_j^\varepsilon} v_j(x) [\eta(x) - \eta(y)] K_{\Omega_j}(x,y)dxdy \\
&= \frac{1}{2} \iint_{D_j^\varepsilon} [v_j(x) - v_j(y)][\eta(x) - \eta(y)] K_{\Omega_j}(x,y) dxdy,
\end{aligned}
\end{equation}
where $D_j^\varepsilon := \left\{(x,y) \in \Omega_j \times \Omega_j: |x-y|>\varepsilon \right\}$. For any $x \in \Omega_j$, we define
\[
L_j^\varepsilon \eta (x) := \int_{\Omega_j\setminus B_\varepsilon(x)} [\eta(x) - \eta(y)]K_{\Omega_j}(x,y) dy.
\]
Notice that $L_j^\varepsilon\eta \to L_j\eta$ a.e. in $\mathbb{R}^N$ as $\varepsilon \to 0^+$ and
\begin{equation}\label{eq:NeumUnifBoundVepj}
|L_j^\varepsilon \eta(x)| \leq \frac{h_j(x)}{(1+|x|)^{N+2s}},
\end{equation}
for some $h_j \in L^1_{loc}(\mathbb{R}^N)$ independent of $\varepsilon \in (0,1)$; see  \eqref{eq:L2HsEstLPh}-\eqref{eq:BounbOpertestFunctions} and \eqref{eq:L2HsEstLPh2}-\eqref{eq:BounbOpertestFunctions2}  in the proof of Lemma~\ref{Lemma:LocalHsBound}. Noticing that the function $x \to (1+|x|)^{-N-\alpha} h_j(x)$ belongs to $L^1(\mathbb{R}^N)$ for any $\alpha > 0$, recalling
\eqref{eq:BoundGrowthvj1} and that $\sigma < s$, we can pass to the limit into \eqref{eq:DisSolEps1} to obtain
\[
\int_{\Omega_j} v_j(x) \int_{\Omega_j\setminus B_\varepsilon(x)} [\eta(x) - \eta(y)] K_{\Omega_j}(x,y)dy dx \to \int_{\Omega_j} v_j L_j\eta dx
\]
as $\varepsilon \to 0$, thanks to the dominated convergence theorem. On the other hand, since $D_j^\varepsilon \to \Omega_j\times\Omega_j$, we find
\[
\iint_{D_j^\varepsilon} [v_j(x) - v_j(y)][\eta(x) - \eta(y)] K_{\Omega_j}(x,y) dxdy \to B_{\Omega_j}(v_j,\eta) = \int_{\Omega_j} \widetilde{f}_j \eta,
\]
and so, in view of \eqref{eq:DisSolEps1}, \eqref{eq:DisSol1} is proved.

Now, we fix an arbitrary $\eta \in C_0^\infty(\mathbb{R}^N)$ and we pass to the limit as $j \to +\infty$ in \eqref{eq:DisSol1}. Using \eqref{eq:BoundrhsEqvj1} and that $2s - N/q - \sigma > 0$, the right hand side of the equation converges to $0$ as $j\to +\infty$.  Further, using that $\chi_j \to \chi_H$ and $K_{\Omega_j} \to K_H$ a.e. in $\mathbb{R}^N$, we apply the Vitali's convergence theorem (here we use again \eqref{eq:L2HsEstLPh}-\eqref{eq:BounbOpertestFunctions} and \eqref{eq:L2HsEstLPh2}-\eqref{eq:BounbOpertestFunctions2}), to deduce $L_j\eta \to L_H\eta$ a.e. in $\mathbb{R}^N$. Writing
\[
\bigg| \int_{\Omega_j} v_j L_j\eta dx - \int_H v L_H \eta dx \bigg| \leq \bigg| \int_{\Omega_j} v_j (L_j\eta - L_H\eta) dx \bigg| + \bigg|\int_H (v_j - v) L_H \eta dx \bigg| := I_j + \overline{I}_j,
\]
we easily see that both $I_j$ and $\overline{I}_j$ go to $0$ as $j \to +\infty$. Indeed, since $L_j\eta \to L_H\eta$, the $v_j$'s satisfy \eqref{eq:BoundGrowthvj1} and $\sigma < 2s$, we obtain $I_j \to 0$ as $j \to +\infty$, applying the Vitali's convergence theorem again. Similar for $\overline{I}_j$, using that $v_j \to v$ uniformly on compact sets of $\mathbb{R}^N$.

As a consequence, we can pass to the limit and deduce that $v$ satisfies
\begin{equation}\label{eq:DisSolL1}
\int_H v \,L_H \eta dx = 0, \quad \text{ for all } \eta \in C_0^\infty(\mathbb{R}^N).
\end{equation}
From interior regularity estimates and \eqref{eq:LocHsBoundBlowup1}, we know that $v \in C^\infty(H)\cap H_{K,loc}(\overline{H})$ and thus $v$ is a weak solution to
\begin{equation}\label{eq:WeakSolH1}
L_Hv = 0 \quad \text{ in } H,
\end{equation}
with Neumann conditions on $\partial H$ in the sense of Definition \ref{Def:WeakSolNeumann}. Indeed, let $\eta \in C_0^\infty(\mathbb{R}^N)$ and set
\[
L_H^\varepsilon \eta(x) := \int_{H \setminus B_\varepsilon(x)} [\eta(x) - \eta(y)] K_H(x,y) \, dy.
\]
By \eqref{eq:DisSolEps1}, we have
\begin{equation}\label{eq:DisSolEpsH1}
\int_H v(x) L_H^\varepsilon \eta(x) dx = \frac{1}{2} \iint_{D^\varepsilon} [v(x) - v(y)][\eta(x) - \eta(y)] K_H(x,y) dxdy,
\end{equation}
where $D^\varepsilon := \left\{(x,y) \in H \times H: |x-y|>\varepsilon \right\}$. Now, proceeding as above, it follows
\[
\int_H v(x) L_H^\varepsilon \eta(x) dx \to \int_H v(x) L_H \eta(x) dx,
\]
as $\varepsilon \to 0^+$ and so, in view of \eqref{eq:DisSolL1} and the fact that $D^\varepsilon \to H \times H$ as $\varepsilon \to 0^+$, we obtain
\[
\int_H \int_H [v(x) - v(y)][\eta(x) - \eta(y)] K_H(x,y) dxdy = 0.
\]
Recalling that $v \in H_{K,loc}(\overline{H})$, \eqref{eq:WeakSolH1} follows.

\emph{Step 4: Conclusion.} In view of \eqref{eq:BoundGrowthv1} and Theorem \ref{Thm:LiouvilleND2}, we deduce that $v$ is constant in $H$. On the other hand, recalling that $v(0) = 0$, it must be $v\equiv 0$ in $H$, a contradiction with \eqref{eq:BoundGrowthv1}.
\end{proof}

We will also need the following observation.

\begin{lem} \label{lem-obs}
Let $\Omega \subset \mathbb{R}^N$ be a bounded $C^1$ domain, $\sigma\in (0,2s)$, and assume that $u$ satisfies:
\begin{itemize}

\item $|u|\leq C_0$ in $\Omega$,

\item $\mathcal N_s u=0$ in $\Omega^c$,

\item $|u(x) - u(z)| \leq C_0|x-z|^{\sigma}$ for all $z\in \partial\Omega$, $x\in \Omega$.
\end{itemize}

Then, we have
\begin{equation}\label{trew}
|u(x) - u(z)| \leq CC_0|x-z|^{\sigma}\quad \textrm{for all}\quad z\in \partial\Omega, \quad x\in \R^N.
\end{equation}
The constant $C$ depends only on $\Omega$.
\end{lem}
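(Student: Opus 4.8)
The plan is to exploit the representation formula hidden in the Neumann condition. First I would note that, for $x\in\Omega^c$, the identity $\mathcal N_su(x)=0$ says exactly that $u(x)$ is a weighted average of the interior values of $u$: with $w_x(y):=|x-y|^{-N-2s}\big/\int_\Omega|x-z|^{-N-2s}\,dz$ for $y\in\Omega$ one has $w_x\ge0$, $\int_\Omega w_x\,dy=1$ and $u(x)=\int_\Omega u(y)\,w_x(y)\,dy$; here $\int_\Omega|x-y|^{-N-2s}\,dy<\infty$ because $|x-y|\ge d(x):=\dist(x,\partial\Omega)>0$ on $\Omega$, and the integral defining $u(x)$ converges since $|u|\le C_0$ on $\Omega$. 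In particular $|u|\le C_0$ on $\Omega^c$ as well. Since the case $x\in\overline\Omega$ of \eqref{trew} is exactly the third hypothesis (which also provides the continuous extension of $u$ to $\overline\Omega$ needed to make sense of $u$ on $\partial\Omega$), it remains to treat $x\in\Omega^c$.

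Fix $z\in\partial\Omega$ and $x\in\Omega^c$ and set $d:=d(x)$, so $d\le|x-z|$. Writing $u(z)=\int_\Omega u(z)\,w_x(y)\,dy$ and using the third hypothesis, I would get
\[
|u(x)-u(z)|\le\int_\Omega|u(y)-u(z)|\,w_x(y)\,dy\le C_0\int_\Omega|y-z|^\sigma\,w_x(y)\,dy .
\]
Then, from $|y-z|\le|y-x|+|x-z|$ and $(a+b)^\sigma\le 2^\sigma(a^\sigma+b^\sigma)$, everything reduces to estimating $\int_\Omega|y-x|^\sigma w_x(y)\,dy$, i.e. the ratio
\[
\int_\Omega|y-x|^\sigma w_x(y)\,dy=\frac{\int_\Omega|y-x|^{\sigma-N-2s}\,dy}{\int_\Omega|y-x|^{-N-2s}\,dy},
\]
and the goal is to show this is $\le C\,d^\sigma$; then $d\le|x-z|$ closes the estimate. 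For the numerator, since $|y-x|\ge d$ on $\Omega$ and $\sigma-N-2s=-N-(2s-\sigma)$ with $2s-\sigma>0$, a polar-coordinates computation gives $\int_\Omega|y-x|^{\sigma-N-2s}\,dy\le\int_{\{|w|\ge d\}}|w|^{-N-(2s-\sigma)}\,dw=c\,d^{-(2s-\sigma)}$ with $c=c(N,s,\sigma)$, using nothing about $\Omega$.

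The denominator is the crux, and this is the step I expect to be the main obstacle: I need a lower bound $\int_\Omega|y-x|^{-N-2s}\,dy\ge c'\,d^{-2s}$, which is a uniform interior measure-density (corkscrew) property of Lipschitz — in particular $C^1$ — domains. Concretely, for $d\le r_*$ (with $r_*>0$ depending only on $\Omega$), if $\bar z\in\partial\Omega$ is a nearest point to $x$, the interior corkscrew condition at $\bar z$ and scale $2d$ furnishes a point $y_0\in\Omega$ with $B_{2d/M}(y_0)\subset\Omega$ and $|y_0-\bar z|<2d$; every $y$ in that ball has $|y-x|\le5d$, so $\int_\Omega|y-x|^{-N-2s}\,dy\ge|B_{2d/M}|\,(5d)^{-N-2s}=c'd^{-2s}$ with $c'=c'(N,\Omega)$. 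Combining with the numerator bound gives the ratio estimate $\le Cd^\sigma$ when $d\le r_*$. For $d>r_*$ I would not use the representation formula at all: then $|x-z|\ge d>r_*$ and $|u(x)-u(z)|\le2C_0\le 2C_0 r_*^{-\sigma}|x-z|^\sigma$, again of the claimed form with constant depending only on $\Omega$ (and $\sigma$). Assembling the three regimes — $x\in\overline\Omega$, $x\in\Omega^c$ with $d\le r_*$, $x\in\Omega^c$ with $d>r_*$ — yields \eqref{trew}. Apart from the corkscrew lower bound every step is elementary; the one subtlety to track is that the corkscrew ball has radius comparable to $\min\{d(x),r_*\}$, which is precisely why the far regime $d(x)>r_*$ has to be split off and handled by the crude bound $|u|\le C_0$.
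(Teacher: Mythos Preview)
Your proof is correct and follows essentially the same route as the paper: both exploit the representation $u(x)=\int_\Omega u(y)\,w_x(y)\,dy$ coming from $\mathcal N_su=0$, bound the numerator $\int_\Omega|y-x|^{\sigma-N-2s}\,dy\lesssim d(x)^{\sigma-2s}$ and the denominator $\int_\Omega|y-x|^{-N-2s}\,dy\gtrsim d(x)^{-2s}$, and treat large $d(x)$ trivially via $|u|\le C_0$. The only cosmetic differences are that the paper cites \cite[Lemma~2.1]{Ab} for the denominator lower bound (your corkscrew argument reproves it) and bounds the numerator via $|x-y|\gtrsim d(x)+|y-z|$ for $z$ the nearest boundary point to $x$, whereas you use the triangle inequality $|y-z|\le|y-x|+|x-z|$ and work with general $z$ throughout.
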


\begin{proof}
Notice that, since $\mathcal N_s u=0$ in $\Omega^c$, then
\[u(x) \int_\Omega \frac{dy}{|x-y|^{N+2s}} = \int_\Omega \frac{u(y)}{|x-y|^{N+2s}}\,dy,\]
for all $x\in \Omega^c$, and thus
\[(u(x)-u(z)) \int_\Omega \frac{dy}{|x-y|^{N+2s}} = \int_\Omega \frac{u(y)-u(z)}{|x-y|^{N+2s}}\,dy,\]
for any $z\in \partial\Omega$.

When $d(x)>1$ the bound \eqref{trew} holds trivially, so we will assume $d(x)\leq1$.
In that case, by \cite[Lemma 2.1]{Ab} we have
\[\int_\Omega \frac{dy}{|x-y|^{N+2s}} \asymp d^{-2s}(x).\]
Moreover, since $\Omega$ is $C^1$,  choosing $z$ to be the projection of $y$ onto $\partial\Omega$, we have
\[\int_\Omega \frac{|u(y)-u(z)|}{|x-y|^{N+2s}}\,dy \leq C \int_\Omega \frac{|y-z|^\sigma}{|x-y|^{N+2s}}\,dy
\leq C\int_\Omega \frac{|y-z|^\sigma}{(d(x)+|y-z|)^{N+2s}}\,dy, \]
for some $C >0$ depending on $\Omega$.
Since
\[\int_{\R^N} \frac{|y-z|^\sigma}{(A+|y-z|)^{N+2s}}\,dy \asymp A^{\sigma-2s},\]
we deduce
\[\int_\Omega \frac{|u(y)-u(z)|}{|x-y|^{N+2s}}\,dy\leq Cd^{\sigma-2s}(x)=C|x-z|^{\sigma-2s}.\]
Combining the previous estimates, the result follows.
\end{proof}

Finally, to prove Theorems \ref{thm-intro} and \ref{thm-intro-reg}, we will also need the following interior regularity results.
The first one is probably well known, we give a short proof for completeness.

\begin{lem} \label{lemma:Interior-Regularity-frac}
	Let $N\geq 2$ and $s>\frac12$.
	Assume that $u\in L^\infty(B_1)$, $(1+|x|)^{-N-2s}u(x)\in L^1(\Omega)$,  satisfies
	$$ (-\Delta)^s u = f \ \ \ \text{ in } \ B_1,$$
	for some $f\in L^q(B_1)$ with $q>N/(2s)$.
	Then, for any $\gamma \leq 2s-N/q$,
	$$ \|u\|_{C^\gamma (B_{1/2})} \leq C (||f||_{L^q(B_1)}+||(1+|x|)^{-N-2s}u(x)||_{L^1(\R^N)}+||u||_{L^\infty(B_1)}),  $$
	where  $C$ is a positive constant depending only on $N$, $s$, $q$ and $\gamma$.
\end{lem}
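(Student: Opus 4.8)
The plan is to split $u$ into a part generated by $f$ inside $B_1$ and an $s$-harmonic part, and estimate each separately; the point of the splitting is that the $f$-part is handled by Green-function (potential-theoretic) estimates and the $s$-harmonic part by the classical interior smoothness of $s$-harmonic functions, so that we never invoke the $L^q\to C^\gamma$ estimate we are trying to prove. Concretely, I would let $u_1$ be the (unique energy) solution of the Dirichlet problem $(-\Delta)^s u_1=f$ in $B_1$, $u_1=0$ in $\mathbb{R}^N\setminus B_1$, written through the Green function of the ball as $u_1(x)=\int_{B_1}G_{B_1}(x,y)f(y)\,dy$, and set $w:=u-u_1$. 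Then $(-\Delta)^s w=0$ in $B_1$, $w=u$ outside $B_1$, and $w$ is bounded on compact subsets of $B_1$ with the same type of weighted $L^1$ tail as $u$.

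For $u_1$ I would use the standard bounds on $G_{B_1}$ (see e.g.\ \cite{Landkov,R-survey}): $0\le G_{B_1}(x,y)\le c_{N,s}|x-y|^{2s-N}$ and the Hölder estimate $|G_{B_1}(x,y)-G_{B_1}(x',y)|\le C|x-x'|^\gamma\big(|x-y|^{2s-N-\gamma}+|x'-y|^{2s-N-\gamma}\big)$ valid for $\gamma<\min\{1,2s\}$. This gives
\[
|u_1(x)-u_1(x')| \le \int_{B_1}\big|G_{B_1}(x,y)-G_{B_1}(x',y)\big|\,|f(y)|\,dy \le C|x-x'|^\gamma \int_{B_1}\frac{|f(y)|\,dy}{|x-y|^{N-2s+\gamma}},
\]
and by Hölder's inequality the last integral is $\le \|f\|_{L^q(B_1)}\big(\int_{B_1}|x-y|^{-(N-2s+\gamma)q'}dy\big)^{1/q'}\le C\|f\|_{L^q(B_1)}$, since $(N-2s+\gamma)q'<N$ exactly when $\gamma<2s-N/q$. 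Together with the analogous bound $\|u_1\|_{L^\infty(\mathbb{R}^N)}\le C\|f\|_{L^q(B_1)}$, this yields $\|u_1\|_{C^\gamma(\mathbb{R}^N)}\le C\|f\|_{L^q(B_1)}$ for all $\gamma<2s-N/q$; the endpoint follows by a standard limiting/interpolation argument, and the case $2s-N/q\ge 1$ (when $\gamma$ may exceed $1$) is identical using the corresponding Hölder estimates on $\nabla_x G_{B_1}$.

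For $w$, since $w$ is $s$-harmonic in $B_1$, the classical interior regularity for $s$-harmonic functions (via the Poisson kernel of the ball, cf.\ \cite{2006Silvestre:art,R-survey}) gives $w\in C^\infty(B_{1/2})$ with, for every $\gamma$,
\[
\|w\|_{C^\gamma(B_{1/2})} \le C\Big(\|w\|_{L^\infty(B_{3/4})} + \big\|(1+|x|)^{-N-2s}w(x)\big\|_{L^1(\mathbb{R}^N)}\Big),
\]
an estimate that uses only local boundedness and the tail bound, hence no circularity. Finally I would do the bookkeeping: $\|w\|_{L^\infty(B_{3/4})}\le \|u\|_{L^\infty(B_1)}+\|u_1\|_{L^\infty(B_1)}$ and, because $u_1$ is supported in $\overline{B_1}$ and bounded, $\|(1+|x|)^{-N-2s}w\|_{L^1(\mathbb{R}^N)}\le \|(1+|x|)^{-N-2s}u\|_{L^1(\mathbb{R}^N)}+C\|u_1\|_{L^\infty(B_1)}$; all three quantities on the right are controlled by $\|f\|_{L^q(B_1)}+\|(1+|x|)^{-N-2s}u\|_{L^1(\mathbb{R}^N)}+\|u\|_{L^\infty(B_1)}$. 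Adding $u=u_1+w$ on $B_{1/2}$ completes the proof.

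The main obstacle is purely one of \emph{assembling the right classical inputs with clean dependence on the norms}: the precise pointwise and Hölder (and, for large $q$, gradient) estimates for $G_{B_1}$, and the interior smoothness estimate for $s$-harmonic functions stated with the weighted-$L^1$ tail term on the right-hand side. None of these is new, but care is needed so that (a) the $s$-harmonic estimate does not secretly use the $L^q$ elliptic regularity we are proving, and (b) the tail and $L^\infty$ norms of $w$ are genuinely bounded by those of $u$ and $f$ as above. The calculus of exponents showing the threshold $\gamma<2s-N/q$ is the elementary computation $(N-2s+\gamma)q'<N\iff\gamma<2s-N/q$.
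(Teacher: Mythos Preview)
Your approach is essentially the same as the paper's: split $u$ into an $f$-generated piece plus an $s$-harmonic remainder, estimate the first by potential theory and the second by interior regularity for $s$-harmonic functions. The only substantive difference is the choice of the first piece: the paper takes $v=(-\Delta)^{-s}(f\chi_{B_1})$ via the Riesz potential on all of $\mathbb{R}^N$ and cites \cite[Theorem 1.6(ii)]{RS-extremal} for $[v]_{C^\gamma(\mathbb{R}^N)}\le C\|f\|_{L^q(B_1)}$, whereas you take $u_1$ to be the Dirichlet solution in $B_1$ via the Green function. Both choices work, but note that your claim $\|u_1\|_{C^\gamma(\mathbb{R}^N)}\le C\|f\|_{L^q(B_1)}$ is an overstatement when $\gamma>s$: the Dirichlet solution is only $C^s$ across $\partial B_1$ (it behaves like $d^s$ near the boundary), so the global $C^\gamma$ bound fails there. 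This is harmless for the lemma since you only need $[u_1]_{C^\gamma(B_{1/2})}$, and your Green-function argument does give that; but it is precisely the reason the paper's Riesz-potential choice is cleaner, since there the global $C^\gamma$ regularity is genuine.
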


\begin{proof}
We can decompose $u=v+w$, where $v = (-\Delta)^{-s} f$ (in the sense that $v$ is the Riesz potential of order $2s$ of the function $f$ extended by zero outside $B_1$) and $w$ satisfies $(-\Delta)^s w=0$ in $B_1$.
Then, if we apply the estimates in \cite[Theorem 1.6 (ii)]{RS-extremal} and \cite[Corollary~2.5]{RS-Dir}, we get
	$$ [v]_{C^\gamma(\RR^N)} \leq C ||f||_{L^q(B_1)}, \ \ \ \ \ ||(1+|x|)^{-N-2s}v(x)||_{L^1(\R^N)} \leq C ||f||_{L^q(B_1)}, $$
	and
	$$ [w]_{C^{\gamma}(B_{1/2})} \leq C (||(1+|x|)^{-N-2s}w(x)||_{L^1(\Omega)}+||w||_{L^\infty(B_2)}). $$
	The result then follows from these estimates.
\end{proof}

The second one is for the regional fractional Laplacian.

\begin{lem} \label{lemma:Interior-Regularity}
	Let $\Omega\subset \RR^N$ be any domain with $N\geq 2$ and $s>\frac12$. Let $L_\Omega$ be given by \eqref{operator-reg}.
	Assume that $u\in L^\infty(B_2)$, $(1+|x|)^{-N-2s}u(x)\in L^1(\Omega)$ and satisfies
	$$ L_\Omega u = f \ \ \ \text{ in } \ B_3\subset \Omega,$$
	for some $f\in L^q(B_3)$ with $q>N/(2s)$.
	Then, for any $\gamma \leq 2s-N/q$,
	$$ [u]_{C^\gamma (B_{1/2})} \leq C (||f||_{L^q(B_2)}+||(1+|x|)^{-N-2s}u(x)||_{L^1(\Omega)}+||u||_{L^\infty(B_2)}),  $$
	where  $C$ is a positive constant depending only on $N$, $s$, $q$ and $\gamma$.
\end{lem}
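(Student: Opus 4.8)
The plan is to reduce the estimate to the interior estimate for the fractional Laplacian in Lemma~\ref{lemma:Interior-Regularity-frac}, by comparing $L_\Omega=(-\Delta)^s_\Omega$ with $(-\Delta)^s$. First I would introduce $\widetilde u:=u\,\chi_\Omega$, the extension of $u$ by zero outside $\Omega$, and note the pointwise identity, valid for every $x$ at positive distance from $\Omega^c$,
\[
(-\Delta)^s\widetilde u(x)=(-\Delta)^s_\Omega u(x)+c_{N,s}\int_{\Omega^c}\frac{u(x)}{|x-y|^{N+2s}}\,dy=L_\Omega u(x)+g(x)\,u(x),\qquad g(x):=c_{N,s}\int_{\Omega^c}\frac{dy}{|x-y|^{N+2s}}.
\]
Since $B_3\subset\Omega$, every $x\in B_2$ satisfies $\dist(x,\Omega^c)\ge 1$, hence $|x-y|\ge 1$ for all $y\in\Omega^c$, and therefore $0\le g(x)\le c_{N,s}\int_{\RR^N\setminus B_1}|z|^{-N-2s}\,dz=:C_{N,s}<\infty$; thus $g\in L^\infty(B_2)$ with a bound depending only on $N$ and $s$. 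The hypothesis $(1+|x|)^{-N-2s}u\in L^1(\Omega)$ guarantees that all the integrals above converge, even when $\Omega$ is unbounded.

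Next, using $L_\Omega u=f$ in $B_3$, I would deduce that $\widetilde u$ solves
\[
(-\Delta)^s\widetilde u=\widetilde f\qquad\text{in }B_2,\qquad \widetilde f:=f+g\,u,
\]
in the same (distributional) sense as in Lemma~\ref{lemma:Interior-Regularity-frac}; to make this rigorous one tests with $\eta\in C_0^\infty(B_1)\subset C_0^\infty(\Omega)$, applies the identity $\int_{\RR^N}\widetilde u\,(-\Delta)^s\eta\,dx=\tfrac{c_{N,s}}{2}\int\int_{\RR^{2N}}\frac{(\widetilde u(x)-\widetilde u(y))(\eta(x)-\eta(y))}{|x-y|^{N+2s}}\,dx\,dy$, splits the double integral according to whether $x,y$ lie in $\Omega$ or in $\Omega^c$, and uses that $\eta$ vanishes outside $B_1\subset\Omega$. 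Clearly $\widetilde f\in L^q(B_1)$, with
\[
\|\widetilde f\|_{L^q(B_1)}\le \|f\|_{L^q(B_2)}+\|g\|_{L^\infty(B_1)}\|u\|_{L^\infty(B_1)}|B_1|^{1/q}\le \|f\|_{L^q(B_2)}+C\,\|u\|_{L^\infty(B_2)}.
\]

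Finally, since $B_1\subset\Omega$ we have $\widetilde u=u$ on $B_1$, so $\widetilde u\in L^\infty(B_1)$ and $(1+|x|)^{-N-2s}\widetilde u(x)=(1+|x|)^{-N-2s}u(x)\chi_\Omega(x)$ has the same $L^1(\RR^N)$ norm as $(1+|x|)^{-N-2s}u$ has on $\Omega$. Hence all the hypotheses of Lemma~\ref{lemma:Interior-Regularity-frac} hold for $\widetilde u$ (recall $N\ge2$, $s>\tfrac12$, $q>N/(2s)$, $\gamma\le 2s-N/q$), and that lemma yields
\[
\|\widetilde u\|_{C^\gamma(B_{1/2})}\le C\Big(\|\widetilde f\|_{L^q(B_1)}+\|(1+|x|)^{-N-2s}\widetilde u\|_{L^1(\RR^N)}+\|\widetilde u\|_{L^\infty(B_1)}\Big).
\]
Since $\widetilde u=u$ on $B_{1/2}\subset\Omega$, inserting the bounds above gives the claimed inequality. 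The main obstacle, though a mild one, is to justify precisely the identity $(-\Delta)^s\widetilde u=\widetilde f$ on $B_2$ in the correct functional setting and the convergence of the integrals when $\Omega$ is unbounded — both handled via the decay of $(-\Delta)^s\eta$ and the $L^1$ tail hypothesis; everything else (the bound on $g$, the translation back and forth between $u$ and $\widetilde u$, and the application of Lemma~\ref{lemma:Interior-Regularity-frac}) is routine.
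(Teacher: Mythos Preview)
Your proposal is correct and follows essentially the same route as the paper: extend $u$ by zero outside $\Omega$, use the pointwise identity $(-\Delta)^s\widetilde u = L_\Omega u + u\cdot c_{N,s}\int_{\Omega^c}|x-y|^{-N-2s}\,dy$ on $B_2$ (where the extra term is bounded since $B_3\subset\Omega$ forces $|x-y|\ge 1$), and then apply Lemma~\ref{lemma:Interior-Regularity-frac}. The paper's proof is more terse but identical in substance; your added remarks on the distributional justification and the role of the tail hypothesis are fine but not strictly necessary.
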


\begin{proof}
Extend $u$ to be zero outside $\Omega$.
	Then, for any $x\in B_2$, it is clear that
	$$ (-\Delta)^s u(x) = L_\Omega u(x) + u(x) \int_{\Omega^c} |x-y|^{-N-2s} = f(x) + u(x) \int_{\Omega^c} |x-y|^{-N-2s}\, dy =:g(x). $$
	Moreover,
	$$ |g|\leq |f|+C|u|\int_{B_3^c} |y|^{-N-2s}\, dy \leq |f|+C|u|, $$
	which means that $||g||_{L^q(B_2)} \leq C (||f||_{L^q(B_2)} + ||u||_{L^\infty(B_2)})$.
	
	Hence, $u$ satisfies
	$$ (-\Delta)^s u = g \ \ \ \text{ in } \ B_2\subset \Omega,$$
	for some $g\in L^q(B_2)$ with norm depending only on $N$, $s$ and $f$.
	The result then follows from Lemma \ref{lemma:Interior-Regularity-frac}.
\end{proof}

We can now give the:

\begin{proof} [Proof of Theorem~\ref{thm-intro}]
	We divide the proof in two steps:
	
	\vspace{2mm}
	
	\emph{Step 1: $C^\alpha$ estimate.} Since $\Omega$ is bounded and Lipschitz, it can be covered with a finite number of balls in such way that $\partial \Omega \cap B$ is a Lipschitz graph for any ball $B$. Consequently, combining the interior estimate of Lemma~\ref{lemma:Interior-Regularity-frac} and the boundary one of Theorem~\ref{Theorem:HolderRegularity1}, we deduce
	$$|u(x)-u(y)| \leq C \left(\|f\|_{L^q(\Omega)} + \|u\|_{L^2(\Omega)}\right)|x-y|^{\alpha}$$
	for every $x,y\in \overline{\Omega}$ with $\alpha$ and $C$ depending only on $N$, $s$, $q$ and $\Omega$.
	
	\medskip
	
	\emph{Step 2: $C^{2s-1+\alpha}$ estimate for $s>\frac{1}{2}$.} Dividing $u$ by a constant if needed, we may assume $\|f\|_{L^q(\Omega)} + \|u\|_{L^2(\Omega)}\leq 1$.
	Now, given $x,y\in \overline{\Omega}$, we define $r=|x-y|$ and $\rho=\min\{d(x),d(y)\}$ and,
	without loss of generality, we assume $\rho=d(x)$. We divide the proof in two cases.

On the one hand, when $\rho \leq 2r$, we take $z\in \partial \Omega$ such that $|z-x|=\rho$ and, using Proposition~\ref{prop-blowup}, we conclude
	\begin{align*}
	|u(x)-u(y)|
	&\leq |u(x)-u(z)|+|u(y)-u(z)| \leq C \left( |x-z|^{2s-1+\alpha} + |y-z|^{2s-1+\alpha} \right) \\ &\leq C \left( d(x)^{2s-1+\alpha} + (d(x)+r)^{2s-1+\alpha} \right) \leq C\,r^{2s-1+\alpha} = C \,|x-y|^{2s-1+\alpha},
	\end{align*}
	for some $\alpha>0$ small enough.

On the other, if $\rho > 2r$ we have $B_{2r}(y) \subset \Omega$. We define the auxiliary function $u_r(x) = u(y + r x) - u(y)$ and the set $\Omega_r := (\Omega-x)/r$. Then, it is clear that $u_r$ satisfies
	$$ L_{\Omega_r} \, u_r (x) = r^{2s} f(y+r x) =:f_r (x) \ \ \ \text{ in } \  B_2, $$
	with $ ||f_r||_{L^q(B_2)} \leq Cr^{2s-N/q}$.
	Moreover, by using Proposition~\ref{prop-blowup} and Lemma \ref{lem-obs} we know that $  |u_r(x)| < C|r x|^{2s-1+\alpha}$ for some $\alpha$ small enough, which yields
	$$ ||u_r||_{L^\infty(B_2)} < Cr^{2s-1+\alpha} \ \ \ \ \text{ and } \ \ \ ||(1+|x|)^{-N-2s}u_r(x)||_{L^1(\R^N)} < Cr^{2s-1+\alpha}. $$
	Furthermore, since $q>N$, we can take $\alpha$ small enough such that $2s-N/q>2s-1+\alpha$. Thus, applying Lemma~\ref{lemma:Interior-Regularity-frac} with $\gamma = 2s-1+\alpha$, we arrive at
	\begin{align*}
		[u_r]_{C^{2s-1+\alpha} (B_1)} &\leq C (||f_r||_{L^q(B_2)}+||(1+|x|)^{-N-2s}u_r(x)||_{L^1(\R^N)}+||u_r||_{L^\infty(B_2)}) \\
		&\leq C (r^{2s-N/q} + r^{2s-1+\alpha} + r^{2s-1+\alpha}) \leq Cr^{2s-1+\alpha},
	\end{align*}
which is equivalent to say
\[
[u]_{C^{2s-1+\alpha} (B_r(y))} \leq C,
\]
for some constant independent of $y$ and $r$. Consequently,
\[
\begin{aligned}
|u(x) - u(y)| &= r^{2s-1+\alpha} \frac{|u(x) - u(y)|}{|x-y|^{2s-1+\alpha}} \leq r^{2s-1+\alpha} \sup_{x \in B_r(y)} \frac{|u(x) - u(y)|}{|x-y|^{2s-1+\alpha}} \\
&\leq r^{2s-1+\alpha} [u]_{C^{0,2s-1+\alpha}(B_r(y))} \leq C r^{2s-1+\alpha} = C |x-y|^{2s-1+\alpha}.
\end{aligned}
\]
Since $x,y \in \overline{\Omega}$ have been arbitrarily chosen, the thesis follows.
\end{proof}

Finally, we give the:

\begin{proof} [Proof of Theorem~\ref{thm-intro-reg}]
	The proof is basically the same as the previous one, applying Lemma~\ref{lemma:Interior-Regularity} instead of Lemma~\ref{lemma:Interior-Regularity-frac}.
\end{proof}

\appendix
\section{Equivalence for weak solutions}

For completeness, we prove here the equivalence established in \cite{Ab} for classical solutions, in the setting of weak solutions.

\begin{prop}
	Let $u\in C^2(\RR^N)\cap L^\infty(\RR^N)$ be such that
	$$ \begin{cases}
	(-\Delta)^s u = f \ \ \ \text{ in } \ \Omega,\\
	\mathcal{N}_s u = 0 \ \ \ \text{ in } \ \RR^N\setminus \overline{\Omega}.
	\end{cases} $$
	Then, it satisfies
	$$ \int_\Omega \left\{ u(x)-u(y)\right\}\,K_\Omega(x,y) \, dy = f(x) \ \ \ \text{ in } \ \Omega, $$
	where $K_\Omega$ is given by \eqref{new-kernel}-\eqref{new-kernel2}.
\end{prop}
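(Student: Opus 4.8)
The plan is to adapt the computation of Abatangelo \cite{Ab} for classical solutions, keeping careful track of the absolute convergence of every integral that appears. Throughout, I fix an interior point $x\in\Omega$, and the goal is to derive $\int_\Omega\big(u(x)-u(y)\big)K_\Omega(x,y)\,dy=f(x)$ from the pointwise identity $(-\Delta)^su(x)=f(x)$ together with the exterior Neumann condition.

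First I would rewrite the Neumann condition as a Poisson-type representation of $u$ outside $\overline\Omega$: for every $z\in\mathbb{R}^N\setminus\overline\Omega$, the identity $\mathcal N_su(z)=0$ means $u(z)\int_\Omega|z-w|^{-N-2s}\,dw=\int_\Omega u(w)|z-w|^{-N-2s}\,dw$, and since $z\notin\overline\Omega$ and $\Omega$ is bounded the quantity $\int_\Omega|z-w|^{-N-2s}\,dw$ is finite and strictly positive, so
\[ u(z)=\Big(\int_\Omega\frac{dw}{|z-w|^{N+2s}}\Big)^{-1}\int_\Omega\frac{u(w)}{|z-w|^{N+2s}}\,dw,\qquad z\in\mathbb{R}^N\setminus\overline\Omega. \]
Next I would split the fractional Laplacian at $x$. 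Since $\mathrm{dist}\big(x,\mathbb{R}^N\setminus\overline\Omega\big)>0$ and $|\partial\Omega|=0$, the principal value only sees a neighbourhood of $x$ contained in $\Omega$, so (using $u\in C^2$ near $x$ for the singular part and $u\in L^\infty$ for the rest)
\[ c_{N,s}\,\mathrm{PV}\!\int_\Omega\frac{u(x)-u(y)}{|x-y|^{N+2s}}\,dy+c_{N,s}\!\int_{\mathbb{R}^N\setminus\overline\Omega}\frac{u(x)-u(z)}{|x-z|^{N+2s}}\,dz=f(x), \]
with the second integral absolutely convergent. Substituting the representation above and inserting the trivial identity $u(x)=u(x)\big(\int_\Omega|z-w|^{-N-2s}\,dw\big)\big(\int_\Omega|z-w|^{-N-2s}\,dw\big)^{-1}$, the exterior integral becomes
\[ c_{N,s}\!\int_{\mathbb{R}^N\setminus\overline\Omega}\frac{1}{|x-z|^{N+2s}}\Big(\int_\Omega\frac{dw}{|z-w|^{N+2s}}\Big)^{-1}\int_\Omega\frac{u(x)-u(w)}{|z-w|^{N+2s}}\,dw\,dz. \]

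Then I would apply Fubini's theorem to exchange the $z$- and $w$-integrals. Bounding $|u(x)-u(w)|\leq 2\|u\|_{L^\infty}$, this reduces (by Tonelli) to verifying $\int_\Omega k_\Omega(x,w)\,dw<\infty$ for the fixed interior point $x$, which is exactly what \cite[Lemma 2.1]{Ab} and Proposition \ref{kernel-estimates} provide: for $x$ fixed with $d(x)>0$, the kernel $k_\Omega(x,\cdot)$ is bounded near $w=x$ and has only a logarithmic singularity as $w\to\partial\Omega$, hence lies in $L^1(\Omega)$. Once Fubini is justified, exchanging the order of integration identifies the exterior integral, via the definition \eqref{new-kernel2} of $k_\Omega$, with $\int_\Omega\big(u(x)-u(w)\big)k_\Omega(x,w)\,dw$; and this integral requires no principal value, since $k_\Omega(x,\cdot)$ is bounded near $x$. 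Adding it to the $\mathrm{PV}$-term and recalling \eqref{new-kernel}, namely $K_\Omega(x,y)=c_{N,s}|x-y|^{-N-2s}+k_\Omega(x,y)$, yields $\mathrm{PV}\int_\Omega\big(u(x)-u(y)\big)K_\Omega(x,y)\,dy=f(x)$, which is the claim. The one genuinely delicate point is this Fubini step — i.e. the $L^1(\Omega)$ bound on $w\mapsto k_\Omega(x,w)$ — and the fine kernel estimates of Proposition \ref{kernel-estimates}, in particular the integrability of the logarithmic blow-up at $\partial\Omega$, are precisely what makes it work; everything else is bookkeeping with absolutely convergent integrals.
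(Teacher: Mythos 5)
Your proof follows essentially the same route as the paper's: express $u(z)$ for $z\in\Omega^c$ as an average of $u$ over $\Omega$ via the Neumann condition, substitute into the exterior part of $(-\Delta)^su(x)$, and exchange the order of integration. The one genuine addition is your explicit Fubini justification, which the paper leaves implicit — though note it can be shortened: bounding $|u(x)-u(w)|\le2\|u\|_{L^\infty}$ \emph{before} swapping makes the inner $w$-integral cancel the normalization $\int_\Omega|z-\bar z|^{-N-2s}\,d\bar z$ exactly, leaving only $\int_{\Omega^c}|x-z|^{-N-2s}\,dz<\infty$ for the fixed interior point $x$, so the fine kernel estimates of Proposition~\ref{kernel-estimates} are not actually needed for this step.
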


\begin{proof}
	Given any $z\in \Omega^c$, we have
	\begin{align*}
		0 &= \mathcal{N}_s u(z) = \int_\Omega \frac{u(z)-u(y)}{|z-y|^{N+2s}}\,dy \\
		&= u(z) \int_\Omega |z-y|^{-N-2s} \, dy - \int_\Omega \frac{u(y)}{|z-y|^{N+2s}} \, dy,
	\end{align*}
	and so
	$$ u(z) = \frac{\int_\Omega u(y) |z-y|^{-N-2s} \, dy}{ \int_\Omega |z-\overline{z}|^{-N-2s} \, d\overline{z}}\ \ \ \text{ in } \ \RR^N\setminus \overline{\Omega}. $$
	Now, we substitute this identity in the fractional Laplacian. Given any $x\in \Omega$
	\begin{align*}
		\frac{(-\Delta)^s u (x)}{c_{N,s}} &= \int_{\RR^N} \frac{u(x)-u(y)}{|x-y|^{N+2s}} \, dy = \int_{\Omega} \frac{u(x)-u(y)}{|x-y|^{N+2s}} \, dy + \int_{\Omega^c} \frac{u(x)-u(z)}{|x-z|^{N+2s}} \, dz \\ &= \int_{\Omega} \frac{u(x)-u(y)}{|x-y|^{N+2s}} \, dy + \int_{\Omega^c} \frac{u(x)-\frac{\int_\Omega u(y) |z-y|^{-N-2s} \, dy}{ \int_\Omega |z-\overline{z}|^{-N-2s} \, d\overline{z}}}{|x-z|^{N+2s}} \, dz \\
		&= \int_{\Omega} \frac{u(x)-u(y)}{|x-y|^{N+2s}} \, dy + \int_{\Omega^c} \frac{\int_\Omega \frac{ u(x)-u(y)}{ |z-y|^{N+2s}} \, dy}{|x-z|^{N+2s}\int_\Omega |z-\overline{z}|^{-N-2s} \, d\overline{z}} \, dz \\
		&= \int_{\Omega} \frac{u(x)-u(y)}{|x-y|^{N+2s}} \, dy + \int_{\Omega} \left\{u(x)-u(y)\right\} \int_{\Omega^c} \frac{|x-z|^{-N-2s} |y-z|^{-N-2s} }{\int_\Omega |z-\overline{z}|^{-N-2s} \, d\overline{z}}\, dz \, dy,
	\end{align*}
and the result follows.
\end{proof}

In what follows, we denote
$$ ||w||_{H^s_\Omega}^2 :=  ||w||_{L^2(\Omega)}^2 + \int \int_{(\RR^N\times\RR^N)\setminus (\Omega^c\times \Omega^c)} \frac{|w(x)-w(y)|^2}{|x-y|^{N+2s}} dx\, dy.$$

\begin{lem} \label{Lemma: FromFracLaplToRestWithKErnel}
	Let $v,w:\RR^N \to \RR$ be such that $\mathcal{N}_s w = 0$ in $\RR^N\setminus \Omega$. Then,
	\begin{align*}
	\int_\Omega \int_{\Omega} &\left\{v(x)-v(y)\right\}\left\{w(x)-w(y)\right\} \, K_\Omega(x,y) dx\, dy \\
	&= c_{N,s}\int \int_{(\RR^N\times\RR^N)\setminus (\Omega^c\times \Omega^c)} \frac{\left\{v(x)-v(y) \right\}\left\{w(x)-w(y) \right\}}{|x-y|^{N+2s}} dx\, dy,
	\end{align*}
	where  $K_\Omega$ is given by \eqref{new-kernel}-\eqref{new-kernel2}.
\end{lem}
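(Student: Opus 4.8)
The plan is to match the two sides term by term after decomposing the domain of integration on the right-hand side. Write $\mathcal{I}$ for the right-hand side of the claimed identity. Since $(\RR^N\times\RR^N)\setminus(\Omega^c\times\Omega^c)=(\Omega\times\Omega)\cup(\Omega\times\Omega^c)\cup(\Omega^c\times\Omega)$ and the integrand is symmetric under $(x,y)\mapsto(y,x)$, I would first write
\[
\mathcal{I}=c_{N,s}\int_\Omega\int_\Omega\frac{(v(x)-v(y))(w(x)-w(y))}{|x-y|^{N+2s}}\,dx\,dy+2c_{N,s}\int_\Omega\int_{\Omega^c}\frac{(v(x)-v(z))(w(x)-w(z))}{|x-z|^{N+2s}}\,dz\,dx=:c_{N,s}A+2c_{N,s}B.
\]
On the other hand, by \eqref{new-kernel} the left-hand side equals $c_{N,s}A+C$, where $C:=\int_\Omega\int_\Omega(v(x)-v(y))(w(x)-w(y))\,k_\Omega(x,y)\,dx\,dy$. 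Hence the whole lemma reduces to proving the single identity $C=2c_{N,s}B$.

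To do this I would use the Neumann condition on $w$. Put $m(z):=\int_\Omega|z-y|^{-N-2s}\,dy$ for $z\in\Omega^c$; then $\mathcal{N}_s w(z)=0$ says exactly $w(z)\,m(z)=\int_\Omega w(y)|z-y|^{-N-2s}\,dy$, and pulling the constant $w(x)$ through $\int_\Omega|z-y|^{-N-2s}\,dy=m(z)$ we obtain, for all $x\in\Omega$ and $z\in\Omega^c$,
\[
w(x)-w(z)=\frac{1}{m(z)}\int_\Omega(w(x)-w(y))\,|z-y|^{-N-2s}\,dy.
\]
Substituting this into $B$ and applying Fubini gives $B=\int_{\Omega^c}m(z)^{-1}\big(\int_\Omega\int_\Omega(v(x)-v(z))(w(x)-w(y))|x-z|^{-N-2s}|z-y|^{-N-2s}\,dx\,dy\big)\,dz$. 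The key step is then to symmetrize the inner double integral under the exchange $x\leftrightarrow y$: the kernel $|x-z|^{-N-2s}|z-y|^{-N-2s}$ is symmetric, while
\[
(v(x)-v(z))(w(x)-w(y))+(v(y)-v(z))(w(y)-w(x))=(v(x)-v(y))(w(x)-w(y)),
\]
so the unknown exterior value $v(z)$ cancels and the inner integral equals $\tfrac12\int_\Omega\int_\Omega(v(x)-v(y))(w(x)-w(y))|x-z|^{-N-2s}|z-y|^{-N-2s}\,dx\,dy$. Interchanging the $z$-integration with the $(x,y)$-integration and using \eqref{new-kernel2} in the form $\int_{\Omega^c}|x-z|^{-N-2s}|y-z|^{-N-2s}m(z)^{-1}\,dz=c_{N,s}^{-1}k_\Omega(x,y)$, I get $B=\tfrac{1}{2c_{N,s}}C$, i.e.\ $C=2c_{N,s}B$, which finishes the proof.

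The only real obstacle is integrability: every use of Fubini and of the symmetrization has to be justified, and this is where one should work under the natural hypothesis $v,w\in H^s_\Omega$ (so that $A$, $B$ and $C$ all converge absolutely). Concretely, one checks that
\[
\int_{\Omega^c}\frac{1}{m(z)}\int_\Omega\int_\Omega\frac{|v(x)-v(y)|\,|w(x)-w(y)|}{|x-z|^{N+2s}|z-y|^{N+2s}}\,dx\,dy\,dz<\infty,
\]
which follows from the Cauchy--Schwarz inequality together with the bound $k_\Omega(x,y)\lesssim|x-y|^{-N-2s}$ from Proposition~\ref{kernel-estimates} (cf.\ \eqref{new-kernel-estimates}), thereby reducing everything to the finiteness of the $H^s_\Omega$-seminorms of $v$ and $w$. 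Once this absolute convergence is granted, the chain of manipulations above is entirely elementary.
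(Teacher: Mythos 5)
Your proposal is correct and follows essentially the same route as the paper: both hinge on the Neumann representation $w(x)-w(z)=m(z)^{-1}\int_\Omega(w(x)-w(y))|z-y|^{-N-2s}\,dy$ for $z\in\Omega^c$, Fubini, and the polarization identity $(v(x)-v(z))(w(x)-w(y))+(v(y)-v(z))(w(y)-w(x))=(v(x)-v(y))(w(x)-w(y))$ which recovers $k_\Omega$. The only difference is cosmetic bookkeeping: the paper works from the $K_\Omega$ side and splits the resulting triple integral into four terms $I_2,\dots,I_5$ (two cancelling via $\mathcal N_s w=0$), while you run the calculation from the cross term $B$ and let the $x\leftrightarrow y$ symmetrization absorb the vanishing pieces in one stroke; your closing remark on justifying Fubini under $v,w\in H^s_\Omega$ is a useful point the paper leaves implicit.
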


\begin{proof}
Note that adding and subtracting the terms $w(z)(v(x)+v(y)+v(z))$ and $v(z)(w(x)+w(y)+w(z))$, and rearranging them, we obtain
	\begin{align*}
	\int_\Omega \int_{\Omega} &\left\{v(x)-v(y)\right\}\left\{w(x)-w(y)\right\} \, K_\Omega(x,y) dx\, dy \\
	&= c_{N,s}\int_\Omega \int_{\Omega} \frac{\left\{v(x)-v(y) \right\}\left\{w(x)-w(y) \right\}}{|x-y|^{N+2s}} dx\, dy \\
	&\hspace{5mm}+ c_{N,s}\int_\Omega dx \int_\Omega dy \int_{\Omega^c} dz \frac{\left\{v(x)-v(y) \right\}\left\{w(x)-w(y) \right\}}{|x-z|^{N+2s} |y-z|^{N+2s}\int_\Omega |z-\overline{z}|^{-N-2s} \, d\overline{z}} \\
	&= c_{N,s}\int_\Omega \int_{\Omega} \frac{\left\{v(x)-v(y) \right\}\left\{w(x)-w(y) \right\}}{|x-y|^{N+2s}} dx\, dy \\
	&\hspace{5mm}+ c_{N,s}\int_\Omega dx \int_\Omega dy \int_{\Omega^c} dz \frac{\left\{v(x)-v(z) \right\}\left\{w(x)-w(z) \right\}}{|x-z|^{N+2s} |y-z|^{N+2s}\int_\Omega |z-\overline{z}|^{-N-2s} \, d\overline{z}} \\
	&\hspace{5mm}+ c_{N,s}\int_\Omega dx \int_\Omega dy \int_{\Omega^c} dz \frac{\left\{v(z)-v(y) \right\}\left\{w(x)-w(z) \right\}}{|x-z|^{N+2s} |y-z|^{N+2s}\int_\Omega |z-\overline{z}|^{-N-2s} \, d\overline{z}} \\
	&\hspace{5mm}+ c_{N,s}\int_\Omega dx \int_\Omega dy \int_{\Omega^c} dz \frac{\left\{v(x)-v(z) \right\}\left\{w(z)-w(y) \right\}}{|x-z|^{N+2s} |y-z|^{N+2s}\int_\Omega |z-\overline{z}|^{-N-2s} \, d\overline{z}} \\
	&\hspace{5mm}+ c_{N,s}\int_\Omega dx \int_\Omega dy \int_{\Omega^c} dz \frac{\left\{v(z)-v(y) \right\}\left\{w(z)-w(y) \right\}}{|x-z|^{N+2s} |y-z|^{N+2s}\int_\Omega |z-\overline{z}|^{-N-2s} \, d\overline{z}} \\
	&= I_1+I_2+I_3+I_4+I_5.
	\end{align*}
	By symmetry in the variables $x$ and $y$ it is clear that $I_2 = I_5$ and $I_3=I_4$. Now, let us simplify them. On the one hand
	\begin{align*}
	I_2 &= I_5 = c_{N,s}\int_\Omega dx \int_\Omega dy \int_{\Omega^c} dz \frac{\left\{v(x)-v(z) \right\}\left\{w(x)-w(z) \right\}}{|x-z|^{N+2s} |y-z|^{N+2s}\int_\Omega |z-\overline{z}|^{-N-2s} \, d\overline{z}} \\
	&=c_{N,s}\int_\Omega dx \int_{\Omega^c} dz \frac{\left\{v(x)-v(z) \right\}\left\{w(x)-w(z) \right\}}{|x-z|^{N+2s} \int_\Omega |z-\overline{z}|^{-N-2s} \, d\overline{z}} \left( \int_\Omega |y-z|^{-N-2s} \, dy \right)\\
	&=c_{N,s}\int_\Omega dx \int_{\Omega^c} dz \frac{\left\{v(x)-v(z) \right\}\left\{w(x)-w(z) \right\}}{|x-z|^{N+2s}}.
	\end{align*}
	On the other hand, using the condition $\mathcal{N}_sw = 0$ in $\RR^N \setminus \overline{\Omega}$ we obtain
	\begin{align*}
	I_3 &= I_4 = c_{N,s}\int_\Omega dx \int_\Omega dy \int_{\Omega^c} dz \frac{\left\{v(z)-v(y) \right\}\left\{w(x)-w(z) \right\}}{|x-z|^{N+2s} |y-z|^{N+2s}\int_\Omega |z-\overline{z}|^{-N-2s} \, d\overline{z}} \\
	&= c_{N,s}\int_\Omega dy \int_{\Omega^c} dz \frac{v(z)-v(y)}{|y-z|^{N+2s}\int_\Omega |z-\overline{z}|^{-N-2s} \, d\overline{z}} \left(\int_\Omega \frac{w(x)-w(z)}{|x-z|^{N+2s}} dx\right) \\
	&= c_{N,s}\int_\Omega dy \int_{\Omega^c} dz \frac{-\mathcal{N}_s w (z)\left\{v(z)-v(y) \right\}}{|y-z|^{N+2s}\int_\Omega |z-\overline{z}|^{-N-2s} \, d\overline{z}}\\
	&= 0.
	\end{align*}
	Putting all the terms together we finally arrive at
	\begin{align*}
	\int_\Omega \int_{\Omega} &\left\{v(x)-v(y)\right\}\left\{w(x)-w(y)\right\} \, K_\Omega(x,y) dx\, dy \\
	&= c_{N,s}\int \int_{(\RR^N\times\RR^N)\setminus (\Omega^c\times \Omega^c)} \frac{\left\{v(x)-v(y) \right\}\left\{w(x)-w(y) \right\}}{|x-y|^{N+2s}} dx\, dy,
	\end{align*}
	as wanted.
\end{proof}

Finally, we prove:

\begin{prop}
	Let $u\in H^s_\Omega$ be such that
	\begin{equation} \label{Eq: WeakFormFracLapl}
		\frac{c_{N,s}}{2}\int \int_{(\RR^N\times\RR^N)\setminus (\Omega^c\times \Omega^c)} \frac{\left\{u(x)-u(y) \right\}\left\{v(x)-v(y) \right\}}{|x-y|^{N+2s}} dx\, dy = \int_\Omega f(x)v(x)\,dx
	\end{equation}
	for all test function $v\in H^s_\Omega$.
	Then, $u\in H_K(\Omega)$ and it satisfies
	\begin{equation} \label{Eq: WeakFormRestrWithKernel}
	\frac{1}{2}\int_\Omega \int_{\Omega} \left\{u(x)-u(y)\right\}\left\{\overline{v}(x)-\overline{v}(y)\right\} \, K_\Omega(x,y) dx\, dy = \int_\Omega f(x)\overline{v}(x)\,dx
	\end{equation}
	for all $\overline{v}\in H_K(\Omega)$, where $K_\Omega$ is given by \eqref{new-kernel}-\eqref{new-kernel2}.
	Moreover, $\mathcal{N}_s u = 0$ in $\RR^N\setminus \overline{\Omega}$.
\end{prop}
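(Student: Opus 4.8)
The plan is to reduce everything to Lemma \ref{Lemma: FromFracLaplToRestWithKErnel}, in three steps. \emph{Step 1: the Neumann condition.} First observe that, since $u\in H^s_\Omega$, the quantity $\mathcal N_s u(z)=c_{N,s}\int_\Omega\frac{u(z)-u(y)}{|z-y|^{N+2s}}\,dy$ is well defined and locally integrable on $\RR^N\setminus\overline{\Omega}$: by Cauchy--Schwarz, $\int_\Omega\frac{|u(z)-u(y)|}{|z-y|^{N+2s}}\,dy\le\big(\int_\Omega\frac{|u(z)-u(y)|^2}{|z-y|^{N+2s}}\,dy\big)^{1/2}\big(\int_\Omega|z-y|^{-N-2s}\,dy\big)^{1/2}$, and the first factor is finite for a.e.\ $z$ by Fubini since $[u]_{H^s_\Omega}<\infty$. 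Now test \eqref{Eq: WeakFormFracLapl} with an arbitrary $v\in C_0^\infty(\RR^N\setminus\overline{\Omega})\subset H^s_\Omega$: the integrand vanishes on $\Omega\times\Omega$ and on $\Omega^c\times\Omega^c$, and symmetrizing the two remaining contributions (over $\Omega\times\Omega^c$ and $\Omega^c\times\Omega$) reduces the left side to $\int_{\Omega^c}v(z)\,\mathcal N_s u(z)\,dz$, while the right side is $\int_\Omega fv=0$. By arbitrariness of $v$, $\mathcal N_s u=0$ a.e.\ in $\RR^N\setminus\overline{\Omega}$; this is already the last assertion of the proposition.

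\emph{Step 2: the extension and its admissibility.} For $\overline{v}\in H_K(\Omega)$ set $\widetilde{v}:=\overline{v}$ on $\Omega$ and, for $z\in\Omega^c$, $\widetilde{v}(z):=\big(\int_\Omega\overline{v}(w)|z-w|^{-N-2s}\,dw\big)\big/\big(\int_\Omega|z-w|^{-N-2s}\,dw\big)$, so that $\mathcal N_s\widetilde{v}=0$ in $\RR^N\setminus\overline{\Omega}$ by construction. I claim $\widetilde{v}\in H^s_\Omega$. On $\Omega\times\Omega$ the seminorm of $\widetilde{v}$ is controlled by that of $\overline{v}$, since $K_\Omega(x,y)\ge c_{N,s}|x-y|^{-N-2s}$. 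For the cross energy, Jensen's inequality applied to the probability measure $d\mu_z(w)=|z-w|^{-N-2s}\,dw\big/\int_\Omega|z-w'|^{-N-2s}\,dw'$ on $\Omega$ gives $|\overline{v}(x)-\widetilde{v}(z)|^2\le\int_\Omega|\overline{v}(x)-\overline{v}(w)|^2\,d\mu_z(w)$; integrating in $z\in\Omega^c$ against $|x-z|^{-N-2s}$ and in $x\in\Omega$, using Fubini and recognizing the definition \eqref{new-kernel2} of $k_\Omega$, we get $\int_\Omega\int_{\Omega^c}\frac{|\overline{v}(x)-\widetilde{v}(z)|^2}{|x-z|^{N+2s}}\,dz\,dx\le\frac1{c_{N,s}}\int_\Omega\int_\Omega|\overline{v}(x)-\overline{v}(w)|^2\,k_\Omega(x,w)\,dx\,dw\le\frac1{c_{N,s}}\int_\Omega\int_\Omega|\overline{v}(x)-\overline{v}(w)|^2\,K_\Omega(x,w)\,dx\,dw<\infty$, where we used $k_\Omega\le K_\Omega$. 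Together with $\widetilde{v}|_\Omega=\overline{v}\in L^2(\Omega)$, this shows $\widetilde{v}\in H^s_\Omega$, so $\widetilde{v}$ is admissible in \eqref{Eq: WeakFormFracLapl}.

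\emph{Step 3: conclusion.} Using $\mathcal N_s u=0$ from Step 1, apply Lemma \ref{Lemma: FromFracLaplToRestWithKErnel} with $w=u$ and $v=\widetilde{v}$; since $\widetilde{v}|_\Omega=\overline{v}$, the left side becomes $\int_\Omega\int_\Omega(u(x)-u(y))(\overline{v}(x)-\overline{v}(y))K_\Omega(x,y)\,dx\,dy$, while the right side equals $c_{N,s}\int\int_{(\RR^N\times\RR^N)\setminus(\Omega^c\times\Omega^c)}\frac{(u(x)-u(y))(\widetilde{v}(x)-\widetilde{v}(y))}{|x-y|^{N+2s}}\,dx\,dy=2\int_\Omega f\,\widetilde{v}=2\int_\Omega f\,\overline{v}$, the middle equality being \eqref{Eq: WeakFormFracLapl} tested with $\widetilde{v}$. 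Dividing by $2$ gives \eqref{Eq: WeakFormRestrWithKernel} for every $\overline{v}\in H_K(\Omega)$. Finally, Lemma \ref{Lemma: FromFracLaplToRestWithKErnel} with $v=w=u$ (again licit since $\mathcal N_s u=0$) gives $\int_\Omega\int_\Omega|u(x)-u(y)|^2K_\Omega(x,y)\,dx\,dy=c_{N,s}[u]_{H^s_\Omega}^2<\infty$, which combined with $u\in L^2(\Omega)$ yields $u\in H_K(\Omega)$.

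\emph{Main obstacle.} The only genuinely delicate point is the admissibility statement in Step 2: one must know that the nonlocal-Neumann extension of an arbitrary $H_K(\Omega)$ function has finite $H^s_\Omega$ energy, and the Jensen estimate above --- which crucially exploits $k_\Omega\le K_\Omega$, i.e.\ the precise way $K_\Omega$ was built in \eqref{new-kernel}--\eqref{new-kernel2} --- is what secures it. Everything else is an application of Lemma \ref{Lemma: FromFracLaplToRestWithKErnel} together with routine uses of Fubini's theorem, justified because all the relevant double and triple integrals are absolutely convergent thanks to $[u]_{H^s_\Omega}<\infty$ and $\|\overline{v}\|_{H_K(\Omega)}<\infty$.
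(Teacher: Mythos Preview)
Your proof is correct and follows essentially the same strategy as the paper's: extend the test function so that it satisfies $\mathcal N_s\widetilde v=0$, invoke Lemma \ref{Lemma: FromFracLaplToRestWithKErnel} to pass between the two bilinear forms, and test \eqref{Eq: WeakFormFracLapl} with $\varphi\in C_0^\infty(\RR^N\setminus\overline\Omega)$ to obtain $\mathcal N_s u=0$. The only differences are organizational: you establish $\mathcal N_s u=0$ first and then apply the lemma with $w=u$, whereas the paper applies it with $w=\widetilde v$; and for the admissibility $\widetilde v\in H^s_\Omega$ you give a direct Jensen estimate recovering $k_\Omega$, while the paper simply re-invokes Lemma \ref{Lemma: FromFracLaplToRestWithKErnel} with $v=w=\widetilde v$ to get the identity $\|\overline v\|_{H_K(\Omega)}^2 = c_{N,s}[\widetilde v]_{H^s_\Omega}^2$ --- your one-sided inequality is in fact the nontrivial half of that computation.
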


\begin{proof}
	Given any test function $\overline{v}\in H_K(\Omega)$ we define $v:\RR^N \to \RR$ in the following way
	$$ v(x) = \begin{cases}
	\overline{v}(x) \ \ \text{ if } \ x\in \Omega,\\
	\left(\int_\Omega \frac{\overline{v}(z)}{|x-z|^{N+2s}}\,dz\right) \left(\int_\Omega |x-z|^{-N-2s}\,dz\right)^{-1} \ \ \text{ if } \ x\in \RR^N\setminus \overline{\Omega}.
	\end{cases} $$
	Indeed, this is the extension of $v$ outside $\Omega$ that ensures $\mathcal{N}_s v = 0$ in $\Omega^c$. Then, applying Lemma~\ref{Lemma: FromFracLaplToRestWithKErnel}, we obtain
	\begin{align*}
	\int \int_{\Omega\times \Omega} &\left\{u(x)-u(y)\right\}\left\{\overline{v}(x)-\overline{v}(y)\right\} \, K_\Omega(x,y) dx\, dy \\
	&=\int \int_{\Omega\times \Omega} \left\{u(x)-u(y)\right\}\left\{v(x)-v(y)\right\} \, K_\Omega (x,y) dx\, dy\\
	&= c_{N,s}\int \int_{(\RR^N\times\RR^N)\setminus (\Omega^c\times \Omega^c)} \frac{\left\{u(x)-u(y) \right\}\left\{v(x)-v(y) \right\}}{|x-y|^{N+2s}} dx\, dy.
	\end{align*}
	Moreover, by using $v$ as a test function in \eqref{Eq: WeakFormFracLapl} we have
	\begin{align*}
	\frac{c_{N,s}}{2} \int \int_{(\RR^N\times\RR^N)\setminus (\Omega^c\times \Omega^c)} &\frac{\left\{u(x)-u(y) \right\}\left\{v(x)-v(y) \right\}}{|x-y|^{N+2s}} dx\, dy \\
	&= \int_\Omega f(x)v(x)\,dx = \int_\Omega f(x)\overline{v}(x)\,dx.
	\end{align*}
	Thus, \eqref{Eq: WeakFormRestrWithKernel} follows by putting together the previous identities. Notice that applying Lemma~\ref{Lemma: FromFracLaplToRestWithKErnel} with $w=v$, we conclude that $v\in H^s_\Omega$. Thus, we can use it as a test function in \eqref{Eq: WeakFormFracLapl}.
	
	Now, taking any $\varphi \in C_0^\infty(\RR^N\setminus \overline{\Omega})\subset H^s_\Omega$ and using it as a test function in \eqref{Eq: WeakFormFracLapl}, we deduce
	$$ \int_{\Omega^c} \varphi(y) \mathcal{N}_s u(y) \, dy = \int_{\Omega^c} \varphi(y) \left( \frac{u(x)-u(y)}{|x-y|^{N+2s}} dx \right) dy = 0, $$
	and so we get that $\mathcal{N}_s u = 0$ in $\RR\setminus \overline{\Omega}$. Furthermore, we can apply Lemma~\ref{Lemma: FromFracLaplToRestWithKErnel} with $v=w=u$ and, since $u\in H^s_\Omega$, we conclude that $u\in H_K(\Omega)$.
\end{proof}

\end{document}